%% file: Half-plan_boundary_BGk_model_final.tex
\documentclass[10pt]{amsart}
\usepackage[margin=1.0in]{geometry}
\usepackage{xcolor}
\usepackage{placeins}
\usepackage{hyperref}
\usepackage{graphicx, subfigure, caption2}
\usepackage{graphicx}
\usepackage{kotex}
\usepackage{amsmath,amsfonts,amssymb,amsthm, physics}
\usepackage{mathtools}
\usepackage{commath}
\usepackage{colortbl}
\usepackage{tikz}

\usetikzlibrary{arrows.meta,calc,intersections}

\newtheorem{theorem}{Theorem}[section]
\newtheorem{lemma}[theorem]{Lemma}
\newtheorem{proposition}[theorem]{Proposition}
\newtheorem{corollary}[theorem]{Corollary}
\theoremstyle{definition}
\newtheorem{definition}[theorem]{Definition}

\theoremstyle{remark}
\newtheorem{remark}[theorem]{Remark}

\numberwithin{equation}{section}

\DeclareMathOperator*{\esssup}{ess\,sup}

\begin{document}
\title[Instantaneous Exponential Ill-posedness of the BGK Model]{Instantaneous Exponential Ill-posedness of the BGK Model in a Half-Space with Inflow Boundary Conditions}
        
\author{Donghyun Lee}
\address{Department of Mathematics, Pohang University of Science and Technology, Pohang, South Korea}
\email{donglee@postech.ac.kr}

\author{Sungbin Park}
\address{Basic Science Research Institute(BRSI), Pohang University of Science and Technology, Pohang, South Korea}
\email{parksb2942@postech.ac.kr}

\author{Sung-Jun Son}
\address{Center for Mathematical Machine Learning and its Applications(CM2LA), Pohang University of Science and Technology, Pohang, South Korea}
\email{sungjun129@postech.ac.kr}

\author{Seok-Bae Yun}
\address{Department of Mathematics, Sungkyunkwan University, Suwon, South Korea}
\email{sbyun01@skku.edu}
    
\begin{abstract}
    In this paper, we study boundary-driven behavior of the BGK model and the Boltzmann equation in a half-space with vacuum initial data. For any $1\leq\beta\leq2$ and $\alpha>0$, we construct a unique local-in-time BGK solution whose inflow boundary data have an arbitrarily small norm with the tangential exponential weight $e^{\alpha|v_h|^\beta}$, but whose norm with weight $e^{\alpha'|v_h|^\beta}$ is infinite for every $0<\alpha'\leq\alpha$ and every positive time of existence, where $v=(v_h,v_z)\in\mathbb{R}^n$ and $v_h\in\mathbb{R}^{n-1}$ denotes the tangential velocity. Thus, this BGK solution instantaneously leaves the exponentially weighted spaces associated with the inflow data, even when the exponential weight is weakened. The construction relies on sharp estimates for the density, bulk velocity, and temperature on a singular scale together with a fixed-point argument in a time-dependent anisotropic space. For comparison, we establish local existence and uniqueness for the cutoff Boltzmann equation under isotropic exponential weights $e^{\alpha|v|^\beta}$. In the Appendix, we further clarify the role of these different weights by showing that the macroscopic stability required for the BGK construction can fail even for Maxwellian inflow data and that it can be recovered by suitably weakening the normal velocity decay of the inflow data. These observations highlight a structural difference between local Maxwellian relaxation and the Boltzmann collision operator in the presence of boundary inflow and vacuum initial data. This work extends the whole-space analysis of D. Lee, S. Park, and S.-B. Yun \cite{LPY2024} to the half-space setting.
\end{abstract}
\maketitle
\tableofcontents
        
\section{Introduction}
\subsection{Kinetic equations: Boltzmann and BGK models}
Kinetic equations describe gas dynamics from a statistical perspective and have been extensively studied. The Boltzmann equation is one of the most important equations in kinetic theory, describing the evolution of the particle distribution through binary collisions. It consists of a transport term and a collision operator representing particle interactions. More precisely, the Boltzmann equation takes the form
\begin{align}\label{eq:Boltzmann}
    \partial_t f + v \cdot \nabla_x f = \int_{\mathbb{R}^n} \int_{\mathbb{S}^{n-1}} 
    B(v-v_*,\omega) \left[f(v')f(v'_*) - f(v)f(v_*)\right] \, d\omega \, dv_*, \qquad n \ge 2,
\end{align}
where $f(t,x,v)$ is the velocity distribution function at time $t\geq 0$, position $x\in \Omega$, and velocity $v\in \mathbb{R}^n $, and the right-hand side of the equation represents the Boltzmann collision operator, where $(v', v_*')$ denote the post-collisional velocities determined by the collision law
\begin{align*}
    &v' = v - \langle v-v_*,\omega\rangle\omega,\quad v_*' = v_* + \langle v-v_*,\omega\rangle\omega,\quad \omega\in \mathbb{S}^{n-1}.
\end{align*}
We usually define the right-hand side of \eqref{eq:Boltzmann} by
\begin{align*}
    Q(f,f):= \int_{\mathbb{R}^n} \int_{\mathbb{S}^{n-1}} 
    B(v-v_*,\omega) \left[f(v')f(v'_*) - f(v)f(v_*)\right] \, d\omega \, dv_*.
\end{align*}
We can also write the $Q$ operator as the bilinear form
\begin{align*}
    Q(f_1,f_2):= \int_{\mathbb{R}^n} \int_{\mathbb{S}^{n-1}} 
    B(v-v_*,\omega) \left[f_1(v')f_2(v'_*) - f_1(v)f_2(v_*)\right] \, d\omega \, dv_*.
\end{align*}
$B$ is the collision kernel characterizing particle interactions. In this paper, we are interested in the collision kernel of the form
\begin{align}\label{eq:B_condition}
    B(v-v_*,\omega) = |v-v_*|^\kappa b(\cos\theta),\quad -n<\kappa\leq 1,\quad 0\leq b(\cos\theta)\leq C|\cos\theta|,\quad \cos\theta = \frac{v-v_*}{|v-v_*|}\cdot\omega.
\end{align}
When $n=3$ and $\kappa=1$, we can consider the hard-sphere collision kernel.

The important property of a solution of the Boltzmann equation is that the total mass, momentum, and energy are preserved in the following sense: for a suitable boundary-free space $\Omega$ such as $\mathbb{T}^n$ or $\mathbb{R}^n$, it formally satisfies
\begin{align}\label{eq:conservation_macro}
    \int_{\Omega\times \mathbb{R}^n}
    \left(\begin{array}{c}1\\ v\\|v|^2\end{array}\right)f(t,x,v) 
    \,dx dv=\int_{\Omega\times \mathbb{R}^n}
    \left(\begin{array}{c}1\\ v\\|v|^2\end{array}\right)f_0(x,v) 
    \,dx dv.
\end{align}
Another important property is the entropy dissipation property given by
\begin{align}\label{eq:entropy_ineq}
    \frac{d}{dt}\int_{\Omega\times\mathbb{R}^n}f\ln f\, dxdv \leq 0.
\end{align}
For the derivation of these properties, we refer to the well-known review paper \cite{Villani2002}.

Although the Boltzmann equation provides an accurate physical description, the collision operator is highly nonlinear and involves a complicated high-dimensional integral structure. This makes both theoretical analysis and numerical computation extremely challenging. To overcome these difficulties, several simplified models have been proposed that retain key physical properties such as conservation laws. 
One of the most widely used models is the BGK model, introduced by Bhatnagar, Gross, and Krook \cite{BGK1954}. In this model, the collision operator is replaced by a relaxation term driving the distribution toward a local Maxwellian equilibrium. The BGK model is given by
\begin{align*}
    \partial_t f + v \cdot \nabla_x f = \frac{1}{\tau} \big( \mathcal{M}(f) - f \big),
\end{align*}
where $\tau > 0$ is the relaxation time. In this paper, we choose $\tau = 1$.

The local Maxwellian $\mathcal{M}(f)$ is determined by the macroscopic moments of $f$ and is defined as
\begin{equation*}
    \mathcal{M}(f)(t,x,v) = \frac{\rho}{(2\pi T)^{n/2}} \exp\!\left(-\frac{|v-u|^2}{2T}\right),
\end{equation*}
where $\rho$, $u$, and $T$ denote the density, bulk velocity, and temperature, respectively.
These quantities are obtained from the moments of the distribution function $f$ as
\begin{align*}
    \begin{split}
        \rho(t,x) &= \int_{\mathbb{R}^n} f(t,x,v)\,dv,\\
        \rho(t,x) u (t,x) &= \int_{\mathbb{R}^n} f(t,x,v)v\,dv,\\
        n \rho(t,x) T (t,x) &= \int_{\mathbb{R}^n} |v-u(t,x)|^2 f(t,x,v)\,dv.\\
    \end{split}
\end{align*}
A direct computation shows that a solution of the BGK model also satisfies \eqref{eq:conservation_macro} and \eqref{eq:entropy_ineq} by the structure of the relaxation term. Under the same boundary-free assumptions, it implies that any solution of the BGK model also satisfies the conservation of total mass, momentum, and energy, as well as the entropy dissipation property.

Due to its relatively simple structure while keeping the key kinetic features, the BGK model has been widely studied in both mathematical analysis and numerical simulations. In particular, it provides a useful framework for understanding the behavior of the Boltzmann equation, including relaxation to equilibrium and hydrodynamic limits.

\subsection{Problem Motivation}

A fundamental issue in the mathematical analysis of the BGK model is the control of the local Maxwellian in suitable function spaces. A classical estimate obtained by Perthame and Pulvirenti \cite{PP1993} shows that the local Maxwellian satisfies a polynomial weighted bound of the form
\begin{align}\label{est:PP}
    \|(1+|v|^2)^q \mathcal{M}(f)\|_{L^\infty_v}\leq C \|(1+|v|^2)^q f\|_{L^\infty_v}
\end{align}
in some range of $q\geq 0$. This estimate has played a crucial role in many well-posedness results for the BGK model. However, since the Maxwellian is itself an exponential function in the velocity variable, this naturally raises the question of whether the above polynomial estimate can be improved to an exponential one. More precisely, one may ask whether an estimate of the form
\begin{align*}
    \|e^{\alpha |v|^\beta} \mathcal{M}(f)\|_{L^\infty_v} \leq C\|e^{\alpha |v|^\beta} f\|_{L^\infty_v}
\end{align*}
holds for some $\alpha>0$ and $\beta>0$. The question of whether such an exponential estimate holds for the initial value problem was first raised in \cite{LPY2024}, where they showed that the estimate fails in general. In particular, for any given $\alpha> 0$, $\beta>0$, and $1\leq p\leq \infty$, the authors constructed a sequence of functions $f_i(v)$ such that
\begin{align*}
    \sup_i \|f_i e^{\alpha |v|^\beta} \|_{L^p_v} < \infty \quad \text{while} \quad \lim_{i\rightarrow \infty}\| \mathcal{M}(f_i) e^{\alpha'|v|^\beta} \|_{L^p_v} = \infty
\end{align*}
for any $0<\alpha'\leq \alpha$. From this observation, the authors further established an instantaneous ill-posedness result for spatially inhomogeneous BGK solutions: for any $n\geq 1$, $\alpha>0$, $0<\beta\leq 2$, $1\leq p,q\leq \infty$, and $\epsilon>0$, there exists initial data $f_0(x,v)$ in the whole space such that
\begin{align*}
    &\|f_0(x,v) e^{\alpha |v|^\beta} \|_{L^p_v(\mathbb{R}^n, L^q_x(\mathbb{R}^n))} < \epsilon,\quad\text{but}\quad \| f(t,x,v) e^{\alpha' |v|^\beta} \|_{L^p_v(\mathbb{R}^n, L^q_x(\mathbb{R}^n))} =\infty\\
    &\left(\text{resp. }\|f_0(x,v) e^{\alpha |v|^\beta} \|_{L^q_x(\mathbb{R}^n, L^p_v(\mathbb{R}^n))} < \epsilon,\quad\text{but}\quad \| f(t,x,v) e^{\alpha' |v|^\beta} \|_{L^q_x(\mathbb{R}^n, L^p_v(\mathbb{R}^n))} =\infty\right)
\end{align*}
for any $0<\alpha'\leq\alpha$ and $t>0$.

Motivated by this observation, we further investigate mechanisms that may produce singular behavior in kinetic models. In particular, the role of boundary effects in the formation of singularities remains largely unexplored, although boundary interactions often generate strong non-equilibrium phenomena. In this paper, we study the BGK model in the half-space with vacuum initial data and an inflow boundary condition and show that even for small boundary data, the exponentially weighted norm of the solution becomes infinite instantaneously.

We consider the spatial domain $\mathbb{R}^n_+$ for $n\geq 2$, so the boundary of the domain is given by $\partial \mathbb{R}^n_+$. By the prescribed inflow boundary condition, particles enter through the boundary. Since the interior is initially vacuum, the distribution within the domain is determined entirely by the boundary inflow. Under free transport, particles propagate along characteristic trajectories.

For a phase point
\begin{align*}
    (x,v)\in \mathbb{R}^n_+\times\mathbb{R}^n,
\end{align*}
we define the backward exit time by
\begin{align*}
    t_b(x,v):=\sup\left\{\tau>0:x-\tau v\in\mathbb{R}^n_+\right\}.
\end{align*}
Whenever $t_b(x,v)\neq \infty$ and $t\geq t_b(x,v)$, the backward characteristic issued from $(t,x,v)$ reaches the boundary at
\begin{align*}
    x_b:=x-vt_b(x,v)
\end{align*}
at time $t-t_b(x,v)$. Thus, if we only consider the free transport effect, the value of the distribution function at \((t,x,v)\) is determined by the inflow boundary data at $(t-t_b, x-vt_b, v)$. This geometric configuration
is illustrated in Figure~\ref{fig:boundary_geometry}.

\input{boundary_geometry}

This boundary-driven setting exhibits several fundamental differences from the classical boundary-free setting. In standard kinetic problems, the evolution is primarily determined by the initial data. In the present problem, however, the evolution is driven entirely by the boundary inflow due to the vacuum initial data. This setting requires a substantial modification of the functional framework. In particular, an anisotropic framework adapted to the normal and tangential velocity structures is better suited to this problem; the usual isotropic velocity-weighted spaces can cause some problems. Accordingly, we redesign the local existence framework for the BGK model to accommodate the present boundary-driven setting.

In this paper, we consider the mild solution of the BGK and Boltzmann equations. For given inflow boundary data $g(t,x_h,v)\geq 0$ defined for $x_h\in\partial \mathbb{R}^n_+$ and $v_z>0$ and vacuum initial data, a function $f(t,x,v)\geq 0$ is called a mild solution of the BGK model for $0\leq t\leq t_0$ if it satisfies
\begin{align}\label{eq:BGK_sol_def}
    f(t,x,v) = e^{-t_b}g(t-t_b, x - vt_b, v)\mathbf{1}_{t\geq t_b} + \int_0^t e^{-(t - \tau)} \mathcal{M}(f)(\tau, x - v(t - \tau), v)\mathbf{1}_{z-v_z(t-\tau)> 0}\,d\tau
\end{align}
for a.e. $0\leq t\leq t_0$, $x\in\mathbb{R}^n_+$, and $v\in\mathbb{R}^n$. Note that $t_b = \frac{z}{v_z}$ for $v_z>0$ when $x\in \mathbb{R}^n_+$, so $t\geq t_b$ is equivalent to $v_z\geq \frac{z}{t}$. We also remark that the first term $e^{-t_b}g(t-t_b, x - vt_b, v)\mathbf{1}_{t\geq t_b}$ plays a role of the initial data $f_0(x,v)$ in the usual solution of the BGK model.

Likewise, $f(t,x,v)$ is a solution of the Boltzmann equation for $0\leq t\leq t_0$ if it satisfies
\begin{align}\label{eq:Boltzmann_sol_def}
    \begin{split}
        f(t,x,v) &= e^{-\int_{t-t_b}^t L(f)(\tau, x-v(t-\tau), v)\,d\tau}g(t-t_b, x - vt_b, v)\mathbf{1}_{t\geq t_b}\\
        & \quad + \int_0^t e^{-\int_\tau^t L(f)(s, x-v(t-s), v)\,ds} Q^+(f,f)(\tau, x - v(t - \tau), v)\mathbf{1}_{z-v_z(t-\tau)> 0}\,d\tau,
    \end{split}
\end{align} 
for a.e. $0\leq t\leq t_0$, $x\in\mathbb{R}^n_+$, and $v\in\mathbb{R}^n$. Here, we define the $Q^+(f_1,f_2)$ and $L(f)$ operators by
\begin{align*}
    Q^+(f_1,f_2) &:= \int_{\mathbb{R}^n} \int_{\mathbb{S}^{n-1}} B(v-v_*,\omega) f_1(v')f_2(v'_*) \, d\omega \, dv_*,\\
    L(f) &:= \int_{\mathbb{R}^n} \int_{\mathbb{S}^{n-1}} B(v-v_*,\omega) f(v_*) \, d\omega \, dv_*.
\end{align*}
The original $Q$ operator is given by $Q(f,f) = Q^+(f,f) - fL(f)$.

We define a function space for BGK solutions $\Omega_{\gamma, t_0}$ as follows.
\begin{definition}\label{def:BGK_sol_space}
For $\gamma>0$, $t_0>0$, and given inflow boundary data $g(t,x_h,v)$, $\Omega_{\gamma, t_0}$ is the collection of functions $f(t,x,v)\geq 0$ for $0\leq t\leq t_0$ such that
\begin{enumerate}
    \item it satisfies
    \begin{align*}
        \esssup_{0< t\leq t_0}\esssup_{x\in\mathbb{R}^n_+}\int_{v\in \mathbb{R}^n}\left(1+\left(|x_h|+\frac{z}{t}\right)^{2\gamma} + \left(v-\frac{z}{t}\hat{z}\right)^2\right)f(t,x,v)\,dv<\infty
    \end{align*}
    and $f(0,x,v) = 0$.
    \item We define the macroscopic fields by
    \begin{align*}
        \rho(t,x)(1, u(t,x), nT(t,x)) &= \int_{\mathbb{R}^n} (1, v, |v-u(t,x)|^2) f(t,x,v)\,dv,\\
        \rho_g(t,x)(1, u_g(t,x), nT_g(t,x)) &= \int_{\mathbb{R}^n} (1, v, |v-u_g(t,x)|^2) e^{-t_b}g(t-t_b, x - vt_b, v)\mathbf{1}_{t\geq t_b}\,dv,
    \end{align*}
    where $(\rho_g, u_g, T_g)$ are the macroscopic fields generated by the damped free transport of the inflow boundary data $e^{-t_b}g(t-t_b, x - vt_b, v)\mathbf{1}_{t\geq t_b}$. Then, there exists a constant $C_u>0$ such that
    \begin{align}\label{eq:X_T_A3}
        \begin{split}
            &|\rho-\rho_g|(t,x)\leq \frac{1}{2}\rho_g(t,x),\\
            &\left|u-\frac{z}{t}\hat{z}\right|(t,x)\leq C_u\sqrt{T_g(t,x)},\\
            &|T-T_g|(t,x)\leq \frac{1}{2}T_g(t,x)
        \end{split}
    \end{align}
    for almost every $0<t\leq t_0$ and $x\in\mathbb{R}^n_+$.
\end{enumerate}
\end{definition}
In the second condition, we do not include $t=0$ since $\rho(0,x) = \rho_g(0,x) = 0$, so $u_g(0,x)$, $T_g(0,x)$, $u(0,x)$ and $T(0,x)$ are not well-defined. To remedy this issue, one can consider $\rho u$ and $\rho T$ instead of $u$ and $T$. In fact, if $f$ satisfies \eqref{eq:BGK_sol_def}, then it should satisfy $f(0,x,v) = 0$.

We explain the conditions in Definition \ref{def:BGK_sol_space}. Because of the boundary effect $\mathbf{1}_{t\geq t_b} = \mathbf{1}_{v_z\geq \frac{z}{t}}$, we can guess that the particles will concentrate near $v_z = \frac{z}{t}$, so we need to account for this effect on the weight and the computation in the macroscopic fields; this is the reason why we use $|x_h|+\frac{z}{t}$ and $v - \frac{z}{t}\hat{z}$ as groups. The first condition is to choose a suitable function space where contraction mapping can be applied; see Section \ref{sec:sol_construction} and Proposition \ref{prop:maxwell} for this. The second condition is to ensure that the macroscopic fields follow the behavior of $(\rho_g, u_g, T_g)$, and to make sure that $\left|u-\frac{z}{t}\hat{z}\right|(t,x)$ is bounded by a constant multiple of $\sqrt{T}(t,x)$ for $0<t\leq t_0$. 

The second condition is a stability condition for the macroscopic fields. When we consider the pure transport equation, the macroscopic fields should be $(\rho_g, u_g, T_g)(t,x)$ (if we ignore the $e^{-t_b}$ term for small $t$), and the second condition ensures that the macroscopic fields are stable in the presence of the relaxation term $\mathcal{M}(f) - f$. When the spatial domain is $\mathbb{R}^n$ or $\mathbb{T}^n$, we can verify that the solutions of the BGK model constructed in the paper by Perthame and Pulvirenti \cite{PP1993} satisfy a version of this condition. For this case, we define the macroscopic fields generated by the damped free transport of the initial data $e^{-t} f_0(x - vt, v)$ by
\begin{align*}
    \rho_0(t,x)(1, u_0(t,x), nT_0(t,x)) &= \int_{\mathbb{R}^n} (1, v, |v-u_0(t,x)|^2) e^{-t} f_0(x - vt, v)\,dv.
\end{align*}
We claim that the macroscopic fields of the solution satisfy
\begin{align}\label{eq:macro_cond_0}
    \begin{split}
        &|\rho-\rho_0|(t,x)\leq \frac{1}{2}\rho_0(t,x),\\
        &|u|(t,x)\leq C_u\sqrt{T_0(t,x)},\\
        &|T-T_0|(t,x)\leq \frac{1}{2}T_0(t,x)
    \end{split}
\end{align}
for a constant $C_u>0$ and $0\leq t\leq t_0$ for some $t_0>0$. Now, we claim that the solutions of the BGK model in  \cite{PP1993} satisfy the condition for some $t_0$ in an \textit{a priori} sense. To prove it, as in \cite{PP1993}, we assume that the initial data $f_0(x,v)$ satisfies
\begin{align*}
    \inf_{0\leq t\leq 1}\inf_x \int_{\mathbb{R}^n} e^{-t}f_0(x-vt,v)\,dv>C_0\quad \text{and}\quad \|(1+|v|^q)f_0(x,v)\|_{L^\infty_{x,v}}<\infty
\end{align*}
for $C_0>0$ and some $q>n+2$. By the standard arguments using \eqref{est:PP} in \cite{PP1993} for example, we have
\begin{align*}
    \|(1+|v|^q)\mathcal{M}(f)(t,x,v)\|_{L^\infty_{x,v}}&\leq C\|(1+|v|^q)f(t,x,v)\|_{L^\infty_{x,v}},\\
    \|(1+|v|^q)f(t,x,v)\|_{L^\infty_{x,v}}&\leq \|(1+|v|^q)f_0(x,v)\|_{L^\infty_{x,v}} e^{Ct}
\end{align*}
for some constant $C$ for $0\leq t\leq 1$. Using these estimates, we can derive
\begin{align*}
    |\rho-\rho_0|(t,x),|u-u_0|(t,x), |T-T_0|(t,x)\leq O(t)
\end{align*}
for $0\leq t\leq 1$. Therefore, we can choose sufficiently small $t_0>0$ depending on $\rho_0, u_0, T_0$, $C_0$, and $\|(1+|v|^q)f_0(x,v)\|_{L^\infty_{x,v}}$ such that the macroscopic fields satisfy the first and third lines of \eqref{eq:macro_cond_0} for $0\leq t\leq t_0$.

To prove the second line, we again recall the inequality in \cite{PP1993}, which states that
\begin{align*}
    \sup_x \frac{\rho|u|^{2n}}{[(T+u^2)T]^{n/2}}\leq C_n \|(1+|v|^n)f(t,x,v)\|_{L^\infty_{x,v}}.
\end{align*}
If $|u|(t,x)\geq \sqrt{T}(t,x)$, then
\begin{align*}
    \frac{|u|^n}{T^{n/2}}&\leq \frac{\rho}{C_0/2} \frac{|u|^n}{T^{n/2}}\left(2^{n/2}\frac{|u|^n}{(T+u^2)^{n/2}}\right)= \frac{2^{1+n/2}}{C_0} \frac{\rho|u|^{2n}}{[(T+u^2)T]^{n/2}}\\
    &\leq \frac{2^{1+n/2}C_n}{C_0}\|(1+|v|^n)f(t,x,v)\|_{L^\infty_{x,v}}.
\end{align*}
Since $T(t,x)\leq \frac{3}{2}T_0(t,x)$, the solution $f$ satisfies the second line of \eqref{eq:macro_cond_0} for $0\leq t\leq t_0$ when
\begin{align*}
    C_u = \sqrt{\frac{3}{2}}\max\left\{1, \left(\frac{2^{1+n/2}C_n}{C_0}\|(1+|v|^n)f(t,x,v)\|_{L^\infty_{t,x,v}}\right)^{1/n}\right\}.
\end{align*}

Unfortunately, the integral of $e^{-t_b} g(t-t_b, x-vt_b, v)\mathbf{1}_{v_z\geq \frac{z}{t}}$ tends to zero as $\frac{z}{t}\rightarrow \infty$. Hence, $\rho_g(t,x)$ does not have a uniform lower bound with respect to $z$ for $t\simeq 0$. Therefore, even if $f$ satisfies the polynomial weighted original inequality of Perthame and Pulvirenti, it may fail to meet the second condition in Definition \ref{def:BGK_sol_space} due to the lack of a lower bound on the density. Proposition \ref{prop:BGK_contradiction} in the Appendix provides such an example even when the boundary data is given by a pure Maxwellian function. For this reason, we explicitly include the bounds on the macroscopic fields $(\rho,u,T)$ in the definition of the solution space. These bounds enable us to use some standard techniques in the BGK model.

The function space for the solution of the Boltzmann equation is simpler than the BGK case, so it will be specified in the main theorem.

\subsection{Notation}Throughout this paper, we use the following notations:

\begin{itemize}
    \item If not stated otherwise, all constants, usually denoted by $C$, are generic and may vary from line to line. 
    The notation $C_\eta$ indicates that the constant depends on $\eta$, not necessarily exclusively.
    
    \item For a vector $a \in \mathbb{R}^n$, we write
    \begin{align*}
        a := (a_h, a_z),
    \end{align*}
    where $a_h \in \mathbb{R}^{n-1}$ denotes the tangential component and $a_z \in \mathbb{R}$ denotes the normal component. If $a$ is a spatial variable $x$, we write $x = (x_h, z)$.
    Accordingly, we define
    \begin{align*}
        |a| := \text{the standard Euclidean norm in } \mathbb{R}^n.
    \end{align*}
    For this notation, $\mathbb{R}^n_+ := \{z>0\}$ and $\partial \mathbb{R}^n_+ := \{z=0\}$.
    
    \item We define the Japanese bracket
    \begin{align*}
        \langle v \rangle := (1 + |v|^2)^{1/2}, \qquad \langle v_h \rangle := (1 + |v_h|^2)^{1/2}.
    \end{align*}
    We define the following weight functions:
    \begin{align*}
        w^h_{\alpha,\beta,\delta}(v) &:= \langle v\rangle^\delta \exp\big( \alpha |v_h|^\beta \big),\\
        w_{\alpha,\beta,\delta}(v) &:= \langle v\rangle^\delta \exp\big( \alpha |v|^\beta \big),
    \end{align*}
    where $\alpha > 0$, $\beta > 0$, and $\delta \in \mathbb{R}$. The $w^h_{\alpha,\beta,\delta}(v)$ weight will be used for the analysis of the BGK model.
    
    \item For a function $f(x,y)$ defined on a product set $\Omega_1\times \Omega_2$, we define the iterated $L^p$ norm by
    \begin{align*}
        \|f\|_{L^p_x(\Omega_1, L^q_y(\Omega_2))}:=\left(\int_{\Omega_1} \left(\int_{\Omega_2} |f(x,y)|^q \,dy\right)^{p/q}\,dx\right)^{1/p}
    \end{align*}
    with the usual modifications when $p=\infty$ or $q=\infty$. If $p=q$, then we usually abbreviate the notation by $L^p_{x,y}(\Omega_1\times \Omega_2)$. If there is no confusion about the integral set, we will omit the integration set. Furthermore, we use the notation $L^\infty_{x,y}(\phi(x,y))$ or $L^p_xL^q_y(\phi(x,y))$ for a weight function $\phi(x,y)$ to mean that $f\in L^p_xL^q_y(\phi(x,y))$ if and only if
    \begin{align*}
        \phi(x,y)f(x,y)\in L^p_x(\Omega_1, L^q_y(\Omega_2)).
    \end{align*}
\end{itemize}

\subsection{Main results} We are now ready to state our main theorem. The following theorem shows that even arbitrarily small inflow boundary data can lead to an instantaneous blow-up of the solution in the exponentially weighted norms for the BGK solution. This demonstrates a strong exponential-type ill-posedness phenomenon driven purely by the boundary.

\begin{theorem}\label{thm:main_BGK}
    Let $n\geq 2$, $1\leq p,q\leq \infty$, $\alpha>0$, $1\leq \beta\leq 2$, $\delta\geq 0$, and assume vacuum initial data $f_0(x,v) = 0$. For any $\epsilon>0$, there exists inflow boundary data $g(t,x_h,v)$ such that
    \begin{align}\label{3_thm:main_BGK_1_cond}
        \begin{split}
            &\sup_{0\leq t<\infty}\left\|w^h_{\alpha,\beta,\delta}e^{-t_b}g(t-t_b,x_h-v_ht_b,v)\mathbf{1}_{t\geq t_b}\right\|_{L_v^p(\mathbb{R}^n, L_x^q(\mathbb{R}^n_+))}<\epsilon,\\
            &\left(\text{resp. }\sup_{0\leq t<\infty} \left\|w^h_{\alpha,\beta,\delta}e^{-t_b}g(t-t_b,x_h-v_ht_b,v)\mathbf{1}_{t\geq t_b}\right\|_{L_x^q(\mathbb{R}^n_+, L_v^p(\mathbb{R}^n))}<\epsilon,\right)
        \end{split}
    \end{align}
    but there exists a unique solution of the BGK model $f(t,x,v)$ in $\Omega_{\frac{1}{1+\beta}, t_0}$ for some $t_0$ under the boundary data $g$ satisfying
    \begin{align}\label{3_thm:main_BGK_1}
        \begin{split}
            &\|w^h_{\alpha',\beta,\delta}f(t,x,v)\|_{L_v^p(\mathbb{R}^n, L_x^q(\mathbb{R}^n_+))}=\infty\\
            &\left(\text{resp. }\|w^h_{\alpha',\beta,\delta}f(t,x,v)\|_{L_x^q(\mathbb{R}^n_+, L_v^p(\mathbb{R}^n))}=\infty\right)
        \end{split}
    \end{align}
    for any $0<\alpha'\leq \alpha$ and $0<t\leq t_0$. Furthermore, for any solution $\tilde{f}(t,x,v)$ in $t\in [0, t^*_0]$ with $t^*_0>t_0$, if $\tilde{f}(t,x,v) = f(t,x,v)$ for $0\leq t\leq t_0$, it still satisfies \eqref{3_thm:main_BGK_1} for $0<t\leq t^*_0$. Here, $\tilde{f}(t,x,v)$ does not need to be in $\Omega_{\frac{1}{1+\beta}, t_0^*}$.
\end{theorem}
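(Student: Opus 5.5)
Following the whole-space strategy of \cite{LPY2024}, the plan has three stages: construct boundary data $g$ whose macroscopic fields are explicitly computable, run a fixed-point argument in $\Omega_{\frac{1}{1+\beta},t_0}$, and then use a lower bound $f\ge e^{-(t-\tau)}\mathcal{M}(f)\mathbf 1$ to produce the blow-up.

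\textbf{Choice of $g$.} We take $g$ to be a superposition of bumps localized in the tangential velocity. Concretely,
\begin{align*}
g(t,x_h,v)=\sum_{i}c_i\,\chi_i(x_h)\,\phi(v_h-a_i e_1)\,\psi(v_z),
\end{align*}
where $|a_i|\to\infty$ and the $\chi_i$ have disjoint supports along the $x_h$-direction. The amplitudes are chosen so that $c_i e^{\alpha|a_i|^\beta}$ is summable and small, which gives \eqref{3_thm:main_BGK_1_cond} for the transported data in either mixed $L^p_vL^q_x$ or $L^q_xL^p_v$ norm. The sum alone is not what produces the blow-up. For that, each bump should be a two-peak configuration in $v_h$ (mass at $\pm a_i$, or a small mass far out together with a bulk near $0$), so that the local temperature at points fed by the $i$-th bump is of order $|a_i|^2$ while the density stays comparable to $c_i$.

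\textbf{Fixed point.} The damped transport term $e^{-t_b}g\mathbf 1_{t\ge t_b}$ has explicit fields $(\rho_g,u_g,T_g)$. Because of the cutoff $v_z\ge z/t$, the normal component of $u_g$ is close to $z/t$. This is why the weights in Definition~\ref{def:BGK_sol_space} use $v-\frac zt\hat z$ and $(|x_h|+z/t)^{2\gamma}$. We then show that the map
\begin{align*}
\Phi(f)=e^{-t_b}g\mathbf 1_{t\ge t_b}+\int_0^t e^{-(t-\tau)}\mathcal M(f)(\tau,x-v(t-\tau),v)\mathbf 1_{z-v_z(t-\tau)>0}\,d\tau
\end{align*}
sends $\Omega_{\gamma,t_0}$ into itself and is a contraction there, using Lipschitz estimates for $\mathcal M$ in terms of $(\rho,u,T)$. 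Those estimates are valid once $\rho\sim\rho_g$, $T\sim T_g$ and $|u-\frac zt\hat z|\lesssim\sqrt{T_g}$, which is exactly what \eqref{eq:X_T_A3} provides. The Duhamel term contributes $O(t)$ relative corrections to the moments, which closes the three inequalities in \eqref{eq:X_T_A3} for small $t_0$.

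I expect the main obstacle to be that $\rho_g$ degenerates as $z/t\to\infty$. The $O(t)$ corrections must therefore be bounded relative to $\rho_g$ and $T_g$ uniformly in $z$ on the singular scale $v_z\approx z/t$. To handle this, I would choose $\psi$ with a sufficiently heavy tail in $v_z$, for instance polynomial, or so that $\psi(v_z)/\int_{v_z\ge s}\psi$ is controlled as $s\to\infty$. This makes the transported moments self-similar in $z/t$. I would also use the exponent $\gamma=\frac1{1+\beta}$ to absorb the growth of the moments in $|x_h|+z/t$. Uniqueness follows from the contraction.

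\textbf{Blow-up.} Since $f\ge0$, we have $f\ge\int_0^te^{-(t-\tau)}\mathcal M(f)\mathbf 1\,d\tau$. On the region fed by the $i$-th bump, \eqref{eq:X_T_A3} gives $\rho\ge\frac12\rho_g\gtrsim c_i$, $T\ge\frac12T_g\gtrsim|a_i|^2$, and $u_h$ bounded. Then
\begin{align*}
\mathcal M(f)\,e^{\alpha'|v_h|^\beta}\gtrsim c_i|a_i|^{-n}\exp\!\left(\alpha'|v_h|^\beta-C|v_h|^2/|a_i|^2\right).
\end{align*}
Evaluating at $|v_h|\sim|a_i|^{2/(2-\beta)}$ for $\beta<2$, or on a large range when $\beta=2$, this exceeds any decay of $c_i$. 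Integrating over $\tau\in(0,t)$ and over a set of $(x,v)$ of measure comparable to $t$ shows that each term's weighted norm grows without bound in $i$. This yields \eqref{3_thm:main_BGK_1} for every $\alpha'\le\alpha$ and every $0<t\le t_0$. For the extension $\tilde f$ and $t>t_0$, the same lower bound restricted to $\tau\in(0,t_0)$, where $\tilde f=f$, gives the blow-up, since the transported contribution is only damped by $e^{-(t-\tau)}$ along characteristics that stay in $z>0$.
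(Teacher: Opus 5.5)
There is a genuine gap. It sits in the step you treat as routine: ``the Duhamel term contributes $O(t)$ relative corrections to the moments.'' That sentence is the entire difficulty of the theorem, and for your data it is false. Condition \eqref{eq:X_T_A3} requires $\rho-\rho_g\le\frac12\rho_g$ pointwise, including where $\rho_g$ is tiny. So the transported density may decay in space no faster than Gaussian spreading at the local temperature allows, and your bumps violate this in the tangential directions.

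Particles from the $i$-th bump have tangential speed within $O(1)$ of $\pm a_i$ and fly for time $t_b\le t$. Take a point at tangential distance $D\gg(|a_i|+1)t$ from $\mathrm{supp}\,\chi_i$, for instance in a gap between bumps, outside $\bigcup_i\mathrm{supp}\,\chi_i$, or transversally to $e_1$ when $n\ge3$. There $\rho_g(t,x)=0$ for compactly supported $\phi$, and $\rho_g(t,x)\lesssim c_ie^{-cD^2/t^2}$ for Gaussian $\phi$.

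Now suppose $f\in\Omega_{\frac{1}{1+\beta},t_0}$. On the region fed by the bump, $\rho\ge\frac12\rho_g\gtrsim c_i$ and $T\ge\frac12T_g\gtrsim|a_i|^2$. These hot local Maxwellians deliver to $x$, through the Duhamel term, a density of order $c_ie^{-CD^2/(|a_i|t)^2}$ up to polynomial factors, which is $\gg\rho_g(t,x)$. This is the tangential analogue of Proposition~\ref{prop:BGK_contradiction}.

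Consequently, for your $g$ no solution lies in $\Omega_{\frac{1}{1+\beta},t_0}$ at all, and shrinking $t_0$ cannot close the fixed point. If instead the two beams do not overlap, then $T_g=O(1)$ and the blow-up step has nothing to work with. Your heavy $v_z$-tails in $\psi$ are in the spirit of Proposition~\ref{prop:suppressed_v_z}, but they only address the normal degeneracy $z/t\to\infty$, not the tangential one your bumps create.

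The missing idea is inflow data whose transported profile is self-consistent with Maxwellian spreading, together with an estimate that proves it. The paper uses the single datum \eqref{eq:boundary_data}. Its cutoff $|v_h|\ge(|x_h-v_ht|+v_z)^{\frac{1}{1+\beta}}$ is an isotropic shell in $v_h$ whose radius varies continuously with position. Because $v_z$ enters through the cutoff rather than a separate factor, the normal and tangential directions merge into $X=|x_h|+\frac{z}{t}$. Proposition~\ref{prop:boundary_int} then gives $\rho_g\approx X^{\frac{n-m-\beta}{1+\beta}}e^{-\alpha X^{\beta/(1+\beta)}}$, $T_g\approx X^{\frac{2}{1+\beta}}$, and $|u_g-\frac{z}{t}\hat z|\lesssim X^{\frac{1}{1+\beta}}$.

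These exponents are matched so that Lemma~\ref{lem:concaten}, namely $X^{\frac{\beta}{1+\beta}}+\frac{y^2}{CX^{2/(1+\beta)}}\ge(X+y)^{\frac{\beta}{1+\beta}}-C'$, applies. It shows that a Maxwellian at temperature $\approx X^{\frac{2}{1+\beta}}$, transported a distance $y$, arrives with weight at most comparable to $\rho_g$ there. This is Proposition~\ref{prop:M[f]_integral}, which yields the $O(t)\rho_g$, $O(t)\rho_g\sqrt{T_g}$ and $O(t)\rho_gT_g$ bounds behind Proposition~\ref{prop:apriori}.

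The same matching dictates $\gamma=\frac{1}{1+\beta}$. The Lipschitz bound of Proposition~\ref{prop:maxwell} uses $T_g\approx 1+(|x_h|+\frac{z}{t})^{2\gamma}$ in both directions. This fails for your data: the bump temperatures $|a_i|^2$ are unrelated to position, and a polynomial $\psi$ gives $T_g\gtrsim(z/t)^2$. Any repair of your construction (tiling $\mathbb{R}^{n-1}$, slowly varying amplitudes, tying $|a_i|$ to $|x_h|^{\frac{1}{1+\beta}}$) essentially rebuilds this continuous profile.

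Once stability is in hand, the blow-up is driven by the spatial growth of $T$ rather than by separate bumps. For example, for $|x_h|\sim|v_h|$ one gets $e^{\alpha'|v_h|^\beta}\mathcal{M}(f)\ge e^{\frac{\alpha'}{8}|v_h|^\beta}$ because $\beta\ge\frac{2\beta}{1+\beta}$. Your treatment of $\tilde f$ by restricting to $\tau\le t_0$ matches the paper's.
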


In contrast to the BGK model case, the following theorem shows that the solution of the Boltzmann equation remains well-behaved near $t=0$ in the exponentially weighted norms in the same half-space inflow setting.
\begin{theorem}\label{thm:main_Boltzmann}
    For the collision kernel given by \eqref{eq:B_condition}, let $n\geq 2$, $\alpha>0$, $\max\left\{0, \frac{2}{n-1}\kappa, \frac{2}{3}(1+\kappa)\right\}<\beta\leq 2$, and $\delta>n+1$. If $n=2$, we additionally assume that $1+\kappa\leq \beta$. Given any $1\leq p,q\leq \infty$, the following holds. For the vacuum initial data $f_0(x,v) = 0$ and an inflow boundary datum $g(t,x_h,v)$ for $v_z>0$ satisfying
    \begin{align}\label{3_thm:main_Boltzmann_1_cond}
        \begin{split}
            w_{\alpha,\beta,\delta}g(t, x_h, v)&\in L^\infty_{t,x_h,v}\left([0, t^*_0]\times \partial \mathbb{R}^n_+\times \mathbb{R}^n\right),\\
            v_z^{1/q}w_{\alpha,\beta,\delta}g(t, x_h, v)&\in L^p_v\left(\mathbb{R}^n, L^q_{t, x_h}([0, t^*_0]\times \partial \mathbb{R}^n_+)\right)
        \end{split}
    \end{align}
    for some $t_0^*>0$, there exists a unique solution of the Boltzmann equation satisfying
    \begin{align*}
        w_{\alpha,\beta,\delta}f(t,x,v)\in L^\infty_{loc, t}\left([0, t_0), L^\infty_{x,v}(\mathbb{R}^n_+\times \mathbb{R}^n)\cap L^p_v\left(\mathbb{R}^n, L^q_x(\mathbb{R}^n_+)\right)\right)
    \end{align*}
    for some $0<t_0\leq t^*_0$. The $t_0$ depends on $\alpha$, $\beta$, $\delta$, $n$, $\kappa$, and $\|w_{\alpha,\beta,\delta}g(t, x_h, v)\|_{L^\infty_{t,x_h,v}\left([0, t^*_0]\times \partial \mathbb{R}^n_+\times \mathbb{R}^n\right)}$.
\end{theorem}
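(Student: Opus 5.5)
We would prove Theorem \ref{thm:main_Boltzmann} with a contraction mapping argument applied to the mild formulation \eqref{eq:Boltzmann_sol_def}, using the isotropic weight $w:=w_{\alpha,\beta,\delta}$. Because the initial data are vacuum and the loss term $L(f)$ is nonnegative, we can work with the weighted quantity $h:=wf$ and the map
\begin{align*}
    \Phi(f)(t,x,v) = e^{-\int_{t-t_b}^t L(f)}g(t-t_b,x-vt_b,v)\mathbf{1}_{t\geq t_b} + \int_0^t e^{-\int_\tau^t L(f)} Q^+(f,f)(\tau,x-v(t-\tau),v)\mathbf{1}_{z-v_z(t-\tau)>0}\,d\tau .
\end{align*}
The solution space is the ball $X_{t_0}:=\{f\geq 0: \sup_{0\leq t\leq t_0}(\|wf(t)\|_{L^\infty_{x,v}}+\|wf(t)\|_{L^p_vL^q_x})\leq 2M\}$. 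Here $M$ is a constant multiple of the two norms of $g$ in \eqref{3_thm:main_Boltzmann_1_cond}.

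The boundary term is handled by a change of variables. Its $L^\infty$ weighted bound follows directly from $e^{-\int L}\leq 1$. For the $L^p_vL^q_x$ bound, at fixed $v$ we substitute $(x_h,z)\mapsto(x_h-v_ht_b,\,t-t_b)$, i.e. $s=t-z/v_z$. This gives $dz=v_z\,ds$, so
\[
\|g(t-t_b,x_h-v_ht_b,v)\mathbf{1}_{t\geq t_b}\|_{L^q_x}\leq \|v_z^{1/q}g(\cdot,\cdot,v)\|_{L^q_{s,x_h}([0,t_0]\times\partial\mathbb{R}^n_+)},
\]
which is exactly why the assumption $v_z^{1/q}wg$ appears.

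The core of the proof is the weighted gain estimate
\[
w(v)\,Q^+(f_1,f_2)(v)\leq C\,\nu(v)\,\|wf_1\|_{L^\infty_v}\|wf_2\|_{L^\infty_v}, \qquad \nu(v)\lesssim\langle v\rangle^{\kappa_+},
\]
together with an integrable version in which one factor is measured in $L^p_vL^q_x$ and the other in $L^\infty_{x,v}$. To prove it we use energy conservation $|v'|^2+|v'_*|^2=|v|^2+|v_*|^2$. For $1\leq\beta\leq2$ (after adjusting for $\beta<1$ with an extra factor) this gives $|v|^\beta\leq |v'|^\beta+|v'_*|^\beta$, hence $e^{\alpha|v|^\beta}\leq e^{\alpha|v'|^\beta}e^{\alpha|v'_*|^\beta}$. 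The polynomial part is treated similarly: $\langle v\rangle^\delta\lesssim\langle v'\rangle^\delta\langle v'_*\rangle^\delta\cdot$(a smaller power). What remains is to bound integrals of the type
\[
\int\!\!\int |v-v_*|^\kappa |\cos\theta|\,\langle v'\rangle^{-\delta}\langle v'_*\rangle^{-\delta}\,e^{-\alpha(|v'|^\beta+|v'_*|^\beta-|v|^\beta)}\,d\omega\,dv_* .
\]
We expect this step to be the main obstacle. The exponential gap $|v'|^\beta+|v'_*|^\beta-|v|^\beta$ degenerates when one post-collisional velocity is small. Controlling it uniformly requires a Carleman-type representation (integrating over $v'$ and the hyperplane through $v'$ orthogonal to $v-v'$). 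On those $(n-1)$-dimensional hyperplanes the exponential decay in the tangential directions yields a gain of order $\langle v\rangle^{-(n-1)\beta/2}$, or similar. Balancing this gain against $|v-v_*|^\kappa$ is precisely what should produce the conditions $\beta>\frac{2}{n-1}\kappa$ and $\beta>\frac23(1+\kappa)$, as well as the extra condition $1+\kappa\leq\beta$ when $n=2$. The goal is to show that the net factor $\nu(v)$ is at most comparable to $\langle v\rangle^{\kappa}$, or is bounded. The factor $\delta>n+1$ ensures that the remaining $v_*$ integrals converge.

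Given this estimate, we close the argument. Since $L(f)\geq0$, we bound
\[
\left|\int_0^t e^{-\int L}\,wQ^+\,d\tau\right|\leq t\,C\,\sup\nu\cdot(2M)^2
\]
when $\nu$ is bounded. If $\nu$ grows, we instead absorb the growth by comparing it with $L(f)$; but $L(f)$ carries no useful lower bound from vacuum, so we aim for bounded $\nu$ under the stated conditions on $\beta$. Choosing $t_0$ small in terms of $M$ then makes $\Phi$ map $X_{t_0}$ into itself. For the contraction, we use the bilinearity of $Q^+$ and the inequality $|e^{-a}-e^{-b}|\leq|a-b|$ for $a,b\geq0$, together with $|L(f_1)-L(f_2)|\lesssim\langle v\rangle^{\kappa_+}\|w(f_1-f_2)\|_{L^\infty}$. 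A growing $\langle v\rangle^{\kappa}$ can be absorbed by the spare polynomial or exponential weight, since $\beta>0$ allows us to lose a small piece of $\alpha$ in the difference estimate. We therefore contract in a slightly weaker norm, or alternatively run the contraction in the $L^\infty$ norm alone and recover the $L^p_vL^q_x$ bound a posteriori. Uniqueness follows from the same difference estimate, and $t_0$ depends only on the listed parameters and on $\|wg\|_{L^\infty}$.
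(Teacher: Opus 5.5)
Your architecture matches the paper's. You use the mild formulation with $e^{-\int L}\le 1$ from nonnegativity, and the trace change of variables (Lemma~\ref{lem:trace}) for the boundary term. You use a weighted gain bound with \emph{bounded} $\nu$, and you are right that vacuum gives no lower bound on $L(f)$ to absorb growth. You weaken the weight in the difference estimate when $\kappa>0$, and you use a mixed $L^\infty_{x,v}$/$L^p_vL^q_x$ gain bound for the integrable norm.

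The step you single out as the main obstacle is not proved in the paper. It is Lemma~\ref{lem:LPY_lem}, i.e.\ Lemmas 5.5 and 5.10 of \cite{LPY2024}, transferred to $\mathbb{R}^n_+$ by extending $f$ by zero to $z<0$. The hypotheses $\beta>\max\{0,\frac{2}{n-1}\kappa,\frac{2}{3}(1+\kappa)\}$, $\delta>n+1$, and $1+\kappa\le\beta$ for $n=2$ are exactly the hypotheses of that lemma. Your argument is complete once you cite it. The Carleman-hyperplane sketch, with its guessed gain $\langle v\rangle^{-(n-1)\beta/2}$, is a plan rather than a proof, and it is not needed. The caveat about $\beta<1$ is also unnecessary, since $|v|^\beta\le(|v'|^2+|v'_*|^2)^{\beta/2}\le|v'|^\beta+|v'_*|^\beta$ for every $0<\beta\le 2$.

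The closing of the argument differs in minor ways. Instead of a small-time ball, Proposition~\ref{prop:boltzmann_existence} proceeds as follows:
\begin{itemize}
\item It uses the Riccati-type bound $\|wf(t)\|_{L^\infty_{x,v}}\le(\|wg\|_{L^\infty}^{-1}-C_{gain,1}t)^{-1}$ and Picard iteration from $f^0=0$.
\item It contracts in $w_{\alpha,\beta,\delta-\kappa}$ for $\kappa>0$. The gain lemma still applies there because $\delta-\kappa>n$ when $\delta>n+1$.
\item For $\kappa\le 0$ no weight loss is needed, since $\int|v-v_*|^\kappa w^{-1}(v_*)\,dv_*$ is uniformly bounded.
\item It continues the solution up to $t_0=(C_{gain,1}\|wg\|_{L^\infty})^{-1}$.
\end{itemize}

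The $L^p_vL^q_x$ bound is then recovered a posteriori by Gr\"onwall (Proposition~\ref{prop:boltzmann_wellposed}). That estimate is linear in $\|wf\|_{L^p_vL^q_x}$ once $\|wf\|_{L^\infty_{x,v}}$ is controlled. This is your second option, and it is the one that matches the statement. With a single radius $M$ built from both norms of $g$, your $t_0$ would depend on $\|v_z^{1/q}wg\|_{L^p_vL^q_{t,x_h}}$, which the theorem does not allow; separate radii for the two norms also fix this.

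Finally, the theorem asserts uniqueness in the whole class, not just in your ball. The paper obtains it by applying Gr\"onwall to the difference estimate, with constants depending on the norms of both solutions. That is how your ``same difference estimate'' should be used.
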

\begin{remark}
    For a function $\phi(t,x_h,v)$ for $x_h\in\partial \mathbb{R}^n_+$ and $\phi = 0$ for $v_z\leq 0$, we have
    \begin{align*}
        \|\phi(t-t_b, x_h-v_ht_b, v)\mathbf{1}_{t\geq t_b}\|_{L^p_v(\mathbb{R}^n, L^q_x(\mathbb{R}^n_+))}=\|v_z^{1/q} \phi(\tau,x_h,v)\|_{L^p_v(\mathbb{R}^n, L^q_{\tau,x_h}([0,t]\times \partial \mathbb{R}^n_+))}
    \end{align*}
    for any $1\leq p,q\leq \infty$. Therefore, the norm in \eqref{3_thm:main_Boltzmann_1_cond} corresponds to the $L_v^p(\mathbb{R}^n, L_x^q(\mathbb{R}^n_+))$ norm in \eqref{3_thm:main_BGK_1_cond} except the weight function $w^h_{\alpha,\beta,\delta}$ and $w_{\alpha,\beta,\delta}$.
\end{remark}
\begin{remark}
    For technical reasons, we consider the $L^p_v\left(\mathbb{R}^n, L^q_x(\mathbb{R}^n_+)\right)$ norm for the Boltzmann equation, which corresponds to the $L^p_vL^q_x$ case of Theorem \ref{thm:main_BGK}. If we use Minkowski's integral inequality for $q\geq p$, we can check $w_{\alpha,\beta,\delta}f(t,x,v)\in L^\infty_{loc, t}\left([0, t_0), L^q_x\left(\mathbb{R}^n_+, L^p_v(\mathbb{R}^n)\right)\right)$ under the assumption \eqref{3_thm:main_Boltzmann_1_cond}.
\end{remark}
Theorem~\ref{thm:main_BGK} demonstrates that an instantaneous exponential norm blow-up of the solution can occur even though the inflow boundary data is well-defined in the exponential norm at all times. In contrast to Theorem~\ref{thm:main_BGK}, the Boltzmann solution remains locally bounded in the corresponding isotropically weighted spaces by Theorem~\ref{thm:main_Boltzmann}. We should note that the exponential weight differs between models: we use the $w^h_{\alpha,\beta,\delta}$ weight for the BGK model, whereas the $w_{\alpha,\beta,\delta}$ weight is used for the Boltzmann equation. We will discuss this difference at the end of this section and in the Appendix.

\subsection{Literature} The Boltzmann equation is a fundamental equation in the kinetic theory for describing the dynamics of dilute gases, and its mathematical analysis has been widely studied in the literature. Classical works on the existence and qualitative theory of solutions include 
\cite{Carleman1933,Grad1958,Ukai1974,KS1978,IS1984}. Later developments established global existence results for large initial data, most notably the renormalized solution theory of DiPerna and Lions \cite{DL1989}. In the regime near a global Maxwellian, a well-developed theory has been established, providing global existence and stability results \cite{GuoVPB, GuoVMB, StrainJAMS}. For more results including large amplitude problem and boundary value problems, we refer to \cite{GDCB2010, KimLee, KimLeeNonconvex, KoKimLeeTorus, KLP2022, DHWY2017,DKL2023,DW2019,KS2025} and the references therein. For a comprehensive overview of the mathematical aspects of the Boltzmann equation, we refer to the review article \cite{Villani2002}. These results provide a broad analytical framework for the kinetic theory of dilute gases and serve as a fundamental point of comparison for simplified kinetic models.

Among such simplified models, the BGK model occupies a central position due to its relative simplicity and its wide applicability in physics and engineering. 
As mentioned above, a key issue in the mathematical analysis of the BGK model is the control of the local Maxwellian in suitable function spaces. 
In particular, the polynomial weighted estimate of Perthame and Pulvirenti \cite{PP1993} has played a fundamental role in the well-posedness theory of the BGK model; see \eqref{est:PP}. 
Building upon this estimate, numerous works have established existence and uniqueness results for the BGK model in various settings \cite{CZ2016,Mischler1996,PY2016,PY2019,SY2022,SY2023,Son2025,Yun2015-2,WZ2012,Zhang2007,Zhang2013,Zhang2010}. Furthermore, discrete versions of similar estimates also play an essential role in the convergence analysis of numerical schemes for BGK-type equations \cite{BCRY2022,Bellouquid2003,Issautier1996,RSY2012,RY2018}. The polynomial weighted estimate of Perthame and Pulvirenti \cite{PP1993} can be avoided in certain special settings. Typical examples include the near-equilibrium framework \cite{BKLY2023,Yun2010,Yun2015-1,Yun2019} and stationary boundary problems such as the slab problem \cite{BY2020,BY2016,BY2024,HY2019}. More recently, local well-posedness near a global Maxwellian for the BGK equation in a bounded domain with inflow boundary conditions was established in a polynomially weighted $L^\infty$ space in \cite{LZ2026}.
Consequently, much of the mathematical theory of the BGK model relies on the observation that the relaxation operator preserves polynomial velocity decay in an appropriate sense.

Although the well-posedness theory of kinetic equations has been extensively studied, much less is known about possible ill-posedness phenomena. Understanding such instability mechanisms is important for clarifying the limitations of kinetic models and the functional settings in which they are mathematically valid. Several works have investigated ill-posedness behavior in various kinetic equations. In particular, instability and blow-up phenomena for the gain-term-only Boltzmann equation have been studied in \cite{IS1987,ACI2004}. More recently, the well-posedness and ill-posedness of the Boltzmann equation in Sobolev spaces were investigated in \cite{CH2024,CSZ2024}, where the authors identified functional settings in which the equation becomes ill-posed.

A closely related line of research concerns the creation and propagation of velocity moments. For the spatially homogeneous Boltzmann equation, polynomial and exponential moment creation and propagation have been extensively studied under various collision kernel assumptions based on Povzner-type estimates \cite{Povzner1962,Elmroth1983,Desvillettes1993,Bobylev1997,GPV2009,Mouhot2006,AG2008,ACGM2013}. Recently, exponential moment creation and propagation in angular non-cutoff settings have attracted significant attention \cite{LM2012,TAGP2018,GPT2019,Fournier2021,CHJ2024}. Similar propagation results are also available for the Landau equation \cite{Villani1998,DV2000,Desvillettes2015,GS2025}. These results show that if the initial data possess exponential decay in the velocity variable, then the solution preserves some exponential moments for all positive times, which can be smaller than the original exponential moment. In \cite{LPY2024}, the authors showed that the solution maintains the same exponential moment of the initial data locally in time.

In contrast to the Boltzmann and Landau equations, where the symmetry structure of the collision operators generates the moment propagation properties, the mechanism of the moment propagation in the BGK model is fundamentally different. For the polynomial moment propagation, it is developed using the interpolation argument for the local Maxwellian, and we refer to the papers dealing with well-posedness theory above, such as \cite{Perthame1989}. The question around the propagation of the exponential moments was recently investigated in \cite{LPY2024}, where it was shown that the BGK model can exhibit ill-posedness in exponentially weighted spaces. Motivated by this result, in the present work, we further investigate this ill-posedness mechanism from the viewpoint of boundary effects. More precisely, we consider the BGK model in a half-space domain with inflow boundary conditions and show that boundary interactions alone can trigger an instantaneous blow-up of the solution's exponential moment. Also, we check that the Boltzmann solution preserves the same exponential moment locally in time.\\

\subsection{Proof strategy and main difficulties} This work extends the previous paper \cite{LPY2024} to the half-space inflow boundary condition with vacuum initial data. For the BGK model for the whole space setting, for example, the authors considered initial data schematically of the form
\begin{align}\label{eq:init_whole_space}
    f_0(x,v) = e^{-\alpha |v|^2}\mathbf{1}_{|v|\geq |x|^{\frac{1}{3}}}
\end{align}
and constructed a unique local-in-time solution exhibiting instantaneous ill-posedness with respect to the exponential weights $e^{\alpha'|v|^2}$ for every $0<\alpha'\leq\alpha$. As the density $\rho(t,x)$ vanishes as $|x|\rightarrow \infty$ while the local temperature $T(t,x)$ blows up, the existence and uniqueness of the solution is very non-trivial, as we can not use the standard estimates as in \cite{Perthame1989}. The main idea for constructing the solution was to tightly control the integral of the local Maxwellian term to ensure the stability of the macroscopic fields and to employ a contraction mapping argument. The ill-posedness result then follows from the spatial growth rate of the local temperature.

For our boundary problem, due to the vacuum initial data and the nature of the boundary effect, the original difficulty intertwines with the singularity generated by the boundary effect, making the problem much harder. Compared to the whole space problem, the role of the $e^{-t} f_0(x-vt, v)$ in the definition of the mild solution is replaced by $e^{-t_b} g(t-t_b, x-vt_b, v)\mathbf{1}_{t\geq t_b}$ as in \eqref{eq:BGK_sol_def}, where the cutoff function is equivalent to $v_z\geq\frac{z}{t}$. If we directly try to copy \eqref{eq:init_whole_space} in choosing $g(t,x_h,v)$, then one natural candidate would be
\begin{align*}
    g(t,x_h,v) = e^{-|v|^2}\mathbf{1}_{|v|\geq |x_h|^{\frac{1}{3}}}\mathbf{1}_{v_z\geq 0}
\end{align*}
as there is no way to include the $z$ variable in the boundary data. Then, we have
\begin{align*}
    \rho_g(t,x) = \int_{\mathbb{R}^n} e^{-t_b} g(t-t_b, x-vt_b, v)\mathbf{1}_{t\geq t_b}\,dv = \int_{\mathbb{R}^n} e^{-t_b} e^{-|v|^2}\mathbf{1}_{|v|\geq |x_h-v_ht_b|^{\frac{1}{3}}}\mathbf{1}_{v_z\geq \frac{z}{t}}\,dv.
\end{align*}
If $|x_h|\rightarrow \infty$ or $\frac{z}{t}\rightarrow \infty$, the density $\rho_g(t,x)$ goes to $0$, and there is a possibility that the macroscopic fields become unstable. In fact, the second one is trickier since, for instance, it is singular as $t\rightarrow 0$; see Proposition~\ref{prop:BGK_contradiction} in the Appendix for an example of the instantaneous macroscopic instability even when we choose the inflow boundary data $g(t,x_h,v) = e^{-|v|^2}$, which is regarded as the natural function in the kinetic theory. 

To overcome the instability of the macroscopic fields, we choose special boundary data $g$, whose prototype is given by
\begin{align*}
    g(t,x_h,v) = e^{-|v_h|^2}\mathbf{1}_{|v_h|\geq (|x_h| + v_z)^{1/3}}\mathbf{1}_{v_z\geq 0}.
\end{align*}
We also adapt the analysis to account for the boundary effects. The main idea is to suppress the $v_z$ directional component in the exponential function and include it in the cutoff function. We also need to separate the horizontal component $(x_h,v_h)$ and vertical component $(z, v_z)$ in the analysis and treat the $\frac{z}{t}$ term generated from the boundary effect very carefully. By the cutoff function, replacing $v_z = \frac{z}{t}$, we regard $|x_h| + \frac{z}{t}$ as the group of spatial variables behaving like $|x|$. For the boundary data for the general weight $w^h_{\alpha,\beta,\delta}$, we refer to \eqref{eq:boundary_data}. In Section \ref{sec:Macro_quan}, we will demonstrate that this modification gives the stability of the macroscopic fields of the solution. After the estimate, in Sections \ref{sec:sol_construction} and \ref{sec:Ill_posed}, we construct a unique solution and establish the ill-posedness result using the blow-up of the temperature as $|x_h| + \frac{z}{t}\rightarrow \infty$. In fact, these ideas are reflected in the conditions of the solution space for the BGK model in Definition~\ref{def:BGK_sol_space}. Proposition \ref{prop:suppressed_v_z} also shows that suppressing the $v_z$ component for the exponential decay in the general boundary data helps to construct a unique solution in the general case. An additional novelty of this work is that we simplify the proof of the estimate of the local Maxwellian operator (Proposition \ref{prop:M[f]_integral}) using an elementary inequality in Lemma \ref{lem:concaten}, compared to the one in \cite{LPY2024}.

For the Boltzmann equation, in contrast to the BGK model, we can import the $Q^+(f,f)$ estimate for the weight function $w_{\alpha,\beta,\delta}(v)$ in \cite{LPY2024} without considering the singular boundary effect since we do not need to compute the macroscopic fields. Using the $Q^+$ estimate, we establish the local well-posedness result for the exponentially weighted space. Interestingly, we can not use the weight function $w^h_{\alpha,\beta,\delta}$ since the proof requires the symmetry property of collision velocities such as $|v|^2 + |v_*|^2 = |v'|^2 + |v'_*|^2$, and $|v_h|$ lacks $v_z$ information. It suggests that the two model equations experience the boundary data in a different way for the vacuum initial data. We will give further discussion about this problem in the Appendix.\\

{\bf Organization.} In Section~\ref{sec:technical_lemmas}, we check some well-definedness problems such as the definition of the mild solution and the boundary trace problem, and then we establish several technical lemmas which will be used throughout the paper. In Section~\ref{sec:Macro_quan}, we verify that the macroscopic fields of the solution are stable near $(\rho_g, u_g, T_g)$ in an \textit{a priori} sense. With this \textit{a priori} estimate, we prove the local existence and uniqueness of solutions in Section~\ref{sec:sol_construction}. Subsequently, in Section \ref{sec:Ill_posed}, we establish the main ill-posedness result in the exponential class for the constructed solution. Finally, in Section~\ref{sec:Well_Bol}, we discuss the local well-posedness of the Boltzmann equation. In the Appendix, we show that the macroscopic fields of the solution can not be stable if we choose the inflow boundary data by $e^{-|v|^2}$ and prove that the macroscopic fields become stable if we suppress the $v_z$ component in the exponent of a decaying function like $e^{-|v_h|^2 - |v_z|}$.

\section{Technical lemmas}\label{sec:technical_lemmas}
Before we start the analysis for the main results, we investigate the definition of the solution and the trace norm of the inflow boundary data $g$.

In this paper, we consider the transport equation of type
\begin{align}\label{eq:trans_eq}
    \partial_t f + v\cdot \nabla_x f +f(t,x,v)A(t,x,v)= B(t,x,v)
\end{align}
for nonnegative functions $A$ and $B$, which may also depend on $f$ like $\mathcal{M}(f)$ or $Q^\pm(f,f)$. Our spatial domain is $\mathbb{R}^n_+$, and we consider the initial data $f_0(x,v)$ and inflow boundary data $g(t,x_h,v)$ on $\partial \mathbb{R}^n_+ = \{z=0\}$. If $f$, $A$, and $B$ are sufficiently regular, then integrating the transport equation \eqref{eq:trans_eq} along the characteristic trajectories yields
\begin{align}\label{eq:mild_def}
    \begin{split}
        f(t,x,v) &= e^{-\int_0^{t_b(x,v)} A(t-\tau, x-v\tau, v)\,d\tau}g(t-t_b(x,v), x-vt_b, v)\mathbf{1}_{t_b(x,v)\leq t} \\
        &\quad + e^{-\int_0^t A(t-\tau, x-v\tau, v)\,d\tau}f_0(x-vt, v)\mathbf{1}_{t_b(x,v)>t}\\
        &\quad + \int_0^t e^{-\int_0^\tau A(t-s, x-vs, v)\,ds}B(t-\tau, x-v\tau, v)\mathbf{1}_{z-v_z\tau> 0}\,d\tau.
    \end{split}
\end{align}
We call $f$ a mild solution of \eqref{eq:trans_eq} if it satisfies \eqref{eq:mild_def} for a.e. $(t,x,v)\in[0, t_0]\times \mathbb{R}^n_+\times\mathbb{R}^n$ for some $0<t_0\leq +\infty$. Note that we use a slightly different time-integral convention from \eqref{eq:BGK_sol_def} and \eqref{eq:Boltzmann_sol_def} to simplify the proof of the following lemma.

We can also consider another form of the transport equation by
\begin{align*}
    \partial_t f + v\cdot \nabla_x f= B(t,x,v) - f(t,x,v)A(t,x,v),
\end{align*}
and write the solution $f$ as
\begin{align}\label{eq:mild_def_2}
    \begin{split}
        f(t,x,v) &= g(t-t_b(x,v), x-vt_b, v)\mathbf{1}_{t_b(x,v)\leq t}+ f_0(x-vt, v)\mathbf{1}_{t_b(x,v)>t}\\
        &\quad + \int_0^t \left(B(t-\tau, x-v\tau, v) - f A(t-\tau, x-v\tau, v)\right)\mathbf{1}_{z-v_z\tau> 0}\,d\tau.
    \end{split}
\end{align}
Even though \eqref{eq:mild_def} and \eqref{eq:mild_def_2} have different forms, the following lemma shows that the two forms are equivalent when $A(t,x,v)$ and $B(t,x,v)$ are locally integrable.

\begin{lemma}\label{lem:mild_sol_equiv}
    If $A(t,x,v),B(t,x,v)\in L^1_{loc}([0,t_0]\times \overline{\mathbb{R}^n_+}\times \mathbb{R}_v^n)$, then \eqref{eq:mild_def} and \eqref{eq:mild_def_2} are equivalent, where $\overline{\mathbb{R}^n_+}$ is the closure of $\mathbb{R}^n_+$.
\end{lemma}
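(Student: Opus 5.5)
The plan is to reduce everything to a one-dimensional statement along each characteristic line and to use the integrating factor there. Fix $(x,v)$ with $x\in\mathbb{R}^n_+$ and consider the backward characteristic $s\mapsto (t-s, x-vs)$ for $0\le s\le \min\{t,t_b(x,v)\}$. Set
\begin{align*}
    a(s) := A(t-s,x-vs,v),\qquad b(s):=B(t-s,x-vs,v),\qquad F(s):=f(t-s,x-vs,v).
\end{align*}
Local integrability of $A,B$ on $[0,t_0]\times\overline{\mathbb{R}^n_+}\times\mathbb{R}^n$ passes to the characteristics by Fubini applied to the measure-preserving change of variables $(t,x,v)\mapsto(t-s,x-vs,v)$. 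Hence for a.e. base point $(t,x,v)$ the functions $a$ and $b$ are in $L^1$ on the relevant $s$-interval. On such lines both \eqref{eq:mild_def} and \eqref{eq:mild_def_2} become scalar integral identities. Here the indicator $\mathbf{1}_{z-v_z\tau>0}$ simply restricts $\tau$ to $[0,\min\{t,t_b\})$, and the "initial" value is $F_{\rm end}$, equal to $g(t-t_b,x-vt_b,v)$ if $t_b\le t$ and to $f_0(x-vt,v)$ otherwise.

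One point must be settled first: both formulas have to hold not only at the base point but at every point of the same line. I would observe that \eqref{eq:mild_def} and \eqref{eq:mild_def_2}, if they hold a.e. in $(t,x,v)$, hold for a.e. line and a.e. $s$ along it. Applying a formula at $(t-s,x-vs,v)$ reproduces it along the sub-segment $[s,\min\{t,t_b\}]$, since $t_b(x-vs,v)=t_b(x,v)-s$ and the endpoint is the same. In one-dimensional form the two formulas then read, for a.e. $s$:
\begin{align*}
    \text{(I)}\quad & F(s) = e^{-\int_s^{S}a}F_{\rm end} + \int_s^{S} e^{-\int_s^{\sigma}a}\,b(\sigma)\,d\sigma,\\
    \text{(II)}\quad & F(s) = F_{\rm end} + \int_s^{S}\bigl(b(\sigma)-a(\sigma)F(\sigma)\bigr)\,d\sigma,
\end{align*}
where $S=\min\{t,t_b\}$.

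For (I)$\Rightarrow$(II), the right side of (I) is absolutely continuous in $s$, because $a,b\in L^1$. Its derivative is a.e. $aF-b$, which is the differential form of (II); integrating from $s$ to $S$ gives (II). For this I need $aF\in L^1$ along the line. That holds since (I) shows $|F|\le |F_{\rm end}|+\int|b|$ is bounded on the segment.

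For (II)$\Rightarrow$(I), here is the main obstacle. (II) makes sense only if $aF\in L^1$ on the line, which I would take as part of the meaning of \eqref{eq:mild_def_2}. Granting that, $F$ coincides a.e. with an absolutely continuous function solving $F'=aF-b$. I would multiply by the integrating factor $e^{-\int_s^S a}$, which is absolutely continuous and bounded because $a\ge0$ is in $L^1$. The product rule for absolutely continuous functions then gives $\frac{d}{ds}\bigl(e^{-\int_s^S a}F\bigr)=-e^{-\int_s^S a}b$. Integrating from $s$ to $S$ and dividing by the factor recovers (I), and uniqueness of absolutely continuous solutions of the linear ODE is automatic from this computation.

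Finally, the a.e. equalities on lines are reassembled into a.e. equality in $(t,x,v)$ by Fubini again.
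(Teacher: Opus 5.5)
Your argument is essentially the paper's proof. The paper also uses a measure-preserving change of variables to show that \eqref{eq:mild_def} holds at a.e.\ point of a.e.\ characteristic. Its shift $w$ is your $-s$, and its identity that $t_b(x+vw,v)-w$ is constant is your $t_b(x-vs,v)=t_b(x,v)-s$. It then differentiates along the characteristic and integrates. Your explicit integrating-factor treatment of the converse makes precise what the paper leaves as ``reversing the argument.''

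There is one sign slip to fix in (II)$\Rightarrow$(I). Let $H$ denote the absolutely continuous representative of $F$, so that $H'=aH-b$ a.e. With $\mu(s)=e^{-\int_s^S a}$ you have $\mu'=a\mu$, hence $(\mu H)'=2a\mu H-\mu b$, not $-\mu b$. The correct factor is $\nu(s)=e^{\int_s^S a}$ (equivalently $e^{-\int_0^s a}$). It satisfies $\nu'=-a\nu$ and $(\nu H)'=-\nu b$, and integrating from $s$ to $S$ and dividing by $\nu(s)$ gives exactly (I). Note that $\nu$ is bounded by $e^{\int_0^S a}$ because $a\in L^1$, not because $a\ge 0$.
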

\begin{proof}
    We prove the direction $\eqref{eq:mild_def}\Rightarrow \eqref{eq:mild_def_2}$, and the opposite direction follows by reversing the proof. We first extend the $f(t,x,v)$ on $\mathbb{R}\times \mathbb{R}^n\times \mathbb{R}^n$ by choosing $f(t,x,v) = 0$ if $(t,x,v)\not\in [0,t_0]\times\mathbb{R}^n_+\times\mathbb{R}^n$. We introduce an auxiliary variable $w\in\mathbb{R}$ and consider $f(t+w, x+vw, v)$. From \eqref{eq:mild_def}, we obtain
    \begin{align}\label{eq:mild_def_1_1}
        \begin{split}
            &f(t+w, x+vw, v) \\
            &= e^{-\int_0^{t_b(x+vw,v)} A(t-(\tau-w), x-v(\tau-w), v)\,d\tau}g(t+w-t_b(x+vw,v), x+vw-vt_b(x+vw,v), v)\mathbf{1}_{t_b(x+vw,v)\leq t+w}\\
            &\quad + e^{-\int_0^{t+w} A(t-(\tau-w), x-v(\tau-w), v)\,d\tau}f(0, x-vt, v)\mathbf{1}_{t_b(x+vw,v)>t+w}\\
            &\quad + \int_0^{t+w} e^{-\int_0^\tau A(t-(s-w), x-v(s-w), v)\,ds}B(t-(\tau-w), x-v(\tau-w), v)\mathbf{1}_{z-v_z(\tau-w)>0}\,d\tau
        \end{split}
    \end{align}
    when $0\leq t+w\leq t_0$ and $z+v_zw>0$, while $f(t+w, x+vw, v) = 0$ otherwise. We note that \eqref{eq:mild_def} and \eqref{eq:mild_def_1_1} are equivalent in a.e. sense as
    \begin{align}\label{eq:a.e._equiv}
        \begin{split}
            &\iint_{\mathbb{R}\times\mathbb{R}\times \mathbb{R}^n\times \mathbb{R}^n}\left|f(t+w, x+vw, v) - (\text{RHS of \eqref{eq:mild_def_1_1}})\right|\mathbf{1}_{0<t+w<t_0}\mathbf{1}_{z+v_zw>0}\,dtdwdxdv\\
            &=\iint_{\mathbb{R}\times\mathbb{R}\times \mathbb{R}^n\times \mathbb{R}^n}\left|f(t, x, v) - (\text{RHS of \eqref{eq:mild_def}})\right|\mathbf{1}_{0<t<t_0}\mathbf{1}_{z>0}\,dtdwdxdv
        \end{split}
    \end{align}
    by the change of variables.

    For technical use, we claim that
    \begin{align*}
        \dv{w} \left(t_b(x+vw, v) - w\right) = 0
    \end{align*}
    for $v$ and $w$ satisfying $z+v_zw>0$ and $v_z>0$, so $t_b(x+vw, v)\neq \infty$. Indeed, for any small $h$ making $z+v_z(w+h)>0$, we have
    \begin{align*}
        t_b(x+v(w+h), v) - (w + h) = t_b(x+vw, v) + h - (w + h) = t_b(x+vw, v) - w,
    \end{align*}
    so it is a constant function.

    Using this identity, we rewrite \eqref{eq:mild_def_1_1} as
    \begin{align}\label{eq:+w}
        \begin{split}
            &f(t+w, x+vw, v) \\
            &= e^{-\int_{-w}^{t_b(x+vw,v) - w} A(t-\tau, x-v\tau, v)\,d\tau}g\left(t-(t_b(x+vw,v)-w), x-v(t_b(x+vw,v)-w), v\right)\mathbf{1}_{t_b(x+vw,v) - w\leq t}\\
            &\quad + e^{-\int_{-w}^t A(t-\tau, x-v\tau, v)\,d\tau}f(0, x-vt, v)\mathbf{1}_{t_b(x+vw,v) - w>t}\\
            &\quad + \int_{-w}^t e^{-\int_{-w}^\tau A(t-s, x-vs, v)\,ds}B(t-\tau, x-v\tau, v)\mathbf{1}_{z-v_z\tau>0}\,d\tau.
        \end{split}
    \end{align}
    Since $A(t,x,v)$ and $B(t,x,v)$ are locally integrable, by Fubini's theorem, the right-hand side of \eqref{eq:+w} is differentiable for a.e. $w$ satisfying $0<t+w<t_0$ and $z+v_zw>0$, and
    \begin{align*}
        \dv{w}\left(\text{R.H.S}\right) = -A(t+w, x+vw, v) f(t+w, x+vw, v) + B(t+w, x+vw, v)\mathbf{1}_{z+v_zw>0}.
    \end{align*}
    Integrating both sides by $w$, we get
    \begin{align*}
        f(t+w, x+vw, v)&=g(t+w-t_b(x+vw,v), x+vw-vt_b(x+vw,v), v)\mathbf{1}_{t_b(x+vw,v)\leq t+w}\\
        &\quad + f(0, x-vt, v)\mathbf{1}_{t_b(x+vw,v)>t+w}\\
        &\quad + \int_0^{t+w} B(t+w-\tau, x-v(\tau-w), v)\mathbf{1}_{z-v_z(\tau-w)>0} - A f(t+w-\tau, x-v(\tau-w), v)\,d\tau.
    \end{align*}
    As $f(t+w,x+vw,v) = 0$ if $x+vw\not\in \mathbb{R}^n_+$, the final term is same as
    \begin{align*}
        \int_0^{t+w} \left(B(t+w-\tau, x-v(\tau-w), v)- Af(t+w-\tau, x-v(\tau-w), v)\right)\mathbf{1}_{z-v_z(\tau-w)>0}\,d\tau
    \end{align*}
    Since the equality holds for a.e. $(t,x,v)$ and $w$ for $0<t+w<t_0$ and $z+v_zw>0$, the same argument as in \eqref{eq:a.e._equiv} yields \eqref{eq:mild_def_2} for a.e. $(t,x,v)$. Conversely, \eqref{eq:mild_def} follows from \eqref{eq:mild_def_2} by reversing the preceding argument.
\end{proof}
For the BGK model, we take $A = 1$ and $B = \mathcal{M}(f)$, whereas for the Boltzmann equation, we set $A = L(f)$ and $B = Q^+(f,f)$. For the solutions constructed in this paper, the following local integrability property holds on every time interval $[0,t_0]$ contained in their interval of existence:
\begin{align*}
    \mathcal{M}(f)&\in L^1_{\mathrm{loc}}\left([0,t_0]\times\overline{\mathbb{R}^n_+}\times\mathbb{R}^n\right)\quad \text{for the BGK model},\\
    Q^+(f,f),L(f)&\in L^1_{\mathrm{loc}}\left([0,t_0]\times\overline{\mathbb{R}^n_+}\times\mathbb{R}^n\right)\quad\text{for the Boltzmann equation}.
\end{align*}
Therefore, Lemma~\ref{lem:mild_sol_equiv} allows us to use either mild formulation.\\

The next lemma concerns the well-definedness of the trace norm of the inflow-boundary data on $\partial \mathbb{R}^n_+$.
\begin{lemma}\label{lem:trace}
    (Well-definedness of the trace) Let $g_1(t,x_h,v)$ and $g_2(t,x_h,v)$ be measurable functions on $[0, \infty)\times \partial\mathbb{R}^n_+\times \mathbb{R}^n_+$. Then, $g_1(t,x_h,v) = g_2(t,x_h,v)$ for a.e. $(t,x,v)\in [0,\infty)\times \partial \mathbb{R}^n_+\times\mathbb{R}^n_+$ if and only if for any $t> 0$,
    \begin{align*}
        g_1(t-t_b, x-vt_b, v)\mathbf{1}_{t\geq t_b} = g_2(t-t_b, x-vt_b, v)\mathbf{1}_{t\geq t_b}
    \end{align*}
    for a.e. $(x,v)$.
    Furthermore, if $1\leq p,q\leq \infty$ with notation $1/\infty = 0$, we have
    \begin{align*}
        \|g(t-t_b, x-vt_b, v)\mathbf{1}_{v_z\geq \frac{z}{t}}\|_{L^p_v(\mathbb{R}^n, L^q_x(\mathbb{R}^n_+))}=\|v_z^{1/q} g(\tau,x_h,v)\|_{L^p_v(\mathbb{R}^n, L^q_{\tau,x_h}([0,t]\times \partial \mathbb{R}^n_+))}
    \end{align*}
    for any $t> 0$.
    Equivalently,
    \begin{align*}
        \|e^{-t_b}g(t-t_b, x-vt_b, v)\mathbf{1}_{v_z\geq \frac{z}{t}}\|_{L^p_v(\mathbb{R}^n, L^q_x(\mathbb{R}^n_+))}=e^{-t}\|v_z^{1/q} e^\tau g(\tau,x_h,v)\|_{L^p_v(\mathbb{R}^n, L^q_{\tau,x_h}([0,t]\times \partial \mathbb{R}^n_+))}
    \end{align*}
    for any $t>0$.
\end{lemma}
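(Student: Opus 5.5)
The whole lemma rests on one change of variables. Fix $t>0$ and $v$ with $v_z>0$, and consider the map
\begin{align*}
    \Phi_{t,v}:(\tau,x_h)\in(0,t]\times\partial\mathbb{R}^n_+\;\longmapsto\; x=(x_h+v_h(t-\tau),\,v_z(t-\tau))\in\mathbb{R}^n_+ .
\end{align*}
Its inverse is $(x_h,z)\mapsto(\tau,x_h')=(t-t_b,\,x_h-v_ht_b)$ with $t_b=z/v_z$. This is a bijection onto the set $\{x\in\mathbb{R}^n_+: 0<z\le v_zt\}$, which is exactly the set where $\mathbf{1}_{t\ge t_b}$ equals $1$. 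The map is triangular in $(\tau,x_h)$. Hence
\begin{align*}
    dx=dx_h\,dz=v_z\,d\tau\,dx_h .
\end{align*}
For $v_z\le 0$ we have $t_b=\infty$ and both sides vanish, since $g$ is supported in $v_z>0$.

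For the norm identity I would argue with $v$ fixed first. Take $q<\infty$. Applying the change of variables gives
\begin{align*}
    \int_{\mathbb{R}^n_+}|g(t-t_b,x-vt_b,v)|^q\mathbf{1}_{v_z\ge z/t}\,dx=\int_0^t\!\!\int_{\partial\mathbb{R}^n_+} v_z|g(\tau,x_h,v)|^q\,dx_h\,d\tau .
\end{align*}
Taking the $1/q$ power produces the factor $v_z^{1/q}$. Then take the $L^p_v$ norm on both sides, where any $p\in[1,\infty]$ is allowed.

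For $q=\infty$ the factor is $v_z^0=1$. The essential suprema agree because $\Phi_{t,v}$ and its inverse are bi-Lipschitz on compacts, so they map null sets to null sets.

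Measurability in $(x,v)$ of $g(t-t_b,x-vt_b,v)$ is needed to apply Fubini/Tonelli. It follows because the full map $(\tau,x_h,v)\mapsto(x,v)$ is a smooth diffeomorphism on $\{v_z>0\}$. To get the weighted version, apply the identity to $e^{\tau}g(\tau,x_h,v)$. This works because $e^{-t_b}=e^{-t}e^{\tau}$ with $\tau=t-t_b$.

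The trace equivalence uses the same diffeomorphism, but now in all variables at once. Let $\Psi(\tau,x_h,v)=(t-\tau, x_h+v_h(t-\tau), v_z(t-\tau))$ for fixed $t$. Its Jacobian determinant is $v_z\neq0$ on $v_z>0$, so it maps null sets to null sets in both directions.

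For the ``only if'' direction, let $N$ be the null set where $g_1\ne g_2$. For each $t$, the set of $(x,v)$ at which the two transported functions differ is contained in the image of $(N\cap\{\tau\le t\})$ under the $(x,v)$-map. That image is null by the Jacobian computation above.

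For the ``if'' direction, suppose the transported functions agree a.e. for every $t>0$. Apply the norm identity with $q=1$, say with $p=1$, to $|g_1-g_2|\wedge 1$ (or simply use the null-set correspondence). This gives $v_z(g_1-g_2)=0$ a.e. on $[0,t]\times\partial\mathbb{R}^n_+\times\mathbb{R}^n_+$. Since $v_z>0$ there, $g_1=g_2$ a.e. on $[0,t]$. Letting $t\to\infty$ through integers gives equality on $[0,\infty)$.

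The main obstacle is the bookkeeping of the a.e. statements: the hypothesis in the ``if'' direction is for each $t$ separately, whereas $g$ lives on a space that includes the time variable. I handle this by noting that for each fixed $t$ the slice map already covers all $\tau\in[0,t]$, so a single $t$ per interval suffices. The remaining cases, $p=\infty$ and $q=\infty$, follow from the same null-set correspondence.
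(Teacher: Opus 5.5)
Your proposal is correct and is essentially the paper's proof. Both use the substitution $(x_h,z)\mapsto(\tau,x_h')=(t-z/v_z,\,x_h-v_hz/v_z)$ with Jacobian $v_z$, together with Tonelli for nonnegative integrands, to get the norm identity and (applied to $|g_1-g_2|$) the a.e.\ equivalence, letting $t\to\infty$ along a sequence at the end. Your extra care about measurability and the $q=\infty$ null-set correspondence is fine; there are only two harmless slips: the slice map should be defined on $\tau\in[0,t)$ (not $(0,t]$) so that it lands exactly on $\{0<z\le v_zt\}$, and $\Psi$ should read $(\tau,x_h,v)\mapsto(x_h+v_h(t-\tau),\,v_z(t-\tau),\,v)$.
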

\begin{proof}
    The first statement is easily checked by taking the integral of $|g_1-g_2|$. For fixed $t$, writing $t_b = \frac{z}{v_z}\geq 0$, we have
    \begin{align*}
        &\int_{\mathbb{R}^n_+\times\mathbb{R}^n_+} |g_1(t-t_b, x-vt_b, v)-g_2(t-t_b, x-vt_b, v)|\mathbf{1}_{t\geq t_b}\,dxdv \\
        &=\int_{\mathbb{R}^{n-1}\times\mathbb{R}_+\times\mathbb{R}^n_+} |g_1(t-\frac{z}{v_z}, x-v\frac{z}{v_z}, v)-g_2(t-\frac{z}{v_z}, x-v\frac{z}{v_z}, v)|\mathbf{1}_{t\geq \frac{z}{v_z}}\,dx_h dz dv \\
        &= \int_{\mathbb{R}^{n-1}\times\mathbb{R}_+\times\mathbb{R}^n_+} v_z|g_1(t-t_b, x_h, v)-g_2(t-t_b, x_h, v)|\mathbf{1}_{0\leq t_b\leq t}\,dx_hdt_b dv\\
        &= \int_0^t \left(\int_{\mathbb{R}^{n-1}\times\mathbb{R}^n_+} v_z|g_1(t_b, x_h, v)-g_2(t_b, x_h, v)|\,dx_h dv\right)\,dt_b.
    \end{align*}
    Since the integrands are nonnegative and $v_z>0$ almost everywhere, the identity above shows that, for each $t>0$, the transported functions agree almost everywhere on $\mathbb{R}_+^n\times\mathbb{R}_+^n$ if and only if $g_1=g_2$ almost everywhere on $[0,t]\times\partial\mathbb{R}_+^n\times\mathbb{R}_+^n$. Since $t>0$ is arbitrary, the first assertion follows.

    For the second statement, we first suppose $1\leq q<\infty$. Then,
    \begin{align*}
        \int \left|g\left(t-\frac{z}{v_z}, x-v\frac{z}{v_z}, v\right)\right|^q \mathbf{1}_{t\geq t_b}\,dx &= v_z\int |g(t-t_b, x_h, v)|^q \mathbf{1}_{0\leq t_b\leq t}\,dx_h dt_b \\
        &= v_z \|g(t_b,x_h,v)\|^q_{L^q_{t_b, x_h}([0,t]\times \partial \mathbb{R}^n_+)}
    \end{align*}
    for any $t>0$. Therefore, we have the lemma for $1\leq q<\infty$. The $q=\infty$ case follows from the pointwise estimate. For the final one, we choose $g(\tau, x, v)\mapsto e^\tau g(\tau, x, v)$ in the second result.
\end{proof}

We now turn to the technical estimates, which will be used in the proofs of the main results. The following lemma provides a self-contained proof of a variant of Tricomi's relation~\cite{Tricomi1950}.
\begin{lemma}\label{lem:exp_integral}
Let $n\geq 1$, $\alpha>0$, $0< \beta\leq 2$, $\delta_1\geq 0$, $\delta_2\in\mathbb{R}$, and $M\geq 0$. Then, there exist constants $C_1$ and $C_2$ depending on $n$, $\alpha$, $\beta$, $\delta_1$, and $\delta_2$ such that
    \begin{align*}
        C_1\leq \frac{1}{\langle M\rangle^{(n-\beta) + (1-\beta)\delta_1 + \delta_2} e^{-\alpha M^\beta}}\int_{\mathbb{R}^n} \left(|v|-M\right)^{\delta_1}\langle v\rangle^{\delta_2} e^{-\alpha|v|^\beta}\mathbf{1}_{|v|\geq M}\,dv\leq C_2.
    \end{align*}
\end{lemma}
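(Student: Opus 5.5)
The integrand is radial, so I would first pass to polar coordinates and reduce everything to the one-dimensional integral
\begin{align*}
    I(M):=|\mathbb{S}^{n-1}|\int_M^\infty (r-M)^{\delta_1}\langle r\rangle^{\delta_2} r^{n-1} e^{-\alpha r^\beta}\,dr .
\end{align*}
Set $E(M):=\langle M\rangle^{(n-\beta)+(1-\beta)\delta_1+\delta_2}e^{-\alpha M^\beta}$. Both $I$ and $E$ are continuous and strictly positive on $[0,\infty)$. For $\delta_1\geq 0$ and any real $\delta_2$, the integral converges because $e^{-\alpha r^\beta}$ beats every polynomial at infinity, and near $r=0$ the integrand is bounded. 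Hence the ratio $I/E$ is bounded above and below on any compact set $[0,M_0]$. This leaves only the regime $M\geq M_0$, with $M_0$ large, depending on $n,\alpha,\beta,\delta_1,\delta_2$.

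For large $M$ I would substitute $r=M+s$. Write
\begin{align*}
    (M+s)^\beta - M^\beta = \beta M^{\beta-1}s\,\phi(s/M),
\end{align*}
where $\phi$ is an explicit function of $\sigma=s/M$ coming from the mean value theorem. The natural scale for $s$ is $\lambda:=M^{1-\beta}$, since then $\alpha\beta M^{\beta-1}s$ is of order one. On the window $0\leq s\leq \lambda$, the ratio $s/M\leq M^{-\beta}$ is small. There I would use
\begin{align*}
    \langle M+s\rangle^{\delta_2}(M+s)^{n-1}\simeq M^{\delta_2+n-1}
\end{align*}
together with $(M+s)^\beta-M^\beta\leq \beta(M+s)^{\beta-1}s\leq C M^{\beta-1}s$. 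This gives the lower bound
\begin{align*}
    I(M)\geq c\,M^{\delta_2+n-1}e^{-\alpha M^\beta}\int_0^{\lambda}s^{\delta_1}e^{-C\alpha\beta M^{\beta-1}s}\,ds\geq c' M^{\delta_2+n-1}\lambda^{1+\delta_1}e^{-\alpha M^\beta}.
\end{align*}
The resulting exponent is
\begin{align*}
    \delta_2+n-1+(1-\beta)(1+\delta_1)=(n-\beta)+(1-\beta)\delta_1+\delta_2,
\end{align*}
which matches $E(M)$. This settles the lower bound $C_1$.

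For the upper bound I would split the $s$-integral at $s=M$.

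On $0\leq s\leq M$, concavity or convexity of $r^\beta$ gives $(M+s)^\beta-M^\beta\geq c_\beta M^{\beta-1}s$. For $\beta\le1$ this uses $(M+s)^{\beta-1}\geq 2^{\beta-1}M^{\beta-1}$; for $1\leq\beta\leq 2$ it uses $\geq\beta M^{\beta-1}s$. Also $\langle M+s\rangle^{\delta_2}(M+s)^{n-1}\leq CM^{\delta_2+n-1}$ for either sign of $\delta_2$. Extending the $s$-integral to $\infty$ then yields $CM^{\delta_2+n-1}\lambda^{1+\delta_1}e^{-\alpha M^\beta}$.

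On $s\geq M$, i.e.\ $r\geq 2M$, I would bound the tail crudely by
\begin{align*}
    \int_{2M}^\infty r^{K}e^{-\alpha r^\beta}\,dr\leq C_K e^{-\alpha (2^\beta-1)M^\beta/2}\,e^{-\alpha M^\beta}
\end{align*}
for some large $K$. The factor $e^{-cM^\beta}$ absorbs any polynomial power of $M$, so this piece is also $\leq CE(M)$.

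I expect the main obstacle to be handling the whole range $0<\beta\leq 2$ uniformly in these elementary convexity inequalities. For $\beta<1$ the window $\lambda=M^{1-\beta}$ grows and can exceed $M$. Then the region $s\le\lambda$ is no longer inside $s\leq M$, and the lower-bound comparison $(M+s)^{n-1}\simeq M^{n-1}$ fails. In that case I would restrict the lower bound to $s\in[0,M]$ instead. There $(M+s)^\beta-M^\beta\leq \beta M^{\beta-1}s$ by concavity, so the exponential factor stays bounded below by $e^{-\alpha\beta M^{\beta-1}s}$, and I would integrate against it over $[0,M]$. I would need to check carefully that this still reproduces the power $\lambda^{1+\delta_1}$, and if not, treat $\beta<1$ separately through the incomplete Gamma function after the substitution $y=r^\beta$.
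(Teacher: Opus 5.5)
Your proof is correct. It has the same skeleton as the paper's: radial reduction, bounded $M$ handled by positivity and continuity, the main contribution coming from a layer of width $\sim M^{1-\beta}$ above $r=M$, and a crude estimate of the far tail. The key step, however, is executed differently.

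The paper substitutes $r^\beta\mapsto r$, so the exponential becomes $e^{-\alpha r}$ on $[M^\beta,\infty)$ and the relevant scale is exactly one unit. It then partitions into unit intervals $[M^\beta+i,M^\beta+i+1)$ and controls $(r^{1/\beta}-M)^{\delta_1}$ on each one via $1+\frac{x}{2\beta}\le(1+x)^{1/\beta}\le1+\frac{\max\{2^{1/\beta-1},1\}}{\beta}x$. Finally it sums the resulting series, splitting the upper bound at $2M^\beta-1$.

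You instead stay in the variable $s=r-M$, linearize the exponent by the mean value theorem, and integrate against explicit Gamma-type integrals. The paper's route is uniform in $\beta$ with no convexity case split, but it carries discrete min/max bookkeeping. Yours is shorter, gives explicit constants, and nothing in it actually uses $\beta\le 2$.

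Your tail estimate is also right. Write $e^{-\alpha r^\beta}=e^{-\theta\alpha r^\beta}e^{-(1-\theta)\alpha r^\beta}$ with $\theta=(1+2^{-\beta})/2$, so that $\theta(2M)^\beta=M^\beta+(2^\beta-1)M^\beta/2$.

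Two small corrections.

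First, the inequality $(M+s)^\beta-M^\beta\le\beta(M+s)^{\beta-1}s$ goes the wrong way for $\beta<1$, where concavity gives $\ge$. What you need is $(M+s)^\beta-M^\beta=\beta\xi^{\beta-1}s$ with $\xi\in[M,M+s]$. This gives $(M+s)^\beta-M^\beta\le\beta\max\{1,2^{\beta-1}\}M^{\beta-1}s$ for $0\le s\le M$, valid for every $\beta>0$.

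Second, the obstacle in your last paragraph does not arise. For $M\ge1$ and any $\beta>0$ one has $\lambda/M=M^{-\beta}\le 1$, as you observed earlier, so $[0,\lambda]\subset[0,M]$. On that window the polynomial factors are comparable to $M^{\delta_2+n-1}$, and $M^{\beta-1}s\le 1$ keeps the exponential factor bounded below by a constant. No separate treatment of $\beta<1$ is needed.
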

\begin{proof}
    Using the spherical coordinates, we write
    \begin{align*}
        \int_{\mathbb{R}^n} (|v|-M)^{\delta_1}\langle v\rangle^{\delta_2} e^{-\alpha|v|^\beta}\mathbf{1}_{|v|\geq M}\,dv = |\mathbb{S}^{n-1}|\int_M^\infty (r-M)^{\delta_1}\langle r\rangle^{\delta_2} e^{-\alpha r^\beta}r^{n-1}\,dr.
    \end{align*}
    We further use the change of variable $r^\beta\mapsto r$. Then, we get
    \begin{align}\label{eq:ul_rm}
        \int_M^\infty (r-M)^{\delta_1}\langle r\rangle^{\delta_2} e^{-\alpha r^\beta}r^{n-1}\,dr = \frac{1}{\beta}\int_{M^\beta}^\infty \left(r^{1/\beta}-M\right)^{\delta_1}\big\langle r^{1/\beta} \big\rangle^{\delta_2} e^{-\alpha r} r^{\frac{n}{\beta}- 1}\,dr.
    \end{align}
    If $M^\beta\leq 10$, the integral is bounded above and below by positive constants. Thus, we safely assume $M^\beta\geq 10$. For every $x\geq 10$, we have
    \begin{align*}
        x\leq \langle x\rangle \leq 2x,
    \end{align*}
    so we can replace the right-hand side of \eqref{eq:ul_rm} by
    \begin{align*}
        \int_{M^\beta}^\infty \left(r^{1/\beta}-M\right)^{\delta_1} r^{\frac{\delta_2 + n}{\beta}- 1} e^{-\alpha r} \,dr.
    \end{align*}
    
    To estimate this integral, we partition the interval $\left[M^\beta, \infty\right)$ by setting
    $r_i := M^\beta + i$ for $i \in \mathbb{N}\cup\{0\}$, and estimate the integral on each sub-interval $[r_i, r_{i+1})$. To deal with the term $\left(r^{1/\beta} -  M\right)^{\delta_1}$, we put $r=r_i$ and write
    \begin{align*}
        r_i^{1/\beta} -  M &=\left(M^\beta + i\right)^{1/\beta} -  M = M\left((1+iM^{-\beta})^{1/\beta}-1\right).
    \end{align*}
    For $0\leq x\leq 1$ and $0< \beta\leq 2$, we have
    \begin{align*}
        1+\frac{x}{2\beta}\leq (1+x)^{1/\beta}\leq 1+\frac{\max\{2^{1/\beta - 1}, 1\}}{\beta}x
    \end{align*}
    Therefore, if $0\leq i\leq M^\beta$, we obtain
    \begin{align}\label{eq:M_bounds}
        \begin{split}
            M\left((1+iM^{-\beta})^{1/\beta}-1\right)&\geq i\frac{M^{1-\beta}}{2\beta}\\
            M\left((1+iM^{-\beta})^{1/\beta}-1\right)&\leq i\frac{\max\{2^{1/\beta - 1}, 1\}}{\beta}M^{1-\beta}.
        \end{split}
    \end{align}
    We first derive the lower bound of the integral by
    \begin{align}
        &\int_{M^\beta}^\infty \left(r^{1/\beta}-M\right)^{\delta_1} r^{\frac{\delta_2 + n}{\beta}- 1} e^{-\alpha r} \,dr\geq \int_{M^\beta}^{2M^\beta + 1} \left(r^{1/\beta}-M\right)^{\delta_1} r^{\frac{\delta_2 + n}{\beta}- 1} e^{-\alpha r} \,dr\notag\\
        &\geq \sum_{i=0}^{\lfloor M^\beta\rfloor} \left(i\frac{M^{1-\beta}}{2\beta}\right)^{\delta_1}\min\left\{(M^\beta+i)^{\frac{\delta_2+n}{\beta}-1}, (M^\beta+(i+1))^{\frac{\delta_2+n}{\beta}-1}\right\} e^{-\alpha(M^\beta + (i+1))}\notag\\
        &=(2\beta)^{-\delta_1}M^{(1-\beta)\delta_1 + \delta_2 + n - \beta}e^{-\alpha M^\beta}\sum_{i=0}^{\lfloor M^\beta\rfloor} i^{\delta_1}\min\left\{(1+iM^{-\beta})^{\frac{\delta_2+n}{\beta}-1}, (1+(i+1)M^{-\beta})^{\frac{\delta_2+n}{\beta}-1}\right\} e^{-\alpha(i+1)}.\label{eq:lower}
    \end{align}
    We need to show that the summation in \eqref{eq:lower} is bounded below by a positive constant independent of $M$. If $\frac{\delta_2+n}{\beta}-1\geq 0$, then the minimum term is not smaller than $1$. Hence, it is lower bounded by $\sum_{i=0}^{\lfloor M^\beta\rfloor} i^{\delta_1}e^{-\alpha(i+1)}$. On the other hand, if $\frac{\delta_2+n}{\beta}-1<0$, then, since $M^\beta\geq 10$, we have
    \begin{align*}
        i^{\delta_1}\min\left\{(1+iM^{-\beta})^{\frac{\delta_2+n}{\beta}-1}, (1+(i+1)M^{-\beta})^{\frac{\delta_2+n}{\beta}-1}\right\}\geq i^{\delta_1}(1+(i+1))^{\frac{\delta_2+n}{\beta}-1} e^{-\alpha(i+1)}.
    \end{align*}
    Therefore, the summation is lower bounded by a positive constant independent of $M$. Combining the two cases yields
    \begin{align*}
        \int_{\mathbb{R}^n} \left(|v|-M\right)^{\delta_1}\langle v\rangle^{\delta_2} e^{-\alpha|v|^\beta}\mathbf{1}_{|v|\geq M}\,dv\geq C_{\alpha,\beta,\delta,n}\langle M\rangle ^{(1-\beta)\delta_1 + \delta_2 + n - \beta}e^{-\alpha M^\beta}.
    \end{align*}
    
    For the upper bound, we split the integral region into $[M^\beta, 2M^\beta - 1)$ and $[2M^\beta - 1, \infty)$. For the integral on $[M^\beta, 2M^\beta - 1)$, we take the same partitioning by $[r_i, r_{i+1})$ as above. Then, using \eqref{eq:M_bounds}, we have
    \begin{align*}
        &\int_{M^\beta}^{2M^\beta - 1} \left(r^{1/\beta}-M\right)^{\delta_1} r^{\frac{\delta_2 + n}{\beta}- 1} e^{-\alpha r} \,dr\\
        &\leq \sum_{i=0}^{\lfloor M^\beta\rfloor} \left((i+1)\frac{\max\{2^{1/\beta - 1}, 1\}}{\beta}M^{1-\beta}\right)^{\delta_1}\max\left\{(M^\beta+i)^{\frac{\delta_2+n}{\beta}-1}, (M^\beta+(i+1))^{\frac{\delta_2+n}{\beta}-1}\right\} e^{-\alpha(M^\beta + i)}.
    \end{align*}
    For the summation, we apply the same argument as in the lower bound estimate and get
    \begin{align}\label{eq:2_3:upper_1}
        \begin{split}
            &\sum_{i=0}^{\lfloor M^\beta\rfloor} \left((i+1)\frac{\max\{2^{1/\beta - 1}, 1\}}{\beta}M^{1-\beta}\right)^{\delta_1}\max\left\{(M^\beta+i)^{\frac{\delta_2+n}{\beta}-1}, (M^\beta+(i+1))^{\frac{\delta_2+n}{\beta}-1}\right\} e^{-\alpha(M^\beta + i)}\\
            &\leq C_{\alpha,\beta,\delta,n}\langle M\rangle ^{(1-\beta)\delta_1 + \delta_2 + n - \beta}e^{-\alpha M^\beta}.
        \end{split}
    \end{align}
    For the integral on $[2M^\beta - 1, \infty)$, we crudely bound the polynomial part by $(r^{1/\beta}-M)^{\delta_1}\leq r^{\delta_1/\beta}$. Partitioning this region by $r_i = 2M^\beta + i$ for $i\in\mathbb{N}\cup\{-1, 0\}$, we have
    \begin{align*}
        &\int_{2M^\beta - 1}^\infty \left(r^{1/\beta}-M\right)^{\delta_1} r^{\frac{\delta_2 + n}{\beta}- 1} e^{-\alpha r} \,dr\leq \int_{2M^\beta - 1}^\infty r^{\frac{\delta_1 + \delta_2 + n}{\beta}- 1} e^{-\alpha r} \,dr\\
        &\leq \sum_{i=-1}^\infty \max\left\{(2M^\beta+i)^{\frac{\delta_1+\delta_2+n}{\beta}-1}, (2M^\beta+(i+1))^{\frac{\delta_1+\delta_2+n}{\beta}-1}\right\} e^{-\alpha(2M^\beta + i)}\\
        &\leq C_{\alpha,\beta,\delta,n}\langle M\rangle^{\delta_1 + \delta_2 + n - \beta}e^{-2\alpha M^\beta}.
    \end{align*}
    Since the exponential decay is much stronger than \eqref{eq:2_3:upper_1}, we get
    \begin{align*}
        \int_{M^\beta}^\infty \left(r^{1/\beta}-M\right)^{\delta_1} r^{\frac{\delta_2 + n}{\beta}- 1} e^{-\alpha r} \,dr\leq C_{\alpha,\beta,\delta,n}\langle M\rangle ^{(1-\beta)\delta_1 + \delta_2 + n - \beta}e^{-\alpha M^\beta}.
    \end{align*}
\end{proof}

The next lemma collects several inequalities for estimating \((x+y)^\beta\) when \(0\leq\beta\leq3\). These inequalities will be used to control terms arising from the cutoff function and the exponential weight in the next section.
\begin{lemma}\label{lem:beta_pert}
    For $x\geq 1$, the following inequalities hold.
    \begin{enumerate}
        \item If $1\leq \beta\leq 3$ and $0\leq c\leq x^\beta$, then
        \begin{align*}
            (x+c x^{1-\beta})^\beta\leq x^\beta +4\beta c.
        \end{align*}
        \item If $1\leq \beta\leq 3$ and $-\frac{x^\beta}{2}\leq c\leq 0$, then
        \begin{align*}
            (x+c x^{1-\beta})^\beta\leq x^\beta +\frac{\beta}{4}c.
        \end{align*}
        \item For $1\leq \beta\leq 3$ and $-x^{\beta}\leq c$,
        \begin{align*}
            x^\beta +\beta c\leq (x+c x^{1-\beta})^\beta.
        \end{align*}
        \item If $0\leq \beta\leq 1$ and $-x^\beta\leq c\leq 0$, then
        \begin{align*}
            x^\beta + c\leq (x + c x^{1-\beta})^\beta.
        \end{align*}
    \end{enumerate}
    If $0\leq \beta \leq 1$ and $x,y> 0$, then
    \begin{align}\label{eq:beta_pert_5}
        (x+y)^\beta\leq \min\left\{x^\beta + \beta y x^{\beta-1}, y^\beta + \beta x y^{\beta-1}\right\}.
    \end{align}
\end{lemma}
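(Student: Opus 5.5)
All five inequalities come from one-variable calculus applied to $\phi(s)=(x+s x^{1-\beta})^\beta$, or, after normalizing, $h(t)=(1+t)^\beta$ with $t=c x^{-\beta}$. Indeed $(x+cx^{1-\beta})^\beta = x^\beta(1+cx^{-\beta})^\beta$, so with $t:=cx^{-\beta}$ each of (1)--(4) reduces to comparing $(1+t)^\beta$ with $1+k t$ for a suitable constant $k$ and $t$ in a fixed interval:
\begin{align*}
    (1)&:\ (1+t)^\beta\leq 1+4\beta t,\quad t\in[0,1],\qquad
    (2):\ (1+t)^\beta\leq 1+\tfrac{\beta}{4}t,\quad t\in[-\tfrac12,0],\\
    (3)&:\ 1+\beta t\leq (1+t)^\beta,\quad t\geq -1,\qquad\ \
    (4):\ 1+t\leq (1+t)^\beta,\quad t\in[-1,0].
\end{align*}
The hypothesis $x\geq1$ is only used to make this rescaling harmless; no further use of $x$ is needed.

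I would check each of the four normalized inequalities directly.
\begin{itemize}
\item[(3)] This is Bernoulli's inequality for $\beta\geq1$ and $1+t\geq0$. It follows from convexity of $(1+t)^\beta$ on $[-1,\infty)$: the tangent line at $t=0$ lies below the graph.
\item[(4)] For $0\leq\beta\leq1$ and $0\leq 1+t\leq1$, we have $(1+t)^\beta\geq 1+t$ immediately.
\item[(1)] By convexity on $[0,1]$, $(1+t)^\beta\leq 1+(2^\beta-1)t$. It then suffices that $2^\beta-1\leq4\beta$ for $1\leq\beta\leq3$. Here $2^\beta-1\leq 7\leq 4\beta$ once $\beta\geq 7/4$, and for $1\leq\beta\leq 7/4$ we have $2^\beta-1\leq 2.37\leq 4$.
\item[(2)] By the mean value theorem, $(1+t)^\beta-1=\beta(1+\xi)^{\beta-1}t$ with $\xi\in(t,0)$. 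Since $t\leq0$, we need $(1+\xi)^{\beta-1}\geq 1/4$. This holds because $1+\xi\geq1/2$ and $\beta-1\leq2$, giving $(1+\xi)^{\beta-1}\geq 2^{-2}=1/4$.
\end{itemize}

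For \eqref{eq:beta_pert_5}, the function $s\mapsto s^\beta$ is concave for $0\leq\beta\leq1$. The tangent-line bound at $x$ gives $(x+y)^\beta\leq x^\beta+\beta x^{\beta-1}y$. By symmetry the same bound holds with the roles of $x$ and $y$ swapped, and taking the minimum of the two bounds yields \eqref{eq:beta_pert_5}.

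The only step needing real care is matching the constants $4\beta$ and $\beta/4$ in (1) and (2) over the whole range $1\leq\beta\leq3$. Even so, the slack is substantial: the worst cases are $\beta=3$ with $2^3-1=7\leq 12$ in (1), and $2^{-2}=1/4$ exactly in (2).
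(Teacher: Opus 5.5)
Your proposal is correct and follows essentially the same route as the paper. Both normalize via $t=cx^{-\beta}$ and prove each one-variable inequality by elementary calculus, then handle \eqref{eq:beta_pert_5} by the tangent-line bound; the paper compares $\beta(1+t)^{\beta-1}$ with the target slope and integrates, which is equivalent to your chord, tangent and mean-value arguments. As a minor remark, the rescaling $(x+cx^{1-\beta})^\beta=x^\beta(1+cx^{-\beta})^\beta$ holds for every $x>0$, so the hypothesis $x\geq 1$ is not actually used.
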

\begin{proof}
    For the first to fourth inequalities, we have
    \begin{align*}
        (x+cx^{1-\beta})^\beta = x^\beta(1+cx^{-\beta})^\beta.
    \end{align*}
    Therefore, we can rewrite the inequalities as
    \begin{align}
        (1+x)^\beta&\leq 1 + 4\beta x\quad \text{for}\quad 1\leq \beta\leq 3,\quad 0\leq x\leq 1,\label{eq:beta_pert_1}\\
        (1+x)^\beta&\leq 1 + \frac{\beta}{4} x\quad \text{for}\quad 1\leq \beta\leq 3,\quad -\frac{1}{2}\leq x\leq 0,\label{eq:beta_pert_2}\\
        (1+x)^\beta&\geq 1 + \beta x\quad \text{for}\quad 1\leq \beta\leq 3,\quad -1\leq x,\label{eq:beta_pert_3}\\
        (1+x)^\beta&\geq 1 + x\quad \text{for}\quad 0\leq \beta\leq 1,\quad -1\leq x\leq 0\label{eq:beta_pert_4}.
    \end{align}
    
    The inequality \eqref{eq:beta_pert_4} is trivial inequality when $0\leq x+1\leq 1$ and $0\leq \beta\leq 1$. So, we prove the first to third inequalities in steps. For the first inequality, we first check that
    \begin{align*}
        \beta (1+x)^{\beta-1}-4\beta\leq 0
    \end{align*}
    for $1\leq \beta\leq 3$ and $0\leq x\leq 1$. Integrating the inequality for $0\leq x\leq 1$, we have \eqref{eq:beta_pert_1}. For the second and third inequalities, we similarly observe that
    \begin{align*}
        \beta(1+x)^{\beta-1} - \frac{\beta}{4}\geq 0,\quad -\frac{1}{2}\leq x\leq 0,
    \end{align*}
    and
    \begin{align*}
        \beta(1+x)^{\beta-1} - \beta\begin{cases}
            \geq 0 & x\geq 0,\\
            \leq 0 & -1< x\leq 0
        \end{cases}
    \end{align*}
    for $1\leq \beta\leq 3$. Therefore, applying the same argument, we obtain \eqref{eq:beta_pert_2} and \eqref{eq:beta_pert_3}.

    For the final one \eqref{eq:beta_pert_5}, it is sufficient to show that
    \begin{align*}
        (x+y)^\beta\leq x^\beta + \beta y x^{\beta-1}
    \end{align*}
    by the exchange symmetry $x\leftrightarrow y$. We first homogenize the inequality by $t =\frac{y}{x}$. Then, it is equivalent to show
    \begin{align*}
        (1+t)^\beta\leq \beta t + 1.
    \end{align*}
    Differentiating with respect to $t$, we get
    \begin{align*}
        \beta (1+t)^{\beta-1}-\beta \leq 0
    \end{align*}
    for $t>0$, and the inequality is true when $t=0$. Therefore, it holds for $t\geq 0$.
\end{proof}
When $x$ is large and $y$ is a scale of $x^{1-\beta}$, the first four inequalities imply that $(x+y)^\beta$ is almost the same as $x^\beta$ with a small perturbation term.

The following lemma directly uses Lemma \ref{lem:beta_pert}; it estimates the cutoff function in the inflow boundary data \eqref{eq:boundary_data} during the computation.
\begin{lemma}\label{lem:gamma_pert}
    Assume that $2\leq \gamma\leq 3$, $x,C\geq 0$, $0\leq t\leq \frac{1}{4}$, and $x+C\geq 1$. If $r\geq (x+C)^{\frac{1}{\gamma}} + 2\frac{t}{\gamma}(x+C)^{\frac{2-\gamma}{\gamma}}$, then
    \begin{align*}
        r\geq ((x+rt)+C)^{\frac{1}{\gamma}}.
    \end{align*}
    If $r\leq (x+C)^{\frac{1}{\gamma}} - \frac{4t}{\gamma}(x+C)^{\frac{2-\gamma}{\gamma}}$, then
    \begin{align*}
        r\leq (|x-rt|+C)^{\frac{1}{\gamma}}.
    \end{align*}
\end{lemma}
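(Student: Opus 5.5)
The plan is to reduce both claims to the perturbative inequalities of Lemma~\ref{lem:beta_pert} with exponent $\beta=\gamma\in[2,3]$, after a monotonicity reduction to the endpoint value of $r$. Throughout, set $X:=x+C\geq 1$ and $y:=X^{1/\gamma}\geq 1$. Then
\begin{align*}
X^{\frac{2-\gamma}{\gamma}}=y^{2-\gamma}=y\cdot y^{1-\gamma},
\end{align*}
so each threshold in the statement has the form $y+c\,y^{1-\gamma}$ with $c$ a multiple of $ty$.

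\emph{First claim.} We must show $r^\gamma\geq X+rt$, i.e.\ $h(r):=r^\gamma-rt-X\geq 0$, for all $r\geq r_0:=y+\frac{2t}{\gamma}y^{2-\gamma}$.

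For $r\geq y\geq 1$ we have $h'(r)=\gamma r^{\gamma-1}-t\geq 2-\frac14>0$. Since $r_0\geq y$, $h$ is increasing on $[r_0,\infty)$, and it suffices to check $h(r_0)\geq 0$.

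Write $r_0=y+c\,y^{1-\gamma}$ with $c=\frac{2t}{\gamma}y\geq 0$. Item (3) of Lemma~\ref{lem:beta_pert} (which allows any $c\geq -y^\gamma$) gives
\begin{align*}
r_0^\gamma\geq y^\gamma+\gamma c=X+2ty.
\end{align*}
It remains to check $2ty\geq r_0t$, i.e.\ $r_0\leq 2y$. This is equivalent to $\frac{2t}{\gamma}y^{2-\gamma}\leq y$. It holds because $t\leq\frac14$, $\gamma\geq 2$ and $y\geq1$ give $\frac{2t}{\gamma}y^{2-\gamma}\leq \frac14 y^{2-\gamma}\leq \frac14\leq y$. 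Hence $h(r_0)\geq 0$.

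\emph{Second claim.} Since $|x-rt|+C\geq X-rt$, it suffices to show $r^\gamma+rt\leq X$ for $r\leq r_1:=y-\frac{4t}{\gamma}y^{2-\gamma}$.

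For negative $r$ the claim holds trivially whenever the power is interpreted on $r\geq0$, as the right-hand side is nonnegative. So we take $0\leq r\leq r_1$. The map $r\mapsto r^\gamma+rt$ is increasing on $[0,\infty)$, so it suffices to treat $r=r_1$ (assuming $r_1\geq 0$; otherwise there is nothing to prove).

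Write $r_1=y+c\,y^{1-\gamma}$ with $c=-\frac{4t}{\gamma}y$. The hypothesis of item (2) of Lemma~\ref{lem:beta_pert} is $-\frac{y^\gamma}{2}\leq c\leq 0$. It holds since $\frac{4t}{\gamma}y\leq \frac{1}{2}y\leq\frac12 y^\gamma$, using $t\leq\frac14$, $\gamma\geq2$ and $y\geq 1$. Item (2) then yields
\begin{align*}
r_1^\gamma\leq X+\frac{\gamma}{4}c=X-ty.
\end{align*}
Since $r_1\leq y$, we get $r_1^\gamma+r_1t\leq X-ty+ty=X$, which proves the second claim.

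The main obstacle is the bookkeeping of the scale. One has to recognize that the correction $\frac{t}{\gamma}X^{(2-\gamma)/\gamma}$ is exactly $c\,y^{1-\gamma}$ with $c\sim ty$, so that Lemma~\ref{lem:beta_pert} produces an additive gain of order $ty$. This gain matches the drift term $rt\approx yt$, and the constants $2$ and $4$ are chosen so that the factors $\gamma$ and $\gamma/4$ from items (3) and (2) beat it. Once this is seen, the monotonicity reduction to $r=r_0$ or $r=r_1$ is routine.
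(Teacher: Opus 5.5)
Your proof is correct. The first claim follows the paper's argument. You show monotonicity of $r^\gamma - rt - X$ beyond the threshold, then apply item (3) of Lemma~\ref{lem:beta_pert} at $r_0$ to get $r_0^\gamma \ge X + 2ty$, which dominates the drift $r_0 t$. The paper writes the remainder as $ty\,(1 - t\,y^{1-\gamma})$ instead of using $r_0 \le 2y$, but this is the same estimate.

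The second claim is where you take a genuinely different and shorter route. The paper also derives $r_1^\gamma \le X - ty$ from item (2). It then splits into the cases $r_1 \le x/t$ and $r_1 > x/t$. In the second case it treats the range $x/t \le r \le r_1$, where $|x - rt| = rt - x$, separately. There it uses convexity of $r^\gamma + x - rt - C$, checks both endpoints, and needs a contradiction argument to show $(x/t)^\gamma \le C$.

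Your observation that $|x - rt| + C \ge X - rt$ always holds removes all of this. The inequality $r^\gamma + rt \le X$, which is exactly what the paper proves in its first case, already gives the conclusion for every $0 \le r \le r_1$, whatever the sign of $x - rt$. So you prove a slightly stronger statement, $r^\gamma \le X - rt$ on the whole range, with one monotonicity check. Nothing is lost, since only the lemma's stated conclusion is used later in Proposition~\ref{prop:boundary_int}. Your handling of $r < 0$ (trivial, since the right-hand side is nonnegative) is also fine; in fact $r_1 \ge y - \tfrac12 > 0$ always.
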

\begin{proof}
    Without loss of generality, we assume $t>0$. We first apply Lemma \ref{lem:beta_pert} to obtain
    \begin{align}
        \left((x+C)^{\frac{1}{\gamma}} + 2\frac{t}{\gamma}(x+C)^{\frac{2-\gamma}{\gamma}}\right)^\gamma&\geq (x+C) + 2t(x+C)^{\frac{1}{\gamma}},\label{eq:gamma_pert_1}\\
        \left((x+C)^{\frac{1}{\gamma}} - \frac{4t}{\gamma}(x+C)^{\frac{2-\gamma}{\gamma}}\right)^\gamma&\leq (x+C) - t(x+C)^{\frac{1}{\gamma}}.\label{eq:gamma_pert_2}
    \end{align}
    Starting from these facts, we will show that $r = (x+C)^{\frac{1}{\gamma}} + 2\frac{t}{\gamma}(x+C)^{\frac{2-\gamma}{\gamma}}$ (resp. $r = (x+C)^{\frac{1}{\gamma}} - \frac{4t}{\gamma}(x+C)^{\frac{2-\gamma}{\gamma}}$) fulfills the first (resp. second) inequality. After that, we enlarge the region such that each inequality holds.
    
    For the first inequality in the lemma, we need to verify
    \begin{align*}
        r^\gamma - (x+r t) - C\geq 0.
    \end{align*}
    The left-hand side is an increasing function when $r\geq (t/\gamma)^{\frac{1}{\gamma-1}}$, so when $r\geq (x+C)^{\frac{1}{\gamma}} + 2\frac{t}{\gamma}(x+C)^{\frac{2-\gamma}{\gamma}}$ as $x+C\geq 1$. Using \eqref{eq:gamma_pert_1}, we have
    \begin{align*}
        &\left((x+C)^{\frac{1}{\gamma}} + 2\frac{t}{\gamma}(x+C)^{\frac{2-\gamma}{\gamma}}\right)^\gamma - \left((x+C)^{\frac{1}{\gamma}} + 2\frac{t}{\gamma}(x+C)^{\frac{2-\gamma}{\gamma}}\right) t - (x + C)\\
        &\geq t(x+C)^{\frac{1}{\gamma}} - t^2(x+C)^{\frac{2-\gamma}{\gamma}} = t(x+C)^{\frac{1}{\gamma}}\left(1 - t(x+C)^{\frac{1-\gamma}{\gamma}}\right)\\
        &\geq 0.
    \end{align*}
    It means that $r\geq ((x+rt)+C)^{\frac{1}{\gamma}}$ when $r\geq (x+C)^{\frac{1}{\gamma}} + 2\frac{t}{\gamma}(x+C)^{\frac{2-\gamma}{\gamma}}$.

    For the second inequality, we need to check
    \begin{align*}
        r^\gamma - |x - r t| - C\leq 0.
    \end{align*}
    We divide into two cases: $(x+C)^{\frac{1}{\gamma}} - \frac{4t}{\gamma}(x+C)^{\frac{2-\gamma}{\gamma}}\leq \frac{x}{t}$ and the other case. For the former case, when $r\leq \frac{x}{t}$, then the inequality is same as
    \begin{align}\label{eq:gamma_pert_2_1}
        r^\gamma - x + r t - C\leq 0.
    \end{align}
    If $r= (x+C)^{\frac{1}{\gamma}} - \frac{4t}{\gamma}(x+C)^{\frac{2-\gamma}{\gamma}}$, we obtain from \eqref{eq:gamma_pert_2} that
    \begin{align*}
        &\left((x+C)^{\frac{1}{\gamma}} - \frac{4t}{\gamma}(x+C)^{\frac{2-\gamma}{\gamma}}\right)^\gamma + \left((x+C)^{\frac{1}{\gamma}} - \frac{4t}{\gamma}(x+C)^{\frac{2-\gamma}{\gamma}}\right)t -x - C\\
        &\leq (x+C) - t(x+C)^{\frac{1}{\gamma}} + \left((x+C)^{\frac{1}{\gamma}} - \frac{4t}{\gamma}(x+C)^{\frac{2-\gamma}{\gamma}}\right)t -x - C\\
        &\leq 0.
    \end{align*}
    Since left-hand side of \eqref{eq:gamma_pert_2_1} is an increasing function of $r$, the inequality holds whenever $r\leq (x+C)^{\frac{1}{\gamma}} - \frac{4t}{\gamma}(x+C)^{\frac{2-\gamma}{\gamma}}$.

    Next, we assume $(x+C)^{\frac{1}{\gamma}} - \frac{4t}{\gamma}(x+C)^{\frac{2-\gamma}{\gamma}}> \frac{x}{t}$. Then, we need to check the inequality
    \begin{align*}
        r^\gamma + x - r t - C\leq 0
    \end{align*}
    for $\frac{x}{t}\leq r<(x+C)^{\frac{1}{\gamma}} - \frac{4t}{\gamma}(x+C)^{\frac{2-\gamma}{\gamma}}$; since the original inequality is an increasing function for $r\leq \frac{x}{t}$, if it is negative at $r = \frac{x}{t}$, it still holds for $r\leq \frac{x}{t}$. First, when $r = (x+C)^{\frac{1}{\gamma}} - \frac{4t}{\gamma}(x+C)^{\frac{2-\gamma}{\gamma}}$, we have
    \begin{align*}
        &\left((x+C)^{\frac{1}{\gamma}} - \frac{4t}{\gamma}(x+C)^{\frac{2-\gamma}{\gamma}}\right)^\gamma - \left((x+C)^{\frac{1}{\gamma}} - \frac{4t}{\gamma}(x+C)^{\frac{2-\gamma}{\gamma}}\right)t + x - C\\
        &\leq (x+C) - t(x+C)^{\frac{1}{\gamma}} - \left((x+C)^{\frac{1}{\gamma}} - \frac{4t}{\gamma}(x+C)^{\frac{2-\gamma}{\gamma}}\right)t + x - C\\
        &=2x - 2t\left((x+C)^{\frac{1}{\gamma}} - \frac{2t}{\gamma}(x+C)^{\frac{2-\gamma}{\gamma}}\right)\\
        &\leq 2t\left((x+C)^{\frac{1}{\gamma}} - \frac{4t}{\gamma}(x+C)^{\frac{2-\gamma}{\gamma}}\right) - 2t\left((x+C)^{\frac{1}{\gamma}} - \frac{2t}{\gamma}(x+C)^{\frac{2-\gamma}{\gamma}}\right)\leq 0
    \end{align*}
    as $x<t\left((x+C)^{\frac{1}{\gamma}} - \frac{4t}{\gamma}(x+C)^{\frac{2-\gamma}{\gamma}}\right)$.

    Next, we put $r = \frac{x}{t}$ and verify
    \begin{align*}
        \left(\frac{x}{t}\right)^\gamma - C\leq 0.
    \end{align*}
    We claim that $\left(\frac{x}{t}\right)^\gamma>C$ can not happen. If this were the case, then
    \begin{align*}
        C< \left(\frac{x}{t}\right)^\gamma &<\left((x+C)^{\frac{1}{\gamma}} - \frac{4t}{\gamma}(x+C)^{\frac{2-\gamma}{\gamma}}\right)^\gamma\\
        &\leq (x+C) - t(x+C)^{\frac{1}{\gamma}},
    \end{align*}
    so
    \begin{align*}
        (x+C)^{\frac{1}{\gamma}}<\frac{x}{t}.
    \end{align*}
    It contradicts the assumption. Therefore, we get $\left(\frac{x}{t}\right)^\gamma - C\leq 0$.

    Finally, as $r^\gamma + x - r t - C$ is a convex function in $r$, we get $r^\gamma + x - r t - C\leq 0$ for $\frac{x}{t}\leq r\leq (x+C)^{\frac{1}{\gamma}} - \frac{4t}{\gamma}(x+C)^{\frac{2-\gamma}{\gamma}}$.
\end{proof}

The following technical lemma will be used in the proof of Proposition~\ref{prop:M[f]_integral} to bound a sum of two exponents, as in \eqref{eq:concaten_example1}.
\begin{lemma}\label{lem:concaten}
    Let $\beta>0$ and $C>0$ be constants. For $x,y>0$, we have
    \begin{align*}
        x^{\frac{\beta}{1+\beta}} + \frac{y^2}{Cx^{\frac{2}{1+\beta}}}\geq (x+y)^{\frac{\beta}{1+\beta}} - \left((1+C)^{\frac{\beta}{1+\beta}} + \frac{\beta}{1+\beta}C\right).
    \end{align*}
\end{lemma}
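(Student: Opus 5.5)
The plan is to linearize the concave power $(x+y)^{\frac{\beta}{1+\beta}}$ around $x$ and then absorb the linear term into the quadratic term $\frac{y^2}{Cx^{2/(1+\beta)}}$. Write $b:=\frac{\beta}{1+\beta}\in(0,1)$ and note that $b-1=-\frac{1}{1+\beta}$. Applying \eqref{eq:beta_pert_5} of Lemma \ref{lem:beta_pert} with exponent $b\le 1$ gives
\begin{align*}
(x+y)^{b}\leq x^{b}+b\,y\,x^{b-1}=x^{b}+b\,\frac{y}{x^{\frac{1}{1+\beta}}}.
\end{align*}
It therefore suffices to show
\begin{align*}
b\,\frac{y}{x^{\frac{1}{1+\beta}}}\leq \frac{y^2}{Cx^{\frac{2}{1+\beta}}}+(1+C)^{b}+bC.
\end{align*}

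The key observation is that both sides depend on $(x,y)$ only through $s:=y\,x^{-\frac{1}{1+\beta}}>0$. Indeed, the left side equals $bs$ and the quadratic term equals $s^2/C$. The claim thus reduces to the one-variable inequality
\begin{align*}
bs\leq \frac{s^2}{C}+K,\qquad K:=(1+C)^{b}+bC,\qquad s>0.
\end{align*}

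I will prove this by splitting into two cases according to the size of $s$.
\begin{itemize}
\item If $s\geq bC$, then $\frac{s^2}{C}\geq bs$ directly, and $K\ge 0$.
\item If $s<bC$, then $bs<b^2C\leq bC\leq K$, since $b<1$ and $(1+C)^b>0$.
\end{itemize}
Alternatively, one can minimize the quadratic $s^2/C-bs$, whose minimum value is $-\frac{b^2C}{4}\ge -bC$.

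The argument is short and I do not expect a genuine obstacle. The only point requiring care is checking that the exponents match exactly, namely $x^{b-1}=x^{-\frac{1}{1+\beta}}$ and $x^{\frac{2}{1+\beta}}=\bigl(x^{\frac{1}{1+\beta}}\bigr)^2$, so that the reduction to the single variable $s$ is exact. The stated constant is generous: the term $(1+C)^{b}$ is not needed by this route, and the inequality holds even with $K$ replaced by $\frac{b^2C}{4}$.
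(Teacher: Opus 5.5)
Your proof is correct and is essentially the paper's argument. Both use the tangent-line bound \eqref{eq:beta_pert_5} for the exponent $\frac{\beta}{1+\beta}\le 1$, split according to the size of $y\,x^{-\frac{1}{1+\beta}}$ (the paper at $C$, you at $bC$), and absorb the linear term into $\frac{y^2}{Cx^{2/(1+\beta)}}$ when it is large. Your version is slightly cleaner: applying \eqref{eq:beta_pert_5} for all $x>0$ removes the paper's extra split at $x=1$, which is the only source of the $(1+C)^{\frac{\beta}{1+\beta}}$ term, and your remark that the constant can be reduced to $\frac{b^2C}{4}$ is right.
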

\begin{proof}
    If $y\leq Cx^{\frac{1}{1+\beta}}$, then by Lemma~\ref{lem:beta_pert} (for $x\geq 1$), we have
    \begin{align*}
        (x+y)^{\frac{\beta}{1+\beta}} - (1+C)^{\frac{\beta}{1+\beta}} - \frac{\beta}{1+\beta}C&\leq (x+Cx^{\frac{1}{1+\beta}})^{\frac{\beta}{1+\beta}} - (1+C)^{\frac{\beta}{1+\beta}}- \frac{\beta}{1+\beta}C\\
        &\leq \begin{cases}
            (1+C)^{\frac{\beta}{1+\beta}}- (1+C)^{\frac{\beta}{1+\beta}}& x\leq 1\\
            x^{\frac{\beta}{1+\beta}}+\frac{\beta}{1+\beta}C  - \frac{\beta}{1+\beta}C& x\geq 1
        \end{cases}\\
        &\leq x^{\frac{\beta}{1+\beta}}.
    \end{align*}
    If $y\geq Cx^{\frac{1}{1+\beta}}$, then applying Lemma~\ref{lem:beta_pert} again, we get
    \begin{align*}
        (x+y)^{\frac{\beta}{1+\beta}} - x^{\frac{\beta}{1+\beta}}\leq \frac{\beta}{1+\beta}yx^{-\frac{1}{1+\beta}}\leq \frac{y^2}{Cx^{\frac{2}{1+\beta}}}
    \end{align*}
    as
    \begin{align*}
        \frac{\beta}{1+\beta}C x^{\frac{1}{1+\beta}}\leq y.
    \end{align*}
    This proves the inequality.
\end{proof}

\section{A priori estimates for the macroscopic fields}\label{sec:Macro_quan}
In Section \ref{sec:Macro_quan}, we choose a specific inflow boundary data $g$ on $\partial \mathbb{R}^n_+$, which will generate the desired exponential ill-posedness for Theorem~\ref{thm:main_BGK}. We compute the sharp lower and upper bounds of the macroscopic fields of a solution with the inflow boundary data $g$ by controlling the nonlinear local Maxwellian part $\mathcal{M}(f)(t,x,v)$. These \textit{a priori} estimates will play crucial roles in showing the existence, uniqueness, and the exponential ill-posedness of the solution.

For $\alpha>0$, $1\leq\beta\leq 2$, and $m\geq 0$, we define the inflow boundary data $g(t,x_h,v)$ for $t\geq 0$ and $(x_h,v)\in \partial \mathbb{R}^n_+ \times \mathbb{R}^n$ by
\begin{align}\label{eq:boundary_data}
    g(t,x_h,v) = e^{-t}\langle v_h\rangle^{-m}e^{-\alpha |v_h|^\beta}\mathbf{1}_{|v_h|\geq \left||x_h-v_ht| + v_z\right|^{\frac{1}{1+\beta}}}\mathbf{1}_{v_z\geq 0}.
\end{align}
It will be the inflow data for Theorem~\ref{thm:main_BGK} exhibiting the ill-posedness. If we define
\begin{align*}
    g_0(x_h,v) = \langle v_h\rangle^{-m}e^{-\alpha|v_h|^\beta}\mathbf{1}_{|v_h|\geq \left||x_h| + v_z\right|^{\frac{1}{1+\beta}}}\mathbf{1}_{v_z\geq 0},
\end{align*}
then $g(t,x_h,v) = e^{-t}g_0(x_h-v_ht, v)$.

We recall that a solution of the BGK model $f$ under the initial data $f_0(x,v) = 0$ and the inflow boundary data $g$ should satisfy \eqref{eq:BGK_sol_def}:
\begin{align*}
    f(t,x,v) = e^{-t_b}g(t-t_b, x - vt_b, v)\mathbf{1}_{t\geq t_b} + \int_0^t e^{-(t - \tau)} \mathcal{M}(f)(\tau, x - v(t - \tau), v)\mathbf{1}_{z-v_z(t-\tau)> 0}\,d\tau.
\end{align*}

For any given constant $\epsilon>0$ in Theorem~\ref{thm:main_BGK}, we can scale $g$ by $\epsilon g$ and the solution $f$ by $\epsilon f$ to write
\begin{align*}
    \epsilon f(t,x,v) &= e^{-t_b}\epsilon g(t-t_b, x - vt_b, v)\mathbf{1}_{t\geq t_b} + \int_0^t e^{-(t - \tau)} \mathcal{M}(\epsilon f)(\tau, x - v(t - \tau), v)\mathbf{1}_{z-v_z(t-\tau)> 0}\,d\tau\\
    &=\epsilon\left(e^{-t_b}g(t-t_b, x - vt_b, v)\mathbf{1}_{t\geq t_b} + \int_0^t e^{-(t - \tau)} \mathcal{M}(f)(\tau, x - v(t - \tau), v)\mathbf{1}_{z-v_z(t-\tau)> 0}\,d\tau\right).
\end{align*}
Therefore, it is enough to assume $\epsilon = 1$ and use the original $g$ to prove Theorem~\ref{thm:main_BGK}.

We recall the macroscopic fields generated by the damped free transport of the inflow boundary data $g$, as in the second condition of Definition \ref{def:BGK_sol_space}.
\begin{definition}\label{Def:macro_0}
    For $t> 0$ and $x\in\mathbb{R}^n_+$, we define $(\rho_g, u_g, T_g)$ by
    \begin{align*}
        \rho_g(t,x)\left(1, u_g(t,x), nT_g(t,x) + u_g^2(t,x)\right) &\coloneqq \int_{\mathbb{R}^n} (1,v,|v|^2) e^{-t_b}g(t-t_b, x-vt_b, v)\mathbf{1}_{t\geq t_b}\,dv\\
        &= e^{-t}\int_{\mathbb{R}^n} (1,v,|v|^2) g_0(x_h-v_ht, v)\mathbf{1}_{t\geq t_b}\,dv.
    \end{align*}
\end{definition}
Note that when $t = 0$, $\rho_g(0,x) = 0$, so $u_g(0,x)$ and $T_g(0,x)$ are not determined. To match the vacuum initial data, we choose $u_g(0,x) = 0$ and $T_g(0,x) = 0$. It will not be a problem for our further analysis since $t=0$ is a measure-null set in the time interval.

The next proposition states that the macroscopic fields generated by the free transport of the inflow boundary data $g$ are well-posed with local-in-time explicit behavior estimates. Furthermore, we check that the free transport function is finite in the exponentially weighted norm. This shows that the boundary term itself does not cause weighted-norm divergence.
\begin{proposition}\label{prop:boundary_int}
    For $\alpha>0$, $1\leq\beta\leq 2$, $m\geq 0$, we set the inflow boundary data $g$ by \eqref{eq:boundary_data}. Then,
    \begin{align}\label{eq:macro_0_bound}
        \begin{split}
            &C_1\left(|x_h| + \frac{z}{t}+1\right)^{\frac{(n-m)-\beta}{1+\beta}} e^{-\alpha(|x_h| + \frac{z}{t}+1)^{\frac{\beta}{1+\beta}}}\leq \rho_g(t,x)\leq C_2\left(|x_h| + \frac{z}{t}+1\right)^{\frac{(n-m)-\beta}{1+\beta}} e^{-\alpha(|x_h| + \frac{z}{t}+1)^{\frac{\beta}{1+\beta}}},\\
            &\left|u_g(t,x) - \frac{z}{t}\hat{z}\right|\leq C_3\left(|x_h| + \frac{z}{t}+1\right)^{\frac{1}{1+\beta}},\\
            &C_4\left(|x_h| + \frac{z}{t}+1\right)^{\frac{2}{1+\beta}} \leq T_g(t,x)\leq C_5\left(|x_h| + \frac{z}{t}+1\right)^{\frac{2}{1+\beta}}
        \end{split}
    \end{align}
    for $0< t\leq \frac{1}{4}$, where $C_i>0$, $1\leq i\leq 5$, are constants depending only on $\alpha$, $\beta$, $m$, and dimension $n$. Moreover, for given $1\leq p,q\leq \infty$ and $\delta\geq 0$, suppose $m$ satisfies
    \begin{align}\label{eq:m_cond1}
        \frac{n+\beta}{p}+(1+\beta)\left(\delta + \frac{n}{q}\right)<m
    \end{align}
    with the convention $1/\infty =0$. Then, we have
    \begin{align*}
        \left\|\langle v\rangle^\delta e^{-t_b}e^{\alpha|v_h|^\beta}g(t-t_b,x_h-v_ht_b,v)\mathbf{1}_{v_z\geq \frac{z}{t}}\right\|_{L^q_x(\mathbb{R}^n_+, L^p_v(\mathbb{R}^n))}\leq C t^{1/q}(1+t^2)^{n/q}e^{-t},
    \end{align*}
    and
    \begin{align*}
        &\left\|\langle v\rangle^\delta e^{-t_b}e^{\alpha|v_h|^\beta}g(t-t_b,x_h-v_ht_b,v)\mathbf{1}_{v_z\geq \frac{z}{t}}\right\|_{L^p_v(\mathbb{R}^n, L^q_x(\mathbb{R}^n_+))}\\
        &=e^{-t}\left\|v_z^{1/q}e^\tau\langle v\rangle^\delta e^{\alpha|v_h|^\beta}g(\tau,x_h,v)\right\|_{L^p_v(\mathbb{R}^n, L^q_{\tau,x_h}([0,t]\times \partial \mathbb{R}^{n}_+))}\leq Ct^{1/q}e^{-t}
    \end{align*}
    for all $t\geq 0$, where the constant $C$ depends on $\alpha$, $\beta$, $m$, $n$, $p$, $q$, and $\delta$.
\end{proposition}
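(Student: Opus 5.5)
Writing out the integrand makes the product structure visible. Since $g(t,x_h,v)=e^{-t}g_0(x_h-v_ht,v)$ and $t_b=z/v_z$, we have
\begin{align*}
e^{-t_b}g(t-t_b,x-vt_b,v)\mathbf{1}_{t\geq t_b}=e^{-t}\langle v_h\rangle^{-m}e^{-\alpha|v_h|^\beta}\mathbf{1}_{|v_h|\geq (|x_h-v_ht|+v_z)^{\frac{1}{1+\beta}}}\mathbf{1}_{v_z\geq \frac{z}{t}}.
\end{align*}
I would therefore compute the moments by integrating in $v_z$ first, then in $v_h$. For fixed $v_h$, the admissible $v_z$ form the interval $\frac{z}{t}\leq v_z\leq |v_h|^{1+\beta}-|x_h-v_ht|$. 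Substituting $w=v_z-\frac{z}{t}$ gives $0\leq w\leq |v_h|^{1+\beta}-|x_h-v_ht|-\frac{z}{t}=:L(v_h)$. The zeroth, first ($w$), and second ($w^2$) moments in $w$ are $L_+$, $L_+^2/2$, and $L_+^3/3$. This explains why $u_g-\frac{z}{t}\hat z$ and $T_g$ are naturally measured in the shifted variable, and the theorem's grouping $X:=|x_h|+\frac{z}{t}+1$ appears automatically.

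The key step is to show that $L_+$ behaves like $(|v_h|^{1+\beta}-X)_+$ up to harmless perturbations. Since $0<t\leq\frac14$, we have $\big||x_h-v_ht|-|x_h|\big|\leq t|v_h|$. Lemma~\ref{lem:gamma_pert} (with $\gamma=1+\beta\in[2,3]$, $x=|x_h|$, $C=\frac{z}{t}$, after handling the $+1$ separately) then sandwiches the region $\{L>0\}$ between $\{|v_h|\geq R_-\}$ and $\{|v_h|\geq R_+\}$, where $R_\pm=(X)^{\frac{1}{1+\beta}}\pm O(t X^{\frac{1-\beta}{1+\beta}})$. For the upper bound, I would use $L\leq |v_h|^{1+\beta}-X+t|v_h|+1$. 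Writing $M=R_-$, one has $|v_h|^{1+\beta}-M^{1+\beta}\lesssim M^{\beta}(|v_h|-M)+(|v_h|-M)^{1+\beta}$. Each resulting integral $\int(|v_h|-M)^{k}\langle v_h\rangle^{j-m}e^{-\alpha|v_h|^\beta}\mathbf 1_{|v_h|\geq M}\,dv_h$ over $\mathbb R^{n-1}$ is then evaluated by Lemma~\ref{lem:exp_integral} in dimension $n-1$, which gives $\langle M\rangle^{(n-1-\beta)+(1-\beta)k+j-m}e^{-\alpha M^\beta}$. The perturbation $R_-$ versus $X^{\frac1{1+\beta}}$ only changes $e^{-\alpha M^\beta}$ by a bounded factor, because by Lemma~\ref{lem:beta_pert} $M^\beta=X^{\frac{\beta}{1+\beta}}+O(t)$. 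For the lower bound, I would restrict to $|v_h|\geq R_++cX^{\frac{1-\beta}{1+\beta}}$, where $L\gtrsim X^{\frac{\beta}{1+\beta}}(|v_h|-R_+)$, and apply the lower half of Lemma~\ref{lem:exp_integral}.

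Bookkeeping the powers, the $w$-moment of order $k$ has size
\begin{align*}
\int L_+^{k+1}\sim X^{\frac{(n-1-m)-\beta}{1+\beta}}\,X^{\frac{(k+1)(\beta+1-\beta)}{1+\beta}}e^{-\alpha X^{\frac{\beta}{1+\beta}}},
\end{align*}
since each factor $L\sim M^\beta(|v_h|-M)$ contributes $M^{\beta}\cdot M^{1-\beta}=M$. For $k=0$ this gives $\rho_g$ exactly of the claimed size. Dividing the $k=1$ moment by $\rho_g$ gives $|u_{g,z}-\frac zt|\lesssim X^{\frac1{1+\beta}}$. The tangential part $u_{g,h}$ is bounded by the $v_h$-moment, where the extra factor $|v_h|$ is of order $M$ on the essential support, which is $X^{\frac{1}{1+\beta}}$. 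For $T_g$, I write $nT_g=\rho_g^{-1}\int|v-u_g|^2\ge \rho_g^{-1}\int|v_h-u_{g,h}|^2$. The upper bound comes from the second moments, which are $\lesssim X^{\frac2{1+\beta}}\rho_g$. For the lower bound I use that $|v_h|$ ranges over a shell of width comparable to $M$ in at least one direction, since $n-1\geq1$; concretely, $\int|v_h-a|^2\geq$ the variance of the angular part of $v_h$, which is $\gtrsim M^2$ for every $a$.

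For the norm bounds, after multiplying by $e^{\alpha|v_h|^\beta}$ the integrand is $e^{-t}\langle v\rangle^\delta\langle v_h\rangle^{-m}$ times indicators. On the support, $v_z\leq |v_h|^{1+\beta}$, so $\langle v\rangle^\delta\lesssim\langle v_h\rangle^{(1+\beta)\delta}$. The $L^p_vL^q_x$ identity is Lemma~\ref{lem:trace}. There the $(\tau,x_h)$ integral runs over $|x_h|\leq |v_h|^{1+\beta}$, contributing $|v_h|^{(1+\beta)(n-1)/q}t^{1/q}$, and $v_z^{1/q}\leq |v_h|^{(1+\beta)/q}$. The remaining $v$-integral is $\int\langle v_h\rangle^{p[(1+\beta)(\delta+n/q)-m]}\,\mathbf 1_{v_z\leq|v_h|^{1+\beta}}\,dv$, which contributes a further $|v_h|^{1+\beta}$ from the $v_z$-length. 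It therefore converges exactly under \eqref{eq:m_cond1}. The $L^q_xL^p_v$ case is analogous: for fixed $x$, the $v$-integral is at most $X^{\text{(negative power)}}$ by the same count, and integrating over $x_h$ and over $z$ with $\frac zt$ scaling gives the factor $t^{1/q}$; the $(1+t^2)^{n/q}$ loss covers the shift $x_h-v_ht$ for large $t$.

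I expect the main obstacle to be the sharp two-sided estimate of $L_+$, in particular showing that the $t|v_h|$ shift and the $\frac zt$ coupling do not destroy the $e^{-\alpha X^{\beta/(1+\beta)}}$ factor. This is exactly where Lemmas~\ref{lem:gamma_pert} and \ref{lem:beta_pert} are needed, together with the restriction $t\leq\frac14$.
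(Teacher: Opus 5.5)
Your proposal is correct. For the two weighted-norm bounds it follows the paper's argument:
\begin{itemize}
\item for $L^p_vL^q_x$, Lemma~\ref{lem:trace} together with $v_z\le|v_h|^{1+\beta}$ and the $x_h$-ball of radius at most $|v_h|^{1+\beta}$ (it is centred at $v_h\tau$, not at $0$, which is harmless after translation);
\item for $L^q_xL^p_v$, the fact that $|v_h|\ge\frac12(|x_h|+\frac zt)^{\frac{1}{1+\beta}}$ on the support once $|x_h|+\frac zt\ge1+t^2$.
\end{itemize}

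For the macroscopic fields you reverse the paper's order of integration. The paper shifts $v_z\mapsto v_z+\frac zt$ and applies Lemma~\ref{lem:gamma_pert} with $C=\frac zt+v_z$, so the radial $v_h$-threshold depends on $v_z$. It then does the $v_h$-integral by Lemma~\ref{lem:exp_integral} with $\delta_1=0$, rerun with $m-1$, $m-2$ for the tangential weights. Finally it does the remaining $v_z$-integral by a second, one-dimensional use of Lemma~\ref{lem:exp_integral} with exponent $\frac{\beta}{1+\beta}$ and $\delta_1\ge1$ for the normal moments. You instead integrate out the explicit $v_z$-interval first, reducing every moment to $\int L_+^{k+1}\langle v_h\rangle^{-m}e^{-\alpha|v_h|^\beta}\,dv_h$, and then apply Lemma~\ref{lem:exp_integral} once in $\mathbb{R}^{n-1}$ with $\delta_1=k+1$. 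Your route makes the power counting transparent, since each factor of $L$ costs $M^\beta M^{1-\beta}=M$. The price is a mean-value bound on $|v_h|^{1+\beta}-M^{1+\beta}$ and a linear lower bound for $L$ beyond $R_+$; the paper uses both lemmas exactly as stated.

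For the lower bound on $T_g$, the paper passes to a radially symmetric minorant (from $|x_h-v_ht|\le|x_h|+|v_h|t$) and restricts it to the hemisphere $v_h\cdot u_{g,h}\le0$, which carries half its mass. Your variance argument is an equivalent alternative, but you should apply it to such a symmetric minorant, for instance
\[
\nu(v_h)=\langle v_h\rangle^{-m}e^{-\alpha|v_h|^\beta}\bigl(|v_h|^{1+\beta}-|x_h|-\tfrac zt-t|v_h|\bigr)_+ .
\]
For this $\nu$ one has, for every $a$,
\[
\int|v_h-a|^2\,d\nu=\int|v_h|^2\,d\nu+|a|^2\nu(\mathbb{R}^{n-1})\ge\int|v_h|^2\,d\nu .
\]
The true weight $L_+(v_h)$ is not symmetric in $v_h$; for $n=2$ your ``angular part'' is only a sign. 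Applying the claim to $L_+$ directly would therefore need an extra comparison of the masses on opposite sides.
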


\begin{proof}
    We first compute the macroscopic fields $(\rho_g, u_g, T_g)$ step by step and then prove finiteness in the exponentially weighted norm. The main difficulty in estimating the macroscopic fields lies in handling the cutoff functions
    \begin{align*}
        \mathbf{1}_{|v_h|\geq \left||x_h-v_ht| + v_z\right|^{\frac{1}{1+\beta}}}\qquad \text{and}\qquad \mathbf{1}_{t\geq t_b} = \mathbf{1}_{v_z\geq \frac{z}{t}}.
    \end{align*}\\
    
    \noindent \textbullet{} \textit{$\rho_g$ computation.} Throughout the computation, it is convenient to employ the change of variable $v_z\mapsto v_z + \frac{z}{t}$, so we write
    \begin{align*}
        \rho_g(t,x) = e^{-t}\int_{\mathbb{R}^n} g_0(x_h-v_ht, v)\mathbf{1}_{v_z\geq \frac{z}{t}}\,dv = e^{-t}\int_{\mathbb{R}^n} \langle v_h\rangle^{-m}e^{-\alpha |v_h|^\beta}\mathbf{1}_{|v_h|\geq \left(|x_h-v_ht|+\frac{z}{t} + v_z\right)^{\frac{1}{1+\beta}}}\mathbf{1}_{v_z\geq 0}\,dv.
    \end{align*}
    Using the elementary bound
    \begin{align*}
        \left||x_h| - |v_h|t\right|\leq |x_h-v_ht|\leq |x_h|+|v_h|t,
    \end{align*}
    we bound $\rho_g(t,x)$ as follows:
    \begin{align*}
        \rho_g(t,x) &\leq \int_0^\infty \int_{\mathbb{R}^{n-1}} \langle v_h\rangle^{-m}e^{-\alpha|v_h|^\beta} \mathbf{1}_{|v_h|\geq \left(\left||x_h| - |v_h| t\right|+\frac{z}{t}+v_z\right)^{\frac{1}{1+\beta}}} \,dv_hdv_z,
        \intertext{and}
        \rho_g(t,x)&\geq e^{-\frac{1}{4}}\int_0^\infty \int_{\mathbb{R}^{n-1}} \langle v_h\rangle^{-m}e^{-\alpha|v_h|^\beta} \mathbf{1}_{|v_h|\geq \left((|x_h| + |v_h| t)+\frac{z}{t}+v_z\right)^{\frac{1}{1+\beta}}} \,dv_hdv_z
    \end{align*}
    for $0< t\leq \frac{1}{4}$. To employ Lemma~\ref{lem:gamma_pert}, we divide cases depending on the size of $|x_h| + \frac{z}{t}$.\\
    
    \noindent \textbf{(a)} If $|x_h| + \frac{z}{t}<1$, then we further divide $v_z$ into $0\leq v_z\leq 1$ and $v_z\geq 1$. We first assume $0\leq v_z\leq 1$. If $|v_h|= 2$ and $t\leq \frac{1}{4}$, then
    \begin{align*}
        ((|x_h| + |v_h| t)+\frac{z}{t}+v_z)^{\frac{1}{1+\beta}}\leq \left(\frac{1}{2}+2\right)^{\frac{1}{2}}\leq 2
    \end{align*}
    for $1\leq \beta\leq 2$. It implies that $|v_h|\geq ((|x_h| + |v_h| t)+\frac{z}{t}+v_z)^{\frac{1}{1+\beta}}$ for $|v_h|\geq 2$. Therefore, we get
    \begin{align*}
        &\int_0^1 \int_{\mathbb{R}^{n-1}} \langle v_h\rangle^{-m}e^{-\alpha|v_h|^\beta} \mathbf{1}_{|v_h|\geq \left(\left||x_h| - |v_h| t\right|+\frac{z}{t}+v_z\right)^{\frac{1}{1+\beta}}}\,dv_hdv_z\leq \int_0^1 \int_{\mathbb{R}^{n-1}} \langle v_h\rangle^{-m}e^{-\alpha|v_h|^\beta}\,dv_hdv_z\leq C'_1,
        \intertext{and}
        &\int_0^1 \int_{\mathbb{R}^{n-1}} \langle v_h\rangle^{-m}e^{-\alpha|v_h|^\beta} \mathbf{1}_{|v_h|\geq \left((|x_h|+|v_h|t)+\frac{z}{t}+v_z\right)^{\frac{1}{1+\beta}}}\,dv_hdv_z\geq \int_0^1 \int_{\mathbb{R}^{n-1}} \langle v_h\rangle^{-m}e^{-\alpha|v_h|^\beta}\mathbf{1}_{|v_h|\geq 2}\,dv_hdv_z\geq C'_2
    \end{align*}
    for some constants $C'_1$ and $C'_2$.
    
    If $v_z\geq 1$, using Lemma~\ref{lem:gamma_pert} for $C = \frac{z}{t} + v_z$, we can take the upper and lower bounds of the step function by
    \begin{align*}
        &\int_1^\infty \int_{\mathbb{R}^{n-1}} \langle v_h\rangle^{-m}e^{-\alpha|v_h|^\beta} \mathbf{1}_{|v_h|\geq \left(\left||x_h|-|v_h|t\right|+\frac{z}{t}+v_z\right)^{\frac{1}{1+\beta}}} \,dv_hdv_z \\
        &\leq \int_1^\infty \int_{\mathbb{R}^{n-1}} \langle v_h\rangle^{-m}e^{-\alpha|v_h|^\beta} \mathbf{1}_{|v_h|\geq \left(|x_h| + \frac{z}{t} + v_z\right)^{\frac{1}{1+\beta}} - \frac{4t}{1+\beta}\left(|x_h| + \frac{z}{t} + v_z\right)^{\frac{1-\beta}{1+\beta}}} \,dv_hdv_z,
        \intertext{and}
        &\int_1^\infty \int_{\mathbb{R}^{n-1}} \langle v_h\rangle^{-m}e^{-\alpha|v_h|^\beta} \mathbf{1}_{|v_h|\geq \left((|x_h|+|v_h|t)+\frac{z}{t}+v_z\right)^{\frac{1}{1+\beta}}} \,dv_hdv_z \\
        &\geq \int_1^\infty \int_{\mathbb{R}^{n-1}} \langle v_h\rangle^{-m}e^{-\alpha|v_h|^\beta} \mathbf{1}_{|v_h|\geq \left(|x_h| + \frac{z}{t} + v_z\right)^{\frac{1}{1+\beta}} + \frac{2t}{1+\beta}\left(|x_h| + \frac{z}{t} + v_z\right)^{\frac{1-\beta}{1+\beta}}} \,dv_hdv_z
    \end{align*}
    since $|x_h| + \frac{z}{t}+v_z\geq 1$ and $0<t\leq \frac{1}{4}$. As $|x_h|+\frac{z}{t}\leq 1$, each integral is bounded by
    \begin{align*}
        &\int_1^\infty \int_{\mathbb{R}^{n-1}} \langle v_h\rangle^{-m}e^{-\alpha|v_h|^\beta} \mathbf{1}_{|v_h|\geq v_z^{\frac{1}{1+\beta}} - \frac{4t}{1+\beta}(1 + v_z)^{\frac{1-\beta}{1+\beta}}} \,dv_hdv_z\leq C''_2,\\
        \intertext{and}
        &\int_1^\infty \int_{\mathbb{R}^{n-1}} \langle v_h\rangle^{-m}e^{-\alpha|v_h|^\beta} \mathbf{1}_{|v_h|\geq (1 + v_z)^{\frac{1}{1+\beta}} + \frac{2t}{1+\beta}(1+v_z)^{\frac{1-\beta}{1+\beta}}} \,dv_hdv_z\geq C''_1
    \end{align*}
    for some uniform constants $C_1''$ and $C_2''$. Combining the two cases $0\leq v_z\leq 1$ and $v_z\geq 1$, we have
    \begin{align}\label{eq:rho_0_bound1}
        C_1\leq \rho_g(t,x)\leq C_2
    \end{align}
    for some constants $C_1$ and $C_2$ when $|x_h| + \frac{z}{t}\leq 1$.\\
    
    \noindent \textbf{(b)} When $|x_h|+\frac{z}{t}>1$, then we directly use Lemma~\ref{lem:gamma_pert} for $|x_h| +\frac{z}{t} + v_z\geq 1$ to bound
    \begin{align*}
        &\int_0^\infty \int_{\mathbb{R}^{n-1}} \langle v_h\rangle^{-m}e^{-\alpha|v_h|^\beta} \mathbf{1}_{|v_h|\geq \left(\left||x_h|-|v_h|t\right|+\frac{z}{t}+v_z\right)^{\frac{1}{1+\beta}}} \,dv_hdv_z \\
        &\leq \int_0^\infty \int_{\mathbb{R}^{n-1}} \langle v_h\rangle^{-m}e^{-\alpha|v_h|^\beta} \mathbf{1}_{|v_h|\geq \left(|x_h| + \frac{z}{t} + v_z\right)^{\frac{1}{1+\beta}} - \frac{4t}{1+\beta}\left(|x_h| + \frac{z}{t} + v_z\right)^{\frac{1-\beta}{1+\beta}}} \,dv_hdv_z,
        \intertext{and}
        &\int_0^\infty \int_{\mathbb{R}^{n-1}} \langle v_h\rangle^{-m}e^{-\alpha|v_h|^\beta} \mathbf{1}_{|v_h|\geq \left((|x_h|+|v_h|t)+\frac{z}{t}+v_z\right)^{\frac{1}{1+\beta}}} \,dv_hdv_z \\
        &\geq \int_0^\infty \int_{\mathbb{R}^{n-1}} \langle v_h\rangle^{-m}e^{-\alpha|v_h|^\beta} \mathbf{1}_{|v_h|\geq \left(|x_h| + \frac{z}{t} + v_z\right)^{\frac{1}{1+\beta}} + \frac{2t}{1+\beta}\left(|x_h| + \frac{z}{t} + v_z\right)^{\frac{1-\beta}{1+\beta}}} \,dv_hdv_z.
    \end{align*}
    From (1) and (3) in Lemma~\ref{lem:beta_pert}, we can check that
    \begin{align*}
        \left(\left(|x_h| + \frac{z}{t} + v_z\right)^{\frac{1}{1+\beta}} + \frac{2t}{1+\beta}\left(|x_h| + \frac{z}{t} + v_z\right)^{\frac{1-\beta}{1+\beta}}\right)^\beta\leq \left(|x_h| + \frac{z}{t} + v_z\right)^{\frac{\beta}{1+\beta}} + \frac{8\beta t}{1+\beta},
        \intertext{and}
        \left(\left(|x_h| + \frac{z}{t} + v_z\right)^{\frac{1}{1+\beta}} - \frac{4t}{1+\beta}\left(|x_h| + \frac{z}{t} + v_z\right)^{\frac{1-\beta}{1+\beta}}\right)^\beta\geq \left(|x_h| + \frac{z}{t} + v_z\right)^{\frac{\beta}{1+\beta}} -\frac{4\beta t}{1+\beta}
    \end{align*}
    for $1\leq \beta\leq 2$. Using Lemma~\ref{lem:exp_integral} for $\delta_1 = 0$ and $\delta_2 = -m$, we have
    \begin{align}\label{eq:rho_upper_v_h}
        \begin{split}
            &\int_0^\infty \int_{\mathbb{R}^{n-1}} \langle v_h\rangle^{-m}e^{-\alpha|v_h|^\beta} \mathbf{1}_{|v_h|\geq \left(|x_h| + \frac{z}{t} + v_z\right)^{\frac{1}{1+\beta}} - \frac{4t}{1+\beta}\left(|x_h| + \frac{z}{t} + v_z\right)^{\frac{1-\beta}{1+\beta}}} \,dv_hdv_z\\
            &\leq C\int_0^\infty \left(\left(|x_h| + \frac{z}{t} + v_z\right)^{\frac{1}{1+\beta}}  + 1\right)^{n-1-\beta-m}e^{-\alpha(|x_h| + \frac{z}{t} + v_z)^{\frac{\beta}{1+\beta}}}\,dv_z\\
            &=C\int_{|x_h| + \frac{z}{t}}^\infty \left(v_z^{\frac{1}{1+\beta}}  + 1\right)^{n-1-\beta-m}e^{-\alpha v_z^{\frac{\beta}{1+\beta}}}\,dv_z.
        \end{split}
    \end{align}
    We again apply Lemma~\ref{lem:exp_integral} for $\delta_1 = 0$ and $\delta_2 = \frac{n-1-\beta-m}{1+\beta}$ to bound the final integral by
    \begin{align*}
        \int_{|x_h| + \frac{z}{t}}^\infty \left(v_z^{\frac{1}{1+\beta}}  + 1\right)^{n-1-\beta-m}e^{-\alpha v_z^{\frac{\beta}{1+\beta}}}\,dv_z\leq C_2\left(|x_h| + \frac{z}{t}+1\right)^{\frac{n-\beta-m}{1+\beta}}e^{-\alpha (|x_h| + \frac{z}{t})^{\frac{\beta}{1+\beta}}}.
    \end{align*}
    For the same reason, we can take a lower bound of the integral by
    \begin{align*}
        &\int_0^\infty \int_{\mathbb{R}^{n-1}} \langle v_h\rangle^{-m}e^{-\alpha|v_h|^\beta} \mathbf{1}_{|v_h|\geq \left(|x_h| + \frac{z}{t} + v_z\right)^{\frac{1}{1+\beta}} + \frac{2t}{1+\beta}\left(|x_h| + \frac{z}{t} + v_z\right)^{\frac{1-\beta}{1+\beta}}} \,dv_hdv_z\\
        &\geq C_1\left(|x_h| + \frac{z}{t}+1\right)^{\frac{n-\beta-m}{1+\beta}}e^{-\alpha (|x_h| + \frac{z}{t})^{\frac{\beta}{1+\beta}}}
    \end{align*}
    for some constants $C_1$ and $C_2$. It proves
    \begin{align}\label{eq:rho_0_bound2}
        C_1\left(|x_h| + \frac{z}{t}+1\right)^{\frac{(n-m)-\beta}{1+\beta}} e^{-\alpha(|x_h| + \frac{z}{t}+1)^{\frac{\beta}{1+\beta}}}\leq \rho_g(t,x)\leq C_2\left(|x_h| + \frac{z}{t}+1\right)^{\frac{(n-m)-\beta}{1+\beta}} e^{-\alpha(|x_h| + \frac{z}{t}+1)^{\frac{\beta}{1+\beta}}}
    \end{align}
    when $|x_h|+\frac{z}{t}\geq 1$. (As $\frac{\beta}{1+\beta}\leq 1$, it is safe to add $1$ to the exponent $(|x_h| + \frac{z}{t})^{\frac{\beta}{1+\beta}}$ like above.)
    
    Combining \eqref{eq:rho_0_bound1} and \eqref{eq:rho_0_bound2}, we conclude that \eqref{eq:rho_0_bound2} holds for whole $0< t\leq \frac{1}{4}$ and $x\in\mathbb{R}^n_+$ for some appropriate constants $C_1$ and $C_2$. Note that the left and right side in \eqref{eq:rho_0_bound2} converge to $0$ as $|x_h| + \frac{z}{t}\rightarrow \infty$. For the later use, we also note that \eqref{eq:rho_0_bound2} holds for $m\leq 0$ as Lemma~\ref{lem:exp_integral} still true for the $m\leq 0$ case.\\
    
    \noindent \textbullet{} \textit{$u_g$ computation.}
    
    We again take the change of variable $v_z\mapsto v_z+\frac{z}{t}$ to write
    \begin{align*}
        \rho_g\left(u_g-\frac{z}{t}\hat{z}\right)(t,x) &= e^{-t}\int_{\mathbb{R}^n} \left(v-\frac{z}{t}\hat{z}\right) g_0(x_h-v_ht, v)\mathbf{1}_{v_z\geq \frac{z}{t}}\,dv\\
        &=e^{-t}\int_0^\infty \int_{\mathbb{R}^{n-1}} v \langle v_h\rangle^{-m}e^{-\alpha|v_h|^\beta} \mathbf{1}_{|v_h|\geq \left(|x_h-v_ht|+\frac{z}{t}+v_z\right)^{\frac{1}{1+\beta}}}\,dv_h dv_z.
    \end{align*}
    We take the upper bound of the integral by
    \begin{align*}
        &\int_0^\infty \int_{\mathbb{R}^{n-1}} |v|\langle v_h\rangle^{-m}e^{-\alpha|v_h|^\beta} \mathbf{1}_{|v_h|\geq \left(|x_h-v_ht|+\frac{z}{t}+v_z\right)^{\frac{1}{1+\beta}}}\,dv_h dv_z\\
        &\leq \int_0^\infty \int_{\mathbb{R}^{n-1}} (|v_h| + v_z)\langle v_h\rangle^{-m}e^{-\alpha|v_h|^\beta} \mathbf{1}_{|v_h|\geq \left(|x_h-v_ht|+\frac{z}{t}+v_z\right)^{\frac{1}{1+\beta}}}\,dv_h dv_z.
    \end{align*}
    For $|v_h| \langle v_h\rangle^{-m}$, we can use the results for $\rho_g$ in \eqref{eq:rho_0_bound2} for $m\mapsto m-1$ and obtain
    \begin{align*}
        \begin{split}
            &\int_0^\infty \int_{\mathbb{R}^{n-1}} \langle v_h\rangle^{-m+1}e^{-\alpha|v_h|^\beta} \mathbf{1}_{|v_h|\geq \left(|x_h-v_ht|+\frac{z}{t}+v_z\right)^{\frac{1}{1+\beta}}}\,dv_h dv_z\\
            &\leq C\left(|x_h| + \frac{z}{t}+1\right)^{\frac{(n+1-m)-\beta}{1+\beta}} e^{-\alpha(|x_h| + \frac{z}{t}+1)^{\frac{\beta}{1+\beta}}}
        \end{split}
    \end{align*}
    for a constant $C$.
    
    For $v_z \langle v_h\rangle^{-m}$, we first bound the $v_h$-integral using \eqref{eq:rho_upper_v_h}. Then,
    \begin{align*}
        &\int_0^\infty v_z\int_{\mathbb{R}^{n-1}}\langle v_h\rangle^{-m}e^{-\alpha|v_h|^\beta} \mathbf{1}_{|v_h|\geq \left(|x_h-v_ht|+\frac{z}{t}+v_z\right)^{\frac{1}{1+\beta}}} \,dv_hdv_z\\
        &\leq C\int_0^\infty v_z\left(\left(|x_h| + \frac{z}{t} + v_z\right)^{\frac{1}{1+\beta}}  + 1\right)^{n-1-\beta-m}e^{-\alpha(|x_h| + \frac{z}{t} + v_z)^{\frac{\beta}{1+\beta}}}\,dv_z\\
        &= C\int_{|x_h|+\frac{z}{t}}^\infty \left(v_z-|x_h|-\frac{z}{t}\right)\left(v_z^{\frac{1}{1+\beta}}  + 1\right)^{n-1-\beta-m}e^{-\alpha v_z^{\frac{\beta}{1+\beta}}}\,dv_z.
    \end{align*}
    Employing Lemma~\ref{lem:exp_integral} for $(\delta_1,\delta_2) = (1, \frac{n-1-\beta-m}{1+\beta})$ to the final $v_z$-integral, we have
    \begin{align*}
        \int_{|x_h|+\frac{z}{t}}^\infty \left(v_z-|x_h|-\frac{z}{t}\right)\left(v_z^{\frac{1}{1+\beta}}  + 1\right)^{n-1-\beta-m}e^{-\alpha v_z^{\frac{\beta}{1+\beta}}}\,dv_z\leq C\left(|x_h| + \frac{z}{t}+1\right)^{\frac{n-\beta-m+1}{1+\beta}}e^{-\alpha (|x_h| + \frac{z}{t})^{\frac{\beta}{1+\beta}}}.
    \end{align*}
    Combining the two results and dividing it by $\rho_g$ in \eqref{eq:rho_0_bound2}, we get a constant $C_3$ such that
    \begin{align}\label{eq:u_0_bound}
        \left|u_g - \frac{z}{t}\hat{z}\right|(t,x)\leq C_3\left(|x_h| + \frac{z}{t}+1\right)^{\frac{1}{1+\beta}}
    \end{align}
    for $0<t\leq \frac{1}{4}$ and $x\in\mathbb{R}^n_+$.\\
    
    \noindent \textbullet{} \textit{$T_g$ computation.}
    
    Taking the change of variable $v_z\mapsto v_z+\frac{z}{t}$, we write
    \begin{align*}
        n\rho_gT_g(t,x)&= e^{-t}\int_{\mathbb{R}^n}|v-u_g(t,x)|^2 \langle v_h\rangle^{-m}e^{-\alpha|v_h|^\beta}\mathbf{1}_{|v_h|\geq (|x_h-v_ht| + v_z)^{\frac{1}{1+\beta}}}\mathbf{1}_{v_z\geq \frac{z}{t}}\,dv\\
        &=e^{-t}\int_0^\infty \int_{\mathbb{R}^{n-1}} \left|v -\left(u_g(t,x) - \frac{z}{t}\hat{z}\right)\right|^2 \langle v_h\rangle^{-m}e^{-\alpha|v_h|^\beta}\mathbf{1}_{|v_h|\geq (|x_h-v_ht| + \frac{z}{t} + v_z)^{\frac{1}{1+\beta}}}\,dv_h dv_z.
    \end{align*}
    If $|x_h| + \frac{z}{t}\leq 2$, by \eqref{eq:u_0_bound}, we have
    \begin{align*}
        \left|u_g - \frac{z}{t}\hat{z}\right|^2\leq 3^{\frac{2}{1+\beta}} C^2_3.
    \end{align*}
    Therefore, the integral is well-bounded below and above by constants as in the $\rho_g$ and $\rho_g\left(u_g-\frac{z}{t}\hat{z}\right)$ computations, so we assume $|x_h|+\frac{z}{t}\geq 2$. Applying Lemma~\ref{lem:gamma_pert} again, we have
    \begin{align*}
        &\int_0^\infty \int_{\mathbb{R}^{n-1}} \left|v -\left(u_g(t,x) - \frac{z}{t}\hat{z}\right)\right|^2 \langle v_h\rangle^{-m}e^{-\alpha|v_h|^\beta}\mathbf{1}_{|v_h|\geq \left(|x_h-v_ht| + \frac{z}{t} + v_z\right)^{\frac{1}{1+\beta}}}\,dv_h dv_z\\
        &\leq \int_0^\infty \int_{\mathbb{R}^{n-1}} \left|v -\left(u_g(t,x) - \frac{z}{t}\hat{z}\right)\right|^2 \langle v_h\rangle^{-m}e^{-\alpha|v_h|^\beta}\mathbf{1}_{|v_h|\geq \left(|x_h| + \frac{z}{t} + v_z\right)^{\frac{1}{1+\beta}} - \frac{4t}{1+\beta}\left(|x_h| + \frac{z}{t} + v_z\right)^{\frac{1-\beta}{1+\beta}}}\,dv_h dv_z,\\
        \intertext{and}
        &\int_0^\infty \int_{\mathbb{R}^{n-1}} \left|v -\left(u_g(t,x) - \frac{z}{t}\hat{z}\right)\right|^2 \langle v_h\rangle^{-m}e^{-\alpha|v_h|^\beta}\mathbf{1}_{|v_h|\geq \left(|x_h-v_ht| + v_z + \frac{z}{t}\right)^{\frac{1}{1+\beta}}}\,dv_h dv_z\\
        &\geq \int_0^\infty \int_{\mathbb{R}^{n-1}} \left|v -\left(u_g(t,x) - \frac{z}{t}\hat{z}\right)\right|^2 \langle v_h\rangle^{-m}e^{-\alpha|v_h|^\beta}\mathbf{1}_{|v_h|\geq \left(|x_h| + \frac{z}{t} + v_z\right)^{\frac{1}{1+\beta}} + \frac{2t}{1+\beta}\left(|x_h| + \frac{z}{t} + v_z\right)^{\frac{1-\beta}{1+\beta}}}\,dv_h dv_z.
    \end{align*}
    
    We first consider the upper bound of the integral. Decomposing the square term, we write
    \begin{align*}
        &\int_0^\infty \int_{\mathbb{R}^{n-1}} \left|v -\left(u_g(t,x) - \frac{z}{t}\hat{z}\right)\right|^2 \langle v_h\rangle^{-m}e^{-\alpha|v_h|^\beta}\mathbf{1}_{|v_h|\geq \left(|x_h| + \frac{z}{t} + v_z\right)^{\frac{1}{1+\beta}} - \frac{4t}{1+\beta}\left(|x_h| + \frac{z}{t} + v_z\right)^{\frac{1-\beta}{1+\beta}}}\,dv_h dv_z\\
        &\leq 2\int_0^\infty \int_{\mathbb{R}^{n-1}} \left(\left(|v_h|^2 + |v_z|^2\right) +\left(u_g(t,x) - \frac{z}{t}\hat{z}\right)^2\right)\langle v_h\rangle^{-m}e^{-\alpha|v_h|^\beta}\mathbf{1}_{|v_h|\geq \left(|x_h| + \frac{z}{t} + v_z\right)^{\frac{1}{1+\beta}} - \frac{4t}{1+\beta}\left(|x_h| + \frac{z}{t} + v_z\right)^{\frac{1-\beta}{1+\beta}}}\,dv_h dv_z.
    \end{align*}
    Iteratively applying Lemma~\ref{lem:exp_integral} as in the $u_g$ computation, we have
    \begin{align*}
        &\int_0^\infty \int_{\mathbb{R}^{n-1}} \left(|v_h|^2 + |v_z|^2\right)\langle v_h\rangle^{-m}e^{-\alpha|v_h|^\beta}\mathbf{1}_{|v_h|\geq \left(|x_h| + \frac{z}{t} + v_z\right)^{\frac{1}{1+\beta}} - \frac{4t}{1+\beta}\left(|x_h| + \frac{z}{t} + v_z\right)^{\frac{1-\beta}{1+\beta}}}\,dv_h dv_z\\
        &\leq C\left(|x_h| + \frac{z}{t}+1\right)^{\frac{(n+2-m)-\beta}{1+\beta}} e^{-\alpha(|x_h| + \frac{z}{t}+1)^{\frac{\beta}{1+\beta}}}
    \end{align*}
    for some constant $C$. Also, from \eqref{eq:u_0_bound}, we take the upper bound of the integral of the second term by
    \begin{align*}
        &\left(u_g(t,x) - \frac{z}{t}\hat{z}\right)^2\int_0^\infty \int_{\mathbb{R}^{n-1}} \langle v_h\rangle^{-m}e^{-\alpha|v_h|^\beta}\mathbf{1}_{|v_h|\geq \left(|x_h| + \frac{z}{t} + v_z\right)^{\frac{1}{1+\beta}} - \frac{4t}{1+\beta}\left(|x_h| + \frac{z}{t} + v_z\right)^{\frac{1-\beta}{1+\beta}}}\,dv_h dv_z\\
        &\leq C\left(|x_h| + \frac{z}{t}+1\right)^{\frac{(n+2-m)-\beta}{1+\beta}} e^{-\alpha(|x_h| + \frac{z}{t}+1)^{\frac{\beta}{1+\beta}}}.
    \end{align*}
    Combining the two upper bounds, we obtain
    \begin{align}\label{eq:T_g_upper_1}
        \begin{split}
            &\int_0^\infty \int_{\mathbb{R}^{n-1}} \left|v -\left(u_g(t,x) - \frac{z}{t}\hat{z}\right)\right|^2 \langle v_h\rangle^{-m}e^{-\alpha|v_h|^\beta}\mathbf{1}_{|v_h|\geq \left(|x_h-v_ht| + \frac{z}{t} + v_z\right)^{\frac{1}{1+\beta}}}\,dv_h dv_z\\
            &\leq C'_5\left(|x_h| + \frac{z}{t}+1\right)^{\frac{(n+2-m)-\beta}{1+\beta}} e^{-\alpha(|x_h| + \frac{z}{t}+1)^{\frac{\beta}{1+\beta}}}
        \end{split}
    \end{align}
    for a constant $C'_5$.
    
    For the lower bound of the integral, we need additional techniques to obtain the same asymptotic behavior as the upper bound. Our $v_h$ integration for the lower bound is on the region $|v_h|\geq R$, where
    \begin{align*}
        R=\left(|x_h| + \frac{z}{t} + v_z\right)^{\frac{1}{1+\beta}} + \frac{2t}{1+\beta}\left(|x_h| + \frac{z}{t} + v_z\right)^{\frac{1-\beta}{1+\beta}}.
    \end{align*}
    For fixed $t$ and $x$, $u_g(t,x) - \frac{z}{t}\hat{z}$ is a fixed vector in $\mathbb{R}^n$. If we measure the distance of each point on the sphere $|v_h| = R$ from the $\left(u_g(t,x) - \frac{z}{t}\hat{z}\right)_h = u_{g,h}(t,x)$, where $u_{g,h}$ is the horizontal component of $u_g$, the hemisphere opposite to $u_{g,h}(t,x)$ should have a distance not less than $R$. It implies that
    \begin{align}\label{eq:set_inclusion}
        \begin{split}
            &\left\{v_h:|v_h|\geq \left(|x_h| + \frac{z}{t} + v_z\right)^{\frac{1}{1+\beta}} + \frac{2t}{1+\beta}\left(|x_h| + \frac{z}{t} + v_z\right)^{\frac{1-\beta}{1+\beta}}, v_h\cdot u_{g,h}\leq 0\right\}\\
            &\subset \left\{v_h:|v_h|\geq \left(|x_h| + \frac{z}{t} + v_z\right)^{\frac{1}{1+\beta}} + \frac{2t}{1+\beta}\left(|x_h| + \frac{z}{t} + v_z\right)^{\frac{1-\beta}{1+\beta}}\right\}\cap \left\{v_h:\left|v_h -u_{g,h}\right|\geq\left(|x_h| + \frac{z}{t} + v_z\right)^{\frac{1}{1+\beta}}\right\}.
        \end{split}
    \end{align}
    
    We use this idea to compute a lower bound for the integral. We first take the horizontal component of the square term and write
    \begin{align*}
        &\int_0^\infty \int_{\mathbb{R}^{n-1}} \left|v -\left(u_g(t,x) - \frac{z}{t}\hat{z}\right)\right|^2 \langle v_h\rangle^{-m}e^{-\alpha|v_h|^\beta}\mathbf{1}_{|v_h|\geq \left(|x_h| + \frac{z}{t} + v_z\right)^{\frac{1}{1+\beta}} + \frac{2t}{1+\beta}\left(|x_h| + \frac{z}{t} + v_z\right)^{\frac{1-\beta}{1+\beta}}}\\
        &\geq \int_0^\infty \int_{\mathbb{R}^{n-1}} \left|v_h -\left(u_g(t,x) - \frac{z}{t}\hat{z}\right)_h\right|^2 \langle v_h\rangle^{-m}e^{-\alpha|v_h|^\beta}\mathbf{1}_{|v_h|\geq \left(|x_h| + \frac{z}{t} + v_z\right)^{\frac{1}{1+\beta}} + \frac{2t}{1+\beta}\left(|x_h| + \frac{z}{t} + v_z\right)^{\frac{1-\beta}{1+\beta}}}\\
        &\qquad \times \mathbf{1}_{\left|v_h -\left(u_g(t,x) - \frac{z}{t}\hat{z}\right)_h\right|\geq \left(|x_h| + \frac{z}{t} + v_z\right)^{\frac{1}{1+\beta}}}\,dv_h dv_z\\
        &\geq C\left(|x_h| + \frac{z}{t}+1\right)^{\frac{2}{1+\beta}}\int_0^\infty \int_{\mathbb{R}^{n-1}} \langle v_h\rangle^{-m}e^{-\alpha|v_h|^\beta}\mathbf{1}_{|v_h|\geq \left(|x_h| + \frac{z}{t} + v_z\right)^{\frac{1}{1+\beta}} + \frac{2t}{1+\beta}\left(|x_h| + \frac{z}{t} + v_z\right)^{\frac{1-\beta}{1+\beta}}}\\
        &\qquad \times \mathbf{1}_{\left|v_h -u_{g,h}\right|\geq \left(|x_h| + \frac{z}{t} + v_z\right)^{\frac{1}{1+\beta}}}\,dv_h dv_z.
    \end{align*}
    By the set inclusion \eqref{eq:set_inclusion} and the $\rho_g$ computation \eqref{eq:rho_0_bound2}, we have
    \begin{align*}
        &\int_0^\infty \int_{\mathbb{R}^{n-1}} \langle v_h\rangle^{-m}e^{-\alpha|v_h|^\beta}\mathbf{1}_{|v_h|\geq \left(|x_h| + \frac{z}{t} + v_z\right)^{\frac{1}{1+\beta}} + \frac{2t}{1+\beta}\left(|x_h| + \frac{z}{t} + v_z\right)^{\frac{1-\beta}{1+\beta}}}\mathbf{1}_{\left|v_h -u_{g,h}\right|\geq \left(|x_h| + \frac{z}{t} + v_z\right)^{\frac{1}{1+\beta}}}\,dv_h dv_z\\
        &\geq \frac{1}{2}\int_0^\infty \int_{\mathbb{R}^{n-1}} \langle v_h\rangle^{-m}e^{-\alpha|v_h|^\beta}\mathbf{1}_{|v_h|\geq \left(|x_h| + \frac{z}{t} + v_z\right)^{\frac{1}{1+\beta}} + \frac{2t}{1+\beta}\left(|x_h| + \frac{z}{t} + v_z\right)^{\frac{1-\beta}{1+\beta}}}\,dv_h dv_z\\
        &\geq \frac{C_1}{2}\left(|x_h| + \frac{z}{t}+1\right)^{\frac{(n-m)-\beta}{1+\beta}} e^{-\alpha(|x_h| + \frac{z}{t}+1)^{\frac{\beta}{1+\beta}}}.
    \end{align*}
    
    Finally, we assemble the lower bound results and combine with the upper bound result \eqref{eq:T_g_upper_1}. Then, we obtain
    \begin{align*}
        C'_4\left(|x_h| + \frac{z}{t}+1\right)^{\frac{(n+2-m)-\beta}{1+\beta}} e^{-\alpha(|x_h| + \frac{z}{t}+1)^{\frac{\beta}{1+\beta}}}\leq n\rho_gT_g(t,x)\leq C'_5 \left(|x_h| + \frac{z}{t}+1\right)^{\frac{(n+2-m)-\beta}{1+\beta}} e^{-\alpha(|x_h| + \frac{z}{t}+1)^{\frac{\beta}{1+\beta}}}
    \end{align*}
    for some constants $C'_4$ and $C'_5$. Dividing it by $\rho_g$ in \eqref{eq:rho_0_bound2}, we obtain
    \begin{align*}
        C_4\left(|x_h| + \frac{z}{t}+1\right)^{\frac{2}{1+\beta}}\leq T_g(t,x)\leq C_5 \left(|x_h| + \frac{z}{t}+1\right)^{\frac{2}{1+\beta}}
    \end{align*}
    for some constants $C_4$ and $C_5$ when $0<t\leq \frac{1}{4}$ and $x\in\mathbb{R}^n_+$.\\
    
    \noindent \textbullet{} {\bf Finiteness of the boundary data in the exponential norm:}
    
    Now, we will show that the boundary data is finite in the exponentially weighted norm. Multiplying the exponential weight $w^h_{\alpha,\beta,\delta}$ to $e^{-t_b}g(t-t_b,x_h-v_h t_b, v)\mathbf{1}_{v_z\geq \frac{z}{t}}$, we have
    \begin{align*}
        \langle v\rangle^\delta e^{-t_b}e^{\alpha|v_h|^\beta}g(t-t_b,x_h-v_ht_b,v)\mathbf{1}_{v_z\geq \frac{z}{t}} = e^{-t}\langle v\rangle^\delta \langle v_h\rangle^{-m}\mathbf{1}_{|v_h|\geq \left(|x_h-v_h t| + v_z\right)^{\frac{1}{1+\beta}}}\mathbf{1}_{v_z\geq \frac{z}{t}}.
    \end{align*}
    We first estimate the $L^q_x(\mathbb{R}^n_+,L^p_v(\mathbb{R}^n))$ norm. We initially consider $1\leq p<\infty$ since the case $p=\infty$ easily follows by a similar argument. The cutoff condition implies that $\frac{z}{t}\leq v_z\leq |v_h|^{1+\beta}$. Consequently,
    \begin{align*}
        \langle v\rangle^\delta \langle v_h\rangle^{-m}\mathbf{1}_{|v_h|\geq \left(|x_h-v_h t|+v_z\right)^{\frac{1}{1+\beta}}} \mathbf{1}_{v_z\geq \frac{z}{t}}\leq \langle v\rangle^\delta \langle v_h\rangle^{-m}\mathbf{1}_{|v_h|\geq \left(|x_h-v_h t|+\frac{z}{t}\right)^{\frac{1}{1+\beta}}}\mathbf{1}_{\frac{z}{t}\leq v_z\leq |v_h|^{1+\beta}}.
    \end{align*}
    We take $v_z$-integral to obtain
    \begin{align}\label{eq:LqxLpv_0}
        \begin{split}
            &\int_{\mathbb{R}}\left\langle v\right\rangle^{p\delta} \langle v_h\rangle^{-pm}\mathbf{1}_{|v_h|\geq \left(|x_h-v_h t|+\frac{z}{t}\right)^{\frac{1}{1+\beta}}}\mathbf{1}_{\frac{z}{t}\leq v_z\leq |v_h|^{1+\beta}}\,dv_z\\
            &=\langle v_h\rangle^{-pm}\mathbf{1}_{|v_h|\geq \left(|x_h-v_h t|+\frac{z}{t}\right)^{\frac{1}{1+\beta}}}\int_{\frac{z}{t}}^{|v_h|^{1+\beta}}\left\langle v\right\rangle^{p\delta} \,dv_z\\
            &\leq C\langle v_h\rangle^{-pm + (1+\beta)(p\delta + 1)}\mathbf{1}_{|v_h|\geq \left(|x_h-v_h t|+\frac{z}{t}\right)^{\frac{1}{1+\beta}}}.
        \end{split}
    \end{align}
    
    It remains to estimate the $v_h$-integral on the the right-hand side of \eqref{eq:LqxLpv_0}. We first consider the case $|x_h| + \frac{z}{t}\geq 1+t^2$. If
    \begin{align*}
        |v_h|\leq \frac{1}{2}(|x_h| + \frac{z}{t})^{\frac{1}{1+\beta}},
    \end{align*}
    then we have
    \begin{align*}
        |x_h-v_ht| + \frac{z}{t}&\geq \left(|x_h| + \frac{z}{t}\right) - |v_h|t\\
        &\geq \left(|x_h| + \frac{z}{t}\right) - \frac{1}{2}\left(|x_h| + \frac{z}{t}\right)^{\frac{1}{1+\beta}}\left(|x_h| + \frac{z}{t}\right)^{\frac{1}{2}}\\
        &\geq \frac{1}{2}(|x_h| + \frac{z}{t})\\
        &\geq |v_h|^{1+\beta}.
    \end{align*}
    It means that $|v_h|\geq \left(|x_h-v_h t|+\frac{z}{t}\right)^{\frac{1}{1+\beta}}$ implies $|v_h|\geq \frac{1}{2}(|x_h| + \frac{z}{t})^{\frac{1}{1+\beta}}$. Using this fact, we have
    \begin{align*}
        \mathbf{1}_{|v_h|\geq \left(|x_h-v_h t|+\frac{z}{t}\right)^{\frac{1}{1+\beta}}}\leq \mathbf{1}_{|v_h|\geq \frac{1}{2}(|x_h| + \frac{z}{t})^{\frac{1}{1+\beta}}}.
    \end{align*}
    So, we bound the remaining $v_h$-integral by
    \begin{align}\label{eq:LqxLpv_1}
        \begin{split}
            &\int_{\mathbb{R}^{n-1}}\langle v_h\rangle^{-pm + (1+\beta)(p\delta + 1)}\mathbf{1}_{|v_h|\geq \left(|x_h-v_h t|+\frac{z}{t}\right)^{\frac{1}{1+\beta}}} \mathbf{1}_{|x_h| + \frac{z}{t}\geq 1 + t^2}\,dv_h\\
            &\leq \mathbf{1}_{|x_h|+\frac{z}{t}\geq 1 + t^2}\int_{\mathbb{R}^{n-1}} \langle v_h\rangle^{-pm + (1+\beta)(p\delta + 1)} \mathbf{1}_{|v_h|\geq \frac{1}{2}(|x_h| + \frac{z}{t})^{\frac{1}{1+\beta}}}\,dv_h\\
            &\leq C\left\langle |x_h| + \frac{z}{t} \right\rangle^{p\delta +\frac{n+\beta-pm}{1+\beta}}\mathbf{1}_{|x_h|+\frac{z}{t}\geq 1 + t^2}
        \end{split}
    \end{align}
    when $p\delta +\frac{n+\beta-pm}{1+\beta}<0$.
    
    If $0\leq |x_h|+\frac{z}{t}\leq 1 + t^2$, then by the direct computation, we have
    \begin{align}\label{eq:LqxLpv_2}
        \begin{split}
            &\int_{\mathbb{R}^{n-1}}\langle v_h\rangle^{-pm + (1+\beta)(p\delta + 1)}\mathbf{1}_{|v_h|\geq \left(|x_h-v_h t|+\frac{z}{t}\right)^{\frac{1}{1+\beta}}}\mathbf{1}_{|x_h|+\frac{z}{t}\leq 1 + t^2}\, dv_h\\
            &\leq \mathbf{1}_{|x_h|+\frac{z}{t}\leq 1 + t^2}\int_{\mathbb{R}^{n-1}}\langle v_h\rangle^{-pm + (1+\beta)(p\delta + 1)}\, dv_h\\
            &\leq C\mathbf{1}_{|x_h|+\frac{z}{t}\leq 1 + t^2}
        \end{split}
    \end{align}
    for some constant $C$ when $p\delta +\frac{n+\beta-pm}{1+\beta}<0$. Combining \eqref{eq:LqxLpv_0}, \eqref{eq:LqxLpv_1}, and \eqref{eq:LqxLpv_2}, we get
    \begin{align*}
        &\left(\int_{\mathbb{R}^n}\left(\langle v\rangle^\delta e^{\alpha|v_h|^\beta}e^{-t_b}g(t-t_b,x_h-v_ht_b,v)\mathbf{1}_{v_z\geq \frac{z}{t}}\right)^p\,dv\right)^{1/p}\\
        &\leq C\left(\left\langle |x_h| + \frac{z}{t} \right\rangle^{\delta+\frac{(n+\beta)/p-m}{1+\beta}}\mathbf{1}_{|x_h|+\frac{z}{t}\geq 1 + t^2} + \mathbf{1}_{|x_h|+\frac{z}{t}\leq 1 + t^2}\right)
    \end{align*}
    for $1\leq p< \infty$.
    
    Next, suppose $1\leq q<\infty$. If we choose $m$ satisfying $\delta+\frac{(n+\beta)/p-m}{1+\beta}<-\frac{n}{q}$,
    \begin{align*}
        \int_{\mathbb{R}^{n-1}}\int_0^\infty \left\langle |x_h| + \frac{z}{t}\right\rangle^{q\left(\delta+\frac{(n+\beta)/p-m}{1+\beta}\right)}\,dz dx_h&=t\int_{\mathbb{R}^{n-1}}\int_0^\infty \left\langle |x_h| + z\right\rangle^{q\left(\delta+\frac{(n+\beta)/p-m}{1+\beta}\right)}\,dz dx_h\\
        &<Ct
    \end{align*}
    for some constant $C$. Also,
    \begin{align*}
        \int_{\mathbb{R}^{n-1}}\int_0^\infty \mathbf{1}_{|x_h|+\frac{z}{t}\leq 1 + t^2}\,dzdx_h\leq C(1+t^2)^n t.
    \end{align*}
    It shows that
    \begin{align}\label{eq:LqxLpv_3}
        \left\|e^{-t_b}\langle v\rangle^\delta e^{\alpha|v_h|^\beta}g(t-t_b,x_h-v_ht_b,v)\mathbf{1}_{v_z\geq \frac{z}{t}}\right\|_{L^q_x(\mathbb{R}^n_+, L^p_v(\mathbb{R}^n))}\leq C(1+t^2)^{n/q} t^{1/q}e^{-t}
    \end{align}
    when
    \begin{align*}
        \frac{n+\beta}{p}+\frac{(n+q\delta)(1+\beta)}{q}<m
    \end{align*}
    for $1\leq q<\infty$. Note that the $e^{-t}$ term is from the definition of the $g$ in \eqref{eq:boundary_data}. When $p=\infty$ or $q=\infty$, we can repeat a similar calculation and conclude that the same results still hold with the convention $1/\infty = 0$. Therefore, for any fixed $1\leq p,q\leq \infty$, the boundary data $g$ satisfies \eqref{eq:LqxLpv_3} when $m$ meets the condition \eqref{eq:m_cond1}.\\
    
    Next, we do the $x$ integral first to compute 
    \begin{align}
        &\left\|e^{-t_b}\langle v\rangle^\delta e^{\alpha|v_h|^\beta}g(t-t_b,x_h-v_ht_b,v)\mathbf{1}_{v_z\geq \frac{z}{t}}\right\|_{L^p_v(\mathbb{R}^n, L^q_x(\mathbb{R}^n_+))}\notag\\
        &=e^{-t}\left\|v_z^{1/q}e^\tau\langle v\rangle^\delta e^{\alpha|v_h|^\beta}g(\tau,x_h,v)\right\|_{L^p_v(\mathbb{R}^n, L^q_{\tau,x_h}([0,t]\times \partial \mathbb{R}^{n}_+))}\notag\\
        &=e^{-t}\left\|v_z^{1/q}\langle v\rangle^\delta\langle v_h\rangle^{-m}\mathbf{1}_{|v_h|\geq \left||x_h-v_h\tau| + v_z\right|^{\frac{1}{1+\beta}}}\mathbf{1}_{v_z\geq 0}\right\|_{L^p_v(\mathbb{R}^n, L^q_{\tau,x_h}([0,t]\times \partial \mathbb{R}^{n}_+))}\notag\\
        &=t^{1/q}e^{-t}\left\|v_z^{1/q}\langle v\rangle^\delta\langle v_h\rangle^{-m}\mathbf{1}_{|v_h|\geq \left||x_h| + v_z\right|^{\frac{1}{1+\beta}}}\mathbf{1}_{v_z\geq 0}\right\|_{L^p_v(\mathbb{R}^n, L^q_{x_h}(\partial \mathbb{R}^{n}_+))}.\label{eq:g_LpvLqx_form}
    \end{align}
    For the first equality, we used Lemma~\ref{lem:trace} for the norm equivalence. For $1\leq q<\infty$, we have
    \begin{align*}
        &\left(\int_{\mathbb{R}^{n-1}} \left(v_z^{1/q}\langle v\rangle^\delta\langle v_h\rangle^{-m}\mathbf{1}_{|v_h|\geq \left||x_h| + v_z\right|^{\frac{1}{1+\beta}}}\mathbf{1}_{v_z\geq 0}\right)^q\,dx_h \right)^{1/q}\\
        &=v_z^{1/q}\langle v\rangle^\delta\langle v_h\rangle^{-m}\left(\int_{\mathbb{R}^{n-1}} \mathbf{1}_{|v_h|\geq \left||x_h| + v_z\right|^{\frac{1}{1+\beta}}}\mathbf{1}_{v_z\geq 0}\,dx_h\right)^{1/q}\\
        &=\left(\frac{|\mathbb{S}^{n-2}|}{n-1}\right)^{1/q}v_z^{1/q}\langle v\rangle^\delta\langle v_h\rangle^{-m}\left(\max\{|v_h|^{1+\beta}-v_z, 0\}\right)^{\frac{n-1}{q}}\mathbf{1}_{v_z\geq 0}.
    \end{align*}
    When $1\leq p<\infty$, we also have
    \begin{align*}
        &\int_{\mathbb{R}^{n-1}}\int_0^{|v_h|^{1+\beta}} v_z^{p/q}\langle v\rangle^{p\delta}\langle v_h\rangle^{-pm}\left(|v_h|^{1+\beta}-v_z\right)^{\frac{p(n-1)}{q}}\,dv_z dv_h\\
        &\leq \int_{\mathbb{R}^{n-1}}\int_0^{|v_h|^{1+\beta}} v_z^{p/q}\langle v\rangle^{p\delta}\langle v_h\rangle^{-pm}|v_h|^{\frac{p(n-1)(1+\beta)}{q}}\,dv_z dv_h\\
        &\leq C\int_{\mathbb{R}^{n-1}}\langle v_h\rangle^{(p/q + p\delta + 1)(1+\beta)-pm+\frac{p(n-1)(1+\beta)}{q}}\,dv_h.
    \end{align*}
    It is integrable if
    \begin{align*}
        (\delta+1/p)(1+\beta)-m+\frac{n(1+\beta)}{q}<-\frac{n-1}{p}
    \end{align*}
    Therefore,
    \begin{align*}
        &\left\|e^{-t_b}\langle v\rangle^\delta e^{\alpha|v_h|^\beta}g(t-t_b,x_h-v_ht_b,v)\mathbf{1}_{v_z\geq \frac{z}{t}}\right\|_{L^p_v(\mathbb{R}^n, L^q_x(\mathbb{R}^n_+))}\leq Ct^{1/q}e^{-t}
    \end{align*}
    if the constants satisfy \eqref{eq:m_cond1} for $1\leq p,q<\infty$. One can do the same calculation for $p=\infty$ or $q=\infty$ and obtain the identical result under the convention $1/\infty = 0$.
\end{proof}

The next proposition is a technical lemma to provide a quantitative control of the $\mathcal{M}(f)$ term when $f$ has the macroscopic fields $\rho_g$, $u_g$, and $T_g$. It shows that the macroscopic fields generated by $\mathcal{M}(f)$ also follow the same behavior as the macroscopic fields of $f$ locally in time. It will be frequently used throughout the proof in constructing a unique solution $f(t,x,v)$ of the BGK model with the given inflow boundary data $g$.
\begin{proposition}\label{prop:M[f]_integral}
    Let $n\geq2$, $m_1\in\mathbb{R}$, $m_2\geq 0$, $\alpha\geq 0$, and $1\leq\beta\leq 2$. Assume that there exist positive constants $C_i$ such that
    \begin{align}\label{eq:macro_bound}
        \begin{split}
            &C_1\left(|x_h| + \frac{z}{t}+1\right)^{\frac{m_1}{1+\beta}} e^{-\alpha(|x_h| + \frac{z}{t}+1)^{\frac{\beta}{1+\beta}}}\leq \rho(t,x)\leq C_2\left(|x_h| + \frac{z}{t}+1\right)^{\frac{m_1}{1+\beta}} e^{-\alpha(|x_h| + \frac{z}{t}+1)^{\frac{\beta}{1+\beta}}},\\
            &\left|u(t,x)-\frac{z}{t}\hat{z}\right|\leq C_3 \left(|x_h| + \frac{z}{t}+1\right)^{\frac{1}{1+\beta}}\\
            &C_4\left(|x_h| + \frac{z}{t}+1\right)^{\frac{2}{1+\beta}}\leq T(t,x)\leq C_5\left(|x_h| + \frac{z}{t}+1\right)^{\frac{2}{1+\beta}}
        \end{split}
    \end{align}
    for $0< t\leq \frac{1}{4}$. Then, there exists a constant $C>0$, depending on $C_i$, $n$, $m_1$, $m_2$, $\alpha$, and $\beta$ such that
    \begin{align}\label{eq:M[f]_integral_res}
        \begin{split}
            &\sup_{0<\tau\leq t}\frac{t}{\tau}\int_{\mathbb{R}^n}\left|v-\frac{z}{t}\hat{z}\right|^{m_2} \frac{\rho(\tau, x-v(t-\tau))}{(2\pi T(\tau, x-v(t-\tau)))^{n/2}}\exp\left(-\frac{|v-u(\tau, x-v(t-\tau))|^2}{2T(\tau, x-v(t-\tau))}\right)\mathbf{1}_{z-v_z(t-\tau)>0}\,dv\\
            &\leq C\left(|x_h| + \frac{z}{t} + 1\right)^{\frac{m_1+m_2}{1+\beta}} e^{-\alpha(|x_h| + \frac{z}{t}+1)^{\frac{\beta}{1+\beta}}}
        \end{split}
    \end{align}
    for $0<t\leq \frac{1}{4}$.
\end{proposition}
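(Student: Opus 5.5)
The plan is to fix $0<\tau\le t\le \frac14$ and $x\in\mathbb{R}^n_+$, and to write $y=x-v(t-\tau)$, so that $y_h=x_h-v_h(t-\tau)$ and $y_z=z-v_z(t-\tau)>0$. Set
\begin{align*}
A:=|x_h|+\tfrac{z}{t}+1,\qquad A'(v):=|y_h|+\tfrac{y_z}{\tau}+1 .
\end{align*}
The hypotheses \eqref{eq:macro_bound} at $(\tau,y)$ bound the integrand by
\begin{align*}
C\,A'^{\frac{m_1-n}{1+\beta}}\exp\Big(-\alpha A'^{\frac{\beta}{1+\beta}}-\frac{|v-u(\tau,y)|^2}{2C_5A'^{2/(1+\beta)}}\Big).
\end{align*}
The first task is to relate the Gaussian center to the variable $A'$. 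From $|u-\frac{y_z}{\tau}\hat z|\le C_3A'^{1/(1+\beta)}$ and the inequality $|a+b|^2\ge \frac12|a|^2-|b|^2$, the Gaussian factor is at most $C\exp\big(-c|v-\frac{y_z}{\tau}\hat z|^2A'^{-2/(1+\beta)}\big)$. One also has
\begin{align*}
\tfrac{y_z}{\tau}=\tfrac{z-v_z(t-\tau)}{\tau},
\end{align*}
and hence $v_z-\frac{y_z}{\tau}=\frac{t}{\tau}\big(v_z-\frac{z}{t}\big)$. This identity is the key. With $w:=v-\frac zt\hat z$, the normal part of $|v-\frac{y_z}{\tau}\hat z|$ equals $\frac{t}{\tau}|w_z|\ge|w_z|$, and the horizontal part is $|w_h|$. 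So the integrand is controlled by $\exp(-c|w|^2A'^{-2/(1+\beta)})$.

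The second step compares $A'$ with $A$. We have $|y_h|\ge|x_h|-|w_h|(t-\tau)$, and
\begin{align*}
\tfrac{y_z}{\tau}=\tfrac{z}{t}-\tfrac{t-\tau}{\tau}w_z .
\end{align*}
The constraint $y_z>0$ forces $w_z<\frac{z\tau}{t(t-\tau)}$. When $w_z\le0$ the normal part only increases $A'$. I will then show $A\le A'+C|w|(1+\frac{t-\tau}{\tau})$, or a refinement of it. The factor $t/\tau$ outside the integral is meant to absorb the Jacobian-like loss coming from $\frac{t-\tau}{\tau}$. Concretely, I would change variables $w_z\mapsto s=\frac{t}{\tau}w_z$ in the $v_z$ integral. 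This gains a factor $\tau/t$, which cancels the prefactor $t/\tau$. In terms of $s$, $\frac{y_z}{\tau}=\frac zt-(1-\frac{\tau}{t})s$, so $A'\ge A-|w_h|-|s|$ up to constants, using $t\le\frac14$. The Gaussian becomes $\exp(-c(|w_h|^2+s^2)A'^{-2/(1+\beta)})$, and $|w_z|^{m_2}\le|s|^{m_2}$.

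The remaining integral is then one over $\xi=(w_h,s)$ of
\begin{align*}
|\xi|^{m_2}A'^{\frac{m_1-n}{1+\beta}}\exp\big(-\alpha A'^{\beta/(1+\beta)}-c|\xi|^2A'^{-2/(1+\beta)}\big),
\end{align*}
where $A'\ge A-C|\xi|$. The step I expect to be the main obstacle is to turn the combined exponent into $\alpha A^{\beta/(1+\beta)}-C$. This is exactly the purpose of Lemma \ref{lem:concaten}. I apply it with $x=A'$ and $y=C|\xi|\ge A-A'$, and take its constant $C$ small (depending on $\alpha,c$). This gives $\alpha A'^{\beta/(1+\beta)}+c|\xi|^2A'^{-2/(1+\beta)}\ge\alpha A^{\beta/(1+\beta)}-C'$, with only part of the Gaussian used. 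The leftover half of the Gaussian then provides integrability.

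To get the correct polynomial power I will split into $|\xi|\le\frac12A$, where $A'\simeq A$, and $|\xi|>\frac12A$. In the first region the Gaussian integral with variance $A^{2/(1+\beta)}$ produces $A^{\frac{n+m_2}{1+\beta}}$. Combined with $A^{\frac{m_1-n}{1+\beta}}$, this gives exactly $A^{\frac{m_1+m_2}{1+\beta}}e^{-\alpha A^{\beta/(1+\beta)}}$. In the second region $|\xi|\gtrsim A$ and $A'\lesssim A+|\xi|$. There the residual Gaussian $e^{-c|\xi|^2A'^{-2/(1+\beta)}}\le e^{-c|\xi|^{2\beta/(1+\beta)}}$ decays faster than any polynomial gain. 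Since $\frac{2\beta}{1+\beta}\ge\frac{\beta}{1+\beta}$ and $\beta\ge1$, it also dominates any extra $e^{\alpha A^{\beta/(1+\beta)}}$ discrepancy, so this piece is bounded by the same quantity. Both bounds are uniform in $\tau$, which yields \eqref{eq:M[f]_integral_res}.
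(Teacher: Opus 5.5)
Your argument is correct, and it takes a genuinely different, more streamlined route than the paper.

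Both proofs begin the same way. You shift $v_z$ by $\frac zt$, use $v_z-\frac{y_z}{\tau}=\frac t\tau\bigl(v_z-\frac zt\bigr)$ and $|a+b|^2\ge\frac12|a|^2-|b|^2$ to recenter the Gaussian, and insert the bounds on $\rho,T$ at $(\tau,y)$.

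The paper then rescales $v_z\mapsto\frac{\tau}{t-\tau}v_z$. This gives the shift in $A'$ unit coefficient, but it leaves an anisotropic Gaussian in $\frac{t}{t-\tau}v_z$ and a singular prefactor $\frac{t}{t-\tau}$. The paper therefore integrates in $v_z$ first, over regions $A_1,A_2$, splitting again at the Gaussian scale so that the width $\frac{t-\tau}{t}A'^{1/(1+\beta)}$ cancels the prefactor. It then integrates in $v_h$ over regions $B_1,B_2$. Lemma~\ref{lem:concaten} is applied separately in each variable, and Young's inequality couples the two decays on $A_2$.

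Your variable $s=\frac t\tau\bigl(v_z-\frac zt\bigr)$ is exactly the paper's $\frac{t}{t-\tau}v_z$, but you use it as the integration variable. Its Jacobian cancels the prefactor $\frac t\tau$, and the Gaussian becomes isotropic in $\xi=(v_h,s)$. Moreover $|A-A'|\le C|\xi|$ uniformly in $\tau$, because the coefficients $t-\tau$ and $1-\frac\tau t$ are at most $1$. One joint application of Lemma~\ref{lem:concaten}, plus the split $|\xi|\lesssim A$ versus $|\xi|\gtrsim A$, then closes the estimate, and $\tau=t$ needs no separate treatment.

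In the same variables, the weights of Corollary~\ref{prop:bound}(2) are simply bounded by $C|\xi|^{2/(1+\beta)}$. The paper instead routes them through the two-exponent form \eqref{eq:M[f]_integral_form1}. What the paper's iterated computation gives beyond yours is the finer intermediate bound \eqref{eq:v_z_int_res} with the factor $(\tau/t)^{m_{2,2}}$, which the statement does not need.

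One slip: in Lemma~\ref{lem:concaten} you must take its constant \emph{large}, not small. With $y=C_0|\xi|$, the term $\alpha C_0^2|\xi|^2/(C A'^{2/(1+\beta)})$ has to fit inside half of the Gaussian exponent. This is harmless, because the lemma holds for every $C>0$ and only the additive constant grows.
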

\begin{proof}
    Since the proof is lengthy and technically involved, we first provide a brief overview of the main ideas. The first step is to simplify the complicated local Maxwellian integrand using the macroscopic field conditions \eqref{eq:macro_bound}. For example, because of the boundary effect, we separate the velocity integration into the tangential $v_h$-integration and normal $v_z$-integration and treat the two integrals differently. Through a chain of change of variables, we will observe that the term $\frac{z}{t}$ naturally appears in the integral in the $v_z$-integral as in the proof of Proposition ~\ref{prop:boundary_int}.
    
    We then derive explicit and sharp bounds for the simplified integrals. Since the transport term shifts the spatial argument from $x$ to $x-v(t-\tau)$, it is natural to partition the velocity-integration domains according to the relevant spatial scales. More precisely, we decompose the $v_z$-integration domain into $A_1$ and $A_2$, and the $v_h$-integration domain into $B_1$ and $B_2$, as defined below, and estimate the integrals over the subdomains. At first, we use Lemma~\ref{lem:concaten} to extract the same exponential decay factor as that appearing in the bound for $\rho$ in \eqref{eq:macro_bound}. We then bound the remaining integral by the desired polynomial term, thereby obtaining \eqref{eq:M[f]_integral_res}.
    
    We start the proof by simplifying the integral. Let $f(t,x,v)$ be a function whose macroscopic fields satisfy \eqref{eq:macro_bound}. Then the associated local Maxwellian can be written as follows:
    \begin{align*}
        \mathcal{M}(f)(t, x, v) = \frac{\rho(t, x)}{(2\pi T(t, x))^{n/2}}\exp\left(-\frac{|v-u(t, x)|^2}{2T(t, x)}\right).
    \end{align*}
    By change of variable $v_z\mapsto v_z + \frac{z}{t}$, we first have
    \begin{align*}
        &\frac{t}{\tau}\int_{\mathbb{R}^n} \left|v-\frac{z}{t}\hat{z}\right|^{m_2} \mathcal{M}(f)(\tau, x-v(t-\tau), v)\mathbf{1}_{z-v_z(t-\tau)>0}\,dv\\
        &=\frac{t}{\tau}\int_{\mathbb{R}^n}|v|^{m_2}\mathcal{M}(f)\left(\tau, x-\left(v+\frac{z}{t}\hat{z}\right)(t-\tau), v+\frac{z}{t}\hat{z}\right)\mathbf{1}_{\frac{\tau}{t}z - v_z(t-\tau) >0}\,dv.
    \end{align*}
    We expand the local Maxwellian by
    \begin{align*}
        &\mathcal{M}(f)\left(\tau, x-\left(v+\frac{z}{t}\hat{z}\right)(t-\tau), v+\frac{z}{t}\hat{z}\right)\\
        &=\frac{\rho(\tau, x-(v+\frac{z}{t}\hat{z})(t-\tau))}{(2\pi T(\tau, x-(v+\frac{z}{t}\hat{z})(t-\tau)))^{n/2}}\exp\left(-\frac{|(v+\frac{z}{t}\hat{z})-u(\tau, x-(v+\frac{z}{t}\hat{z})(t-\tau))|^2}{2T(\tau, x-(v+\frac{z}{t}\hat{z})(t-\tau))}\right).
    \end{align*}
    The $z$-component of the velocity part in the exponent is rearranged into
    \begin{align*}
        &v_z + \frac{z}{t}-u_z(\tau, x-(v+\frac{z}{t}\hat{z})(t-\tau))\\
        &= v_z + \frac{z}{t} - \left(u_z(\tau, x-(v+\frac{z}{t}\hat{z})(t-\tau)) - \frac{z-(v_z+\frac{z}{t})(t-\tau)}{\tau}\right) - \frac{z-(v_z+\frac{z}{t})(t-\tau)}{\tau}\\
        &=\frac{t}{\tau}v_z - \left(u_z(\tau, x-(v+\frac{z}{t}\hat{z})(t-\tau)) - \frac{z-(v_z+\frac{z}{t})(t-\tau)}{\tau}\right).
    \end{align*}
    Therefore, we can rewrite the exponent to
    \begin{align*}
        \left(v+\frac{z}{t}\hat{z}\right)-u(\tau, x-(v+\frac{z}{t}\hat{z})(t-\tau)) = \left(v_h,\frac{t}{\tau}v_z\right) - \left(u(\tau, x-(v+\frac{z}{t}\hat{z})(t-\tau)) - \frac{z-(v_z+\frac{z}{t})(t-\tau)}{\tau}\hat{z}\right).
    \end{align*}
    Furthermore, from \eqref{eq:macro_bound}, the second term is bounded by
    \begin{align*}
        \left|u(\tau, x-(v+\frac{z}{t}\hat{z})(t-\tau)) - \frac{z-(v_z+\frac{z}{t})(t-\tau)}{\tau}\hat{z}\right|\leq \frac{C_3}{\sqrt{C_4}}\sqrt{T(\tau, x-(v+\frac{z}{t}\hat{z})(t-\tau))}.
    \end{align*}
    Also, using the simple inequality $(x-y)^2\geq \frac{1}{2}x^2 - y^2$ for $x,y\geq 0$, we can bound the exponent in exponential term by
    \begin{align*}
        &-\frac{\left|(v_h, \frac{t}{\tau}v_z) - \left(u(\tau, x-(v+\frac{z}{t}\hat{z})(t-\tau)) - \frac{z-(v_z + \frac{z}{t})(t-\tau)}{\tau}\hat{z}\right)\right|^2}{2T(\tau, x-(v+\frac{z}{t}\hat{z})(t-\tau))}\\
        &\leq -\frac{\frac{1}{2}|(v_h, \frac{t}{\tau}v_z)|^2 - |u(\tau, x-(v+\frac{z}{t}\hat{z})(t-\tau)) - \frac{z-(v_z + \frac{z}{t})(t-\tau)}{\tau}\hat{z}|^2}{2T(\tau, x-(v+\frac{z}{t}\hat{z})(t-\tau))}\\
        &\leq \frac{C^2_3}{2C_4} - \frac{\frac{1}{2}|(v_h, \frac{t}{\tau}v_z)|^2}{2T(\tau, x-(v+\frac{z}{t}\hat{z})(t-\tau))}.
    \end{align*}
    These bounds yield
    \begin{align*}
        &\int_{\mathbb{R}^n} |v|^{m_2}\frac{\rho(\tau, x-(v+\frac{z}{t}\hat{z})(t-\tau))}{T(\tau, x-(v+\frac{z}{t}\hat{z})(t-\tau))^{n/2}}\exp\left(-\frac{|(v+\frac{z}{t}\hat{z})-u(\tau, x-(v+\frac{z}{t}\hat{z})(t-\tau))|^2}{2T(\tau, x-(v+\frac{z}{t}\hat{z})(t-\tau))}\right)\mathbf{1}_{\frac{\tau}{t}z - v_z(t-\tau) >0}\,dv\\
        &\leq e^{\frac{C_3^2}{2C_4}}\int_{\mathbb{R}^n} |v|^{m_2}\frac{\rho(\tau, x-(v+\frac{z}{t}\hat{z})(t-\tau))}{T(\tau, x-(v+\frac{z}{t}\hat{z})(t-\tau))^{n/2}}\exp\left(-\frac{|(v_h, \frac{t}{\tau}v_z)|^2}{4T(\tau, x-(v+\frac{z}{t}\hat{z})(t-\tau))}\right)\mathbf{1}_{\frac{\tau}{t}z - v_z(t-\tau) >0}\,dv.
    \end{align*}
    If we assume $\tau = t$, then the integral is the same as the plain Gaussian integral
    \begin{align*}
        \int_{\mathbb{R}^n} |v|^{m_2}\frac{\rho(t, x)}{T(t, x)^{n/2}}\exp\left(-\frac{|v|^2}{4T(t, x)}\right)\mathbf{1}_{z>0}\,dv,
    \end{align*}
    and we directly obtain the target bound \eqref{eq:M[f]_integral_res}. Therefore, we assume $0<\tau<t$ from now on and write the cutoff function $\mathbf{1}_{\frac{\tau}{t}z - v_z(t-\tau) >0} = \mathbf{1}_{v_z < \frac{\tau z}{t(t-\tau)}}$.
    
    We split $|v|^{m_2} \leq C(|v_h|^{m_2} + |v_z|^{m_2})$ for some constant $C$ for given $m_2\geq 0$. Replacing $\rho$ and $T$ by \eqref{eq:macro_bound}, we have
    \begin{align*}
        &\frac{t}{\tau}\int_{\mathbb{R}^n} |v|^{m_2}\frac{\rho(\tau, x-(v+\frac{z}{t}\hat{z})(t-\tau))}{T(\tau, x-(v+\frac{z}{t}\hat{z})(t-\tau))^{n/2}}\exp\left(-\frac{|(v_h, \frac{t}{\tau}v_z)|^2}{4T(\tau, x-(v+\frac{z}{t}\hat{z})(t-\tau))}\right)\mathbf{1}_{v_z< \frac{\tau z}{t(t-\tau)}}\,dv\\
        &\leq C\frac{t}{\tau}\int_{\mathbb{R}^n} (|v_h|^{m_2} + |v_z|^{m_2})\frac{(|x_h-v_h(t-\tau)| + \frac{z}{t} - \frac{(t-\tau)v_z}{\tau} + 1)^{\frac{m_1}{1+\beta}} e^{-\alpha (|x_h-v_h(t-\tau)| + \frac{z}{t} - \frac{(t-\tau)v_z}{\tau} + 1)^{\frac{\beta}{1+\beta}}}}{(|x_h-v_h(t-\tau)| + \frac{z}{t} - \frac{(t-\tau)v_z}{\tau} + 1)^{\frac{n}{1+\beta}}}\\
        &\qquad \times \exp\left(-\frac{|v_h|^2 + |\frac{t}{\tau}v_z|^2}{4C_5(|x_h-v_h(t-\tau)| + \frac{z}{t} - \frac{(t-\tau)v_z}{\tau} + 1)^{\frac{2}{1+\beta}}}\right)\mathbf{1}_{v_z< \frac{\tau z}{t(t-\tau)}}\,dv\\
        &= C\frac{t}{t-\tau}\int_{\mathbb{R}^n} \left(|v_h|^{m_2} + \left(\frac{\tau}{t-\tau}|v_z|\right)^{m_2}\right) \frac{e^{-\alpha(|x_h-v_h(t-\tau)| + \frac{z}{t} - v_z + 1)^{\frac{\beta}{1+\beta}}}}{(|x_h-v_h(t-\tau)| + \frac{z}{t} - v_z + 1)^{\frac{n-m_1}{1+\beta}}}\\
        &\qquad \times \exp\left(-\frac{|v_h|^2 + |\frac{t}{t-\tau}v_z|^2}{4C_5(|x_h-v_h(t-\tau)| + \frac{z}{t} - v_z + 1)^{\frac{2}{1+\beta}}}\right)\mathbf{1}_{v_z< \frac{z}{t}}\,dv.
    \end{align*}
    In the last equality, we used a change of variable $v_z\mapsto \frac{\tau}{t-\tau}v_z$. For later use, we generalize the notation and write the desired bound by
    \begin{align}\label{eq:M[f]_integral_form1}
        \begin{split}
            &\frac{t}{t-\tau}\int_{\mathbb{R}^n} |v_h|^{m_{2,1}}\left(\frac{\tau}{t-\tau}|v_z|\right)^{m_{2,2}} \frac{e^{-\alpha(|x_h-v_h(t-\tau)| + \frac{z}{t} - v_z + 1)^{\frac{\beta}{1+\beta}}}}{(|x_h-v_h(t-\tau)| + \frac{z}{t} - v_z + 1)^{\frac{n-m_1}{1+\beta}}}\\
            &\qquad \times \exp\left(-\frac{|v_h|^2 + |\frac{t}{t-\tau}v_z|^2}{4C_5(|x_h-v_h(t-\tau)| + \frac{z}{t} - v_z + 1)^{\frac{2}{1+\beta}}}\right)\mathbf{1}_{v_z< \frac{z}{t}}\,dv\\
            &\leq C(|x_h| + \frac{z}{t} + 1)^{\frac{m_1+m_{2,1} + m_{2,2}}{1+\beta}} e^{-\alpha(|x_h| + \frac{z}{t}+1)^{\frac{\beta}{1+\beta}}}
        \end{split}
    \end{align}
    for a constant depending on $C_i$, $n$, $m_1$, $m_2$, $\alpha$, and $\beta$. The original integral can be recovered by choosing $(m_{2,1},m_{2,2}) = (m_2, 0)$ and $(m_{2,1},m_{2,2}) = (0, m_2)$.

    To bound this integral, we separate the $v_z$ and $v_h$-integrals and decompose each integration domain by
    \begin{align*}
        A_1&:=\left\{v_z\in\mathbb R:-2\left(|x_h-v_h(t-\tau)|+\frac{z}{t}+1\right)\leq v_z<\frac{z}{t}\right\},\\
        A_2&:=\left\{v_z\in\mathbb R:v_z\leq-2\left(|x_h-v_h(t-\tau)|+\frac{z}{t}+1\right) \right\}
    \end{align*}
    for the $v_z$ domain $(-\infty, \frac{z}{t}]$ and
    \begin{equation}\label{eq:v_h_division}
        \begin{aligned}
            B_1&:=\left\{v_h\in\mathbb R^{n-1}:|v_h|(t-\tau)\leq 2\left(|x_h|+\frac{z}{t}+1\right)\right\},\\
            B_2&:=\left\{v_h\in\mathbb R^{n-1}:|v_h|(t-\tau)\geq 2\left(|x_h|+\frac{z}{t}+1\right)\right\}
        \end{aligned}
    \end{equation}
    for the $v_h$ domain $\mathbb{R}^{n-1}$.
    
    We first consider the domain $A_1\times \mathbb{R}^{n-1}$. We will perform $v_z$-integral on $A_1$ and then $v_h$-integral on $B_1$ and $B_2$ and derive the desired bound \eqref{eq:M[f]_integral_res}. Next, we prove that the integral on $A_2\times \mathbb{R}^{n-1}$ is a negligible term compared to the integral on $A_1\times\mathbb{R}^{n-1}$ and conclude the result.\\
    
    \noindent\textbf{Step 1: Integral over $A_1\times\mathbb{R}^{n-1}$.} 
    
    Now, we consider the domain $A_1\times \mathbb{R}^{n-1}$, so
    \begin{align*}
        -2\left(|x_h-v_h(t-\tau)|+\frac{z}{t}+1\right)< v_z\leq\frac{z}{t}.
    \end{align*}
    For pretreatment, we extract $e^{-\alpha(|x_h| + \frac{z}{t} + 1)^{\frac{\beta}{1+\beta}}}$ term from the exponential parts in \eqref{eq:M[f]_integral_form1}. We first split the exponent function in the local Maxwellian for the $v_z$ direction into half and multiply with the exponential function in the density $\rho$. Then, by Lemma \ref{lem:concaten},
    \begin{align}\label{eq:concaten_example1}
        \begin{split}
            &e^{-\alpha(|x_h-v_h(t-\tau)| + \frac{z}{t} - v_z + 1)^{\frac{\beta}{1+\beta}}}\exp\left(-\frac{|\frac{t}{t-\tau}v_z|^2}{4C_5(|x_h-v_h(t-\tau)| + \frac{z}{t} - v_z + 1)^{\frac{2}{1+\beta}}}\right)\\
            &=e^{-\alpha(|x_h-v_h(t-\tau)| + \frac{z}{t} - v_z + 1)^{\frac{\beta}{1+\beta}}}\left(\exp\left(-\frac{|\frac{t}{t-\tau}v_z|^2}{8C_5(|x_h-v_h(t-\tau)| + \frac{z}{t} - v_z + 1)^{\frac{2}{1+\beta}}}\right)\right)^2\\
            &\leq Ce^{-\alpha(|x_h-v_h(t-\tau)| + \frac{z}{t} +\left(\frac{t}{t-\tau}-1\right)|v_z| + 1)^{\frac{\beta}{1+\beta}}}\exp\left(-\frac{|\frac{t}{t-\tau}v_z|^2}{8C_5(|x_h-v_h(t-\tau)| + \frac{z}{t} - v_z + 1)^{\frac{2}{1+\beta}}}\right)\\
            &\leq Ce^{-\alpha(|x_h-v_h(t-\tau)| + \frac{z}{t} + 1)^{\frac{\beta}{1+\beta}}}\exp\left(-\frac{|\frac{t}{t-\tau}v_z|^2}{8C_5(|x_h-v_h(t-\tau)| + \frac{z}{t} - v_z + 1)^{\frac{2}{1+\beta}}}\right)
        \end{split}
    \end{align}
    for a constant $C$ depending on $C_5$ and $\beta$. We can repeat this for the $v_h$ variable. For the $v_h$ part of the local Maxwellian in \eqref{eq:M[f]_integral_form1}, as $v_z\geq -2\left(|x_h-v_h(t-\tau)| + \frac{z}{t} + 1\right)$, we write
    \begin{align*}
        \exp\left(-\frac{|v_h|^2}{4C_5(|x_h-v_h(t-\tau)| + \frac{z}{t} - v_z + 1)^{\frac{2}{1+\beta}}}\right)\leq \exp\left(-\frac{|v_h|^2}{4C_5\left(3(|x_h-v_h(t-\tau)| + \frac{z}{t} + 1)\right)^{\frac{2}{1+\beta}}}\right).
    \end{align*}
    We now split this local Maxwellian function and apply Lemma \ref{lem:concaten}, as in \eqref{eq:concaten_example1}, to obtain
    \begin{align*}
        &e^{-\alpha(|x_h-v_h(t-\tau)| + \frac{z}{t} + 1)^{\frac{\beta}{1+\beta}}}\exp\left(-\frac{|v_h|^2}{4C_5\left(3(|x_h-v_h(t-\tau)| + \frac{z}{t} + 1)\right)^{\frac{2}{1+\beta}}}\right)\\
        &\leq C e^{-\alpha(|x_h-v_h(t-\tau)| + |v_h| + \frac{z}{t} + 1)^{\frac{\beta}{1+\beta}}}\exp\left(-\frac{|v_h|^2}{8C_5\left(3(|x_h-v_h(t-\tau)| + \frac{z}{t} + 1)\right)^{\frac{2}{1+\beta}}}\right)\\
        &\leq C e^{-\alpha(|x_h| + \frac{z}{t} + 1)^{\frac{\beta}{1+\beta}}}\exp\left(-\frac{|v_h|^2}{8C_5\left(3(|x_h-v_h(t-\tau)| + \frac{z}{t} + 1)\right)^{\frac{2}{1+\beta}}}\right)
    \end{align*}
    as $0\leq t-\tau\leq \frac{1}{4}$. Finally, the integral \eqref{eq:M[f]_integral_form1} on $A_1\times\mathbb{R}^{n-1}$ is bounded by the integral
    \begin{align*}
        &e^{-\alpha(|x_h| + \frac{z}{t} + 1)^{\frac{\beta}{1+\beta}}}\frac{t}{t-\tau}\int_{A_1\times \mathbb{R}^{n-1}} |v_h|^{m_{2,1}}\left(\frac{\tau}{t-\tau}|v_z|\right)^{m_{2,2}}\frac{1}{(|x_h-v_h(t-\tau)| + \frac{z}{t} - v_z + 1)^{\frac{n-m_1}{1+\beta}}}\\
        &\qquad \times \exp\left(-\frac{|v_h|^2}{8C_5\left(3(|x_h-v_h(t-\tau)| + \frac{z}{t} + 1)\right)^{\frac{2}{1+\beta}}}\right)\exp\left(-\frac{|\frac{t}{t-\tau}v_z|^2}{8C_5(|x_h-v_h(t-\tau)| + \frac{z}{t} - v_z + 1)^{\frac{2}{1+\beta}}}\right)\,dv
    \end{align*}
    with some constant multiplication. To make the notation clean, we introduce a constant $C_T>0$ and write the goal by
    \begin{align}\label{eq:M[f]_integral_form2}
        \begin{split}
            &\frac{t}{t-\tau}\int_{A_1\times \mathbb{R}^{n-1}} |v_h|^{m_{2,1}}\left(\frac{\tau}{t-\tau}|v_z|\right)^{m_{2,2}}\frac{1}{(|x_h-v_h(t-\tau)| + \frac{z}{t} - v_z + 1)^{\frac{n-m_1}{1+\beta}}}\\
            &\qquad \times \exp\left(-\frac{|v_h|^2}{C_T(|x_h-v_h(t-\tau)| + \frac{z}{t} + 1)^{\frac{2}{1+\beta}}}\right)\exp\left(-\frac{|\frac{t}{t-\tau}v_z|^2}{C_T(|x_h-v_h(t-\tau)| + \frac{z}{t} - v_z + 1)^{\frac{2}{1+\beta}}}\right)\,dv\\
            &\leq C(|x_h| + \frac{z}{t} + 1)^{\frac{m_1+m_{2,1} + m_{2,2}}{1+\beta}}.
        \end{split}
    \end{align}

    We start from the $v_z$-integral on $A_1$. We will show that
    \begin{align}\label{eq:v_z_int_res}
        \begin{split}
            &\frac{t}{t-\tau}\int_{A_1} \frac{\left(\frac{\tau}{t-\tau}|v_z|\right)^{m_{2,2}}}{(|x_h-v_h(t-\tau)| + \frac{z}{t} - v_z + 1)^{\frac{n-m_1}{1+\beta}}}\exp\left(-\frac{|\frac{t}{t-\tau}v_z|^2}{C_T(|x_h-v_h(t-\tau)| + \frac{z}{t} - v_z + 1)^{\frac{2}{1+\beta}}}\right)\,dv_z\\
            &\leq C\left(\frac{\tau}{t}\right)^{m_{2,2}}(|x_h-v_h(t-\tau)| + \frac{z}{t} + 1)^{\frac{m_1+m_{2,2}-n+1}{1+\beta}}
        \end{split}
    \end{align}
    as the result of the integration. For this, we further divide the integral region into two parts:
    \begin{align*}
        \frac{t}{t-\tau}|v_z| > \frac{1}{2}(|x_h-v_h(t-\tau)| + \frac{z}{t} + 1)^{\frac{1}{1+\beta}}
    \end{align*}
    and another part depending on the scaling of the exponential function in \eqref{eq:v_z_int_res}. If $\frac{t}{t-\tau}|v_z| > \frac{1}{2}(|x_h-v_h(t-\tau)| + \frac{z}{t} + 1)^{\frac{1}{1+\beta}}$, choose $N$ to be the smallest integer satisfying
    \begin{align*}
        N> \max\{m_{2,2}+2, -(m_1-n)\}.
    \end{align*}
    Using $x^N e^{-x^2}\leq C_N$ for a constant $C_N$, we have
    \begin{align}\label{eq:A1_bound_3}
        \begin{split}
            &\frac{t}{t-\tau}\left(\frac{\tau}{t-\tau}\right)^{m_{2,2}}\int_{-2\left(|x_h-v_h(t-\tau)|+\frac{z}{t}+1\right)}^{\frac{z}{t}} \frac{|v_z|^{m_{2,2}}}{(|x_h-v_h(t-\tau)| + \frac{z}{t} - v_z + 1)^{\frac{n-m_1}{1+\beta}}}\\
            &\qquad \times \exp\left(-\frac{|\frac{t}{t-\tau}v_z|^2}{C_T(|x_h-v_h(t-\tau)| + \frac{z}{t} - v_z + 1)^{\frac{2}{1+\beta}}}\right) \mathbf{1}_{\frac{t}{t-\tau}|v_z| > \frac{1}{2}(|x_h-v_h(t-\tau)| + \frac{z}{t} + 1)^{\frac{1}{1+\beta}}}\,dv_z\\
            &\leq C_N\frac{t}{t-\tau}\left(\frac{\tau}{t-\tau}\right)^{m_{2,2}}\int_{-2\left(|x_h-v_h(t-\tau)|+\frac{z}{t}+1\right)}^{\frac{z}{t}} \frac{|v_z|^{m_{2,2}}}{(|x_h-v_h(t-\tau)| + \frac{z}{t} - v_z + 1)^{\frac{n-m_1}{1+\beta}}}\\
            &\qquad \times \left(\frac{\sqrt{C_T}(|x_h-v_h(t-\tau)| + \frac{z}{t} - v_z + 1)^{\frac{1}{1+\beta}}}{|\frac{t}{t-\tau}v_z|}\right)^N \mathbf{1}_{\frac{t}{t-\tau}|v_z| > \frac{1}{2}(|x_h-v_h(t-\tau)| + \frac{z}{t} + 1)^{\frac{1}{1+\beta}}}\,dv_z\\
            &\leq C\frac{t}{t-\tau}\left(\frac{\tau}{t-\tau}\right)^{m_{2,2}}\left(\frac{t-\tau}{t}\right)^N\\
            &\qquad \times \int_{-2\left(|x_h-v_h(t-\tau)|+\frac{z}{t}+1\right)}^{\frac{z}{t}} (|x_h-v_h(t-\tau)| + \frac{z}{t} - v_z + 1)^{\frac{N+m_1-n}{1+\beta}}\frac{1}{|v_z|^{N-m_{2,2}}}\mathbf{1}_{\frac{t}{t-\tau}|v_z| > \frac{1}{2}(|x_h-v_h(t-\tau)| + \frac{z}{t} + 1)^{\frac{1}{1+\beta}}}\,dv_z.
        \end{split}
    \end{align}
    As $N+m_1-n\geq 0$ and $v_z\geq -2(|x_h-v_h(t-\tau)| + \frac{z}{t} + 1)$, we have
    \begin{align*}
        (|x_h-v_h(t-\tau)| + \frac{z}{t} - v_z + 1)^{\frac{N+m_1-n}{1+\beta}}\leq \left(3(|x_h-v_h(t-\tau)| + \frac{z}{t} + 1)\right)^{\frac{N+m_1-n}{1+\beta}}.
    \end{align*}
    Also, since $\frac{t}{t-\tau}|v_z| > (|x_h-v_h(t-\tau)| + \frac{z}{t} + 1)^{\frac{1}{1+\beta}}$ and $N-m_{2,2}\geq 2$, we have
    \begin{align}\label{eq:A1_bound_4}
        \begin{split}
            &\left(\frac{t-\tau}{t}\right)^N\int_{-2\left(|x_h-v_h(t-\tau)|+\frac{z}{t}+1\right)}^{\frac{z}{t}} \frac{1}{|v_z|^{N-m_{2,2}}}\mathbf{1}_{\frac{t}{t-\tau}|v_z| > \frac{1}{2}(|x_h-v_h(t-\tau)| + \frac{z}{t} + 1)^{\frac{1}{1+\beta}}}\,dv_z\\
            &\leq 2\left(\frac{t-\tau}{t}\right)^N\int_{\frac{1}{2}\frac{t-\tau}{t}(|x_h-v_h(t-\tau)| + \frac{z}{t} + 1)^{\frac{1}{1+\beta}}}^\infty\frac{1}{|v_z|^{N-m_{2,2}}}\,dv_z\\
            &\leq C\left(\frac{t-\tau}{t}\right)^{m_{2,2}+1}(|x_h-v_h(t-\tau)| + \frac{z}{t} + 1)^{\frac{m_{2,2} + 1 - N}{1+\beta}}.
        \end{split}
    \end{align}
    Applying \eqref{eq:A1_bound_3} and \eqref{eq:A1_bound_4}, we obtain
    \begin{align}\label{eq:A1_bound_5}
        \begin{split}
            &\frac{t}{t-\tau}\int_{A_1} \frac{\left(\frac{\tau}{t-\tau}|v_z|\right)^{m_{2,2}}}{(|x_h-v_h(t-\tau)| + \frac{z}{t} - v_z + 1)^{\frac{n-m_1}{1+\beta}}}\\
            &\qquad \times \exp\left(-\frac{|\frac{t}{t-\tau}v_z|^2}{C_T(|x_h-v_h(t-\tau)| + \frac{z}{t} - v_z + 1)^{\frac{2}{1+\beta}}}\right)\mathbf{1}_{\frac{t}{t-\tau}|v_z| > \frac{1}{2}(|x_h-v_h(t-\tau)| + \frac{z}{t} + 1)^{\frac{1}{1+\beta}}}\,dv_z\\
            &\leq C\left(\frac{\tau}{t}\right)^{m_{2,2}}(|x_h-v_h(t-\tau)| + \frac{z}{t} + 1)^{\frac{m_1+m_{2,2}-n + 1}{1+\beta}}.
        \end{split}
    \end{align}
        
    Next, suppose $\frac{t}{t-\tau}|v_z| \leq \frac{1}{2}(|x_h-v_h(t-\tau)| + \frac{z}{t} + 1)^{\frac{1}{1+\beta}}$. In this case, we easily obtain
    \begin{align*}
        \frac{1}{2}\left(|x_h-v_h(t-\tau)| + \frac{z}{t} + 1\right)\leq |x_h-v_h(t-\tau)| + \frac{z}{t} - v_z + 1\leq \frac{3}{2}\left(|x_h-v_h(t-\tau)| + \frac{z}{t} + 1\right)
    \end{align*}
    Therefore, crudely bounding the exponential function by $1$, we obtain
    \begin{align}\label{eq:A1_bound_6}
        \begin{split}
            &\frac{t}{t-\tau}\int_{-2\left(|x_h-v_h(t-\tau)|+\frac{z}{t}+1\right)}^{\frac{z}{t}} \frac{\left(\frac{\tau}{t-\tau}|v_z|\right)^{m_{2,2}}}{(|x_h-v_h(t-\tau)| + \frac{z}{t} - v_z + 1)^{\frac{n-m_1}{1+\beta}}}\\
            &\qquad \times \exp\left(-\frac{|\frac{t}{t-\tau}v_z|^2}{C_T(|x_h-v_h(t-\tau)| + \frac{z}{t} - v_z + 1)^{\frac{2}{1+\beta}}}\right)\mathbf{1}_{\frac{t}{t-\tau}|v_z| \leq \frac{1}{2}(|x_h-v_h(t-\tau)| + \frac{z}{t} + 1)^{\frac{1}{1+\beta}}}\,dv_z\\
            &\leq C\frac{t}{t-\tau}(|x_h-v_h(t-\tau)| + \frac{z}{t} + 1)^{\frac{m_1-n}{1+\beta}}\int_{-\infty}^\infty \left(\frac{\tau}{t-\tau}|v_z|\right)^{m_{2,2}}\mathbf{1}_{\frac{t}{t-\tau}|v_z| \leq \frac{1}{2}(|x_h-v_h(t-\tau)| + \frac{z}{t} + 1)^{\frac{1}{1+\beta}}} \,dv_z\\
            &\leq C\left(\frac{\tau}{t}\right)^{m_{2,2}}(|x_h-v_h(t-\tau)| + \frac{z}{t} + 1)^{\frac{m_1+m_{2,2}-n+1}{1+\beta}}.
        \end{split}
    \end{align}
    
    Combining \eqref{eq:A1_bound_5} and \eqref{eq:A1_bound_6}, we obtain \eqref{eq:v_z_int_res}.\\

    We move on to the $v_h$-integral. Inserting the $v_z$-integral result \eqref{eq:v_z_int_res} to \eqref{eq:M[f]_integral_form2}, we need to obtain
    \begin{align}\label{eq:M[f]_integral_form3}
        \begin{split}
            &\int_{\mathbb{R}^{n-1}} |v_h|^{m_{2,1}}(|x_h-v_h(t-\tau)| + \frac{z}{t} + 1)^{\frac{m_1+m_{2,2}-n+1}{1+\beta}} \exp\left(-\frac{|v_h|^2}{C_T(|x_h-v_h(t-\tau)| + \frac{z}{t} + 1)^{\frac{2}{1+\beta}}}\right)\,dv_h\\
            &\leq C(|x_h| + \frac{z}{t} + 1)^{\frac{m_1+m_{2,1} + m_{2,2}}{1+\beta}}.
        \end{split}
    \end{align}
    We estimate this integral over the regions $B_1$ and $B_2$ defined in \eqref{eq:v_h_division}.\\

    \noindent\textbullet{} \textit{Region $B_1$.} For
    \begin{align*}
        |v_h|(t-\tau)\leq 2\left(|x_h|+\frac{z}{t}+1\right),
    \end{align*}
    we further split the integral region by
    \begin{align*}
        |v_h|\geq \frac{1}{2}\left(|x_h|+\frac{z}{t}+1\right)^{\frac{1}{1+\beta}}
    \end{align*}
    and the other as in the $v_z$-integral case. If $|v_h|\geq \frac{1}{2}\left(|x_h|+\frac{z}{t}+1\right)^{\frac{1}{1+\beta}}$, we choose $N\in\mathbb{N}$ satisfying
    \begin{align*}
        N> \max\{-(m_1+m_{2,2} - n + 1), m_{2,1} + n + 1\}.
    \end{align*}
    Using the inequality $x^N e^{-x^2}\leq C_N$ for some constant $C_N$, we bound the integral by
    \begin{align*}
        &\int_{|v_h|(t-\tau)\leq 2(|x_h| + \frac{z}{t} + 1)} |v_h|^{m_{2,1}}\left(|x_h-v_h(t-\tau)| + \frac{z}{t} + 1\right)^{\frac{m_1+m_{2,2} - n + 1}{1+\beta}}\\
        &\qquad \times \exp\left(-\frac{|v_h|^2}{C_T(|x_h-v_h(t-\tau)| + \frac{z}{t} + 1)^{\frac{2}{1+\beta}}}\right)\mathbf{1}_{|v_h|\geq \frac{1}{2}\left(|x_h|+\frac{z}{t}+1\right)^{\frac{1}{1+\beta}}}\,dv_h\\
        &\leq C_N\int_{|v_h|(t-\tau)\leq 2(|x_h| + \frac{z}{t} + 1)} |v_h|^{m_{2,1}}\left(|x_h-v_h(t-\tau)| + \frac{z}{t} + 1\right)^{\frac{m_1+m_{2,2} - n + 1}{1+\beta}}\\
        &\qquad \times \left(\frac{\sqrt{C_T}(|x_h-v_h(t-\tau)| + \frac{z}{t} + 1)^{\frac{1}{1+\beta}}}{|v_h|}\right)^N\mathbf{1}_{|v_h|\geq \frac{1}{2}\left(|x_h|+\frac{z}{t}+1\right)^{\frac{1}{1+\beta}}}\,dv_h\\
        &\leq C\int_{|v_h|(t-\tau)\leq 2(|x_h| + \frac{z}{t} + 1)} \left(|x_h-v_h(t-\tau)| + \frac{z}{t} + 1\right)^{\frac{N+m_1+m_{2,2} - n + 1}{1+\beta}}\frac{1}{|v_h|^{N-m_{2,1}}}\mathbf{1}_{|v_h|\geq \frac{1}{2}\left(|x_h|+\frac{z}{t}+1\right)^{\frac{1}{1+\beta}}}\,dv_h.
    \end{align*}
    Since $N+m_1+m_{2,2} - n + 1\geq 0$, we have
    \begin{align*}
        \left(|x_h-v_h(t-\tau)| + \frac{z}{t} + 1\right)^{\frac{N+m_1+m_{2,2} - n + 1}{1+\beta}}\leq \left(3\left(|x_h| + \frac{z}{t} + 1\right)\right)^{\frac{N+m_1+m_{2,2} - n + 1}{1+\beta}}
    \end{align*}
    for $|v_h|(t-\tau)\leq 2\left(|x_h|+\frac{z}{t}+1\right)$. Also, as $N - m_{2,1}> n + 1$ and $0\leq (t-\tau)\leq \frac{1}{4}$, we have
    \begin{align}\label{eq:B1_bound_res1}
        \begin{split}
            &\int_{|v_h|(t-\tau)\leq 2(|x_h| + \frac{z}{t} + 1)} \left(|x_h-v_h(t-\tau)| + \frac{z}{t} + 1\right)^{\frac{N+m_1+m_{2,2} - n + 1}{1+\beta}}\frac{1}{|v_h|^{N-m_{2,1}}}\mathbf{1}_{|v_h|\geq \frac{1}{2}\left(|x_h|+\frac{z}{t}+1\right)^{\frac{1}{1+\beta}}}\,dv_h\\
            &\leq \left(3\left(|x_h| + \frac{z}{t} + 1\right)\right)^{\frac{N+m_1+m_{2,2} - n + 1}{1+\beta}}\int_{\mathbb{R}^{n-1}} \frac{1}{|v_h|^{N-m_{2,1}}}\mathbf{1}_{|v_h|\geq \frac{1}{2}\left(|x_h|+\frac{z}{t}+1\right)^{\frac{1}{1+\beta}}}\,dv_h\\
            &\leq C\left(|x_h| + \frac{z}{t} + 1\right)^{\frac{N+m_1+m_{2,2} - n + 1}{1+\beta}}\left(|x_h| + \frac{z}{t} + 1\right)^{\frac{m_{2,1} - N + n-1}{1+\beta}}\\
            &=C\left(|x_h| + \frac{z}{t} + 1\right)^{\frac{m_1+m_{2,2} + m_{2,1}}{1+\beta}}.
        \end{split}
    \end{align}
    
    If $|v_h|\leq \frac{1}{2}\left(|x_h|+\frac{z}{t}+1\right)^{\frac{1}{1+\beta}}$, then
    \begin{align*}
        \frac{1}{2}\left(|x_h| + \frac{z}{t} + 1\right)\leq |x_h-v_h(t-\tau)| + \frac{z}{t} + 1\leq \frac{3}{2}\left(|x_h| + \frac{z}{t} + 1\right).
    \end{align*}
    Therefore, bounding the exponential function by $1$, we get
    \begin{align}\label{eq:B1_bound_res2}
        \begin{split}
            &\int_{|v_h|(t-\tau)\leq 2(|x_h| + \frac{z}{t} + 1)} |v_h|^{m_{2,1}}\left(|x_h-v_h(t-\tau)| + \frac{z}{t} + 1\right)^{\frac{m_1+m_{2,2} - n + 1}{1+\beta}}\\
            &\qquad \times \exp\left(-\frac{|v_h|^2}{C_T(|x_h-v_h(t-\tau)| + \frac{z}{t} + 1)^{\frac{2}{1+\beta}}}\right)\mathbf{1}_{|v_h|\leq \frac{1}{2}\left(|x_h|+\frac{z}{t}+1\right)^{\frac{1}{1+\beta}}}\,dv_h\\
            &\leq C\left(|x_h| + \frac{z}{t} + 1\right)^{\frac{m_1+m_{2,2} - n + 1}{1+\beta}}\int_{\mathbb{R}^{n-1}} |v_h|^{m_{2,1}}\mathbf{1}_{|v_h|\leq \frac{1}{2}\left(|x_h|+\frac{z}{t}+1\right)^{\frac{1}{1+\beta}}}\,dv_h\\
            &\leq C\left(|x_h| + \frac{z}{t} + 1\right)^{\frac{m_1+m_{2,2} + m_{2,1}}{1+\beta}}.
        \end{split}
    \end{align}
    We merge the two results \eqref{eq:B1_bound_res1} and \eqref{eq:B1_bound_res2} and obtain
    \begin{align}\label{eq:B1_bound_res}
        \begin{split}
            &\int_{B_1} |v_h|^{m_{2,1}}\left(|x_h-v_h(t-\tau)| + \frac{z}{t} + 1\right)^{\frac{m_1+m_{2,2} - n + 1}{1+\beta}}\exp\left(-\frac{|v_h|^2}{C_T(|x_h-v_h(t-\tau)| + \frac{z}{t} + 1)^{\frac{2}{1+\beta}}}\right)\,dv_h\\
            &\leq C\left(|x_h| + \frac{z}{t} + 1\right)^{\frac{m_1+m_{2,1} + m_{2,2}}{1+\beta}}.
        \end{split}
    \end{align}
    It matches with \eqref{eq:M[f]_integral_form3}.\\
    
    \noindent\textbullet{} \textit{Region $B_2$.} When
    \begin{align*}
        |v_h|(t-\tau) \geq 2\left(|x_h|+\frac{z}{t}+1\right),
    \end{align*}
    by the triangle inequality, we have
    \begin{align*}
        \frac{|v_h(t-\tau)|}{2}\leq |x_h-v_h(t-\tau)| + \frac{z}{t} + 1\leq 2|v_h(t-\tau)|.
    \end{align*}
    Consequently, we have
    \begin{align*}
        &\int_{|v_h|(t-\tau) \geq 2\left(|x_h|+\frac{z}{t}+1\right)} |v_h|^{m_{2,1}}\left(|x_h-v_h(t-\tau)| + \frac{z}{t} + 1\right)^{\frac{m_1+m_{2,2}-n+1}{1+\beta}}\exp\left(-\frac{|v_h|^2}{C_T(|x_h-v_h(t-\tau)| + \frac{z}{t} + 1)^{\frac{2}{1+\beta}}}\right)\,dv_h\\
        &\leq C\int_{|v_h|(t-\tau) \geq 2\left(|x_h|+\frac{z}{t}+1\right)} |v_h|^{m_{2,1}}(|v_h|(t-\tau))^{\frac{m_1+m_{2,2}-n+1}{1+\beta}}\exp\left(-\frac{|v_h|^2}{C_T(2|v_h|(t-\tau))^{\frac{2}{1+\beta}}}\right)\,dv_h\\
        &= C(t-\tau)^{\frac{m_1+m_{2,2}-n+1}{1+\beta}}\int_{|v_h|(t-\tau) \geq 2\left(|x_h|+\frac{z}{t}+1\right)} |v_h|^{m_{2,1} + \frac{m_1+m_{2,2}-n+1}{1+\beta}}\exp\left(-\frac{|v_h|^{\frac{2\beta}{1+\beta}}}{C_T(2(t-\tau))^{\frac{2}{1+\beta}}}\right)\,dv_h.
    \end{align*}
    By taking $v_h\mapsto (t-\tau)^{1/\beta}v_h$ and using Lemma~\ref{lem:exp_integral},
    \begin{align}
        &(t-\tau)^{\frac{m_1+m_{2,2}-n+1}{1+\beta}}\int_{|v_h|(t-\tau) \geq 2\left(|x_h|+\frac{z}{t}+1\right)} |v_h|^{m_{2,1} + \frac{m_1+m_{2,2}-n+1}{1+\beta}}\exp\left(-\frac{|v_h|^{\frac{2\beta}{1+\beta}}}{C_T(2(t-\tau))^{\frac{2}{1+\beta}}}\right)\,dv_h\notag\\
        &=(t-\tau)^{\frac{m_1 + m_{2,1}+m_{2,2}}{\beta}}\int_{\mathbb{R}^{n-1}} |v_h|^{m_{2,1} + \frac{m_1+m_{2,2}-n+1}{1+\beta}}\exp\left(-\frac{|v_h|^{\frac{2\beta}{1+\beta}}}{2^{\frac{2}{1+\beta}}C_T}\right)\mathbf{1}_{|v_h|(t-\tau)^{\frac{1+\beta}{\beta}}\geq 2(|x_h| + \frac{z}{t} + 1)}\,dv_h\notag\\
        &\leq C'(t-\tau)^{-m_{2,1}-n+3}(|x_h| + \frac{z}{t} + 1)^{m_{2,1} + \frac{m_1+m_{2,2} + (n-3)\beta}{1+\beta}}\exp\left(-\frac{(2(|x_h| + \frac{z}{t} + 1))^{\frac{2\beta}{1+\beta}}}{2^{\frac{2}{1+\beta}}C_T(t-\tau)^2}\right)\label{eq:B2_bound_2}
    \end{align}
    for some constant $C'$. If $-m_{2,1}-n+3\geq 0$, the desired estimate follows because
    \begin{align*}
        &(t-\tau)^{-m_{2,1}-n+3}(|x_h| + \frac{z}{t} + 1)^{m_{2,1} + \frac{m_1+m_{2,2} + (n-3)\beta}{1+\beta}}\\
        &= (|x_h| + \frac{z}{t} + 1)^{\frac{m_1+m_{2,1} + m_{2,2}}{1+\beta}}\left(\frac{t-\tau}{(|x_h| + \frac{z}{t} + 1)^{\frac{\beta}{1+\beta}}}\right)^{-m_{2,1}-n+3}\\
        &\leq (|x_h| + \frac{z}{t} + 1)^{\frac{m_1+m_{2,1} + m_{2,2}}{1+\beta}}
    \end{align*}
    for $0<t-\tau\leq \frac{1}{4}$. If not, let $\eta = m_{2,1} + n - 3$.
    Since $x^\eta e^{-x^2}\leq C_\eta$ for some constant $C_\eta$, applying
    \begin{align*}
        \exp\left(-\frac{(2(|x_h| + \frac{z}{t} + 1))^{\frac{2\beta}{1+\beta}}}{2^{\frac{2}{1+\beta}}C_T(t-\tau)^2}\right)\leq C_\eta\left(\frac{(2(|x_h| + \frac{z}{t} + 1))^{\frac{\beta}{1+\beta}}}{2^{\frac{1}{1+\beta}}\sqrt{C_T}(t-\tau)}\right)^{-\eta}
    \end{align*}
    to the final formula, we get
    \begin{align*}
        \eqref{eq:B2_bound_2}&\leq C(|x_h| + \frac{z}{t} + 1)^{\frac{m_1+m_{2,1} + m_{2,2}}{1+\beta}}\left(\frac{t-\tau}{(|x_h| + \frac{z}{t} + 1)^{\frac{\beta}{1+\beta}}}\right)^{-m_{2,1}-n+3}\left(\frac{2^{\frac{1}{1+\beta}}\sqrt{C_T}(t-\tau)}{(2(|x_h| + \frac{z}{t} + 1))^{\frac{\beta}{1+\beta}}}\right)^{m_{2,1}+n-3}\\
        &\leq C(|x_h| + \frac{z}{t} + 1)^{\frac{m_1+m_{2,1} + m_{2,2}}{1+\beta}}.
    \end{align*}
    Therefore, we obtain
    \begin{align}\label{eq:B2_bound_res1}
        \begin{split}
            &\int_{B_2} |v_h|^{m_{2,1}}\left(|x_h-v_h(t-\tau)| + \frac{z}{t} + 1\right)^{\frac{m_1+m_{2,2}-n+1}{1+\beta}}\exp\left(-\frac{|v_h|^2}{C_T(|x_h-v_h(t-\tau)| + \frac{z}{t} + 1)^{\frac{2}{1+\beta}}}\right)\,dv_h\\
            &\leq C(|x_h| + \frac{z}{t} + 1)^{\frac{m_1+m_{2,1} + m_{2,2}}{1+\beta}}.
        \end{split}
    \end{align}
    It also matches with \eqref{eq:M[f]_integral_form3}. Combining \eqref{eq:B1_bound_res} and \eqref{eq:B2_bound_res1}, we conclude \eqref{eq:M[f]_integral_form2} on the region $A_1\times\mathbb{R}^{n-1}$.\\
    
    \noindent\textbf{Step 2: Integral over $A_2\times\mathbb{R}^{n-1}$.} 
    
    We move on to the remaining region $A_2\times\mathbb{R}^{n-1}$, where $v_z\leq -2\left(|x_h-v_h(t-\tau)|+\frac{z}{t}+1\right)$. Here, for any fixed $m_{2,1},m_{2,2}\geq 0$ and $N\geq 0$, we bound the $v_z$ and $v_h$-integrals in steps by
    \begin{align}
        &\frac{t}{t-\tau}\int_{-\infty}^{-2\left(|x_h-v_h(t-\tau)|+\frac{z}{t}+1\right)} \left(\frac{\tau}{t-\tau}|v_z|\right)^{m_{2,2}} \frac{e^{-\alpha(|x_h-v_h(t-\tau)| + \frac{z}{t} - v_z + 1)^{\frac{\beta}{1+\beta}}}}{(|x_h-v_h(t-\tau)| + \frac{z}{t} - v_z + 1)^{\frac{n-m_1}{1+\beta}}}\notag\\
        &\qquad \times \exp\left(-\frac{|v_h|^2 + |\frac{t}{t-\tau}v_z|^2}{C_T(|x_h-v_h(t-\tau)| + \frac{z}{t} - v_z + 1)^{\frac{2}{1+\beta}}}\right)\,dv_z\notag\\
        &\leq C\left(|x_h-v_h(t-\tau)| + \frac{z}{t} + 1\right)^{-N} e^{-3^{\frac{\beta}{1+\beta}}\alpha(|x_h-v_h(t-\tau)| + \frac{z}{t} + 1)^{\frac{\beta}{1+\beta}}}\exp\left(-\frac{|\frac{t}{t-\tau}|^{\frac{2}{1+\beta}}|v_h|^{\frac{2\beta}{1+\beta}}}{C_T}\right)\label{eq:A2_v_z_int},
        \intertext{and}
        &\int_{\mathbb{R}^{n-1}}|v_h|^{m_{2,1}}\left(|x_h-v_h(t-\tau)| + \frac{z}{t} + 1\right)^{-N} e^{-3^{\frac{\beta}{1+\beta}}\alpha(|x_h-v_h(t-\tau)| + \frac{z}{t} + 1)^{\frac{\beta}{1+\beta}}}\exp\left(-\frac{|\frac{t}{t-\tau}|^{\frac{2}{1+\beta}}|v_h|^{\frac{2\beta}{1+\beta}}}{C_T}\right)\,dv_h\notag\\
        &\leq C \left(|x_h| + \frac{z}{t} + 1\right)^{-N}e^{-3^{\frac{\beta}{1+\beta}}\alpha(|x_h| + \frac{z}{t} + 1)^{\frac{\beta}{1+\beta}}}\label{eq:A2_v_h_int}
    \end{align}
    for some constant $C$ depending on $\alpha$, $\beta$, $m_1$, $n$, $m_{2,1}$, $m_{2,2}$, $C_T$, and $N$. It shows that 
    \begin{align*}
        &\frac{t}{t-\tau}\int_{A_2\times\mathbb{R}^{n-1}} |v_h|^{m_{2,1}}\left(\frac{\tau}{t-\tau}|v_z|\right)^{m_{2,2}} \frac{e^{-\alpha(|x_h-v_h(t-\tau)| + \frac{z}{t} - v_z + 1)^{\frac{\beta}{1+\beta}}}}{(|x_h-v_h(t-\tau)| + \frac{z}{t} - v_z + 1)^{\frac{n-m_1}{1+\beta}}}\\
        &\qquad \times \exp\left(-\frac{|v_h|^2 + |\frac{t}{t-\tau}v_z|^2}{C_T(|x_h-v_h(t-\tau)| + \frac{z}{t} - v_z + 1)^{\frac{2}{1+\beta}}}\right)\,dv\\
        &\leq C \left(|x_h| + \frac{z}{t} + 1\right)^{-N}e^{-3^{\frac{\beta}{1+\beta}}\alpha(|x_h| + \frac{z}{t} + 1)^{\frac{\beta}{1+\beta}}}.
    \end{align*}
    By choosing $N\geq \max\{0, -\frac{m_1+m_2}{1+\beta}\}$, we verify that the integral on $A_2\times\mathbb{R}^{n-1}$ for the weight $|v|^{m_2}$ is very small compared to the integral on $A_1\times \mathbb{R}^{n-1}$.
    
    We start from the $v_z$-integral. Since $-v_z\geq 2\left(|x_h-v_h(t-\tau)|+\frac{z}{t}+1\right)$, we have
    \begin{align*}
        |v_z|\leq (|x_h-v_h(t-\tau)| + \frac{z}{t} - v_z + 1)\leq 2|v_z|.
    \end{align*}
    From this simple bound, we deduce
    \begin{align*}
        \frac{1}{(|x_h-v_h(t-\tau)| + \frac{z}{t} - v_z + 1)^{\frac{n-m_1}{1+\beta}}}\leq C\frac{1}{|v_z|^{\frac{n-m_1}{1+\beta}}},
    \end{align*}
    and
    \begin{align*}
        \exp\left(-\frac{|v_h|^2 + |\frac{t}{t-\tau}v_z|^2}{C_T(|x_h-v_h(t-\tau)| + \frac{z}{t} - v_z + 1)^{\frac{2}{1+\beta}}}\right)&\leq \exp\left(-\frac{|v_h|^2 + |\frac{t}{t-\tau}v_z|^2}{2^{\frac{2}{1+\beta}}C_T|v_z|^{\frac{2}{1+\beta}}}\right) \\
        &= \exp\left(-\frac{|v_h|^2 + \frac{1}{2}|\frac{t}{t-\tau}v_z|^2}{2^{\frac{2}{1+\beta}}C_T|v_z|^{\frac{2}{1+\beta}}}\right)\exp\left(-\frac{1}{2}\left(\frac{t}{t-\tau}\right)^2\frac{|v_z|^{\frac{2\beta}{1+\beta}}}{2^{\frac{2}{1+\beta}}C_T}\right).
    \end{align*}
    We use Young's inequality
    \begin{align*}
        |v_h|^{\frac{2\beta}{1+\beta}}\left|\frac{\sqrt{\beta}}{\sqrt{2}} \frac{t}{t-\tau}v_z\right|^{\frac{2}{1+\beta}}\leq \frac{\beta}{1+\beta}\left(|v_h|^2 + \frac{1}{2}\left|\frac{t}{t-\tau}v_z\right|^2\right)
    \end{align*}
    to further bound the first exponential function as
    \begin{align*}
        \exp\left(-\frac{|v_h|^2 + \frac{1}{2}|\frac{t}{t-\tau}v_z|^2}{2^{\frac{2}{1+\beta}}C_T|v_z|^{\frac{2}{1+\beta}}}\right)&\leq \exp\left(-\frac{1+\beta}{\beta}\left(\frac{\beta}{2}\right)^{\frac{1}{1+\beta}}\frac{|v_h|^{\frac{2\beta}{1+\beta}}|\frac{t}{t-\tau}v_z|^{\frac{2}{1+\beta}}}{2^{\frac{2}{1+\beta}}C_T|v_z|^{\frac{2}{1+\beta}}}\right)\\
        &\leq \exp\left(-\frac{1+\beta}{\beta^{\frac{\beta}{1+\beta}}}\frac{|\frac{t}{t-\tau}|^{\frac{2}{1+\beta}}|v_h|^{\frac{2\beta}{1+\beta}}}{2^{\frac{3}{1+\beta}}C_T}\right).
    \end{align*}
    Applying these inequalities and $(|x_h-v_h(t-\tau)| + \frac{z}{t} - v_z + 1)\geq 3(|x_h-v_h(t-\tau)| + \frac{z}{t} + 1)$ under this $v_z$ region, we bound the $v_z$-integral by
    \begin{align}
        &\frac{t}{t-\tau}\int_{-\infty}^{-2\left(|x_h-v_h(t-\tau)| + \frac{z}{t} + 1\right)} \left(\frac{\tau}{t-\tau}|v_z|\right)^{m_{2,2}} \frac{e^{-\alpha(|x_h-v_h(t-\tau)| + \frac{z}{t} - v_z + 1)^{\frac{\beta}{1+\beta}}}}{(|x_h-v_h(t-\tau)| + \frac{z}{t} - v_z + 1)^{\frac{n-m_1}{1+\beta}}}\notag\\
        &\qquad \times \exp\left(-\frac{|v_h|^2 + |\frac{t}{t-\tau}v_z|^2}{C_T(|x_h-v_h(t-\tau)| + \frac{z}{t} - v_z + 1)^{\frac{2}{1+\beta}}}\right)\,dv_z\notag\\
        &\leq C\frac{t}{t-\tau}\left(\frac{\tau}{t-\tau}\right)^{m_{2,2}}e^{-3^{\frac{\beta}{1+\beta}}\alpha(|x_h-v_h(t-\tau)| + \frac{z}{t} + 1)^{\frac{\beta}{1+\beta}}}\exp\left(-\frac{1+\beta}{\beta^{\frac{\beta}{1+\beta}}}\frac{|\frac{t}{t-\tau}|^{\frac{2}{1+\beta}}|v_h|^{\frac{2\beta}{1+\beta}}}{2^{\frac{3}{1+\beta}}C_T}\right)\notag\\
        &\qquad \times \int_{-\infty}^{-2\left(|x_h-v_h(t-\tau)| + \frac{z}{t} + 1\right)} |v_z|^{m_{2,2}+\frac{m_1-n}{1+\beta}}\exp\left(-\frac{\left(\frac{t}{t-\tau}\right)^2|v_z|^{\frac{2\beta}{1+\beta}}}{2^{\frac{3+\beta}{1+\beta}}C_T}\right)\,dv_z\label{eq:A2_v_z_int_1}.
    \end{align}
    For the final integral, using Lemma~\ref{lem:exp_integral}, we have
    \begin{align*}
        &\int_{-\infty}^{-2\left(|x_h-v_h(t-\tau)| + \frac{z}{t} + 1\right)} |v_z|^{m_{2,2}+\frac{m_1-n}{1+\beta}}\exp\left(-\frac{\left(\frac{t}{t-\tau}\right)^2|v_z|^{\frac{2\beta}{1+\beta}}}{2^{\frac{3+\beta}{1+\beta}}C_T}\right)\,dv_z\\
        &\leq C\left(\frac{t-\tau}{t}\right)^2 \left(|x_h-v_h(t-\tau)| + \frac{z}{t} + 1\right)^{\frac{1-\beta}{1+\beta} + m_{2,2} + \frac{m_1-n}{1+\beta}}\exp\left(-\left(\frac{t}{t-\tau}\right)^2\frac{\left(|x_h-v_h(t-\tau)| + \frac{z}{t} + 1\right)^{\frac{2\beta}{1+\beta}}}{2^{\frac{3-\beta}{1+\beta}}C_T}\right).
    \end{align*}
    Using the exponential factor, we can eliminate the $\frac{1}{t-\tau}$ singularities in \eqref{eq:A2_v_z_int_1} and produce an arbitrary order of polynomial decay. Therefore, for any $N\geq 0$, we prove \eqref{eq:A2_v_z_int}.
    
    Next, we bound the $v_h$-integral of \eqref{eq:A2_v_z_int}: we define $C'_T$ by
    \begin{align*}
        C'_T = \frac{\beta^{\frac{\beta}{1+\beta}}}{1+\beta}2^{\frac{3}{1+\beta}}C_T,
    \end{align*}
    and compute the integral
    \begin{align*}
        \int_{\mathbb{R}^{n-1}}|v_h|^{m_{2,1}} \left(|x_h-v_h(t-\tau)| + \frac{z}{t} + 1\right)^{-N}e^{-3^{\frac{\beta}{1+\beta}}\alpha(|x_h-v_h(t-\tau)| + \frac{z}{t} + 1)^{\frac{\beta}{1+\beta}}}\exp\left(-\frac{|\frac{t}{t-\tau}|^{\frac{2}{1+\beta}}|v_h|^{\frac{2\beta}{1+\beta}}}{C'_T}\right)\,dv_h.
    \end{align*}
    
    Since $0\leq\frac{\beta}{1+\beta}\leq 1$, we crudely split the exponential function by
    \begin{align*}
        e^{-3^{\frac{\beta}{1+\beta}}\alpha(|x_h-v_h(t-\tau)| + \frac{z}{t} + 1)^{\frac{\beta}{1+\beta}}}&\leq e^{-3^{\frac{\beta}{1+\beta}}\alpha(|x_h| + \frac{z}{t} + 1)^{\frac{\beta}{1+\beta}}}e^{3^{\frac{\beta}{1+\beta}}\alpha|v_h(t-\tau)|^{\frac{\beta}{1+\beta}}}\\
        &\leq e^{-3^{\frac{\beta}{1+\beta}}\alpha(|x_h| + \frac{z}{t} + 1)^{\frac{\beta}{1+\beta}}}e^{3^{\frac{\beta}{1+\beta}}\alpha|v_h|^{\frac{\beta}{1+\beta}}}
    \end{align*}
    for $0\leq t-\tau\leq \frac{1}{4}$. Using this inequality, we get
    \begin{align*}
        &\int_{\mathbb{R}^{n-1}} |v_h|^{m_{2,1}}\left(|x_h-v_h(t-\tau)| + \frac{z}{t} + 1\right)^{-N} e^{-3^{\frac{\beta}{1+\beta}}\alpha(|x_h-v_h(t-\tau)| + \frac{z}{t} + 1)^{\frac{\beta}{1+\beta}}}\exp\left(-\frac{|\frac{t}{t-\tau}|^{\frac{2}{1+\beta}}|v_h|^{\frac{2\beta}{1+\beta}}}{C'_T}\right)\,dv_h\\
        &\leq e^{-3^{\frac{\beta}{1+\beta}}\alpha(|x_h| + \frac{z}{t} + 1)^{\frac{\beta}{1+\beta}}}\int_{\mathbb{R}^{n-1}} |v_h|^{m_{2,1}}\left(|x_h-v_h(t-\tau)| + \frac{z}{t} + 1\right)^{-N} e^{3^{\frac{\beta}{1+\beta}}\alpha|v_h|^{\frac{\beta}{1+\beta}}}\exp\left(-\frac{|v_h|^{\frac{2\beta}{1+\beta}}}{C'_T}\right)\,dv_h\\
        &\leq \exp\left(\frac{9C'_T\alpha^2}{4}\right)e^{-3^{\frac{\beta}{1+\beta}}\alpha(|x_h| + \frac{z}{t} + 1)^{\frac{\beta}{1+\beta}}}\\
        &\qquad \times \int_{\mathbb{R}^{n-1}} |v_h|^{m_{2,1}}\left(|x_h-v_h(t-\tau)| + \frac{z}{t} + 1\right)^{-N}\exp\left(-\frac{1}{C'_T}\left(|v_h|^{\frac{\beta}{1+\beta}} - \frac{3^{\frac{\beta}{1+\beta}} C'_T\alpha}{2}\right)^2\right)\,dv_h.
    \end{align*}
    To bound the final integral, we split the $v_h$ domain by
    \begin{align*}
        |v_h|(t-\tau)\leq \frac{1}{2}\left(|x_h| + \frac{z}{t} + 1\right)
    \end{align*}
    and the other part. When $|v_h|(t-\tau)\leq \frac{1}{2}\left(|x_h| + \frac{z}{t} + 1\right)$, we have
    \begin{align*}
        &\int_{\mathbb{R}^{n-1}} |v_h|^{m_{2,1}}\left(|x_h-v_h(t-\tau)| + \frac{z}{t} + 1\right)^{-N}\exp\left(-\frac{1}{C'_T}\left(|v_h|^{\frac{\beta}{1+\beta}} - \frac{3^{\frac{\beta}{1+\beta}} C'_T\alpha}{2}\right)^2\right)\mathbf{1}_{|v_h|(t-\tau)\leq \frac{1}{2}(|x_h| + \frac{z}{t} + 1)}\,dv_h\\
        &\leq C\left(|x_h| + \frac{z}{t} + 1\right)^{-N}\int_{\mathbb{R}^{n-1}} |v_h|^{m_{2,1}}\exp\left(-\frac{1}{C'_T}\left(|v_h|^{\frac{\beta}{1+\beta}} - \frac{3^{\frac{\beta}{1+\beta}} C'_T\alpha}{2}\right)^2\right)\,dv_h\\
        &\leq C\left(|x_h| + \frac{z}{t} + 1\right)^{-N}
    \end{align*}
    as the final integral is uniformly bounded. If $|v_h|(t-\tau)\geq \frac{1}{2}\left(|x_h| + \frac{z}{t} + 1\right)$, we further assume
    \begin{align}\label{eq:A2_v_h_cond}
        \frac{1}{4}\left(|x_h| + \frac{z}{t} + 1\right)\geq \left(\frac{3^{\frac{\beta}{1+\beta}} C'_T\alpha}{2}\right)^{\frac{1+\beta}{\beta}}.
    \end{align}
    Otherwise, since $|x_h| + \frac{z}{t} + 1$ is bounded above,
    \begin{align*}
        &\int_{\mathbb{R}^{n-1}} |v_h|^{m_{2,1}}\left(|x_h-v_h(t-\tau)| + \frac{z}{t} + 1\right)^{-N}\exp\left(-\frac{1}{C'_T}\left(|v_h|^{\frac{\beta}{1+\beta}} - \frac{3^{\frac{\beta}{1+\beta}} C'_T\alpha}{2}\right)^2\right)\mathbf{1}_{|v_h|(t-\tau)\geq \frac{1}{2}(|x_h| + \frac{z}{t} + 1)}\,dv_h\\
        &\leq \int_{\mathbb{R}^{n-1}} |v_h|^{m_{2,1}}\exp\left(-\frac{1}{C'_T}\left(|v_h|^{\frac{\beta}{1+\beta}} - \frac{3^{\frac{\beta}{1+\beta}} C'_T\alpha}{2}\right)^2\right)\,dv_h,
    \end{align*}
    and the final integral is well-bounded by a constant. Under this region \eqref{eq:A2_v_h_cond}, we have
    \begin{align*}
        |v_h|^{\frac{\beta}{1+\beta}} - \frac{3^{\frac{\beta}{1+\beta}} C'_T\alpha}{2}\geq |v_h|^{\frac{\beta}{1+\beta}} - \left(\frac{|v_h|(t-\tau)}{2}\right)^{\frac{\beta}{1+\beta}}\geq \frac{1}{2}|v_h|^{\frac{\beta}{1+\beta}}
    \end{align*}
    as $0\leq t-\tau\leq \frac{1}{4}$. Thus, using Lemma~\ref{lem:exp_integral}, we get
    \begin{align*}
        &\int_{\mathbb{R}^{n-1}} |v_h|^{m_{2,1}}\left(|x_h-v_h(t-\tau)| + \frac{z}{t} + 1\right)^{-N}\exp\left(-\frac{1}{C'_T}\left(|v_h|^{\frac{\beta}{1+\beta}} - \frac{3^{\frac{\beta}{1+\beta}} C'_T\alpha}{2}\right)^2\right)\mathbf{1}_{|v_h|(t-\tau)\geq \frac{1}{2}(|x_h| + \frac{z}{t} + 1)}\,dv_h\\
        &\leq \int_{\mathbb{R}^{n-1}} |v_h|^{m_{2,1}}\exp\left(-\frac{|v_h|^{\frac{2\beta}{1+\beta}}}{4C'_T}\right)\mathbf{1}_{|v_h|(t-\tau)\geq \frac{1}{2}(|x_h| + \frac{z}{t} + 1)}\,dv_h\\
        &\leq C\left(\frac{1}{2(t-\tau)}\left(|x_h| + \frac{z}{t} + 1\right)\right)^{n-1-\frac{2\beta}{1+\beta} + m_{2,1}}\exp\left(-\frac{1}{4C'_T}\left(\frac{1}{2(t-\tau)}(|x_h| + \frac{z}{t} + 1)\right)^{\frac{2\beta}{1+\beta}}\right).
    \end{align*}
    Again, we can construct an arbitrary polynomial decay from the exponentially decaying term and bound the final result by $C_N\left(|x_h| + \frac{z}{t} + 1\right)^{-N}$ for any $N\geq 0$. It proves \eqref{eq:A2_v_h_int}. Applying the two results \eqref{eq:A2_v_z_int} and \eqref{eq:A2_v_h_int} for $C_T = 4C_5$ to \eqref{eq:M[f]_integral_form1}, for any $N\geq 0$, we obtain
    \begin{align}\label{eq:M[f]_integral_res2}
        \begin{split}
            &\frac{t}{t-\tau}\int_{\mathbb{R}^n} \left(|v_h|^{m_2} + \left(\frac{\tau}{t-\tau}|v_z|\right)^{m_2}\right) \frac{e^{-\alpha(|x_h-v_h(t-\tau)| + \frac{z}{t} - v_z + 1)^{\frac{\beta}{1+\beta}}}}{(|x_h-v_h(t-\tau)| + \frac{z}{t} - v_z + 1)^{\frac{n-m_1}{1+\beta}}}\\
            &\qquad \times \exp\left(-\frac{|v_h|^2 + |\frac{t}{t-\tau}v_z|^2}{4C_5(|x_h-v_h(t-\tau)| + \frac{z}{t} - v_z + 1)^{\frac{2}{1+\beta}}}\right)\mathbf{1}_{v_z\leq -2\left(|x_h-v_h(t-\tau)|+\frac{z}{t}+1\right)}\,dv\\
            &\leq C\left(|x_h| + \frac{z}{t} + 1\right)^{-N}e^{-3^{\frac{\beta}{1+\beta}}\alpha(|x_h| + \frac{z}{t} + 1)^{\frac{\beta}{1+\beta}}}.
        \end{split}
    \end{align}
    Combining \eqref{eq:M[f]_integral_form2} and \eqref{eq:M[f]_integral_res2}, we obtain the result \eqref{eq:M[f]_integral_res}.
\end{proof}

The next corollary is an application of Proposition~\ref{prop:M[f]_integral} for $\alpha = 0$. It will be used in the contraction mapping argument for the construction of the BGK solution in Proposition~\ref{prop:maxwell}.
\begin{corollary}\label{prop:bound}
    For $1\leq \beta\leq 2$, assume
    \begin{align*}
        &\left|u-\frac{z}{t}\hat{z}\right|(t,x)\leq C_3 \left(|x_h| + \frac{z}{t}+1\right)^{\frac{1}{1+\beta}}\\
        &C_4\left(|x_h| + \frac{z}{t}+1\right)^{\frac{2}{1+\beta}}\leq T(t,x)\leq C_5\left(|x_h| + \frac{z}{t}+1\right)^{\frac{2}{1+\beta}}
    \end{align*}
    for some constants $C_i$ and $0<t\leq \frac{1}{4}$. Then, the following holds.
    \begin{enumerate}
        \item There exists a constant $C>0$ for each $i=0, 1$ depending on $\beta$, $n$, and $C_i$ such that
        \begin{align*}
            &\sup_{0<\tau\leq t}\int_{\mathbb{R}^n} \bigl|v - \tfrac{z}{t}\hat z\bigr|^{2i} \frac{1}{T(\tau, x-v(t-\tau))^{\frac{n}{2}+i}} \exp\!\left(-\frac{\bigl|v - u(\tau, x-v(t-\tau))\bigr|^2}{4T(\tau, x-v(t-\tau))}\right)\mathbf{1}_{z-v_z(t-\tau)>0}\, dv \leq C.
        \end{align*}
        \item There exists a constant $C>0$ depending on $\alpha$, $\beta$, $n$, and $C_i$ such that
        \begin{align*}
            &\sup_{0<\tau\leq t}\int_{\mathbb{R}^n} \left(|v_h(t-\tau)| + \frac{t-\tau}{\tau}\left|v_z - \frac{z}{t}\right|\right)^{\frac{2}{1+\beta}} \frac{1}{T(\tau, x-v(t-\tau))^{n/2 + 1}}\exp\left(-\frac{|v-u(\tau, x-v(t-\tau))|^2}{4T(\tau, x-v(t-\tau))}\right)\mathbf{1}_{z-v_z(t-\tau)>0}\,dv\\
            &\leq C\left(|x_h|+\frac{z}{t} + 1\right)^{-\frac{2\beta}{(1+\beta)^2}}.
        \end{align*}
    \end{enumerate}
\end{corollary}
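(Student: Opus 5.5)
The plan is to reduce both parts to Proposition~\ref{prop:M[f]_integral} with $\alpha=0$, i.e.\ with a polynomial density profile. The key observation is that the hypotheses on $u$ and $T$ are exactly the second and third lines of \eqref{eq:macro_bound}. If we set a fictitious density $\rho(t,x):=T(t,x)^{-i}$ (part (1)) or $\rho(t,x):=T(t,x)^{-1}$ (part (2)), the first line of \eqref{eq:macro_bound} holds with $\alpha=0$. The exponent is $m_1=-2i$ (resp.\ $m_1=-2$), and the constants come from $C_4,C_5$. The integrands in the corollary are then, up to the factor $(2\pi)^{n/2}$, exactly $\rho/T^{n/2}\exp(-|v-u|^2/(4T))$. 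The only difference from the Maxwellian is the factor $4T$ instead of $2T$ in the exponent. This is harmless: the proof of Proposition~\ref{prop:M[f]_integral} immediately replaced $2T$ by $4T$ via $(x-y)^2\ge \frac12x^2-y^2$, so repeating that step with $8T$ only changes $C_T$. I will also drop the factor $t/\tau\ge1$ from \eqref{eq:M[f]_integral_res}, which only weakens the bound.

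\textbf{Part (1).} Take $m_2=2i$ and $m_1=-2i$. Proposition~\ref{prop:M[f]_integral} with $\alpha=0$ then gives the bound
\begin{align*}
C\left(|x_h|+\tfrac{z}{t}+1\right)^{\frac{m_1+m_2}{1+\beta}}=C,
\end{align*}
uniformly in $0<\tau\le t\le\frac14$, which is exactly the claim.

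\textbf{Part (2).} I use the generalized estimate \eqref{eq:M[f]_integral_form1} rather than the final statement, because the weight is anisotropic. After the change of variables $v_z\mapsto v_z+\frac zt$, the weight becomes $(|v_h|(t-\tau)+\frac{t-\tau}{\tau}|v_z|)^{\frac{2}{1+\beta}}$. The subsequent rescaling $v_z\mapsto\frac{\tau}{t-\tau}v_z$ turns this into $(|v_h|(t-\tau)+|v_z|)^{\frac2{1+\beta}}$, and the new $v_z$ ranges over $v_z<\frac zt$. The quantity $|v_h|(t-\tau)+|v_z|$ is precisely the spatial displacement separating the evaluation point from $(x_h,\frac zt)$. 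Bounding it by $|v_h|^{\frac2{1+\beta}}$ alone would only give the exponent $0$, so the gain $-\frac{2\beta}{(1+\beta)^2}$ must come from this finer structure.

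\textbf{Main obstacle.} I expect the real difficulty to be producing this gain, and I would rerun the $A_1/A_2$, $B_1/B_2$ decomposition with $m_1=-2$ and the new weight. In the bulk regions ($\frac{t}{t-\tau}|v_z|\lesssim R^{\frac1{1+\beta}}$ and $|v_h|\lesssim R^{\frac1{1+\beta}}$, where $R=|x_h|+\frac zt+1$) the displacement is at most $(t-\tau)R^{\frac1{1+\beta}}\lesssim R^{\frac1{1+\beta}}$. Hence the weight is bounded by $R^{\frac{2}{(1+\beta)^2}}$, and the remaining integral is $\lesssim R^{\frac{-2}{1+\beta}}$ by part (1)-type bounds. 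The product is
\begin{align*}
R^{\frac{2}{(1+\beta)^2}-\frac{2}{1+\beta}}=R^{-\frac{2\beta}{(1+\beta)^2}},
\end{align*}
as desired. In the tail regions (large $|v_h|$ or large $|v_z|$), I expect the Gaussian factors and the superpolynomial decay obtained in Step 2 and in region $B_2$ to absorb any polynomial weight, exactly as there. I would also verify that the powers of $\frac{t-\tau}{\tau}$ and $\frac{t}{t-\tau}$ coming from the Jacobian cancel the same way they did in \eqref{eq:A1_bound_6}.
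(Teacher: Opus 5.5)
Your reduction is essentially the paper's proof: Proposition~\ref{prop:M[f]_integral} with $\alpha=0$ and a fictitious density (the paper takes $\rho=1$ or $\rho=(|x_h|+\frac{z}{t}+1)^{-\frac{2}{1+\beta}}$, equivalent to your $T^{-i}$), and for part (2) the bound \eqref{eq:M[f]_integral_form1} applied to the split weight with $(m_{2,1},m_{2,2})=(\frac{2}{1+\beta},0)$ and $(0,\frac{2}{1+\beta})$. Your ``main obstacle'' is not real, though. The crude bound $(|v_h|(t-\tau))^{\frac{2}{1+\beta}}\le|v_h|^{\frac{2}{1+\beta}}$ already gives $\frac{m_1+m_{2,1}}{1+\beta}=-\frac{2\beta}{(1+\beta)^2}$, because velocity weights are measured at the thermal scale $\sqrt{T}\sim(|x_h|+\frac{z}{t}+1)^{\frac{1}{1+\beta}}$ (exactly what your bulk estimate uses), so no rerun of the $A_i/B_i$ decomposition is needed. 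Moreover, if you keep the factor $\frac{t}{\tau}$ in \eqref{eq:M[f]_integral_res} instead of dropping it, you can bound the weight by $C\max\{1,(\frac{t-\tau}{\tau})^{\frac{2}{1+\beta}}\}|v-\frac{z}{t}\hat{z}|^{\frac{2}{1+\beta}}$ and use $\frac{\tau}{t}\max\{1,(\frac{t-\tau}{\tau})^{\frac{2}{1+\beta}}\}\le1$ (valid since $\frac{2}{1+\beta}\le1$); this yields part (2) directly from \eqref{eq:M[f]_integral_res} with $m_1=-2$, $m_2=\frac{2}{1+\beta}$, and also settles the Jacobian bookkeeping you flagged.
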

\begin{proof}
    In \eqref{eq:macro_bound}, we choose $\rho = 1$, so $\alpha = 0$, $m_1 = 0$, and $C_1 = C_2 = 1$. Choosing $m_2 = 0$, we get the result for $i=0$. If $i=1$, we choose $\rho = \left(|x_h| + \frac{z}{t}+1\right)^{-\frac{2}{1+\beta}}$ and $m_2 = 2$ to get a similar result.

    For the second one, we choose $\rho = \left(|x_h| + \frac{z}{t}+1\right)^{-\frac{2}{1+\beta}}$. In \eqref{eq:M[f]_integral_form1}, we select
    \begin{align*}
        (m_{2,1}, m_{2,2}) = \left(\frac{2}{1+\beta}, 0\right)\quad\text{or}\quad(m_{2,1}, m_{2,2}) = \left(0, \frac{2}{1+\beta}\right).
    \end{align*}
    Then,
    \begin{align*}
        &\sup_{0<\tau\leq t}\int_{\mathbb{R}^n} \left(|v_h(t-\tau)| + \frac{t-\tau}{\tau}\left|v_z - \frac{z}{t}\right|\right)^{\frac{2}{1+\beta}} \frac{1}{T^{n/2 + 1}}\exp\left(-\frac{|v-u|^2}{4T}\right)(\tau, x-v(t-\tau))\mathbf{1}_{z-v_z(t-\tau)>0}\,dv\\
        &\leq C\sup_{0<\tau\leq t}\int_{\mathbb{R}^n} \left(\left(|v_h(t-\tau)|\right)^{\frac{2}{1+\beta}} + \left(\frac{t-\tau}{\tau}\left|v_z - \frac{z}{t}\right|\right)^{\frac{2}{1+\beta}}\right) \frac{1}{T^{n/2 + 1}}\exp\left(-\frac{|v-u|^2}{4T}\right)(\tau, x-v(t-\tau))\mathbf{1}_{z-v_z(t-\tau)>0}\,dv\\
        &\leq C\left(|x_h|+\frac{z}{t} + 1\right)^{-\frac{2\beta}{(1+\beta)^2}}
    \end{align*}
    for a constant $C$.
\end{proof}

We recall the macroscopic fields $(\rho_g, u_g, T_g)$ given in Proposition~\ref{prop:boundary_int}. If we choose $m_1 = (n-m)-\beta$, then the macroscopic fields satisfy the conditions of Proposition~\ref{prop:M[f]_integral}, providing the sharp control of the macroscopic fields generated by the local Maxwellian $\mathcal{M}(f)$. Using this proposition, we can check the \textit{a priori} local-in-time well-posedness of the macroscopic fields of the BGK solution.

\begin{proposition}\label{prop:apriori}
    Let $\rho_g$, $u_g$, and $T_g$ be given by \eqref{eq:macro_0_bound}. Then there exists $0<t_0\leq \frac{1}{4}$ depending on the $C_i$, $m$, $\alpha$, $\beta$, and $n$ such that the following holds. Assume $f_1$ satisfies
    \begin{align}\label{eq:macro_bound_f_1}
        \begin{split}
            &|\rho_1-\rho_g|(t,x)\leq \frac{1}{2}\rho_g(t,x),\\
            &\left|u_1-\frac{z}{t}\hat{z}\right|(t,x)\leq \frac{C_3 + 1}{\sqrt{C_4}}\sqrt{T_g(t,x)},\\
            &|T_1-T_g|(t,x)\leq \frac{1}{2}T_g(t,x)
        \end{split}
    \end{align}
    for $0<t\leq t_0$, where $(\rho_1,u_1,T_1)$ are macroscopic quantities generated by $f_1$. If $f_2$ satisfies
    \begin{align*}
        f_2(t,x,v) = e^{-t_b}g(t-t_b, x - vt_b, v)\mathbf{1}_{t\geq t_b} + \int_0^t e^{-(t - \tau)} \mathcal{M}(f_1)(\tau, x - v(t - \tau), v)\mathbf{1}_{z-v_z(t-\tau)> 0}\,d\tau,
    \end{align*}
    then the $(\rho_2, u_2, T_2)$ generated by $f_2$ also satisfy \eqref{eq:macro_bound_f_1} for $0<t\leq t_0$.
\end{proposition}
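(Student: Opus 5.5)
The plan is to split the moments of $f_2$ into the boundary part and the Maxwellian (Duhamel) part, and show the latter is $O(t)$ relative to the former. Write $f_2 = f_g + F$, where $f_g = e^{-t_b}g(t-t_b,x-vt_b,v)\mathbf{1}_{t\geq t_b}$ and $F(t,x,v) = \int_0^t e^{-(t-\tau)}\mathcal{M}(f_1)(\tau,x-v(t-\tau),v)\mathbf{1}_{z-v_z(t-\tau)>0}\,d\tau$. The moments of $f_g$ are exactly $(\rho_g,u_g,T_g)$, controlled by Proposition~\ref{prop:boundary_int}.

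By \eqref{eq:macro_bound_f_1} and \eqref{eq:macro_0_bound}, the triple $(\rho_1,u_1,T_1)$ satisfies the hypotheses \eqref{eq:macro_bound} of Proposition~\ref{prop:M[f]_integral}, with $m_1=(n-m)-\beta$ and new constants $C_1/2$, $3C_2/2$, $(C_3+1)\sqrt{C_5/C_4}$, $C_4/2$, $3C_5/2$. These constants do not depend on $t_0$. Applying that proposition with $m_2=0,1,2$ and using $\int_0^t\frac{\tau}{t}\,d\tau\leq t$ (the factor $t/\tau$ on the left of \eqref{eq:M[f]_integral_res} is even favorable), I expect
\begin{align*}
\int_{\mathbb{R}^n}\left|v-\tfrac{z}{t}\hat z\right|^{k}F\,dv\leq C t\left(|x_h|+\tfrac{z}{t}+1\right)^{\frac{m_1+k}{1+\beta}}e^{-\alpha(|x_h|+\frac{z}{t}+1)^{\frac{\beta}{1+\beta}}},\qquad k=0,1,2.
\end{align*}
Comparing with \eqref{eq:macro_0_bound}, this says the $k$-th centered moment of $F$ is at most $Ct\,\rho_g\,(|x_h|+\frac{z}{t}+1)^{\frac{k}{1+\beta}}$.

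With this bound the three conditions follow in turn.
\begin{itemize}
\item \textbf{Density.} Since $\rho_2-\rho_g=\int F\,dv\leq Ct\rho_g$, choosing $t_0$ with $Ct_0\leq\frac12$ gives the first line.
\item \textbf{Bulk velocity.} Write
\begin{align*}
\rho_2\left(u_2-\tfrac{z}{t}\hat z\right)=\rho_g\left(u_g-\tfrac{z}{t}\hat z\right)+\int\left(v-\tfrac{z}{t}\hat z\right)F\,dv.
\end{align*}
Then
\begin{align*}
\left|u_2-\tfrac{z}{t}\hat z\right|\leq\frac{\rho_g}{\rho_2}\left(C_3+Ct\right)\left(|x_h|+\tfrac{z}{t}+1\right)^{\frac{1}{1+\beta}}\leq\frac{C_3+Ct}{1-Ct}\,\frac{\sqrt{T_g}}{\sqrt{C_4}}.
\end{align*}
This is at most $\frac{C_3+1}{\sqrt{C_4}}\sqrt{T_g}$ for small $t_0$. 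The "$+1$" slack in \eqref{eq:macro_bound_f_1} is exactly what absorbs the error.
\item \textbf{Temperature.} Use the identity
\begin{align*}
n\rho_2T_2=\int\left|v-\tfrac{z}{t}\hat z\right|^2f_2\,dv-\rho_2\left|u_2-\tfrac{z}{t}\hat z\right|^2,
\end{align*}
and the same identity for $f_g$. Subtracting, $n(\rho_2T_2-\rho_gT_g)$ equals the second centered moment of $F$ minus the difference of the $|u-\frac{z}{t}\hat z|^2$ terms. Each piece is $O(t)\rho_g(|x_h|+\frac{z}{t}+1)^{\frac{2}{1+\beta}}$, which is $O(t)\rho_gT_g$ by the lower bound $T_g\geq C_4(\cdots)^{\frac{2}{1+\beta}}$. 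Dividing by $\rho_2\in[\frac12\rho_g,\frac32\rho_g]$ gives $|T_2-T_g|\leq Ct\,T_g\leq\frac12T_g$ for small $t_0$.
\end{itemize}

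The main obstacle is keeping every constant independent of $t_0$, since $t_0$ is chosen last. This requires Proposition~\ref{prop:M[f]_integral} to be applied uniformly in $0<\tau\leq t\leq\frac14$, so that the bound holds pointwise in $(t,x)$ with the singular scale $|x_h|+\frac{z}{t}$ evaluated at the current time $t$, not at $\tau$. Proposition~\ref{prop:M[f]_integral} is stated in exactly that form, absorbing the change of scale through the $\frac{t}{\tau}$ factor, so the remaining work is bookkeeping.

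The one delicate point in that bookkeeping is that $\mathcal{M}(f_1)$ uses the true $u_1$, while the centering in the moments of $F$ is at $\frac{z}{t}\hat z$. This mismatch is already handled inside the proof of Proposition~\ref{prop:M[f]_integral}, where the centering shift $\frac{z-(v_z+\frac{z}{t})(t-\tau)}{\tau}$ is absorbed using $|u-\frac{z}{t}\hat z|\lesssim\sqrt T$. So I only need to verify that the assumption $|u_1-\frac{z}{t}\hat z|\leq\frac{C_3+1}{\sqrt{C_4}}\sqrt{T_g}$ translates into the $C_3$-type hypothesis of \eqref{eq:macro_bound}, which it does via $T_g\leq C_5(\cdots)^{\frac{2}{1+\beta}}$.
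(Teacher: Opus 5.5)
Your proposal is correct and follows essentially the same route as the paper. Both split $f_2$ into the damped boundary transport and the Duhamel term, and both bound the zeroth, first and second moments of the Duhamel term (centered at $\frac{z}{t}\hat z$) by $Ct\rho_g\bigl(1,\sqrt{T_g},T_g\bigr)$ via Propositions~\ref{prop:boundary_int} and~\ref{prop:M[f]_integral}, then close the three conditions for small $t_0$.

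The differences are cosmetic. The paper uses $\rho_2\geq\rho_g$, from positivity of the Duhamel term, instead of your $\rho_2\geq(1-Ct)\rho_g$. For the temperature it uses $|v-u_2|^2-|v-u_g|^2=(u_2+u_g-2v)\cdot(u_2-u_g)$ together with an explicit $O(t)$ bound on $|u_2-u_g|$, rather than your centered second-moment identity.
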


\begin{proof}
    We first write $(\rho_2, u_2, T_2)$ as
    \begin{align}\label{eq:macro_f_2}
        \begin{split}
            &\rho_2(t,x)\left(1, u_2(t,x)-\frac{z}{t}\hat{z}, n T_2(t,x)\right)=\int_{\mathbb{R}^n} \left(1,v-\frac{z}{t}\hat{z},|v-u_2(t,x)|^2\right)f_2(t,x,v)\,dv\\
            &=\int_{\mathbb{R}^n} \left(1,v-\frac{z}{t}\hat{z},|v-u_2(t,x)|^2\right)e^{-\frac{z}{v_z}}g(t-t_b, x-vt_b, v)\mathbf{1}_{v_z\geq \frac{z}{t}}\,dv\\
            &\quad + \int_0^t \int_{\mathbb{R}^n} \left(1,v-\frac{z}{t}\hat{z},|v-u_2(t,x)|^2\right)e^{-(t-\tau)}\mathcal{M}(f_1)(\tau, x-v(t-\tau), v)\mathbf{1}_{z-v_z(t-\tau)>0}\,dv d\tau.
        \end{split}
    \end{align}
    The first term represents the macroscopic fields generated by the damped free transport of the boundary data $g(t,x_h,v)$. For the second term, we can bound the integral using Proposition~\ref{prop:boundary_int} and~\ref{prop:M[f]_integral} as
    \begin{align}\label{eq:macro_M(f_1)}
        \begin{split}
            &\int_0^t e^{-(t-\tau)}\int_{\mathbb{R}^n} \left(1,\left|v-\frac{z}{t}\hat{z}\right|,\left|v-\frac{z}{t}\hat{z}\right|^2\right)\mathcal{M}(f_1)(\tau, x-v(t-\tau), v)\mathbf{1}_{z-v_z(t-\tau)>0}\,dv d\tau\\
            &\leq C_{\mathcal{M}}t\left(\left(|x_h| + \frac{z}{t} + 1\right)^{\frac{n-\beta-m}{1+\beta}}, \left(|x_h| + \frac{z}{t} + 1\right)^{\frac{n+1-\beta-m}{1+\beta}}, \left(|x_h| + \frac{z}{t} + 1\right)^{\frac{n+2-\beta-m}{1+\beta}}\right) e^{-\alpha(|x_h| + \frac{z}{t}+1)^{\frac{\beta}{1+\beta}}}\\
            &\leq C_{\mathcal{M}}C_1^{-1}t\rho_g\left(1, \sqrt{C_4}^{-1}\sqrt{T_g}, C_4^{-1}T_g\right)(t,x)
        \end{split}
    \end{align}
    for some constant $C_{\mathcal{M}}$ depending on $C_i$ in \eqref{eq:macro_0_bound} and other fixed constants in Proposition \ref{prop:M[f]_integral}. 
    From \eqref{eq:macro_0_bound}, we further have
    \begin{align*}
        \left|u_g(t,x)-\frac{z}{t}\hat{z}\right|\leq \frac{C_3}{\sqrt{C_4}}\sqrt{T_g}(t,x),
    \end{align*}
    
    Now, we check each line of \eqref{eq:macro_bound_f_1}. From the definition of $\rho_g$ in Definition \ref{Def:macro_0}, we obtain
    \begin{align}\label{eq:rho_diff}
        \begin{split}
            0\leq (\rho_2-\rho_g)(t,x) &= \int_0^t  e^{-(t-\tau)}\int_{\mathbb{R}^n} \mathcal{M}(f_1)(\tau, x-v(t-\tau), v)\mathbf{1}_{z-v_z(t-\tau)>0}\,dv d\tau\\
            &\leq \frac{C_{\mathcal{M}}}{C_1}t\rho_g(t,x).
        \end{split}
    \end{align}
    Therefore, we choose $t\leq \frac{C_1}{2C_{\mathcal{M}}}$ to get $|\rho_2-\rho_g|(t,x)\leq \frac{1}{2}\rho_g(t,x)$. From this result, we also have $\frac{\rho_g}{\rho_2}\leq 1$.
    
    To compute $u_2(t,x)$, we directly use \eqref{eq:macro_f_2} and \eqref{eq:macro_M(f_1)} by
    \begin{align*}
        \rho_2\left|u_2-\frac{z}{t}\hat{z}\right|(t,x)&\leq \left|\int_{\mathbb{R}^n} \left(v-\frac{z}{t}\hat{z}\right)e^{-\frac{z}{v_z}}g(t-t_b, x-vt_b, v)\mathbf{1}_{v_z\geq \frac{z}{t}}\,dv\right|\\
        &\quad + \int_0^t  e^{-(t-\tau)}\int_{\mathbb{R}^n} \left|v-\frac{z}{t}\hat{z}\right|\mathcal{M}(f_1)(\tau, x-v(t-\tau), v)\mathbf{1}_{z-v_z(t-\tau)>0}\,dv d\tau \\
        &\leq \rho_g\left|u_g-\frac{z}{t}\hat{z}\right|(t,x) + \frac{C_{\mathcal{M}}}{C_1\sqrt{C_4}}t\rho_g\sqrt{T_g}(t,x)\\
        &\leq \frac{1}{\sqrt{C_4}}\left(C_3+\frac{C_{\mathcal{M}}}{C_1}t\right)\rho_g\sqrt{T_g}(t,x).
    \end{align*}
    As $\frac{\rho_g}{\rho_2}\leq 1$, it shows
    \begin{align}\label{eq:u2_bound}
        \left|u_2-\frac{z}{t}\hat{z}\right|(t,x)\leq \frac{C_3 + 1}{\sqrt{C_4}}\sqrt{T_g}(t,x)
    \end{align}
    for $t\leq \frac{C_1}{C_{\mathcal{M}}}$.
    
    We can further compute $|u_2-u_g|$ as follows. From the definition of $\rho_g u_g$, we have
    \begin{align*}
        \left|\rho_2\left(u_2-\frac{z}{t}\hat{z}\right) - \rho_g\left(u_g-\frac{z}{t}\hat{z}\right)\right|(t,x)&\leq \int_0^t e^{-(t-\tau)}\int_{\mathbb{R}^n} \left|v-\frac{z}{t}\hat{z}\right|\mathcal{M}(f_1)(\tau, x-v(t-\tau), v)\mathbf{1}_{z-v_z(t-\tau)>0}\,dv d\tau \\
        &\leq \frac{C_{\mathcal{M}}}{C_1\sqrt{C_4}}t\rho_g\sqrt{T_g}(t,x).
    \end{align*}
    From \eqref{eq:rho_diff} and \eqref{eq:u2_bound}, we get
    \begin{align*}
        \rho_g\left|u_2-u_g\right|(t,x)&\leq\left|\rho_2\left(u_2-\frac{z}{t}\hat{z}\right) - \rho_g\left(u_g-\frac{z}{t}\hat{z}\right)\right|(t,x) + \left|(\rho_g-\rho_2)\left(u_2-\frac{z}{t}\hat{z}\right)\right|(t,x)\\
        &\leq \frac{C_{\mathcal{M}}}{C_1\sqrt{C_4}}t\rho_g\sqrt{T_g}(t,x) + \frac{C_{\mathcal{M}}}{C_1}\frac{C_3 + 1}{\sqrt{C_4}}t\rho_g \sqrt{T_g}(t,x)\\
        &\leq \frac{C_{\mathcal{M}}}{C_1}\frac{C_3 + 2}{\sqrt{C_4}}t\rho_g\sqrt{T_g}(t,x).
    \end{align*}
    It shows
    \begin{align}\label{eq:u_diff}
        \left|u_2-u_g\right|(t,x)\leq \frac{C_{\mathcal{M}}}{C_1}\frac{C_3 + 2}{\sqrt{C_4}}t\sqrt{T_g}(t,x).
    \end{align}
    
    Next, we compute $n|\rho_g T_g - \rho_2 T_2|$. We explicitly write down the formula and bound it by
    \begin{align}\label{eq:T_diff_1}
        \begin{split}
            n|\rho_2 T_2 - \rho_g T_g|(t,x) &\leq \left|\int_{\mathbb{R}^n} \left(|v-u_2(t,x)|^2 - |v-u_g(t,x)|^2\right)e^{-\frac{z}{v_z}}g(t-t_b, x-vt_b, v)\mathbf{1}_{v_z\geq \frac{z}{t}}\,dv\right|\\
            &\quad + \int_0^t e^{-(t-\tau)}\int_{\mathbb{R}^n} \left||v-u_2(t,x)|^2 - \left|v-\frac{z}{t}\hat{z}\right|^2\right|\mathcal{M}(f_1)(\tau, x-v(t-\tau), v)\mathbf{1}_{z-v_z(t-\tau)>0}\,dv d\tau\\
            &\quad + \int_0^t e^{-(t-\tau)}\int_{\mathbb{R}^n} \left|v-\frac{z}{t}\hat{z}\right|^2 \mathcal{M}(f_1)(\tau, x-v(t-\tau), v)\mathbf{1}_{z-v_z(t-\tau)>0}\,dv d\tau.
        \end{split}
    \end{align}
    The difference term in the first integral is given by
    \begin{align*}
        (v-u_2(t,x))^2 - (v-u_g(t,x))^2 &= \left(u_2(t,x) + u_g(t,x) - 2v\right)\cdot \left(u_2(t,x) - u_g(t,x)\right)
    \end{align*}
    By the same calculation, we also have
    \begin{align*}
        (v-u_2(t,x))^2 - \left(v-\frac{z}{t}\hat{z}\right)^2=\left(u_2(t,x) + \frac{z}{t}\hat{z} - 2v\right)\cdot \left(u_2(t,x) - \frac{z}{t}\hat{z}\right)
    \end{align*}
    in the second integral.
    
    Using this equality and \eqref{eq:u_diff}, we bound the first integral by
    \begin{align*}
        &\int_{\mathbb{R}^n} \left(|v-u_2(t,x)|^2 - |v-u_g(t,x)|^2\right)e^{-\frac{z}{v_z}}g(t-t_b, x-vt_b, v)\mathbf{1}_{v_z\geq \frac{z}{t}}\,dv\\
        &= (u_2-u_g)\cdot \int_{\mathbb{R}^n}\left(u_2(t,x) + u_g(t,x) - 2v\right)e^{-\frac{z}{v_z}}g(t-t_b, x-vt_b,v)\mathbf{1}_{v_z\geq \frac{z}{t}}\,dv\\
        &= \rho_g (u_2-u_g)^2\leq \left(\frac{C_{\mathcal{M}}}{C_1}\frac{C_3 + 2}{\sqrt{C_4}} t\right)^2\rho_g T_g(t,x).
    \end{align*}
    Using \eqref{eq:macro_M(f_1)} and \eqref{eq:u2_bound}, the second integral can be bounded similarly:
    \begin{align*}
        &\int_0^t e^{-(t-\tau)}\int_{\mathbb{R}^n} \left||v-u_2(t,x)|^2 - \left|v-\frac{z}{t}\hat{z}\right|^2\right|\mathcal{M}(f_1)(\tau, x-v(t-\tau), v)\mathbf{1}_{z-v_z(t-\tau)>0}\,dv d\tau\\
        &\leq \left|u_2(t,x) - \frac{z}{t}\hat{z}\right|\int_0^t e^{-(t-\tau)}\int_{\mathbb{R}^n}\left(\left|u_2(t,x) - \frac{z}{t}\hat{z}\right| + 2\left|v-\frac{z}{t}\hat{z}\right|\right)\mathcal{M}(f_1)(\tau, x-v(t-\tau), v)\mathbf{1}_{z-v_z(t-\tau)>0}\,dv d\tau\\
        &\leq C_{\mathcal{M}}C_1^{-1} t\rho_g \left(\left|u_2 - \frac{z}{t}\hat{z}\right|^2 + 2\left|u_2 - \frac{z}{t}\hat{z}\right|\sqrt{C_4}^{-1} \sqrt{T_g}\right)(t,x)\\
        &\leq \frac{C_{\mathcal{M}}}{C_1 C_4}(C_3 + 1)(C_3+3) t \rho_g T_g(t,x).
    \end{align*}
    Finally, the last integral in \eqref{eq:T_diff_1} is bounded by $\frac{C_{\mathcal{M}}}{C_1C_4}t\rho_gT_g(t,x)$ by \eqref{eq:macro_M(f_1)}. 
    
    Combining all the results, we conclude
    \begin{align*}
        n|\rho_2 T_2-\rho_g T_g|(t,x)\leq \frac{C_{\mathcal{M}}}{C_1C_4}\left(\frac{C_{\mathcal{M}}}{C_1} (C_3 + 2)^2 t +(C_3+1)(C_3+3) + 1\right) t\rho_g T_g(t,x).
    \end{align*}
    From this and \eqref{eq:rho_diff}, we have
    \begin{align*}
        \rho_2|T_2-T_g|&\leq |\rho_2 T_2-\rho_g T_g| + |(\rho_g - \rho_2)T_g|\\
        &\leq \frac{1}{n}\frac{C_{\mathcal{M}}}{C_1C_4}\left(\frac{C_{\mathcal{M}}}{C_1} (C_3 + 2)^2 t +(C_3+1)(C_3+3) + 1\right) t\rho_g T_g(t,x) + \frac{C_{\mathcal{M}}}{C_1}t\rho_gT_g(t,x),
    \end{align*}
    and as $\frac{\rho_g}{\rho_2}\leq 1$, we get
    \begin{align*}
        |T_2-T_g|\leq \frac{C_{\mathcal{M}}}{C_1}\left(\frac{1}{nC_4}\left(\frac{C_{\mathcal{M}}}{C_1} (C_3 + 2)^2 t +(C_3+1)(C_3+3) + 1\right) + 1\right) tT_g(t,x).
    \end{align*}
    Now, we can choose small enough $t$ depending on $C_{\mathcal{M}}$ and $C_i$ to make
    \begin{align*}
        |T_2-T_g|(t,x)\leq \frac{1}{2}T_g(t,x).
    \end{align*}
\end{proof}
If we choose $C_u = \frac{C_3 + 1}{\sqrt{C_4}}$, Proposition~\ref{prop:apriori} verifies that the BGK solution fulfills \eqref{eq:X_T_A3} \textit{a priori}.

\section{Fixed-point construction of the BGK solution}\label{sec:sol_construction}
In this section, we establish a unique solution to the BGK model with the boundary data $g(t,x_h,v)$ given in \eqref{eq:boundary_data} and vacuum initial data. Possessing the \textit{a priori} estimate \eqref{eq:macro_bound_f_1}, we define a solution map on the weighted space $\Omega_{\frac{1}{1+\beta}, t_0}$ defined in Definition~\ref{def:BGK_sol_space} and prove that the map is a contraction map in a short time interval $[0, t_0]$; the local well-posedness of the solution follows immediately by the fixed-point argument.

We recall the time-dependent weight function and the weighted norm
\begin{align*}
    \mathfrak{w}(t,x,v) &= 1+\left(|x_h| + \frac{z}{t}\right)^{\frac{2}{1+\beta}} + \left(v-\frac{z}{t}\hat{z}\right)^2,\\
    \text{norm}&:\sup_{0< t\leq t_0}\sup_{x\in\mathbb{R}^n_+}\int_{v\in \mathbb{R}^n}\mathfrak{w}(t,x,v)f(t,x,v)\,dv
\end{align*}
in Definition \ref{def:BGK_sol_space} for $\Omega_{\frac{1}{1+\beta}, t_0}$. We note that $\Omega_{\frac{1}{1+\beta}, t_0}$ is non-empty when $0<t_0\leq \frac{1}{4}$ since $f(t,x,v) = e^{-t_b}g(t-t_b,x-vt_b, v)\mathbf{1}_{t\geq t_b}$ is in $\Omega_{\frac{1}{1+\beta}, t_0}$ by the results in Proposition~\ref{prop:boundary_int}.

\subsection{Lipschitz continuity of the Maxwellian operator}
The following proposition establishes the Lipschitz continuity of the local Maxwellian operator $\mathcal{M}$ on $\Omega_{\frac{1}{1+\beta}, t_0}$. It will be used in the proof of the contraction property of the solution map.
\begin{proposition}\label{prop:maxwell}
    For given inflow boundary data $g$ in \eqref{eq:boundary_data}, assume that $f_1,f_2\in \Omega_{\frac{1}{1+\beta}, t_0}$. By \eqref{eq:X_T_A3}, there exist constants $C_{u,1}$ and $C_{u,2}$ such that
    \begin{align*}
        \left|u_1-\frac{z}{t}\hat{z}\right|(t,x)&\leq C_{u,1}\sqrt{T_g(t,x)}\\
        \left|u_2-\frac{z}{t}\hat{z}\right|(t,x)&\leq C_{u,2}\sqrt{T_g(t,x)},
    \end{align*}
    where $(\rho_i, u_i, T_i)$ are the macroscopic fields generated by $f_i$ for $i=1,2$. Then 
    \begin{align}\label{Inequ:maxwell}
        \begin{split}
            &\esssup_{x\in\mathbb{R}^n_+}\int_{\mathbb{R}^n}\mathfrak{w}(t,x,v)\left|\mathcal{M}(f_1)-\mathcal{M}(f_2)\right|(\tau, x-v(t-\tau), v)\mathbf{1}_{z-v_z(t-\tau)>0}\,dv\\
            &\leq C\esssup_{x\in\mathbb{R}^n_+}\int_{\mathbb{R}^n}\mathfrak{w}(\tau,x,v)|f_1-f_2|(\tau,x,v)\,dv
        \end{split}
    \end{align}
    for almost every $0<\tau\leq t$ and all $0<t\leq t_0$. Here, the constant $C<\infty$ depends on the boundary data $g$ in \eqref{eq:boundary_data}, $\max\{C_{u,1}, C_{u,2}\}$, and dimension $n$.
\end{proposition}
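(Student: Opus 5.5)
The plan follows the standard Perthame--Pulvirenti Lipschitz argument. We interpolate the Maxwellian along the macroscopic parameters and control the resulting derivatives using Corollary~\ref{prop:bound}. Fix $\tau$ and $y=x-v(t-\tau)$. Set $(\rho_\theta,u_\theta,T_\theta)=\theta(\rho_1,u_1,T_1)+(1-\theta)(\rho_2,u_2,T_2)$ for $\theta\in[0,1]$, and write $\mathcal{M}(f_1)-\mathcal{M}(f_2)=\int_0^1 \frac{d}{d\theta}\mathcal{M}_\theta\,d\theta$. Differentiating gives the pointwise bound
\begin{align*}
|\mathcal{M}(f_1)-\mathcal{M}(f_2)|\leq C\int_0^1 \frac{1}{T_\theta^{n/2}}\left(|\rho_1-\rho_2| + \rho_\theta\frac{|v-u_\theta||u_1-u_2|}{T_\theta} + \rho_\theta\Bigl(1+\frac{|v-u_\theta|^2}{T_\theta}\Bigr)\frac{|T_1-T_2|}{T_\theta}\right)e^{-\frac{|v-u_\theta|^2}{2T_\theta}}\,d\theta .
\end{align*}
Since $f_i\in\Omega_{\frac{1}{1+\beta},t_0}$ and \eqref{eq:X_T_A3} holds, $\rho_\theta,T_\theta,u_\theta$ are all comparable to $\rho_g,T_g$. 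Also $|u_\theta-\frac{z}{t}\hat z|\le \max C_{u,i}\sqrt{T_g}$, and Proposition~\ref{prop:boundary_int} supplies the $(|x_h|+\frac{z}{t}+1)$-scalings. We absorb the polynomial factors $|v-u_\theta|^k/T_\theta^{k/2}$ by reducing $e^{-|v-u_\theta|^2/2T_\theta}$ to $e^{-|v-u_\theta|^2/4T_\theta}$. The hypotheses of Corollary~\ref{prop:bound} then hold for $(u_\theta,T_\theta)$.

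The next step expresses the differences of the macroscopic fields at $(\tau,y)$ through $\int\mathfrak{w}(\tau,y,v)|f_1-f_2|\,dv=:D(\tau,y)\le \|\cdot\|$. First, $|\rho_1-\rho_2|\le D$. Next,
\begin{align*}
\rho_1|u_1-u_2|\le \int |v-\tfrac{z'}{\tau}\hat z||f_1-f_2|\,dv+|\rho_1-\rho_2||u_2-\tfrac{z'}{\tau}\hat z|,
\end{align*}
where $y=(y_h,z')$. Similarly, $n\rho_1|T_1-T_2|$ is bounded by second moments of $|f_1-f_2|$ about $\frac{z'}{\tau}\hat z$ plus lower-order terms. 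Because $\mathfrak w(\tau,y,\cdot)\ge 1+|v-\frac{z'}{\tau}\hat z|^2$ and $\mathfrak w\ge (|y_h|+\frac{z'}{\tau})^{2/(1+\beta)}\sim T_g(\tau,y)$, these give
\begin{align*}
|u_1-u_2|\lesssim \frac{D}{\rho_g\sqrt{T_g}},\qquad |T_1-T_2|\lesssim \frac{D}{\rho_g},\qquad |\rho_1-\rho_2|\lesssim \frac{D}{T_g}
\end{align*}
at $(\tau,y)$. Here we use $\sqrt{T_g}\,\mathbf 1\le \mathfrak w$, so for instance $|\rho_1-\rho_2|T_g\le CD$. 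Plugging these in, every term in the pointwise bound becomes $C\,D\,T_\theta^{-n/2-1}e^{-|v-u_\theta|^2/4T_\theta}$, with at most a bounded factor in front.

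It remains to integrate against $\mathfrak w(t,x,v)\mathbf 1_{z-v_z(t-\tau)>0}$ and bound the result by $C\sup_y D(\tau,y)$. We split $\mathfrak w(t,x,v)$ into the terms $1+|v-\frac{z}{t}\hat z|^2$ and $(|x_h|+\frac{z}{t})^{\frac{2}{1+\beta}}$. For the first term, Corollary~\ref{prop:bound}(1) applies directly: with $i=0$ and $i=1$, the weight $|v-\frac{z}{t}\hat z|^2$ is exactly compensated by the extra $T^{-1}$. The second term is the expected obstacle, because we need $(|x_h|+\frac zt)^{\frac{2}{1+\beta}}$ to be comparable to $T_g(\tau,y)$ at the transported point. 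For this we use the identity
\begin{align*}
|x_h|+\tfrac{z}{t}\le |y_h|+\tfrac{z'}{\tau}+|v_h|(t-\tau)+\tfrac{t-\tau}{\tau}\left|v_z-\tfrac zt\right|,
\end{align*}
which follows from $z'=z-v_z(t-\tau)$ and $\frac{z'}{\tau}-\frac zt=-\frac{t-\tau}{\tau}(v_z-\frac zt)$. The raised power of the first part is absorbed by one factor of $T_g(\tau,y)$, i.e.\ by $T_\theta^{-1}$, reducing it to case (1). The shift part is exactly the quantity in Corollary~\ref{prop:bound}(2), which is bounded by a constant. Taking the essential supremum in $x$ then yields \eqref{Inequ:maxwell}, with $C$ depending on $g$ (through the $C_i$) and $\max C_{u,i}$. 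Care is needed only to verify that convex combinations $u_\theta,T_\theta$ still satisfy the hypotheses of Corollary~\ref{prop:bound} uniformly in $\theta$; this is immediate from \eqref{eq:X_T_A3}.
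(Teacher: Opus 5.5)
Your proposal is correct and follows essentially the same route as the paper. Both arguments apply the fundamental theorem of calculus along an interpolation and control the field variations by $\int\mathfrak{w}(\tau,y,w)|f_1-f_2|\,dw$ at the transported point $y=x-v(t-\tau)$. Both then split $x_h=y_h+v_h(t-\tau)$ and $\frac{z}{t}=\frac{z'}{\tau}+\frac{t-\tau}{\tau}\left(v_z-\frac{z}{t}\right)$, absorb the first part into $T^{-1}$, and close with Corollary~\ref{prop:bound}(1)--(2).

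The only difference is what you interpolate. You interpolate the parameters $(\rho,u,T)$ linearly, whereas the paper interpolates the distribution $f_\lambda=(1-\lambda)f_1+\lambda f_2$. Your choice makes the uniform bounds $T_g/2\le T_\theta\le 3T_g/2$ immediate; the paper has to deal with the extra $|u_1-u_2|^2$ term in $T_\lambda$. The cost is that you must estimate $|\rho_1-\rho_2|$, $|u_1-u_2|$ and $|T_1-T_2|$ by $D$ directly, and your estimates for these are right.

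One small correction: the inequality you actually need there is $T_g(\tau,y)\lesssim\mathfrak{w}(\tau,y,v)$, together with $|v-\tfrac{z'}{\tau}\hat z|\sqrt{T_g}\lesssim\mathfrak{w}$ for the velocity difference. The bound $\sqrt{T_g}\le\mathfrak{w}$ that you wrote is too weak for $|\rho_1-\rho_2|T_g\le CD$.
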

\begin{proof}
    First, we define a linear combination of $f_1$ and $f_2$ by $f_\lambda = (1-\lambda)f_1 + \lambda f_2$ for $0\leq \lambda\leq 1$. The corresponding macroscopic fields are defined as follows:
    \begin{align*}
        \rho_\lambda &= \int_{\mathbb{R}^n}(1-\lambda)f_1 + \lambda f_2\,dv,\\
        \rho_\lambda u_\lambda &= \int_{\mathbb{R}^n}v\left((1-\lambda)f_1 + \lambda f_2\right)\,dv,\\
        \rho_\lambda\left(nT_\lambda + u_\lambda^2\right) &= \int_{\mathbb{R}^n}v^2\left((1-\lambda)f_1 + \lambda f_2\right)\,dv.
    \end{align*}
    We first claim that $\rho_\lambda$, $u_\lambda$, and $T_\lambda$ satisfy
    \begin{align}\label{eq:macro_lambda_bound}
        \begin{split}
            &|\rho_\lambda-\rho_g|(t,x)\leq \frac{1}{2}\rho_g(t,x),\\
            &\left|u_\lambda-\frac{z}{t}\hat{z}\right|(t,x)\leq C_{u,max}\sqrt{T_g(t,x)},\\
            &\frac{1}{2}T_g(t,x)\leq T_\lambda(t,x)\leq \left(C_{u,max}^2 + \frac{3}{2}\right)T_g(t,x)
        \end{split}
    \end{align}
    for all $0\leq \lambda\leq 1$, where $C_{u,max} =\max\{C_{u,1},C_{u,2}\}$, and $C_3$ and $C_4$ are from \eqref{eq:macro_0_bound}. Indeed, $\rho_\lambda$ is a linear combination of $\rho_1$ and $\rho_2$, so it satisfies
    \begin{align*}
        |\rho_\lambda-\rho_g|(t,x)\leq \frac{1}{2}\rho_g(t,x)
    \end{align*}
    by \eqref{eq:X_T_A3}.
    
    For $u_\lambda$, we write
    \begin{align*}
        \left(u_\lambda - \frac{z}{t}\hat{z}\right)(t,x) = \frac{1}{\rho_\lambda}\left((1-\lambda)\rho_1\left(u_1-\frac{z}{t}\hat{z}\right) + \lambda\rho_2\left(u_2-\frac{z}{t}\hat{z}\right)\right),
    \end{align*}
    so we have
    \begin{align*}
        \left|u_\lambda-\frac{z}{t}\hat{z}\right|(t,x)\leq \max\left\{\left|u_1-\frac{z}{t}\hat{z}\right|,\left|u_2-\frac{z}{t}\hat{z}\right|\right\}\leq \max\{C_{u,1},C_{u,2}\}\sqrt{T_g(t,x)}.
    \end{align*}
    
    For $T_\lambda$, it is given by
    \begin{align*}
        n T_\lambda = \frac{1}{\rho_\lambda}\left((1-\lambda)\rho_1(n T_1 + u_1^2) + \lambda\rho_2(n T_2 + u_2^2) - \rho_\lambda u_\lambda^2\right).
    \end{align*}
    By an explicit calculation, we have
    \begin{align*}
        (1-\lambda) \rho_1 u_1^2 +\lambda\rho_2u_2^2 - \rho_\lambda u_\lambda^2 &=(1-\lambda) \rho_1 u_1^2 +\lambda\rho_2u_2^2 - \frac{((1-\lambda)\rho_1u_1 + \lambda\rho_2u_2)^2}{(1-\lambda) \rho_1 + \lambda\rho_2}\\
        &=\lambda(1-\lambda)\rho_1\rho_2\frac{u_1^2 + u_2^2 - 2u_1\cdot u_2}{(1-\lambda) \rho_1 + \lambda\rho_2}\\
        &=\lambda(1-\lambda)\rho_1\rho_2\frac{(u_1-u_2)^2}{\rho_\lambda}.
    \end{align*}
    Therefore,
    \begin{align*}
        nT_\lambda = \frac{\lambda(1-\lambda)\rho_1\rho_2}{\rho_\lambda^2}|u_1-u_2|^2 + \frac{(1-\lambda) \rho_1}{\rho_\lambda} nT_1 + \frac{\lambda \rho_2}{\rho_\lambda} nT_2.
    \end{align*}
    The upper and lower bounds of $T_\lambda - T_g$ are computed as follows.
    \begin{align*}
        n(T_\lambda - T_g) &= \frac{\lambda(1-\lambda)\rho_1\rho_2}{\rho_\lambda^2}|u_1-u_2|^2 + \frac{(1-\lambda) \rho_1}{\rho_\lambda} nT_1 + \frac{\lambda \rho_2}{\rho_\lambda} nT_2 - nT_g\\
        &\leq \frac{1}{4}|u_1-u_2|^2 + \frac{n}{2}T_g,\\
        n(T_\lambda - T_g) &\geq \frac{(1-\lambda) \rho_1}{\rho_\lambda} nT_1 + \frac{\lambda \rho_2}{\rho_\lambda} nT_2 - nT_g\geq -n\max\{|T_1-T_g|,|T_2-T_g|\}\\
        &\geq -\frac{n}{2}T_g.
    \end{align*}
    Since
    \begin{align*}
        \frac{\lambda(1-\lambda)\rho_1\rho_2}{((1-\lambda)\rho_1 + \lambda \rho_2)^2}\leq \frac{1}{4}\quad \text{and}\quad |u_1-u_2|\leq \left|u_1-\frac{z}{t}\hat{z}\right| + \left|u_2-\frac{z}{t}\hat{z}\right|\leq 2\max\{C_{u,1},C_{u,2}\}\sqrt{T_g(t,x)},
    \end{align*}
    we get
    \begin{align*}
        \frac{1}{2}T_g(t,x)\leq T_\lambda(t,x)\leq \left((\max\{C_{u,1},C_{u,2}\})^2 + \frac{3}{2}\right)T_g(t,x).
    \end{align*}
    This proves the claim.

    By the Fundamental Theorem of Calculus, the left-hand side of \eqref{Inequ:maxwell} can be written as
    \begin{align}\label{eq:contract_prep1}
        \begin{split}
            &\int_{\mathbb{R}^n}\mathfrak{w}(t,x,v)\left|\mathcal{M}(f_1)-\mathcal{M}(f_2)\right|(\tau, x-v(t-\tau), v)\mathbf{1}_{z-v_z(t-\tau)> 0}\,dv\\
            &\leq \int_{\mathbb{R}^n}\int_0^1 \mathfrak{w}(t,x,v)\left|\dv{\lambda}\mathcal{M}(f_\lambda)\right|(\tau, x-v(t-\tau), v)\mathbf{1}_{z-v_z(t-\tau)> 0}\,d\lambda dv.
        \end{split}
    \end{align}
    Since the local Maxwellian depends smoothly on the macroscopic fields
    $(\rho_\lambda,u_\lambda,T_\lambda)$, differentiation with respect to the
    interpolation parameter $\lambda$ yields
    \begin{align*}
        \frac{d}{d\lambda}\mathcal{M}(f_\lambda):=\frac{d\rho_\lambda}{d\lambda}\frac{\partial}{\partial\rho_\lambda}\mathcal{M}(f_\lambda)+\frac{d u_\lambda}{d\lambda}\frac{\partial}{\partial u_\lambda}\mathcal{M}(f_\lambda)+\frac{d T_\lambda}{d\lambda}\frac{\partial}{\partial T_\lambda}\mathcal{M}(f_\lambda).
    \end{align*}
    Now, the following equalities can be obtained by a straightforward differentiation starting from the definitions of $\rho_\lambda$, $u_\lambda$, and $T_\lambda$.
    \begin{align*}
        \dv{\lambda}\rho_\lambda(t,x) &= \int_{\mathbb{R}^n}(f_2-f_1)(t,x,w)\,dw,\\
        \frac{\rho_\lambda^2}{\rho_1}\dv{\lambda} u_\lambda(t,x) &=
        \int_{\mathbb{R}^n}\left(w-\frac{z}{t}\hat{z}\right)(f_2-f_1)(t,x,w)\,dw - \left(u_1(t,x)-\frac{z}{t}\hat{z}\right)\int_{\mathbb{R}^n}(f_2-f_1)(t,x,w)\,dw,\\
        n\frac{\rho_\lambda^2}{\rho_1}\dv{\lambda} T_\lambda(t,x) &=\int_{\mathbb{R}^n}\left(w-\frac{z}{t}\hat{z}\right)^2(f_2-f_1)(t,x,w)\,dw - \left(nT_1 + \left(u_1 - \frac{z}{t}\hat{z}\right)^2\right)\int_{\mathbb{R}^n}(f_2-f_1)(t,x,w)\,dw\\
        &\quad - 2\left(u_\lambda-\frac{z}{t}\hat{z}\right) \cdot\left(\int_{\mathbb{R}^n}\left(w-\frac{z}{t}\hat{z}\right)(f_2-f_1)(t,x,w)\,dw - \left(u_1-\frac{z}{t}\hat{z}\right)\int_{\mathbb{R}^n}(f_2-f_1)(t,x,w)\,dw\right).
    \end{align*}
    Generally, the same conclusion remains valid when $\frac{z}{t}\hat{z}$ is replaced by any fixed vector $c(t,x)$. From \eqref{eq:X_T_A3} and \eqref{eq:macro_lambda_bound}, we further simplify these equations by
    \begin{align}\label{eq:macro_deri}
        \begin{split}
            \left|\dv{\lambda}\rho_\lambda\right|(t,x) &\leq  \int_{\mathbb{R}^n}|f_2-f_1|(t,x,w)\,dw,\\
            \frac{\rho_\lambda^2}{\rho_1}\left|\dv{\lambda} u_\lambda\right|(t,x) &\leq
            \int_{\mathbb{R}^n}\left|w-\frac{z}{t}\hat{z}\right||f_2-f_1|(t,x,w)\,dw + C_{u,max}\sqrt{T_g(t,x)}\int_{\mathbb{R}^n}|f_2-f_1|(t,x,w)\,dw,\\
            n\frac{\rho_\lambda^2}{\rho_1}\left|\dv{\lambda} T_\lambda\right|(t,x) &\leq\int_{\mathbb{R}^n}\left|w-\frac{z}{t}\hat{z}\right|^2|f_2-f_1|(t,x,w)\,dw + C T_g(t,x)\int_{\mathbb{R}^n}|f_2-f_1|(t,x,w)\,dw\\
            &\quad + 2C_{u,max}\sqrt{T_g(t,x)} \int_{\mathbb{R}^n}\left|w-\frac{z}{t}\hat{z}\right||f_2-f_1|(t,x,w)\,dw,
        \end{split}
    \end{align}
    where $C = \frac{3n}{2} +3C_{u,max}^2$.
    
    We next estimate the derivative of the local Maxwellian $\mathcal{M}(f_\lambda)$. Combining \eqref{eq:macro_deri} with the explicit form of the local Maxwellian, we obtain
    \begin{align*}
        \left|\dv{\rho_\lambda}{\lambda}\dv{\rho_\lambda} \mathcal{M}(f_\lambda)\right|(t,x,v)&\leq \frac{1}{(2\pi T_\lambda)^{n/2}} \exp\left(-\frac{|v-u_\lambda|^2}{2T_\lambda}\right)\int_{\mathbb{R}^n}|f_2-f_1|(t,x,w)\,dw,\\
        \left|\dv{u_\lambda}{\lambda}\dv{u_\lambda} \mathcal{M}(f_\lambda)\right|(t,x,v)&\leq \frac{\rho_1\rho_\lambda^{-1}}{(2\pi T_\lambda)^{n/2}} \frac{|v - u_\lambda|}{T_\lambda} \exp\left(-\frac{|v-u_\lambda|^2}{2T_\lambda}\right)\\
        &\qquad \times \left( \int_{\mathbb{R}^n}\left|w-\frac{z}{t}\hat{z}\right||f_2-f_1|(t,x,w)\,dw + C_{u,max}\sqrt{T_g}\int_{\mathbb{R}^n}|f_2-f_1|(t,x,w)\,dw\right),\\
        \left|\dv{T_\lambda}{\lambda}\dv{T_\lambda} \mathcal{M}(f_\lambda)\right|(t,x,v)&\leq \frac{\rho_1\rho_\lambda^{-1}}{n(2\pi T_\lambda)^{n/2}}\left(\frac{n}{2}\frac{1}{T_\lambda} + \frac{|v-u_\lambda|^2}{2T_\lambda^2}\right) \exp\left(-\frac{|v-u_\lambda|^2}{2T_\lambda}\right)\\
        &\qquad \times \left(\int_{\mathbb{R}^n}\left|w-\frac{z}{t}\hat{z}\right|^2|f_2-f_1|(t,x,w)\,dw + CT_g\int_{\mathbb{R}^n}|f_2-f_1|(t,x,w)\,dw\right.\\
        &\qquad \quad \left.+ 2C_{u,max}\sqrt{T_g} \int_{\mathbb{R}^n}\left|w-\frac{z}{t}\hat{z}\right||f_2-f_1|(t,x,w)\,dw\right).
    \end{align*}
    Now, we use the elementary inequalities 
    \[
    x e^{-\frac{x^2}{4}} \le 1, \qquad
    x^2 e^{-\frac{x^2}{4}} \le 2 \quad \text{for } x \ge 0.
    \]
    for \( x = \frac{|v-u_\lambda|}{\sqrt{T_\lambda}} \) to get
    \begin{align*}
        |u_\lambda - v| \exp\left(-\frac{|v-u_\lambda|^2}{4T_\lambda}\right)\leq \sqrt{T_\lambda},\quad |u_\lambda - v|^2\exp\left(-\frac{|v-u_\lambda|^2}{4T_\lambda}\right)\leq 2T_\lambda.
    \end{align*}
    Substituting these bounds into the previous estimates with some rearrangements, we obtain
    \begin{align*}
        \left|\dv{\rho_\lambda}{\lambda}\dv{\rho_\lambda} \mathcal{M}(f_\lambda)(t,x,v)\right|&\leq \frac{2\pi}{(2\pi T_\lambda)^{n/2 + 1}} \exp\left(-\frac{|v-u_\lambda|^2}{4T_\lambda}\right)\int_{\mathbb{R}^n}T_\lambda |f_2-f_1|(t,x,w)\,dw,\\
        \left|\dv{u_\lambda}{\lambda}\dv{u_\lambda} \mathcal{M}(f_\lambda)(t,x,v)\right|&\leq \frac{2\pi\rho_1\rho_\lambda^{-1}}{(2\pi T_\lambda)^{n/2 + 1}} \exp\left(-\frac{|v-u_\lambda|^2}{4T_\lambda}\right)\\
        &\qquad \times \left(\int_{\mathbb{R}^n}\sqrt{T_\lambda}\left|w-\frac{z}{t}\hat{z}\right||f_2-f_1|(t,x,w)\,dw + C_{u,max}\sqrt{T_\lambda T_g}\int_{\mathbb{R}^n}|f_2-f_1|(t,x,w)\,dw\right),\\
        \left|\dv{T_\lambda}{\lambda}\dv{T_\lambda} \mathcal{M}(f_\lambda)(t,x,v)\right|&\leq \frac{2\pi\rho_1\rho_\lambda^{-1}}{(2\pi T_\lambda)^{n/2 + 1}}\left(\frac{1}{2} + \frac{1}{n}\right) \exp\left(-\frac{|v-u_\lambda|^2}{4T_\lambda}\right)\\
        &\qquad \times \int_{\mathbb{R}^n}\left(\left|w-\frac{z}{t}\hat{z}\right|^2 + 2C_{u,max}\sqrt{T_g}\left|w-\frac{z}{t}\hat{z}\right| +  CT_g\right)|f_2-f_1|(t,x,w)\,dw.
    \end{align*}
    From \eqref{eq:macro_lambda_bound}, $\rho_1 \rho_\lambda^{-1}\leq 3$ and $T_\lambda \leq \left(C_{u,max}^2 + \frac{3}{2}\right) T_g$, so combining all the bounds, we get
    \begin{align*}
        &\int_{\mathbb{R}^n} \mathfrak{w}(t,x,v)\left|\dv{\lambda}\mathcal{M}(f_\lambda)\right|(\tau, x-v(t-\tau), v)\mathbf{1}_{z-v_z(t-\tau)> 0}\,dv\\
        &\leq \int_{\mathbb{R}^n}\mathfrak{w}(t,x,v)\left(\left|\frac{d\rho_\lambda}{d\lambda}\frac{\partial}{\partial\rho_\lambda}\mathcal{M}(f_\lambda)\right|+\left|\frac{d u_\lambda}{d\lambda}\frac{\partial}{\partial u_\lambda}\mathcal{M}(f_\lambda)\right|+\left|\frac{d T_\lambda}{d\lambda}\frac{\partial}{\partial T_\lambda}\mathcal{M}(f_\lambda)\right|\right)(\tau, x-v(t-\tau), v)\mathbf{1}_{z-v_z(t-\tau)> 0}\,dv\\
        &\leq C\int_{\mathbb{R}^n}\frac{\mathfrak{w}(t,x,v)}{(2\pi T_\lambda(\tau, x-v(t-\tau)))^{n/2 + 1}} \exp\left(-\frac{|v-u_\lambda(\tau, x-v(t-\tau))|^2}{4T_\lambda(\tau, x-v(t-\tau))}\right)\mathbf{1}_{z-v_z(t-\tau)> 0}\\
        &\qquad \times \int_{\mathbb{R}^n}\left(\left|w-\frac{z-v_z(t-\tau)}{\tau}\hat{z}\right| + \sqrt{T_g(\tau, x-v(t-\tau))}\right)^2 |f_2-f_1|(\tau, x-v(t-\tau),w)\,dw \,dv\\
        &\leq C\int_{\mathbb{R}^n}\frac{\mathfrak{w}(t,x,v)}{(2\pi T_\lambda(\tau, x-v(t-\tau)))^{n/2 + 1}} \exp\left(-\frac{|v-u_\lambda(\tau, x-v(t-\tau))|^2}{4T_\lambda(\tau, x-v(t-\tau))}\right)\mathbf{1}_{z-v_z(t-\tau)> 0} \,dv\\
        &\qquad \times \esssup_{x\in\mathbb{R}^n_+}\int_{\mathbb{R}^n}\left(\left|w-\frac{z}{\tau}\hat{z}\right| + \sqrt{T_g(\tau, x)}\right)^2 |f_2-f_1|(\tau, x,w)\,dw.
    \end{align*}
    for a constant $C$ depending on the dimension $n$ and $C_u$ for almost every $x$ and $\tau$.
    
    As $T_g(\tau,x)\leq C_5\left(|x_h|+\frac{z}{\tau} + 1\right)^{\frac{2}{1+\beta}}$ from \eqref{eq:macro_0_bound}, we have
    \begin{align*}
        \int_{\mathbb{R}^n}\left(\left|w-\frac{z}{\tau}\hat{z}\right| + \sqrt{T_g(\tau, x)}\right)^2 |f_2-f_1|(\tau, x,w)\,dw\leq C\int_{\mathbb{R}^n}\mathfrak{w}(\tau,x,w)|f_2-f_1|(\tau,x,w)\,dw.
    \end{align*}
    Summarizing the current result, we have
    \begin{align*}
        &\int_{\mathbb{R}^n} \mathfrak{w}(t,x,v)\left|\dv{\lambda}\mathcal{M}(f_\lambda)\right|(\tau, x-v(t-\tau), v)\mathbf{1}_{z-v_z(t-\tau)> 0}\,dv\\
        &\leq C\int_{\mathbb{R}^n} \mathfrak{w}(t,x,v)\frac{1}{(2\pi T_\lambda(\tau, x-v(t-\tau)))^{n/2 + 1}} \exp\left(-\frac{|v-u_\lambda(\tau, x-v(t-\tau))|^2}{4T_\lambda(\tau, x-v(t-\tau))}\right)\mathbf{1}_{z-v_z(t-\tau)> 0}\,dv\\
        &\qquad \times\esssup_{x\in\mathbb{R}^n_+}\int_{\mathbb{R}^n}\mathfrak{w}(\tau,x,w)|f_2-f_1|(\tau,x,w)\,dw.
    \end{align*}
    Our remaining task is to bound the $v$-integral. We first decompose the $x_h$ and $\frac{z}{t}$ in $\mathfrak{w}(t,x,v)$ by
    \begin{align*}
        x_h = x_h-v_h(t-\tau) + v_h(t-\tau)\quad \text{and} \quad \frac{z}{t} = \frac{z-v_z(t-\tau)}{\tau} + \frac{(t-\tau)}{\tau}\left(v_z-\frac{z}{t}\right)
    \end{align*}
    to extract $x_h-v_h(t-\tau) + \frac{z-v_z(t-\tau)}{\tau}$ of $T(\tau, x-v(t-\tau))$. Separating the terms, we rearrange the integral by
    \begin{align*}
        &\int_{\mathbb{R}^n} \left(1+(|x_h|+\frac{z}{t})^{\frac{2}{1+\beta}} + \left(v-\frac{z}{t}\hat{z}\right)^2\right) \frac{1}{T_\lambda^{n/2 + 1}}\exp\left(-\frac{|v-u_\lambda|^2}{4T_\lambda}\right)(\tau, x-v(t-\tau))\mathbf{1}_{z - v_z(t-\tau)>0}\,dv\\
        &\leq\int_{\mathbb{R}^n} \left(1+\left(|x_h-v_h(t-\tau)|+\frac{z-v_z(t-\tau)}{\tau}\right)^{\frac{2}{1+\beta}} + \left(|v_h(t-\tau)|+\left|\frac{(t-\tau)}{\tau}\left(v_z-\frac{z}{t}\right)\right|\right)^{\frac{2}{1+\beta}} + \left(v-\frac{z}{t}\hat{z}\right)^2\right)\\
        &\qquad \times \frac{1}{T_\lambda(\tau, x-v(t-\tau))^{n/2 + 1}}\exp\left(-\frac{|v-u_\lambda(\tau, x-v(t-\tau))|^2}{4T_\lambda(\tau, x-v(t-\tau))}\right)\mathbf{1}_{z - v_z(t-\tau)>0}\,dv\\
        &\leq C\int_{\mathbb{R}^n} \frac{1}{T_\lambda(\tau, x-v(t-\tau))^{n/2}}\exp\left(-\frac{|v-u_\lambda(\tau, x-v(t-\tau))|^2}{4T_\lambda(\tau, x-v(t-\tau))}\right)\mathbf{1}_{z - v_z(t-\tau)>0}\,dv\\
        &\quad + \int_{\mathbb{R}^n} \left(\left(|v_h(t-\tau)|+\frac{t-\tau}{\tau}\left|(v_z - \frac{z}{t})\right|\right)^{\frac{2}{1+\beta}} + \left(v-\frac{z}{t}\hat{z}\right)^2\right)\\
        &\qquad \times \frac{1}{T_\lambda(\tau, x-v(t-\tau))^{n/2 + 1}}\exp\left(-\frac{|v-u_\lambda(\tau, x-v(t-\tau))|^2}{4T_\lambda(\tau, x-v(t-\tau))}\right)\mathbf{1}_{z - v_z(t-\tau)>0}\,dv.
    \end{align*}
    Applying Proposition~\ref{prop:boundary_int} to \eqref{eq:macro_lambda_bound}, we obtain an upper bound of order
    $(|x_h|+\frac{z}{t}+1)^{\frac{1}{1+\beta}}$ for $|u_\lambda-\frac{z}{t}\hat{z}|$ and two-sided bounds of order $(|x_h|+\frac{z}{t}+1)^{\frac{2}{1+\beta}}$ for $T_\lambda$. Thus, we can directly employ Corollary~\ref{prop:bound} to bound the two integrals by a uniform constant. In conclusion, we obtain the following result
    \begin{align*}
        &\sup_{0\leq \lambda\leq 1}\int_{\mathbb{R}^n} \mathfrak{w}(t,x,v)\left|\dv{\lambda}\mathcal{M}(f_\lambda)\right|(\tau, x-v(t-\tau), v)\mathbf{1}_{z - v_z(t-\tau)>0}\,dv\\
        &\leq C\esssup_{x\in\mathbb{R}^n_+}\int_{\mathbb{R}^n}\mathfrak{w}(\tau,x,w)|f_2-f_1|(\tau,x,w)\,dw
    \end{align*}
    for almost every $0<\tau\leq t$ and $x\in\mathbb{R}^n_+$ for all $0<t\leq t_0$. Since the constant $C$ only depends on the inflow boundary data $g$, $C_{u,max}$, and the dimension $n$, by \eqref{eq:contract_prep1}, we prove the proposition.
    \end{proof}
    
    \subsection{Existence and uniqueness via a contraction mapping}
    With Proposition~\ref{prop:maxwell}, we are now ready to prove the existence and uniqueness of solutions via the Banach fixed point theorem.
    
    \begin{theorem}\label{thm:BGK_sol}
        For the inflow boundary data $g$ defined in \eqref{eq:boundary_data} and the initial data $f_0(x,v)=0$ for $x\in \mathbb{R}^n_+$, there exists $t_0>0$ and a unique solution $f(t,x,v)$ of the BGK model in the space $\Omega_{\frac{1}{1+\beta}, t_0}$. Furthermore, we can choose a version of $f(t,x,v)$ such that it satisfies \eqref{eq:X_T_A3} pointwise.
    \end{theorem}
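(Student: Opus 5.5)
We argue by the Banach fixed-point theorem applied to the solution map
\begin{align*}
    \Phi(f)(t,x,v) := e^{-t_b}g(t-t_b, x-vt_b, v)\mathbf{1}_{t\geq t_b} + \int_0^t e^{-(t-\tau)}\mathcal{M}(f)(\tau, x-v(t-\tau), v)\mathbf{1}_{z-v_z(t-\tau)>0}\,d\tau
\end{align*}
on $\Omega_{\frac{1}{1+\beta}, t_0}$, with $C_u = \frac{C_3+1}{\sqrt{C_4}}$ in \eqref{eq:X_T_A3} and the metric
\begin{align*}
    d(f_1,f_2) = \esssup_{0<t\leq t_0}\esssup_{x}\int_{\mathbb{R}^n}\mathfrak{w}(t,x,v)|f_1-f_2|(t,x,v)\,dv.
\end{align*}
The space is nonempty because the free-transport term $f^{(0)} := e^{-t_b}g(\cdot)\mathbf{1}_{t\geq t_b}$ belongs to it by Proposition~\ref{prop:boundary_int}. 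We take $t_0\leq\frac14$ at least as small as in Proposition~\ref{prop:apriori}.

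\textbf{Step 1: $\Phi$ maps $\Omega_{\frac{1}{1+\beta}, t_0}$ into itself.} Nonnegativity and $\Phi(f)(0)=0$ are immediate. The macroscopic condition \eqref{eq:X_T_A3} for $\Phi(f)$ is exactly Proposition~\ref{prop:apriori}. For the weighted moment bound, we use \eqref{eq:macro_0_bound} and the $\rho_g$ bounds. These show that $\int \mathfrak{w}\, f^{(0)}\,dv \lesssim \rho_g\bigl(1+(|x_h|+\frac zt)^{\frac{2}{1+\beta}}\bigr) + \rho_g T_g$, which is bounded uniformly because the factor $e^{-\alpha(\cdot)^{\beta/(1+\beta)}}$ kills every polynomial (for $\alpha>0$; when $m$ is large this holds even without it). For the Maxwellian part, \eqref{eq:macro_M(f_1)}, i.e.\ Proposition~\ref{prop:M[f]_integral} with $m_2=0,2$, gives a bound of the form $t\rho_g(1+T_g)\cdot(\text{polynomial})$, which is again uniformly bounded. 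Completeness of the set with respect to $d$ must also be checked. The norm condition is closed by Fatou. For the macroscopic inequalities, note that $d$-convergence implies convergence of $\rho$, $\rho u$, and $\rho|v-\frac zt\hat z|^2$ for a.e.\ $(t,x)$, since these moments are controlled by $\mathfrak{w}$. Because $\rho\geq\frac12\rho_g>0$, the inequalities pass to the limit a.e.

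\textbf{Step 2: contraction.} For $f_1,f_2$ in the space, Proposition~\ref{prop:maxwell} gives
\begin{align*}
    \int\mathfrak{w}(t,x,v)|\Phi f_1-\Phi f_2|\,dv \leq \int_0^t e^{-(t-\tau)} C\, d(f_1,f_2)\,d\tau \leq C t_0\, d(f_1,f_2).
\end{align*}
This uses the fact that the difference of the boundary terms cancels. Here $C$ depends only on $g$, on $C_{u,max}=C_u$ (fixed) and on $n$. Shrinking $t_0$ so that $Ct_0\leq\frac12$ makes $\Phi$ a contraction, which yields existence and uniqueness in $\Omega_{\frac{1}{1+\beta},t_0}$. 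One technical point needs care here. The constant in Proposition~\ref{prop:maxwell} is uniform in $\tau\leq t$, so the time integral is harmless. The weight, however, is evaluated at time $t$ on the left and at time $\tau$ on the right, and Proposition~\ref{prop:maxwell} already absorbs this mismatch through Corollary~\ref{prop:bound}.

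\textbf{Step 3: pointwise version.} The fixed point satisfies \eqref{eq:X_T_A3} only almost everywhere. We therefore redefine $f$ as $\Phi(f)$ evaluated pointwise, i.e.\ we use the right-hand side of \eqref{eq:BGK_sol_def}. Its moments are given by the explicit integrals in \eqref{eq:macro_f_2}, and the estimates of Proposition~\ref{prop:apriori} hold for every $(t,x)$. The reason is that they rely only on Propositions~\ref{prop:boundary_int} and~\ref{prop:M[f]_integral}, which are pointwise in $(t,x)$ and involve the input $f$ only through $\mathcal{M}(f)$ integrated over $\tau$, so null sets in $(\tau,x)$ do not matter after the $v$-integration along characteristics. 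This version equals $f$ almost everywhere, so it is still the solution.

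\textbf{Main obstacle.} We expect the main difficulty to be this last measure-theoretic point, namely justifying that modifying the macroscopic fields on null sets does not affect the Maxwellian integrals. A secondary difficulty is verifying completeness of the constrained set under $d$.
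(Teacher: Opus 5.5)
Your existence argument is sound and is essentially the paper's. It combines invariance of the constrained class via Proposition~\ref{prop:apriori}, uniform weighted moments via Propositions~\ref{prop:boundary_int} and~\ref{prop:M[f]_integral}, and the Lipschitz bound of Proposition~\ref{prop:maxwell} with $Ct_0\le\frac12$. The paper runs a Picard iteration with everywhere-defined iterates, so the pointwise version of \eqref{eq:X_T_A3} follows from uniform convergence of the moments. Your redefinition $f\mapsto\Phi(f)$ in Step~3 also works. For fixed $(t,x)$ and $\tau<t$ the map $v\mapsto x-v(t-\tau)$ is an affine bijection, so null sets in $(\tau,y)$ are irrelevant. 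This point is therefore not the real obstacle.

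The gap is in uniqueness. Definition~\ref{def:BGK_sol_space} only requires that \emph{some} constant $C_u>0$ works in \eqref{eq:X_T_A3}, so $\Omega_{\frac{1}{1+\beta},t_0}$ is the union over all $C_u$. You fix $C_u=\frac{C_3+1}{\sqrt{C_4}}$, and Banach's theorem then gives uniqueness only among solutions obeying \eqref{eq:X_T_A3} with that particular constant.

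A competitor $h\in\Omega_{\frac{1}{1+\beta},t_0}$ may satisfy \eqref{eq:X_T_A3} only with a larger $C_u'$. The Lipschitz constant in Proposition~\ref{prop:maxwell} then depends on $\max\{C_u,C_u'\}$, so $Ct_0\le\frac12$ can fail on the already fixed interval $[0,t_0]$. Shrinking the interval depending on $C_u'$ would only give $f=h$ on a shorter interval. Proposition~\ref{prop:apriori} does not rescue this either: applied to $h$, its time threshold depends on $C_u'$.

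The missing step is the one the paper uses. Since $f=\Phi f$ and $h=\Phi h$, the boundary terms cancel, and Proposition~\ref{prop:maxwell} with $C_{u,\max}=\max\{C_u,C_u'\}$ gives
\[
D(t)\le C\int_0^t D(\tau)\,d\tau,\qquad D(t):=\esssup_{x\in\mathbb{R}^n_+}\int_{\mathbb{R}^n}\mathfrak{w}(t,x,v)\,|f-h|(t,x,v)\,dv .
\]
Here $D$ is bounded because both functions lie in $\Omega_{\frac{1}{1+\beta},t_0}$. Gronwall's inequality then yields $D\equiv0$ however large $C$ is. With this Gronwall step replacing the contraction argument for uniqueness, your proof is complete.
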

    
    \begin{proof} 
    For given inflow boundary data $g$ in \eqref{eq:boundary_data}, we define the solution map $\mathcal{T}$ by
    \begin{align*}
        \mathcal{T}f(t,x,v) := e^{-t_b}g(t-t_b, x - vt_b, v)\mathbf{1}_{t\geq t_b} + \int_0^t e^{-(t - \tau)} \mathcal{M}(f)(\tau, x - v(t - \tau), v)\mathbf{1}_{z-v_z(t-\tau)> 0}\,d\tau.
    \end{align*}
    From Proposition~\ref{prop:boundary_int}, we have the macroscopic fields bounds \eqref{eq:macro_0_bound} of $(\rho_g,u_g,T_g)$ generated by $e^{-t_b}g(t-t_b, x - vt_b, v)\mathbf{1}_{t\geq t_b}$. We choose $C_u = \frac{C_3+1}{\sqrt{C_4}}$ and $0<t_0^*\leq \frac{1}{4}$ depending on the inflow boundary data $g$ from Proposition~\ref{prop:apriori}. We construct a function sequence by
    \begin{align*}
        f^1(t,x,v) = e^{-t_b}g(t-t_b, x - vt_b, v)\mathbf{1}_{t\geq t_b},
    \end{align*}
    and $f^i = \mathcal{T}f^{i-1}$ for $i\geq 2$. Then, we claim that $\{f^i\}_{i\in\mathbb{N}}\subset \Omega_{\frac{1}{1+\beta}, t_0^*}$. Indeed,
    \begin{enumerate}
        \item by the construction, $f^i(0,x,v) = 0$ for all $i\geq 1$.
        \item By Proposition~\ref{prop:apriori}, since $f^1$ trivially satisfies \eqref{eq:macro_bound_f_1} by definition, $f^i = \mathcal{T}f^{i-1}$ for all $i\geq 2$ satisfies \eqref{eq:macro_bound_f_1}, so \eqref{eq:X_T_A3} for all $(t,x)$.
        \item Since all $f^i$ satisfies \eqref{eq:X_T_A3} in the pointwise sense, using Propositions~\ref{prop:boundary_int} and \ref{prop:M[f]_integral}, we have
        \begin{align*}
            &\int_{\mathbb{R}^n} \mathfrak{w}(t,x,v)e^{-t_b}g(t-t_b, x - vt_b, v)\mathbf{1}_{t\geq t_b}\,dv\\
            &\leq C\left(|x_h| + \frac{z}{t} + 1\right)^{\frac{n+2-\beta-m}{1+\beta}}e^{-\alpha(|x_h| + \frac{z}{t}+1)^{\frac{\beta}{1+\beta}}},
            \intertext{and}
            &\sup_{0<\tau\leq t}\int_{\mathbb{R}^n} \mathfrak{w}(t,x,v)\mathcal{M}(f^i)(\tau, x-v(t-\tau), v)\mathbf{1}_{z-v_z(t-\tau)> 0}\,dv\\
            &\leq C\left(|x_h| + \frac{z}{t} + 1\right)^{\frac{n+2-\beta-m}{1+\beta}}e^{-\alpha(|x_h| + \frac{z}{t}+1)^{\frac{\beta}{1+\beta}}}
        \end{align*}
        for all $0<t\leq t_0^*$ and $x\in\mathbb{R}^n_+$. Therefore,
        \begin{align*}
            \sup_{0<t\leq t_0^*}\sup_{x\in\mathbb{R}^n_+}\int_{\mathbb{R}^n} \mathfrak{w}(t,x,v)f^i(t,x,v)\,dv\leq C<\infty
        \end{align*}
        for a constant $C$ independent of $i$.
    \end{enumerate}
    Now, we can employ Proposition~\ref{prop:maxwell} for $f^i$: for $f^i$ and $f^{i-1}$ for $i\geq 2$, we have
    \begin{align*}
        &\sup_{x\in\mathbb{R}^n_+}\int_{\mathbb{R}^n} \mathfrak{w}(t,x,v)|\mathcal{T}f^i - \mathcal{T}f^{i-1}|(t,x,v)\,dv\\
        &\leq\sup_{x\in\mathbb{R}^n_+}\int_0^t\int_{\mathbb{R}^n} \mathfrak{w}(t,x,v)|\mathcal{M}(f^i)-\mathcal{M}(f^{i-1})|(\tau,x-v(t-\tau),v)\mathbf{1}_{z-v_z(t-\tau)> 0}\,dv d\tau\\
        &\leq Ct\sup_{0< \tau\leq t}\sup_{x\in\mathbb{R}^n_+} \int_{\mathbb{R}^n} \mathfrak{w}(\tau,x,v)|f^i-f^{i-1}|(\tau,x,v) \,dv
    \end{align*}
    for all $0<t\leq t_0^*$; since the macroscopic fields of $f^i$ satisfy \eqref{eq:X_T_A3} pointwise, we can use $\sup$ instead of $\esssup$. We choose $t_0 = \min\{\frac{1}{2C}, t^*_0\}$. Since $\Omega_{\frac{1}{1+\beta},t_0^*}\subset \Omega_{\frac{1}{1+\beta},t_0}$, we have $\{f^i\}_{i\in\mathbb{N}}\subset \Omega_{\frac{1}{1+\beta},t_0}$. Also, we obtain
    \begin{align*}
        \sup_{0<t\leq t_0} \sup_{x\in\mathbb{R}^n_+}\int_{\mathbb{R}^n} \mathfrak{w}(t,x,v)|\mathcal{T}f^i-\mathcal{T}f^{i-1}|(t,x,v)\,dv\leq \frac{1}{2}\sup_{0<t\leq t_0} \sup_{x\in\mathbb{R}^n_+}\int_{\mathbb{R}^n} \mathfrak{w}(t,x,v)|f^i - f^{i-1}|(t, x,v) \,dv,
    \end{align*}
    so it is a contraction mapping $\mathcal{T}$ for $\{f^i\}$ on $\Omega_{\frac{1}{1+\beta}, t_0}\subset L^\infty_{t,x}((0,t_0]\times\mathbb{R}^n_+, L^1_v(\mathfrak{w}))$. By the standard measure theoretic argument, we have $f^i(t,x,v)\rightarrow f(t,x,v) = f^1 + \sum_{i=2}^\infty (f^i-f^{i-1})$ (i) in $L^\infty_{t,x}((0,t_0]\times\mathbb{R}^n_+, L^1_v(\mathfrak{w}))$ and (ii) almost everywhere in $(t,x,v)$. Furthermore, the macroscopic fields of $f$ satisfy \eqref{eq:X_T_A3} for all $(t,x)$ since
    \begin{align*}
        \sum_{i=2}^\infty \sup_{0<t\leq t_0} \sup_{x\in\mathbb{R}^n_+} \int \mathfrak{w}(t,x,v)|f^i-f^{i-1}|(t,x,v)\,dv<\infty
    \end{align*}
    and the density of $f^i$ are all bounded below by $\frac{1}{2}\rho_g$. Therefore, it satisfies $f\in \Omega_{\frac{1}{1+\beta}, t_0}$ and $\mathcal{T}f = f$ with $f(0,x,v) = 0$. This proves the existence part.
    
    For the uniqueness, assume there exists another solution $h\in \Omega_{\frac{1}{1+\beta},t_0}$ with some $C_u'$ in \eqref{eq:X_T_A3}. For $t>0$, by Proposition~\ref{prop:maxwell}, we have
    \begin{align*}
        &\esssup_{x\in\mathbb{R}^n_+}\int_{\mathbb{R}^n} \mathfrak{w}(t,x,v)|f - h|(t,x,v)\,dv\\
        &=\esssup_{x\in\mathbb{R}^n_+}\int_{\mathbb{R}^n} \mathfrak{w}(t,x,v)|\mathcal{T}f - \mathcal{T}h|(t,x,v)\,dv\\
        &\leq \esssup_{x\in\mathbb{R}^n_+}\int_0^t\int_{\mathbb{R}^n} \mathfrak{w}(t,x,v)|\mathcal{M}(f)-\mathcal{M}(h)|(\tau,x-v(t-\tau),v)\mathbf{1}_{z-v_z(t-\tau)> 0}\,dv d\tau\\
        &\leq C\int_0^t \esssup_{x\in\mathbb{R}^n_+} \int_{\mathbb{R}^n} \mathfrak{w}(\tau,x,v)|f-h|(\tau,x,v) \,dv\,d\tau
    \end{align*}
    for almost every $0<t\leq t_0$ and for some constant $C$ depending on the boundary data $g$ in \eqref{eq:boundary_data}, $\max\{C_{u,1}, C_{u,2}\}$, and dimension $n$. By Gronwall's inequality, we have
    \begin{align*}
        \esssup_{0<t\leq t_0}\esssup_{x\in\mathbb{R}^n_+}\int_{\mathbb{R}^n} \mathfrak{w}(t,x,v)|f - h|(t,x,v)\,dv = 0.
    \end{align*}
    By the definition of $\Omega_{\frac{1}{1+\beta}, t_0}$, $h(0,x,v) = 0$ at $t=0$. Therefore, we conclude $f=h$ a.e. in $\Omega_{\frac{1}{1+\beta}, t_0}$.
\end{proof}

\section{Ill-posedness of the constructed BGK solution}\label{sec:Ill_posed}
In this section, we complete the proof of Theorem~\ref{thm:main_BGK} by proving that the constructed local BGK solution in Theorem~\ref{thm:BGK_sol} exhibits the desired ill-posedness in the weight $e^{\alpha'|v_h|^\beta}$. One of the interesting points of the next proposition is that any solutions of the BGK model sharing the same solution near $t=0$ have the same ill-posedness for any extended time.
\begin{proposition}\label{prop:Ill_posed}
    For the given boundary data $g(t,x_h,v)$ in \eqref{eq:boundary_data}, let $f(t,x,v)\in \Omega_{\frac{1}{1+\beta}, t_0}$ be the solution of the BGK model constructed in Theorem~\ref{thm:BGK_sol}. Then, for any $0<\alpha'\leq \alpha$, we have
    \begin{align*}
        \inf_{0<t\leq t_0}\left\|e^{\alpha'|v_h|^\beta}f(t,x,v)\right\|_{L^p_v(\mathbb{R}^n, L^q_x(\mathbb{R}^n_+))} = \inf_{0<t\leq t_0}\left\|e^{\alpha'|v_h|^\beta}f(t,x,v)\right\|_{L^q_x(\mathbb{R}^n_+, L^p_v(\mathbb{R}^n))} = \infty
    \end{align*}
    for any $1\leq p,q\leq \infty$. Furthermore, let $\tilde{f}(t,x,v)$ be any solution on $0\leq t\leq t^*_0$, where $t^*_0\geq t_0$, such that $\tilde{f}(t,x,v) = f(t,x,v)$ for $0\leq t\leq t_0$. Then, it (more precisely, a version of $\tilde{f}$) satisfies
    \begin{align*}
        \inf_{0<t\leq t^*_0}\left\|e^{\alpha'|v_h|^\beta}\tilde{f}(t,x,v)\right\|_{L^p_v(\mathbb{R}^n, L^q_x(\mathbb{R}^n_+))} = \inf_{0<t\leq t^*_0}\left\|e^{\alpha'|v_h|^\beta}\tilde{f}(t,x,v)\right\|_{L^q_x(\mathbb{R}^n_+, L^p_v(\mathbb{R}^n))} = \infty
    \end{align*}
    for any $1\leq p,q\leq \infty$. Here, we need not assume that $\tilde{f}(t,x,v)$ belongs to $\Omega_{\frac{1}{1+\beta}, t^*_0}$.
\end{proposition}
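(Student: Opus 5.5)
The mechanism is that the local Maxwellian has temperature $T\sim(|x_h|+\frac zt+1)^{\frac{2}{1+\beta}}$, which grows in space, while its density decays only like $e^{-\alpha(|x_h|+\frac zt+1)^{\frac{\beta}{1+\beta}}}$. Since $f\ge$ the gain part of the Duhamel formula \eqref{eq:BGK_sol_def}, it suffices to bound from below
\begin{align*}
I(t,x,v):=\int_0^t e^{-(t-\tau)}\mathcal{M}(f)(\tau,x-v(t-\tau),v)\mathbf{1}_{z-v_z(t-\tau)>0}\,d\tau .
\end{align*}
I would take $\tau\in[t/2,t]$ and keep $x$ in a fixed bounded set, say $|x_h|\le1$, $z\le t$; the set of such $x$ has positive measure, and for fixed $v$ the foot point $y=x-v(t-\tau)$ stays in $\mathbb{R}^n_+$ when $v_z\le 0$. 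To let the Maxwellian exploit large $|v_h|$, I would instead choose $x$ far out in the tangential direction. Concretely, set $L=|x_h|$ large, $z$ small, and consider $v=(v_h,v_z)$ with $|v_h|\approx \lambda\sqrt{T}$ and $v_z$ bounded.

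By Theorem~\ref{thm:BGK_sol}, $f$ satisfies \eqref{eq:X_T_A3} pointwise. Combined with \eqref{eq:macro_0_bound}, this gives at $(\tau,y)$
\begin{align*}
\rho\ge c(L+1)^{\frac{n-m-\beta}{1+\beta}}e^{-\alpha(L+C)^{\frac{\beta}{1+\beta}}},\qquad T\ge c(L+1)^{\frac{2}{1+\beta}},\qquad |u-\tfrac{z}{\tau}\hat z|\le C(L+1)^{\frac{1}{1+\beta}}.
\end{align*}
Here I use $|y_h|\le L+|v_h|t\le 2L$ as long as $|v_h|t\le L$, and $z/\tau$ is bounded. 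Hence, for $|v_h|\le K(L+1)^{\frac1{1+\beta}}$, we get $\mathcal{M}(f)\ge c_K\,\rho\,T^{-n/2}$, and so $f(t,x,v)\gtrsim t\,(L+1)^{p_0}e^{-\alpha (L+C)^{\beta/(1+\beta)}}$ for some power $p_0$.

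The weight at $|v_h|= K(L+1)^{\frac1{1+\beta}}$ is $e^{\alpha' K^\beta (L+1)^{\beta/(1+\beta)}}$. Choosing $K$ with $\alpha'K^\beta>\alpha$ (possible for every $\alpha'>0$, at the cost of the constant $c_K$ being tiny but $L$-independent), I get
\begin{align*}
e^{\alpha'|v_h|^\beta}f(t,x,v)\ge c\,t\,(L+1)^{p_0}e^{(\alpha'K^\beta-\alpha)(L+1)^{\frac{\beta}{1+\beta}}-C'}
\end{align*}
on a set of $(x,v)$ where $|x_h|\in[L,L+1]$, $z\in(0,t)$, $v_z\in[-1,0]$, and $|v_h|\in[\tfrac K2,K](L+1)^{\frac1{1+\beta}}$. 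This requires applying the Gaussian lower bound uniformly on the shell, which amounts to $|v-u|^2/T\le C_K$. The set has measure at least $c$ in both $x$ and $v$, uniformly in $L$. Integrating, or taking the sup in the $L^\infty$ case, in either order $L^p_vL^q_x$ or $L^q_xL^p_v$ and letting $L\to\infty$ gives $\infty$ for each $0<t\le t_0$, hence the infimum is infinite. The factor $\langle v\rangle^\delta\ge1$ only helps.

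For the extension $\tilde f$ and $t\in(t_0,t_0^*]$, no macroscopic control is available. Instead I would use the mild formula written from time $t_0$: $\tilde f(t)\ge e^{-(t-t_0)}\tilde f(t_0,x-v(t-t_0),v)\mathbf{1}_{z-v_z(t-t_0)>0}$. This is valid because all terms of the Duhamel formula are nonnegative, and by Lemma~\ref{lem:mild_sol_equiv} the formula can be restarted at $t_0$. The free transport $(x,v)\mapsto(x-v(t-t_0),v)$ preserves measure, and I restrict to $v_z\le0$ so the indicator equals $1$. The lower bound set above is essentially invariant under this shift up to bounded changes of $x_h$: for the $L^p_vL^q_x$ norm this is immediate, while for $L^q_xL^p_v$ I would shrink the $v_h$-shell's effect on $x_h$ by noting that $|v_h|(t-t_0)\lesssim L^{1/(1+\beta)}\ll L$. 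The main obstacle I expect is checking the uniform Gaussian lower bound. Specifically, $|u(\tau,y)-v|^2/T(\tau,y)$ must stay bounded on the chosen shell, and the constants from \eqref{eq:X_T_A3} must be kept independent of $L$. I would handle this using $T\ge\frac12T_g$ and the $u$-bound with $C_u$.
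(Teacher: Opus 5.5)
Your argument is correct, and it takes a genuinely different route from the paper's.

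\textbf{How the paper argues.} It uses two separate super-thermal regimes.
\begin{itemize}
\item For $L^p_vL^q_x$ it fixes $v$ with $|v_h|\to\infty$ and takes $|x_h|\simeq C|v_h|$, with $C$ large in terms of $\alpha' C_T$. The Gaussian factor then costs $e^{-\frac34\alpha'|v_h|^{2\beta/(1+\beta)}}$. This is beaten by $e^{\alpha'|v_h|^\beta}$ only because $\beta\ge \frac{2\beta}{1+\beta}$, and the constants must be tuned when $\beta=1$.
\item For $L^q_xL^p_v$ it fixes $x$ and takes $|v_h|=c(|x_h|+1)^{2/(1+\beta)}$. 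It then treats $\beta>1$ and $\beta=1$ separately, using a small $c$ in the latter case.
\end{itemize}

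\textbf{How your argument differs.} You stay on the thermal scale $|v_h|\in[\frac K2,K](L+1)^{1/(1+\beta)}$ with $|v_z|\le 1$. There $|v-u|^2/T\le C_K$ independently of $L$. The argument rests on one observation. The weight $e^{\alpha'|v_h|^\beta}\ge e^{\alpha'(K/2)^\beta (L+1)^{\beta/(1+\beta)}}$ has exactly the same growth order as the density decay $e^{-\alpha(L+1)^{\beta/(1+\beta)}}$. Moreover, the shift $|v_h|(t-\tau)\lesssim K t L^{1/(1+\beta)}$ of the foot point changes $(|y_h|+\frac{y_z}{\tau}+1)^{\beta/(1+\beta)}$ only by $O(1)$.

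\textbf{What each route buys.} With your approach a single product set $X_L\times V_L$, of measure bounded below uniformly in $L$, gives both iterated norms $\ge \Lambda_L|X_L|^{1/q}|V_L|^{1/p}\to\infty$. This handles both norm orders and all $1\le\beta\le 2$ at once. It also makes transparent that the blow-up comes from the matched exponent $\frac{\beta}{1+\beta}$ built into \eqref{eq:boundary_data}. The paper's regimes buy faster growth, for instance $e^{\epsilon(|x_h|+1)^{2\beta/(1+\beta)}}$ in $x$ rather than $e^{c(L+1)^{\beta/(1+\beta)}}$, but the statement does not need this.

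One small correction: the minimum of the weight over your shell is attained at $K/2$, so you need $\alpha'(K/2)^\beta>\alpha$, not $\alpha'K^\beta>\alpha$.

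\textbf{The extension to $\tilde f$.} The restart at $t_0$ is unnecessary. The paper simply keeps $\tau\in[\frac{\min\{t,t_0\}}{2},\min\{t,t_0\}]$ in the full Duhamel formula for $\tilde f$. There the integrand is $\mathcal{M}(f)$, and all other terms are nonnegative. Your thermal-scale estimate goes through verbatim this way, since $|v_h|(t-\tau)\lesssim tL^{1/(1+\beta)}\ll L$ and $y_z/\tau$ stays bounded.

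If you keep the restart, two fixes are needed.
\begin{itemize}
\item Justify the restart by splitting the Duhamel integral at $t_0$. Lemma~\ref{lem:mild_sol_equiv} concerns the equivalence of the two mild forms, not restarting.
\item With $v_z\le 0$, the point $x=x'+v(t-t_0)$ has $z=z'-|v_z|(t-t_0)$, which can leave $\mathbb{R}^n_+$. You must either restrict $|v_z|\le t_0/(2(t-t_0))$ or enlarge the admissible $z'$-range at time $t_0$. Either choice keeps $y_z/\tau$ bounded and the measure bounds uniform in $L$.
\end{itemize}
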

\begin{proof}
    We directly prove the ill-posedness on any extended time interval since it includes the first statement. For notational convenience, we present the proof for $t_0^*=\infty$ and show that
    \begin{align*}
        \inf_{t>0}\left\|e^{\alpha'|v_h|^\beta}f(t,x,v)\right\|_{L^p_v(\mathbb{R}^n, L^q_x(\mathbb{R}^n_+))} = \inf_{t>0}\left\|e^{\alpha'|v_h|^\beta}f(t,x,v)\right\|_{L^q_x(\mathbb{R}^n_+, L^p_v(\mathbb{R}^n))} = \infty
    \end{align*}
    using only the assumption that $f(t,x,v) \in \Omega_{\frac{1}{1+\beta},t_0}$. The same argument applies to any finite $t_0^*$ by restricting $t$ to $0<t\leq t_0^*$.

    Let $0<\alpha'\leq \alpha$. We first check
    \begin{align}\label{eq:Lp_vLq_x_res}
        \inf_{t>0}\left\|e^{\alpha'|v_h|^\beta}f(t,x,v)\right\|_{L^p_v(\mathbb{R}^n, L^q_x(\mathbb{R}^n_+))} = \infty.
    \end{align}
    By the definition of the solution, we have
    \begin{align*}
        e^{\alpha' |v_h|^\beta}f(t,x,v)\geq \int_0^t e^{-(t-\tau)}e^{\alpha' |v_h|^\beta}\mathcal{M}(f)(\tau, x-v(t-\tau), v)\mathbf{1}_{z-v_z(t-\tau)> 0}\,d\tau.
    \end{align*}
    Taking $L^q_x(\mathbb{R}^n_+)$ on both sides, we write
    \begin{align*}
        \int_{\mathbb{R}^n_+} \left(e^{\alpha'|v_h|^\beta} f(t,x,v)\right)^q\,dx\geq \int_{\mathbb{R}^n} e^{q\alpha' |v_h|^\beta}\left(\int_0^t e^{-(t-\tau)}\mathcal{M}(f)(\tau, x-v(t-\tau), v)\mathbf{1}_{z\geq v_z(t-\tau)}\mathbf{1}_{z\geq 0}\,d\tau\right)^q\,dx.
    \end{align*}
    We confine $v_z> 0$, so $\mathbf{1}_{z> v_z(t-\tau)}\mathbf{1}_{z> 0} = \mathbf{1}_{z> v_z(t-\tau)}$.

    Our goal is to show that for any fixed
    \begin{align}\label{eq:Lp_vLq_x_choice2}
        t>0,\quad \frac{\min\{t, t_0\}}{2}\leq z, \tau\leq \min\{t, t_0\},\quad\text{and}\quad 0< v_z\leq \frac{\min\{t, t_0\}}{2t},
    \end{align}
    there exists sufficiently large $M>0$ depending on $t$, $t_0$, $\alpha$, $\alpha'$, $\beta$, and $C_T$ such that for any $|v_h|\geq M$ and $x_h$ satisfying
    \begin{align}\label{eq:Lp_vLq_x_choice3}
        \left(\left(\frac{2}{\alpha'C_T}\right)^{\frac{1+\beta}{2}} + t\right)|v_h|\leq |x_h|\leq 2\left(\left(\frac{2}{\alpha'C_T}\right)^{\frac{1+\beta}{2}} + t\right)|v_h|,
    \end{align}
    the local Maxwellian satisfies
    \begin{align}\label{eq:Lp_vLq_x_1}
        e^{\alpha' |v_h|^\beta}\mathcal{M}(f)(\tau, x-v(t-\tau), v)\mathbf{1}_{z - v_z(t-\tau)>0}\geq e^{\frac{\alpha'}{8}|v_h|^\beta}.
    \end{align}
    If we prove this, then for $1\leq q<\infty$, we obtain
    \begin{align*}
        &\int_{\mathbb{R}^n_+} \left(e^{\alpha'|v_h|^\beta}f(t,x,v)\right)^q\,dx\\
        &\geq e^{-qt}\int_{\mathbb{R}^{n-1}} \int_{\frac{\min\{t, t_0\}}{2}}^{\min\{t, t_0\}} \Bigg(\int_{\frac{\min\{t, t_0\}}{2}}^{\min\{t, t_0\}} e^{\alpha' |v_h|^\beta}\mathcal{M}(f)(\tau, x-v(t-\tau), v)\\
        &\qquad \times \mathbf{1}_{\left(\left(\frac{2}{\alpha'C_T}\right)^{\frac{1+\beta}{2}} + t\right)|v_h|\leq |x_h|\leq 2\left(\left(\frac{2}{\alpha'C_T}\right)^{\frac{1+\beta}{2}} + t\right)|v_h|}\mathbf{1}_{z-v_z(t-\tau)>0}\,d\tau\Bigg)^q\,dz dx_h\\
        &\geq e^{-qt}e^{\frac{q\alpha'}{8}|v_h|^\beta}\int_{\mathbb{R}^{n-1}} \int_{\frac{\min\{t, t_0\}}{2}}^{\min\{t, t_0\}}\Bigg(\int_{\frac{\min\{t, t_0\}}{2}}^{\min\{t, t_0\}}\mathbf{1}_{\left(\left(\frac{2}{\alpha'C_T}\right)^{\frac{1+\beta}{2}} + t\right)|v_h|\leq |x_h|\leq 2\left(\left(\frac{2}{\alpha'C_T}\right)^{\frac{1+\beta}{2}} + t\right)|v_h|}\,d\tau\Bigg)^q\,dz dx_h\\
        &\geq C\left(\min\{t, t_0\}\right)^{q+1}\left(\left(\frac{2}{\alpha'C_T}\right)^{\frac{1+\beta}{2}} + t\right)^{n-1} e^{-qt}e^{\frac{q\alpha'}{8}|v_h|^\beta} |v_h|^{n-1}
    \end{align*}
    for any $t>0$ and $0< v_z\leq \frac{\min\{t, t_0\}}{2t}$. This yields the desired ill-posedness \eqref{eq:Lp_vLq_x_res} for any $1\leq p\leq \infty$ due to the exponential term $e^{\frac{\alpha'}{8}|v_h|^\beta}$. If $q=\infty$, it is simpler as
    \begin{align*}
        \left\|e^{\alpha'|v_h|^\beta}f(t,x,v)\right\|_{L^\infty_x(\mathbb{R}^n_+)} \geq e^{-t}\frac{\min\{t, t_0\}}{2}e^{\frac{\alpha'}{8}|v_h|^\beta}\mathbf{1}_{0< v_z\leq \frac{\min\{t, t_0\}}{2t}},
    \end{align*}
    and we again get the ill-posedness \eqref{eq:Lp_vLq_x_res} for $1\leq p\leq \infty$.
    
   We first derive an explicit lower bound for the $\mathcal M(f)$-term. By the choices of $z, \tau$, and $v_z$ in \eqref{eq:Lp_vLq_x_choice2}, we have
    \begin{align}\label{eq:z_v_z_cond}
        z - v_z(t-\tau) > 0\quad \text{and}\quad \frac{z-v_z(t-\tau)}{\tau}\leq 2.
    \end{align}
    We estimate $\frac{|v-u(\tau, x-v(t-\tau))|^2}{2T(\tau, x-v(t-\tau))}$. From \eqref{eq:X_T_A3}, we have
    \begin{align*}
        \left|u(\tau, x-v(t-\tau)) - \frac{z-v_z(t-\tau)}{\tau}\hat{z}\right|\leq C_u\sqrt{T_g(\tau, x-v(t-\tau))}.
    \end{align*}
    Therefore, from $(x-y)^2\leq 2(x^2 + y^2)$, we have
    \begin{align*}
        \frac{|v-u(\tau, x-v(t-\tau))|^2}{2T(\tau, x-v(t-\tau))}&\leq \frac{|v|^2 + |u(\tau, x-v(t-\tau))|^2}{T(\tau, x-v(t-\tau))}\\
        &\leq \frac{|v|^2 + \left(\left|\frac{z-v_z(t-\tau)}{\tau}\right| + C_u\sqrt{T_g(\tau, x-v(t-\tau))}\right)^2}{T(\tau, x-v(t-\tau))}\\
        &\leq C + \frac{|v|^2}{T(\tau, x-v(t-\tau))}
    \end{align*}
    for some constant $C$. Thus, we can write
    \begin{align}\label{eq:Lp_vLq_x_2}
        \begin{split}   
            &\mathcal{M}(f)(\tau, x-v(t-\tau), v)\mathbf{1}_{z - v_z(t-\tau)>0}\\
            &=\frac{\rho(\tau, x-v(t-\tau))}{(2\pi T(\tau, x-v(t-\tau)))^{n/2}}\exp\left(-\frac{|v-u(\tau, x-v(t-\tau))|^2}{2T(\tau, x-v(t-\tau))}\right)\mathbf{1}_{z - v_z(t-\tau)>0}\\
            &\geq C \frac{e^{-\alpha(|x_h-v_h(t-\tau)| + \frac{z-v_z(t-\tau)}{\tau} + 1)^{\frac{\beta}{1+\beta}}}}{(|x_h-v_h(t-\tau)| + \frac{z-v_z(t-\tau)}{\tau} + 1)^{\frac{m+\beta}{1+\beta}}}\exp\left(-\frac{|v|^2}{C_T(|x_h-v_h(t-\tau)| + \frac{z-v_z(t-\tau)}{\tau} + 1)^{\frac{2}{1+\beta}}}\right)\mathbf{1}_{z - v_z(t-\tau)>0}
        \end{split}
    \end{align}
    for some constant $C_T$, replacing $\rho$ and $T$ by \eqref{eq:X_T_A3} and \eqref{eq:macro_0_bound}.
    
    By the cutoff condition \eqref{eq:Lp_vLq_x_choice3} and \eqref{eq:z_v_z_cond}, for $|v_h|\geq 1$, it satisfies
    \begin{align*}
        \left(\frac{2}{\alpha'C_T}\right)^{\frac{1+\beta}{2}}|v_h|\leq |x_h-v_h(t-\tau)| + \frac{z-v_z(t-\tau)}{\tau} + 1\leq \left(\left(\frac{4}{\alpha'C_T}\right)^{\frac{1+\beta}{2}} + 4t + 4\right)|v_h|.
    \end{align*}
    Indeed,
    \begin{align*}
        &|x_h-v_h(t-\tau)| + \frac{z-v_z(t-\tau)}{\tau} + 1\geq |x_h|-|v_h|(t-\tau)\geq \left(\frac{2}{\alpha'C_T}\right)^{\frac{1+\beta}{2}}|v_h|,\\
        &|x_h-v_h(t-\tau)| + \frac{z-v_z(t-\tau)}{\tau} + 1\leq |x_h|+|v_h|(t-\tau) + 3 \leq \left(\left(\frac{4}{\alpha'C_T}\right)^{\frac{1+\beta}{2}} + 4t + 4\right)|v_h|
    \end{align*}
    for $|v_h|\geq 1$. Applying these inequalities to \eqref{eq:Lp_vLq_x_2}, we obtain
    \begin{align*}
        e^{\alpha'|v_h|^\beta}\mathcal{M}(f)(\tau, x-v(t-\tau), v)\mathbf{1}_{z> v_z(t-\tau)}\geq C e^{\alpha'|v_h|^\beta}\frac{e^{-\alpha (C')^{\frac{\beta}{1+\beta}} |v_h|^{\frac{\beta}{1+\beta}}}}{(C'|v_h|)^{\frac{m+\beta}{1+\beta}}}\exp\left(-\frac{1}{2}\alpha'\frac{|v|^2}{|v_h|^{\frac{2}{1+\beta}}}\right)
    \end{align*}
    for some constant $C'$.
    
    If we take $|v_h|$ large enough so that $|v_z|^2\leq \frac{1}{2}|v_h|^2$, then
    \begin{align*}
        e^{\alpha'|v_h|^\beta}e^{-\alpha(C')^{\frac{\beta}{1+\beta}}|v_h|^{\frac{\beta}{1+\beta}}}\exp\left(-\frac{1}{2}\alpha'\frac{|v|^2}{|v_h|^{\frac{2}{1+\beta}}}\right)&\geq e^{\alpha'|v_h|^\beta}e^{-\alpha(C')^{\frac{\beta}{1+\beta}}|v_h|^{\frac{\beta}{1+\beta}}}\exp\left(-\frac{3}{4}\alpha'|v_h|^{\frac{2\beta}{1+\beta}}\right).
    \end{align*}
    When $1\leq \beta\leq 2$, we have $\beta\geq \frac{\beta}{1+\beta}$ and $\beta \geq \frac{2\beta}{1+\beta}$. Therefore, the exponential growth term dominates the other decaying terms as $|v_h|\rightarrow\infty$, and there exists a sufficiently large $M>0$ such that the desired result \eqref{eq:Lp_vLq_x_1} holds for $|v_h|\geq M$.\\

    Next, we perform the $v$-integral first and then prove the ill-posedness
    \begin{align}\label{eq:Lq_xLp_v_res}
        \inf_{t>0}\left\|e^{\alpha'|v_h|^\beta}f(t,x,v)\right\|_{L^q_x(\mathbb{R}^n_+, L^p_v(\mathbb{R}^n))} = \infty.
    \end{align}
    Similar to \eqref{eq:Lp_vLq_x_choice2}, we fix
    \begin{align}\label{eq:Lq_xLp_v_choice}
        t>0,\quad \frac{\min\{t, t_0\}}{2}\leq z, \tau\leq \min\{t, t_0\},\quad 0< v_z\leq \frac{z}{4t}.
    \end{align}
    For $|v_h|$, we take scaling
    \begin{align}\label{eq:Lq_xLp_v_v_h_cond}
        |v_h| = c(|x_h| + 1)^{\frac{2}{1+\beta}}
    \end{align}
    for $0<c\leq \frac{1}{4\max\{t, 1\}}$. Under these choices, we have
    \begin{align*}
        v_z\leq 1,\quad \frac{3}{8}\leq \frac{z-v_z(t-\tau)}{\tau}\leq 2,
    \end{align*}
    and
    \begin{align*}
        \left|x_h-v_h(t-\tau)\right| + \frac{z-v_z(t-\tau)}{\tau} + 1&\geq |x_h| + 1 - \frac{1}{4}\left(|x_h| + 1\right)^{\frac{2}{1+\beta}}\geq \frac{3}{4}\left(|x_h| + 1\right),\\
        \left|x_h-v_h(t-\tau)\right| + \frac{z-v_z(t-\tau)}{\tau} + 1&\leq|x_h|+ 3 + \frac{1}{4}\left(|x_h| + 1\right)^{\frac{2}{1+\beta}}\\
        &\leq 4\left(|x_h|+ 1\right)
    \end{align*}
    for $1\leq \beta\leq 2$.
    
    Applying these inequalities to \eqref{eq:Lp_vLq_x_2}, we have
    \begin{align*}
        &e^{\alpha' |v_h|^\beta}\mathcal{M}(f)(\tau, x-v(t-\tau), v)\mathbf{1}_{z> v_z(t-\tau)}\\
        &\geq C e^{\alpha'|v_h|^\beta}\frac{e^{-\alpha(|x_h-v_h(t-\tau)| + \frac{z-v_z(t-\tau)}{\tau} + 1)^{\frac{\beta}{1+\beta}}}}{(|x_h-v_h(t-\tau)| + \frac{z-v_z(t-\tau)}{\tau} + 1)^{\frac{m+\beta}{1+\beta}}}\exp\left(-\frac{|v|^2}{C_T(|x_h-v_h(t-\tau)| + \frac{z-v_z(t-\tau)}{\tau} + 1)^{\frac{2}{1+\beta}}}\right)\mathbf{1}_{z> v_z(t-\tau)}\\
        &\geq C e^{\alpha'|v_h|^\beta}\frac{e^{-4^{\frac{\beta}{1+\beta}}\alpha (|x_h| + 1)^{\frac{\beta}{1+\beta}}}}{4^{\frac{m+\beta}{1+\beta}}(|x_h|+ 1)^{\frac{m+\beta}{1+\beta}}}\exp\left(-\frac{|v_h|^2 + 1}{\left(\frac{3}{4}\right)^{\frac{2}{1+\beta}}C_T(|x_h| + 1)^{\frac{2}{1+\beta}}}\right).
    \end{align*}
    Finally, directly replacing $|v_h|$ by \eqref{eq:Lq_xLp_v_v_h_cond}, we get
    \begin{align*}
        &e^{\alpha'|v_h|^\beta}\frac{e^{-4^{\frac{\beta}{1+\beta}}\alpha (|x_h| + 1)^{\frac{\beta}{1+\beta}}}}{4^{\frac{m+\beta}{1+\beta}}(|x_h|+ 1)^{\frac{m+\beta}{1+\beta}}}\exp\left(-\frac{|v_h|^2 + 1}{\left(\frac{3}{4}\right)^{\frac{2}{1+\beta}}C_T(|x_h| + 1)^{\frac{2}{1+\beta}}}\right)\\
        &\geq e^{\alpha'c^\beta (|x_h| + 1)^{\frac{2\beta}{1+\beta}}}\frac{e^{-4^{\frac{\beta}{1+\beta}}\alpha (|x_h| + 1)^{\frac{\beta}{1+\beta}}}}{4^{\frac{m+\beta}{1+\beta}}(|x_h|+ 1)^{\frac{m+\beta}{1+\beta}}}\exp\left(-\frac{c^2(|x_h| + 1)^{\frac{4}{1+\beta}} + 1}{\left(\frac{3}{4}\right)^{\frac{2}{1+\beta}}C_T(|x_h| + 1)^{\frac{2}{1+\beta}}}\right).
    \end{align*}
    
    If $1<\beta\leq 2$, as $\frac{2\beta}{1+\beta}> \frac{\beta}{1+\beta}, \frac{2}{1+\beta}$, the first growth term overwhelms the other decay terms as $|x_h|\rightarrow \infty$ for $c>0$. Therefore, there exists sufficiently large $M>0$ and small $\epsilon>0$ such that
    \begin{align*}
        e^{\alpha' |v_h|^\beta}\mathcal{M}(f)(\tau, x-v(t-\tau), v)\mathbf{1}_{z\geq v_z(t-\tau)} \geq e^{\epsilon (|x_h| + 1)^{\frac{2\beta}{1+\beta}}}
    \end{align*}
    for $|x_h|\geq M$ under the choices of \eqref{eq:Lq_xLp_v_choice} and
    \begin{align*}
        \frac{1}{8\max\{t, 1\}}(|x_h|+1)^{\frac{2}{1+\beta}}\leq |v_h|\leq \frac{1}{4\max\{t, 1\}}(|x_h|+1)^{\frac{2}{1+\beta}}
    \end{align*}
    from \eqref{eq:Lq_xLp_v_v_h_cond}.
    
    If $\beta = 1$, then the leading term in the exponents is given by
    \begin{align*}
        \left(\alpha'c -\frac{4}{3}\frac{c^2}{C_T}\right)(|x_h| + 1).
    \end{align*}
    If we choose $c$ from
    \begin{align*}
        c\in\left[\frac{1}{2}\min\left\{\frac{3\alpha' C_T}{8}, \frac{1}{4\max\{t, 1\}}\right\}, \min\left\{\frac{3\alpha' C_T}{8}, \frac{1}{4\max\{t, 1\}}\right\}\right],
    \end{align*}
    we have
    \begin{align*}
        \left(\alpha'c -\frac{4}{3}\frac{c^2}{C_T}\right)(|x_h| + 1)\geq C(|x_h| + 1)
    \end{align*}
    for some constant $C>0$ depending on $\min\left\{\frac{3\alpha' C_T}{8}, \frac{1}{4\max\{t, 1\}}\right\}$. It shows that the growth term is much bigger than the decay term in the given range of $c$. Therefore, there again exist sufficiently large $M>0$ and small $\epsilon>0$ such that
    \begin{align*}
        e^{\alpha' |v_h|}\mathcal{M}(f)(\tau, x-v(t-\tau), v)\mathbf{1}_{z\geq v_z(t-\tau)}\geq e^{\epsilon (|x_h| + 1)}
    \end{align*}
    for $|x_h|\geq M$ under the choices of \eqref{eq:Lq_xLp_v_choice} and
    \begin{align*}
        \frac{1}{2}\min\left\{\frac{3\alpha' C_T}{8}, \frac{1}{4\max\{t, 1\}}\right\} (|x_h| + 1)\leq |v_h|\leq \min\left\{\frac{3\alpha' C_T}{8}, \frac{1}{4\max\{t, 1\}}\right\} (|x_h| + 1).
    \end{align*}
    
    Finally, for $1\leq p<\infty$, we obtain the lower bound of $L^p_v$ norm of $f(t,x,v)$ as
    \begin{align*}
        &\int_{\mathbb{R}^n} \left(e^{\alpha'|v_h|^\beta}f(t,x,v)\right)^p\,dv\\
        &\geq e^{-pt}\int_{\mathbb{R}^n} \left(\int_0^t e^{\alpha'|v_h|^\beta}\mathcal{M}(f)(\tau,x-v(t-\tau),v)\mathbf{1}_{z - v_z(t-\tau)>0}\,d\tau\right)^p\,dv\\
        &\geq e^{-pt}\int_{\mathbb{R}^{n-1}}\int_0^{\frac{z}{4t}} \left(\int_{\frac{1}{2}\min\{t, t_0\}}^{\min\{t, t_0\}} e^{\alpha'|v_h|^\beta}\mathcal{M}(f)(\tau,x-v(t-\tau),v)\mathbf{1}_{v_h\in A}\,d\tau\right)^p\,dv_z dv_h\\
        &\geq e^{-pt}e^{p\epsilon (|x_h| + 1)^{\frac{2\beta}{1+\beta}}}\int_{\mathbb{R}^{n-1}}\int_0^{\frac{z}{4t}} \left(\int_{\frac{1}{2}\min\{t, t_0\}}^{\min\{t, t_0\}} \mathbf{1}_{v_h\in A}\,d\tau\right)^p\,dv_z dv_h\\
        &\geq C(\min\{t, t_0\})^p \frac{z}{4t}e^{-pt}e^{p\epsilon (|x_h| + 1)^{\frac{2\beta}{1+\beta}}}\int_{\mathbb{R}^{n-1}}\mathbf{1}_{v_h\in A}\,dv_h
    \end{align*}
    for $\frac{\min\{t, t_0\}}{2}\leq z\leq \min\{t, t_0\}$, where $A$ is the set given by
    \begin{align*}
        A:=\begin{dcases}
            \left\{v_h:\frac{1}{8\max\{t, 1\}}(|x_h|+1)^{\frac{2}{1+\beta}}\leq |v_h|\leq \frac{1}{4\max\{t, 1\}}(|x_h|+1)^{\frac{2}{1+\beta}}\right\} & 1<\beta\leq 2,\\
            \left\{v_h:\frac{1}{2}\min\left\{\frac{3\alpha' C_T}{8}, \frac{1}{4\max\{t, 1\}}\right\} (|x_h| + 1)\leq |v_h|\leq \min\left\{\frac{3\alpha' C_T}{8}, \frac{1}{4\max\{t, 1\}}\right\} (|x_h| + 1)\right\}& \beta = 1.
        \end{dcases}
    \end{align*}
    The integral of the set is bounded by
    \begin{align*}
        \int_{\mathbb{R}^{n-1}}\mathbf{1}_{v_h\in A}\,dv_h\geq C\left(\min\left\{\frac{3\alpha' C_T}{8}, \frac{1}{4\max\{t, 1\}}\right\} (|x_h| + 1)^{\frac{2}{1+\beta}}\right)^{n-1}.
    \end{align*}
    Due to the exponential factor $e^{\epsilon (|x_h| + 1)^{\frac{2\beta}{1+\beta}}}$, we obtain the exponential ill-posedness \eqref{eq:Lq_xLp_v_res} for any $1\leq q\leq \infty$. If $p=\infty$, since
    \begin{align*}
        \left\|e^{\alpha'|v_h|^\beta}f(t,x,v)\right\|_{L^\infty_v(\mathbb{R}^n)}\geq \frac{\min\{t, t_0\}}{2} e^{-t}e^{\epsilon (|x_h| + 1)^{\frac{2\beta}{1+\beta}}}\mathbf{1}_{\frac{\min\{t, t_0\}}{2}\leq z\leq \min\{t, t_0\}},
    \end{align*}
    we again get \eqref{eq:Lq_xLp_v_res} for $1\leq q\leq \infty$.
\end{proof}

We conclude this section with the proof of Theorem~\ref{thm:main_BGK}.
\begin{proof}[Proof of Theorem~\ref{thm:main_BGK}]
    For given constants, we choose the boundary data $g(t,x_h,v)$ of the form \eqref{eq:boundary_data} satisfying the condition \eqref{eq:m_cond1}; as noted in the introduction of Section~\ref{sec:Macro_quan}, we can rescale $g$ by $\eta g$ for any sufficiently small $\eta>0$ to meet the condition \eqref{3_thm:main_BGK_1_cond}. From Proposition \ref{prop:boundary_int}, we have the boundedness of the boundary data under the exponential weight. To make the weighted norm of the boundary data arbitrarily small, we choose the boundary data as $\epsilon g$ for any $\epsilon>0$ by the homogeneity of the BGK model equation; we refer to the argument right after \eqref{eq:boundary_data}. Using Theorem \ref{thm:BGK_sol}, we construct a unique solution $f$ of the BGK model in $\Omega_{\frac{1}{1+\beta},t_0}$ such that the macroscopic fields of $f$ satisfy \eqref{eq:X_T_A3} pointwise. For any BGK solution $\tilde{f}$ such that $f(t,x,v) = \tilde{f}(t,x,v)$ for $0\leq t\leq t_0$, we have the desired exponential-norm blow-up by Proposition \ref{prop:Ill_posed}.
\end{proof}

\section{Well-posedness theory of the Boltzmann equation}\label{sec:Well_Bol}
In this section, we establish the local well-posedness result for the Boltzmann equation in exponentially weighted spaces. As noted in the introduction, we use the exponential norm with $e^{\alpha|v|^\beta}$, which differs from the one used in the BGK model. This is due to the incompatibility of the exponential weight between the two models. To put it simply, the BGK model suffers from the instability of the macroscopic fields when we use the weight $e^{\alpha |v|^\beta}$, and the Boltzmann equation loses the symmetric properties of the velocity variables when we use $e^{\alpha |v_h|^\beta}$. We will discuss these differences in detail in the Appendix. As a result, it gives a complementary local well-posedness result to the instantaneous blow-up in the BGK model.

We mainly follow and adapt the proof steps of \cite{LPY2024} to our boundary problem. First, we check \textit{a priori} estimate that a solution $f$ of the Boltzmann equation satisfies a $w_{\alpha,\beta,\delta}$-weighted $L^\infty_{x,v}(\mathbb{R}^n_+\times\mathbb{R}^n)$ boundedness locally in time. Using this \textit{a priori} estimate, we construct a unique solution for the boundary data. Finally, having weighted $L^\infty_{x,v}$ estimate in hand, we construct $w_{\alpha,\beta,\delta}$-weighted $L^p_v(\mathbb{R}^n, L^q_x(\mathbb{R}^n_+))$ boundedness.

Same as the BGK model, we consider the vacuum initial data $f_0 = 0$ and the inflow boundary data $g$. We recall that the mild solution of the Boltzmann equation is defined by
\begin{align*}
    f(t,x,v) &= e^{-\int_{t-t_b}^t L(f)(\tau, x-v(t-\tau), v)\,d\tau}g(t-t_b, x-vt_b, v)\mathbf{1}_{v_z\geq \frac{z}{t}}\\
    &\quad + \int_0^t e^{-\int_\tau^t L(f)(s, x-v(t-s), v)\,ds} Q^+(f,f)(\tau, x-v(t-\tau), v)\mathbf{1}_{z-v_z(t-\tau)> 0}\,d\tau.
\end{align*}
Our toy model for the boundary data $g$ is
\begin{align*}
    g(t,x_h,v) = e^{-t}\langle v\rangle^{-m}e^{-\alpha |v|^\beta}\mathbf{1}_{v_z\geq 0}
\end{align*}
for some $m\geq 0$, which is similar to \eqref{eq:boundary_data}. We also recall that our collision kernel satisfies \eqref{eq:B_condition}, so
\begin{align*}
    B(v-v_*,\omega)\leq C|v-v_*|^\kappa |\cos\theta|,\quad -n<\kappa\leq 1.
\end{align*}

The following lemma establishes a key estimate for the gain operator $Q^+(f,f)$ with respect to the weight $w_{\alpha,\beta,\delta}$. This estimate will be used repeatedly in the proof of the following proposition.
\begin{lemma}[Lemmas 5.5 and 5.10 in \cite{LPY2024}]\label{lem:LPY_lem}
    Let $n\geq 2$, $\alpha>0$, $\max\left\{0, \frac{2}{n-1}\kappa, \frac{2}{3}(1+\kappa)\right\}<\beta\leq 2$, and $\delta> n$. Then, for functions $f$ and $h$, we have
    \begin{align*}
        \left\|w_{\alpha,\beta,\delta}Q^+(f,h)(x,v)\right\|_{L^\infty_{x,v}(\mathbb{R}^n_+\times\mathbb{R}^n)}\leq C_{gain,1}\|w_{\alpha,\beta,\delta}f\|_{L^\infty_{x,v}(\mathbb{R}^n_+\times\mathbb{R}^n)}\|w_{\alpha,\beta,\delta}h\|_{L^\infty_{x,v}(\mathbb{R}^n_+\times\mathbb{R}^n)}
    \end{align*}
    for a constant $C_{gain,1}$ depending on $\alpha$, $\beta$, $\delta$, $\kappa$, and the dimension $n$.
    For general $1\leq p,q\leq \infty$, we further assume $\delta>n+1$. If $n=2$, we also impose $1+\kappa\leq \beta$. Then, we have
    \begin{align*}
        \left\|w_{\alpha,\beta,\delta}Q^+(f,f)(x,v)\right\|_{L^p_v(\mathbb{R}^n, L^q_x(\mathbb{R}^n_+))}\leq C_{gain,2}\|w_{\alpha,\beta,\delta}f\|_{L^\infty_{x,v}}\|w_{\alpha,\beta,\delta}f\|_{L^p_v(\mathbb{R}^n, L^q_x(\mathbb{R}^n_+))}
    \end{align*}
    for a constant $C_{gain,2}$ depending on $\alpha$, $\beta$, $\delta$, $\kappa$, and $n$.
\end{lemma}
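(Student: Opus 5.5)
The statement is quoted from \cite{LPY2024} (Lemmas 5.5 and 5.10 there). The plan is therefore to check that the whole-space argument carries over verbatim to $\mathbb{R}^n_+$, not to reprove it from scratch. The key observation is that $Q^+(f,h)(x,v)$ is local in $x$: it involves only $f(x,\cdot)$ and $h(x,\cdot)$ at the same spatial point. The half-space geometry, the boundary, and the cutoff $\mathbf{1}_{z>0}$ play no role. Both inequalities are purely velocity estimates, applied pointwise in $x$ and then integrated or supremized over $x\in\mathbb{R}^n_+$. For instance, one can extend $f,h$ by zero to $\mathbb{R}^n$ and apply the whole-space lemma.

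For the $L^\infty$ bound I would argue as follows. First insert $f(v')\le \|w f\|_\infty w^{-1}(v')$ and $h(v'_*)\le \|w h\|_\infty w^{-1}(v'_*)$. This reduces the claim to showing
\begin{align*}
\sup_v w_{\alpha,\beta,\delta}(v)\int_{\mathbb{R}^n}\int_{\mathbb{S}^{n-1}} |v-v_*|^\kappa|\cos\theta|\, w^{-1}_{\alpha,\beta,\delta}(v')w^{-1}_{\alpha,\beta,\delta}(v'_*)\,d\omega dv_*<\infty.
\end{align*}
Energy conservation gives $|v'|^2+|v'_*|^2=|v|^2+|v_*|^2$. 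For $\beta\le 2$, concavity then yields $|v'|^\beta+|v'_*|^\beta\ge (|v'|^2+|v'_*|^2)^{\beta/2}\ge |v|^\beta$. Hence the exponential weight $e^{\alpha|v|^\beta}$ is absorbed, and only the polynomial factor $\langle v\rangle^\delta\langle v'\rangle^{-\delta}\langle v'_*\rangle^{-\delta}$ remains. This factor has to be controlled using the Carleman-type parametrization of $(v',v'_*)$.

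The main obstacle is exactly this step: the exponential gain must be made to compensate for the growth of $|v-v_*|^\kappa\langle v\rangle^\delta$ when $|v|$ is large. I would split the domain according to whether $\min\{|v'|,|v'_*|\}$ is comparable to $|v|$. If it is, the surplus $|v'|^\beta+|v'_*|^\beta-|v|^\beta\gtrsim |v|^{\beta}$ provides exponential decay. Otherwise, one of $v',v'_*$ is close to $v$, and the remaining integral is over a thin region. That region should have measure of order $|v|^{-(n-1)\beta/2}$ or $|v|^{\beta-2}$ relative to $|v|^\kappa$, which is where the conditions $\beta>\frac{2}{n-1}\kappa$ and $\beta>\frac23(1+\kappa)$ enter. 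The requirement $\delta>n$ makes the $v_*$-integrability of $\langle v_*\rangle^{-\delta}$ available.

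For the $L^p_vL^q_x$ bound, I would keep $h=f$ in $L^\infty$ on one factor and put the $L^q_x$ norm on the other. Minkowski's inequality moves $\|\cdot\|_{L^q_x}$ inside the collision integral. The result is a linear operator in $F(v):=\|w f(\cdot,v)\|_{L^q_x}$ with kernel of the above type. I would then bound its $L^p_v\to L^p_v$ norm by Schur's test. This requires integrability of the kernel in both variables, using the changes of variables $v\mapsto v'$ and $v\mapsto v'_*$; it needs the extra decay $\delta>n+1$ and, when $n=2$, the condition $1+\kappa\le\beta$ to control the Jacobian singularity. Since all of these computations are performed exactly as in \cite{LPY2024}, I would cite them rather than repeat them.
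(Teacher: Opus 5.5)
Your proposal matches the paper's proof. The paper extends $f$ by zero to $\{z<0\}$ and applies Lemmas 5.5 and 5.10 of \cite{LPY2024}, which is exactly your observation that $Q^+$ is local in $x$, so the whole-space estimates transfer verbatim. Your heuristic sketch of the whole-space argument is not needed, and it is loose in places: for $\beta=2$ the surplus $|v'|^2+|v'_*|^2-|v|^2=|v_*|^2$ can vanish even when $|v'|,|v'_*|\sim|v|$, so the decay there comes from the factor $e^{-\alpha|v_*|^2}$ that you discarded rather than from your splitting; since you defer to the citation, this does not affect correctness.
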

\begin{proof}
    The original results in \cite{LPY2024} were established in the whole space setting $\mathbb{R}^n$. To extend these results to the half space, we just extend the function $f$ on $\mathbb{R}^n_+$ by
    \begin{align*}
        \tilde{f}(x,v) = \begin{cases}
            f(x,v) & z\geq 0,\\
            0 & z<0
        \end{cases}
    \end{align*}
    and apply Lemma 5.5 and 5.10 in \cite{LPY2024} for $\tilde{f}$.
\end{proof}

The following proposition establishes the existence and uniqueness of the solution in the $L^\infty_{x,v}(w_{\alpha,\beta,\delta})$ space. We temporarily assume $t_0^* = \infty$ in Theorem \ref{thm:main_Boltzmann} to simplify the statement.
\begin{proposition}\label{prop:boltzmann_existence}
    Let $n\geq 2$, $\alpha>0$, $\max\left\{0, \frac{2}{n-1}\kappa, \frac{2}{3}(1+\kappa)\right\}<\beta\leq 2$, and $\delta> n+1$. If the boundary data on $\partial \mathbb{R}^n_+$ satisfies
    \begin{align*}
        \|w_{\alpha, \beta,\delta} g(t,x_h,v)\|_{L_{t,x_h,v}^\infty([0, \infty)\times \partial\mathbb{R}^n_+\times\mathbb{R}^n)}<\infty,
    \end{align*}
    then there exists a unique solution of the Boltzmann equation $f(t,x,v)$ in $L_{loc, t}^\infty([0,t_0), L^\infty_{x,v}(w_{\alpha, \beta,\delta}))$ which satisfies
    \begin{align*}
        \|w_{\alpha,\beta,\delta}f(t,x,v)\|_{L^\infty_{x,v}(\mathbb{R}^n_+\times\mathbb{R}^n)}\leq \frac{1}{\|w_{\alpha, \beta,\delta}g(t,x_h,v)\|^{-1}_{L^\infty_{t,x_h,v}([0, \infty)\times \partial\mathbb{R}^n_+\times\mathbb{R}^n)} - C_{gain,1}t},
    \end{align*}
    where $C_{gain,1}$ is given in Lemma~\ref{lem:LPY_lem}. Here, $t_0$ is given by
    \begin{align}\label{eq:t_0_def}
        t_0 = \left(C_{gain,1}\|w_{\alpha, \beta,\delta}g(t,x_h,v)\|_{L^\infty_{t,x_h,v}([0, \infty)\times \partial\mathbb{R}^n_+\times\mathbb{R}^n)}\right)^{-1}.
    \end{align}
\end{proposition}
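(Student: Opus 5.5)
We argue by a standard a priori bound plus contraction in $L^\infty_t L^\infty_{x,v}(w_{\alpha,\beta,\delta})$. Write $G:=\|w_{\alpha,\beta,\delta}g\|_{L^\infty_{t,x_h,v}}$. The first step is an a priori estimate. Since $L(f)\geq 0$ for $f\geq 0$, the exponential damping factors in the mild formulation \eqref{eq:Boltzmann_sol_def} are at most $1$. Dropping them and multiplying by $w_{\alpha,\beta,\delta}(v)$, which is constant along characteristics because the velocity is unchanged, gives
\begin{align*}
    w f(t,x,v)\leq G\,\mathbf{1}_{v_z\geq z/t} + \int_0^t w\,Q^+(f,f)(\tau,x-v(t-\tau),v)\,d\tau .
\end{align*}
We then apply the first estimate of Lemma~\ref{lem:LPY_lem} (the zero extension to $z<0$ is harmless). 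Setting $y(t)=\sup_{s\leq t}\|wf(s)\|_{L^\infty_{x,v}}$, this yields $y(t)\leq G+C_{gain,1}\int_0^t y(\tau)^2\,d\tau$. Comparison with the Riccati ODE $Y'=C_{gain,1}Y^2$, $Y(0)=G$, gives $y(t)\leq (G^{-1}-C_{gain,1}t)^{-1}$ for $t<t_0$, with $t_0$ as in \eqref{eq:t_0_def}.

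The second step constructs the solution. A Picard iteration on the full mild formula is awkward because the loss term sits in the exponent. Instead, we use the monotone (Kaniel--Shinbrot type) scheme or the form \eqref{eq:mild_def_2}. Concretely, on a short interval $[0,t_1]$ we fix the ball $\mathcal{B}=\{f\geq 0: \sup_{t\leq t_1}\|wf\|_{L^\infty}\leq 2G\}$ and define
\begin{align*}
    \Phi(f) := e^{-\int L(f)}g\,\mathbf{1} + \int_0^t e^{-\int_\tau^t L(f)}Q^+(f,f)\,d\tau .
\end{align*}
Lemma~\ref{lem:mild_sol_equiv} guarantees that either mild form may be used.

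For the contraction we need two ingredients. The first is the bilinear bound for $Q^+(f,h)$ from Lemma~\ref{lem:LPY_lem}, applied to $Q^+(f_1,f_1)-Q^+(f_2,f_2)=Q^+(f_1-f_2,f_1)+Q^+(f_2,f_1-f_2)$. The second is a bound $|L(f)(v)|\leq C\langle v\rangle^{\kappa}\|wf\|_{L^\infty}$, valid since $\delta>n$ and $\kappa>-n$. Combined with $|e^{-a}-e^{-b}|\leq|a-b|$, this gives
\begin{align*}
    \|w(\Phi f_1-\Phi f_2)\|\leq Ct_1\,G\,\langle v\rangle^{\kappa_+}\|w(f_1-f_2)\| .
\end{align*}

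The main obstacle is exactly this growth $\langle v\rangle^{\kappa}$ for $0<\kappa\leq 1$, which breaks a naive contraction in the pure $w$-weighted norm. We handle it by absorbing the growth. Rather than comparing $e^{-\int L(f_1)}$ with $e^{-\int L(f_2)}$ directly, we keep one factor $e^{-\int L(f_i)}$ and use a lower bound on $L(f)$ where available. Failing that, we measure the difference in the slightly weaker weight $w_{\alpha,\beta,\delta-\kappa_+}$, which still controls $L$ since $\delta-\kappa>n$ because $\delta>n+1$. Uniqueness in the stronger space then follows. We then extend by continuation: the a priori bound of the first step keeps $\|wf\|$ finite up to any $t<t_0$, so restarting the contraction from each time reaches every $t<t_0$. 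The extension must use the boundary data as the only source, since the data at the restart time also enter as initial data in \eqref{eq:mild_def}. Finally, uniqueness in $L^\infty_{loc,t}$ follows from the same difference estimate together with Gronwall's inequality.
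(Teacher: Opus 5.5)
Your proposal is correct and follows essentially the paper's route. The shared ingredients are: the a priori Riccati bound from $e^{-\int L}\le 1$ and the first gain estimate of Lemma~\ref{lem:LPY_lem}; a contraction measured in the weaker weight $w_{\alpha,\beta,\delta-\kappa}$ when $\kappa>0$ (and in $w_{\alpha,\beta,\delta}$ when $\kappa\le 0$), so that the $\langle v\rangle^{\kappa}$ loss from $L(f_1)-L(f_2)$ is absorbed by the strong-weight bounds on $g$ and $Q^+(f_1,f_1)$; Gr\"onwall for uniqueness; and continuation restarting from $f(T')$ as initial data. The only cosmetic difference is that the paper shows the exact Riccati set $\mathcal{A}$ is invariant under $\mathcal{T}$ instead of working in a $2G$-ball plus continuation. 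The detours you mention are not needed and should be dropped: the Kaniel--Shinbrot scheme, the form \eqref{eq:mild_def_2}, and a lower bound on $L(f)$. In particular, a Picard map built on \eqref{eq:mild_def_2} would lose both positivity and the damping bound $e^{-\int L}\le 1$ on which your first step relies.
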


\begin{proof}
    Define a solution map $\mathcal{T}$ by
    \begin{align*}
        \mathcal{T}f(t,x,v) &= e^{-\int_{t-t_b}^t L(f)(\tau, x-v(t-\tau), v)\,d\tau}g(t-t_b, x-vt_b, v)\mathbf{1}_{v_z\geq \frac{z}{t}}\\
        &\quad + \int_0^t e^{-\int_\tau^t L(f)(s, x-v(t-s), v)\,ds} Q^+(f,f)(\tau, x-v(t-\tau), v)\mathbf{1}_{z-v_z(t-\tau)> 0}\,d\tau.
    \end{align*}
    We claim that if
    \begin{align}\label{eq:apriori_esti}
        \|w_{\alpha,\beta,\delta}f(t)\|_{L^\infty_{x,v}(\mathbb{R}^n_+\times\mathbb{R}^n)}\leq \frac{1}{\|w_{\alpha, \beta,\delta} g(t,x_h,v)\|^{-1}_{L^\infty_{t,x_h,v}([0,\infty)\times \partial\mathbb{R}^n_+\times\mathbb{R}^n)} - C_{gain,1}t},
    \end{align}
    then $\|w_{\alpha,\beta,\delta}\mathcal{T}f(t)\|_{L^\infty_{x,v}(\mathbb{R}^n_+\times\mathbb{R}^n)}$ also follows the same bound along time, and so the solution map
    \begin{align*}
        \mathcal{T}:\mathcal{A}\rightarrow \mathcal{A}
    \end{align*}
    is well-defined, where $\mathcal{A}$ is the collection of functions in $L^\infty_{loc, t}([0, t_0), L^\infty_{x,v}(w_{\alpha,\beta,\delta}))$ satisfying \eqref{eq:apriori_esti} and $f\geq 0$. Indeed, if $f\in \mathcal{A}$ for some $T'>0$, by Lemma~\ref{lem:LPY_lem} and \ref{lem:trace}, we have
    
    \begin{align*}
        \|w_{\alpha, \beta,\delta}\mathcal{T}f(t,x,v)\|_{L^\infty_{x,v}(\mathbb{R}^n_+\times\mathbb{R}^n)}&\leq \|w_{\alpha, \beta,\delta}g(t-t_b, x-vt_b, v)\mathbf{1}_{v_z\geq \frac{z}{t}}\|_{L^\infty_{x,v}(\mathbb{R}^n_+\times\mathbb{R}^n)}\\
        &\quad + \int_0^t \|w_{\alpha, \beta,\delta} Q^+(f,f)(\tau, x-v(t-\tau), v)\mathbf{1}_{z-v_z(t-\tau)> 0}\|_{L^\infty_{x,v}(\mathbb{R}^n_+\times\mathbb{R}^n)}\,d\tau\\
        &\leq \|w_{\alpha, \beta,\delta} g(t,x_h,v)\|_{L^\infty_{t,x_h,v}([0,\infty)\times \partial\mathbb{R}^n_+\times\mathbb{R}^n)} + C_{gain,1}\int_0^t \|w_{\alpha, \beta,\delta} f(\tau,x,v)\|^2_{L^\infty_{x,v}(\mathbb{R}^n_+\times\mathbb{R}^n)}\,d\tau
    \end{align*}
    for each $0\leq t\leq T'$. Therefore, if $f$ satisfies \eqref{eq:apriori_esti}, then
    \begin{align*}
        \|w_{\alpha, \beta,\delta}\mathcal{T}f(t,x,v)\|_{L^\infty_{x,v}(\mathbb{R}^n_+\times\mathbb{R}^n)}&\leq \|w_{\alpha, \beta,\delta} g(t,x_h,v)\|_{L^\infty_{t,x_h,v}([0,\infty)\times \partial\mathbb{R}^n_+\times\mathbb{R}^n)}\\
        &\quad + C_{gain,1}\int_0^t \frac{1}{(\|w_{\alpha, \beta,\delta} g(t,x_h,v)\|^{-1}_{L^\infty_{t,x_h,v}([0,\infty)\times \partial\mathbb{R}^n_+\times\mathbb{R}^n)} - C_{gain,1}\tau)^2}\,d\tau\\
        &=\frac{1}{\|w_{\alpha, \beta,\delta} g(t,x_h,v)\|^{-1}_{L^\infty_{t,x_h,v}([0,\infty)\times \partial\mathbb{R}^n_+\times\mathbb{R}^n)} - C_{gain,1}t}.
    \end{align*}
    
    With this \textit{a priori} estimate, we can prove the existence of a fixed point of $\mathcal{T}$ in $\mathcal{A}$ satisfying \eqref{eq:apriori_esti} for small enough $T_1$ by checking the contraction property of $\mathcal{T}$. We estimate difference $|\mathcal{T}f_1-\mathcal{T}f_2|$ as follows:
    \begin{align}
        &|\mathcal{T}f_1-\mathcal{T}f_2|(t,x,v)\notag\\
        &\leq \left|e^{-\int_{t-t_b}^t L[f_1](\tau, x-v(t-\tau), v)\,d\tau} - e^{-\int_{t-t_b}^t L[f_2](\tau, x-v(t-\tau), v)\,d\tau}\right|g(t-t_b, x-vt_b, v)\mathbf{1}_{v_z\geq \frac{z}{t}}\label{eq:Boltzmann_contraction_1}\\
        &\quad + \int_0^t \left|e^{-\int_\tau^t L[f_1](s, x-v(t-s), v)\,ds} - e^{-\int_\tau^t L[f_2](s, x-v(t-s), v)\,ds}\right| Q^+(f_1,f_1)(\tau, x-v(t-\tau), v)\mathbf{1}_{z-v_z(t-\tau)> 0}\,d\tau\label{eq:Boltzmann_contraction_2}\\
        &\quad + \int_0^t e^{-\int_\tau^t L[f_2](s, x-v(t-s), v)\,ds} \left|Q^+(f_1,f_1) - Q^+(f_2,f_2)\right|(\tau, x-v(t-\tau), v)\mathbf{1}_{z-v_z(t-\tau)> 0}\,d\tau\label{eq:Boltzmann_contraction_3}.
    \end{align}
    We split the proof into the cases $\kappa>0$ and $\kappa\leq 0$.\\
    
    \noindent \textit{(i) The case $\kappa>0$.}
    When $\kappa>0$, we multiply $w_{\alpha,\beta,\delta-\kappa}(v)$ on the both sides. Since $|e^{-x}-e^{-y}|\leq |x-y|$ for $x,y\geq 0$, we have
    \begin{align*}
        &w_{\alpha,\beta,\delta-\kappa}(v)\mathbf{1}_{z-v_z(t-\tau)> 0}\left|\left(e^{-\int_\tau^t L[f_1](s, x-v(t-s), v)\,ds} - e^{-\int_\tau^t L[f_2](s, x-v(t-s), v)\,ds}\right)\right|\\
        &\leq w_{\alpha,\beta,\delta-\kappa}(v)\mathbf{1}_{z-v_z(t-\tau)> 0}\int_\tau^t \left|L[f_1]-L[f_2]\right|(s, x-v(t-s), v)\,ds\\
        &\leq w_{\alpha,\beta,\delta-\kappa}(v)(1+|v|^2)^{\kappa/2}\int_\tau^t \iint_{\mathbb{S}^{n-1}\times\mathbb{R}^n}(1+|v_*|^2)^{\kappa/2}b(\cos\theta)(w_{\alpha,\beta,\delta-\kappa}(v_*))^{-1}\|w_{\alpha,\beta,\delta-\kappa}(f_1-f_2)(s)\|_{L^\infty_{x,v}}\,dv_*d\omega ds\\
        &\leq C w_{\alpha,\beta,\delta}(v)\int_0^t \left\|w_{\alpha,\beta,\delta-\kappa}(f_1-f_2)(s)\right\|_{L^\infty_{x,v}}\,ds
    \end{align*}
    for $0\leq \tau\leq t$. In the middle, we used
    \begin{align}\label{eq:L_bound}
        \begin{split}
            \left|L[f_1]-L[f_2]\right|(v) &\leq C\int_{\mathbb{R}^n\times\mathbb{S}^{n-1}}|v-v_*|^\kappa |\cos\theta| |f_1-f_2|(v_*)\,dv_* d\omega\\
            &\leq C\int_{\mathbb{R}^n\times\mathbb{S}^{n-1}}(1+|v|^2)^{\kappa/2}(1+|v_*|^2)^{\kappa/2} |\cos\theta| |f_1-f_2|(v_*)\,dv_* d\omega\\
            &\leq C(1+|v|^2)^{\kappa/2} \|w_{\alpha,\beta,\delta-\kappa}(f_1-f_2)\|_{L^\infty_v}\int_{\mathbb{R}^n\times\mathbb{S}^{n-1}} |\cos\theta|(1+|v_*|^2)^{\kappa-\delta/2}e^{-\alpha |v_*|^\beta}\,dv_*d\omega\\
            &\leq C(1+|v|^2)^{\kappa/2} \|w_{\alpha,\beta,\delta-\kappa}(f_1-f_2)\|_{L^\infty_v}.
        \end{split}
    \end{align}
    Thus, combining this bound with Lemma~\ref{lem:trace}, we obtain
    \begin{align}\label{eq:diff_esti2}
        \begin{split}
            &w_{\alpha,\beta,\delta-\kappa}(v)\left|e^{-\int_{t-t_b}^t L[f_1](\tau, x-v(t-\tau), v)\,d\tau} - e^{-\int_{t-t_b}^t L[f_2](\tau, x-v(t-\tau), v)\,d\tau}\right| g(t-t_b, x-vt_b, v)\mathbf{1}_{v_z\geq \frac{z}{t}}\\
            &\leq C \|w_{\alpha, \beta,\delta} g(t,x_h,v)\|_{L^\infty_{t,x_h,v}([0,\infty)\times \partial\mathbb{R}^n_+\times\mathbb{R}^n)}\int_0^t \left\|w_{\alpha,\beta,\delta-\kappa}(f_1-f_2)(\tau)\right\|_{L^\infty_{x,v}}\,d\tau
        \end{split}
    \end{align}
    for \eqref{eq:Boltzmann_contraction_1}. For \eqref{eq:Boltzmann_contraction_2}, we use Lemma~\ref{lem:LPY_lem} to obtain
    \begin{align}\label{eq:diff_esti3}
        \begin{split}
            &w_{\alpha,\beta,\delta-\kappa}(v)\int_0^t \left|e^{-\int_\tau^t L[f_1](s, x-v(t-s), v)\,ds} - e^{-\int_\tau^t L[f_2](s, x-v(t-s), v)\,ds}\right| Q^+(f_1,f_1)(\tau, x-v(t-\tau), v)\mathbf{1}_{z-v_z(t-\tau)> 0}\,d\tau\\
            &\leq C\int_0^t w_{\alpha,\beta,\delta}(v) Q^+(f_1,f_1)(\tau, x-v(t-\tau), v)\mathbf{1}_{z-v_z(t-\tau)> 0}\int_\tau^t \left\|w_{\alpha,\beta,\delta-\kappa}(f_1-f_2)(s)\right\|_{L^\infty_{x,v}}\,ds \,d\tau\\
            &\leq C\int_0^t \|w_{\alpha,\beta,\delta} f_1(\tau)\|^2_{L^\infty_{x,v}}\int_\tau^t \left\|w_{\alpha,\beta,\delta-\kappa}(f_1-f_2)(s)\right\|_{L^\infty_{x,v}}\,ds \,d\tau\\
            &\leq C\sup_{0\leq \tau\leq t}\|w_{\alpha,\beta,\delta} f_1(\tau)\|^2_{L^\infty_{x,v}}\int_0^t \left\|w_{\alpha,\beta,\delta-\kappa}(f_1-f_2)(s)\right\|_{L^\infty_{x,v}}\,ds
        \end{split}
    \end{align}
    for $t\leq 1$. Finally, we decompose the gain term $Q^+$ and apply Lemma~\ref{lem:LPY_lem} to bound \eqref{eq:Boltzmann_contraction_3} by
    \begin{align}\label{eq:diff_esti4}
        \begin{split}
            &w_{\alpha,\beta,\delta-\kappa}(v)\int_0^t e^{-\int_\tau^t L[f_2](s, x-v(t-s), v)\,ds} \left|Q^+(f_1,f_1) - Q^+(f_2,f_2)\right|(\tau, x-v(t-\tau), v)\mathbf{1}_{z-v_z(t-\tau)> 0}\,d\tau\\
            &\leq w_{\alpha,\beta,\delta-\kappa}(v)\int_0^t \left(\left|Q^+(f_1-f_2,f_1)\right| + \left|Q^+(f_2,f_2-f_1)\right|\right)(\tau, x-v(t-\tau), v)\mathbf{1}_{z-v_z(t-\tau)> 0}\,d\tau\\
            &\leq C\int_0^t (\|w_{\alpha,\beta,\delta} f_1(\tau)\|_{L^\infty_{x,v}} + \|w_{\alpha,\beta,\delta} f_2(\tau)\|_{L^\infty_{x,v}})\|w_{\alpha,\beta,\delta-\kappa} (f_1-f_2)(\tau)\|_{L^\infty_{x,v}}\,d\tau.
        \end{split}
    \end{align}

    Combining \eqref{eq:diff_esti2}, \eqref{eq:diff_esti3}, and \eqref{eq:diff_esti4}, we get
    \begin{align}\label{eq:contract_kappa>0}
        \begin{split}
            &\|w_{\alpha,\beta, \delta-\kappa}(\mathcal{T}f_1-\mathcal{T}f_2)(t)\|_{L^\infty_{x,v}(\mathbb{R}^n_+\times\mathbb{R}^n)}\\
            &\leq C\left[\|w_{\alpha, \beta,\delta} g(t,x_h,v)\|_{L^\infty_{t,x_h,v}} + \esssup_{0\leq \tau\leq t}\left(\|w_{\alpha,\beta,\delta} f_1(\tau)\|_{L^\infty_{x,v}}^2 + \|w_{\alpha,\beta,\delta}f_1(\tau)\|_{L^\infty_{x,v}} + \|w_{\alpha,\beta,\delta}f_2(\tau)\|_{L^\infty_{x,v}}\right)\right]\\
            &\qquad \times \int_0^t \|w_{\alpha,\beta,\delta-\kappa} (f_1-f_2)(\tau)\|_{L^\infty_{x,v}}\,d\tau
        \end{split}
    \end{align}
    for $t\leq 1$.
    
    Now, we construct a function sequence $f^i = \mathcal{T}f^{i-1}$ with $f^0(t,x,v) = 0$. Then, $f^i$ satisfies \eqref{eq:apriori_esti} and $f^i\geq 0$ for each $i\in\mathbb{N}\cup\{0\}$. Therefore, setting $f_1 = f^i$ and $f_2 = f^{i-1}$ in \eqref{eq:contract_kappa>0}, we get
    \begin{align*}
        &\|w_{\alpha,\beta, \delta-\kappa}(f^{i+1}-f^i)(t)\|_{L^\infty_{x,v}}\\
        &\leq C\left[\|w_{\alpha, \beta,\delta} g(t,x_h,v)\|_{L^\infty_{t,x_h,v}} + \esssup_{0\leq \tau\leq t}\left(\|w_{\alpha,\beta,\delta} f^i(\tau)\|_{L^\infty_{x,v}}^2 + \|w_{\alpha,\beta,\delta}f^i(\tau)\|_{L^\infty_{x,v}} + \|w_{\alpha,\beta,\delta}f^{i-1}(\tau)\|_{L^\infty_{x,v}}\right)\right]\\
        &\qquad \times \int_0^t \|w_{\alpha,\beta,\delta-\kappa} (f^i-f^{i-1})(\tau)\|_{L^\infty_{x,v}}\,d\tau.
    \end{align*}
    Choosing small enough $T_1\leq 1$, we have
    \begin{align*}
        \|w_{\alpha,\beta, \delta-\kappa}(f^{i+1}-f^i)(t)\|_{L^\infty_{x,v}(\mathbb{R}^n_+\times\mathbb{R}^n)}\leq \frac{1}{2}\sup_{0\leq \tau\leq T_1}\|w_{\alpha,\beta, \delta-\kappa} (f^i-f^{i-1})(\tau)\|_{L^\infty_{x,v}(\mathbb{R}^n_+\times\mathbb{R}^n)}
    \end{align*}
    for $0\leq t\leq T_1$. Therefore, there exists a fixed point $f(t,x,v)$ of $\mathcal{T}$ in $L^\infty_t([0, T_1], L^\infty_{x,v}(w_{\alpha,\beta,\delta-\kappa}))$, which is a solution of the Boltzmann equation. Since $w_{\alpha,\beta,\delta}f^i(t,x,v)\rightarrow w_{\alpha,\beta,\delta}f(t,x,v)$ in a.e. $(t,x,v)$, and $f^i\in \mathcal{A}$ for all $i$, we conclude that $f\in L^\infty_t([0, T_1], L^\infty_{x,v}(w_{\alpha,\beta,\delta}))$ satisfying \eqref{eq:apriori_esti} and $f\geq 0$.
    
    To prove uniqueness of the solution in $L^\infty_t([0, T_1], L^\infty_{x,v}(w_{\alpha,\beta,\delta}))$, suppose there is another solution $h(t,x,v)$ in $L^\infty_t([0, T_1], L^\infty_{x,v}(w_{\alpha,\beta,\delta}))$. Plugging $f_1 = f$ and $f_2 = h$ into \eqref{eq:contract_kappa>0}, we obtain
    \begin{align*}
        &\|w_{\alpha,\beta, \delta-\kappa}(f-h)(t)\|_{L^\infty_{x,v}}\\
        &\leq C\left[\|w_{\alpha, \beta,\delta} g(t,x_h,v)\|_{L^\infty_{t,x_h,v}} + \esssup_{0\leq \tau\leq t}\left(\|w_{\alpha,\beta,\delta} f(\tau)\|_{L^\infty_{x,v}}^2 + \|w_{\alpha,\beta,\delta}f(\tau)\|_{L^\infty_{x,v}} + \|w_{\alpha,\beta,\delta}h(\tau)\|_{L^\infty_{x,v}}\right)\right]\\
        &\qquad \times \int_0^t \|w_{\alpha,\beta,\delta-\kappa} (f-h)(\tau)\|_{L^\infty_{x,v}}\,d\tau.
    \end{align*}
    for a.e. $t$. By Gr{\"o}nwall's inequality, we have $f=h$ for a.e. $(t,x,v)$, so in $L^\infty_t([0, T_1], L^\infty_{x,v}(w_{\alpha,\beta,\delta}))$.

    To extend the solution in time, assume we have constructed the solution $f(t,x,v)$ for $t\in [0, T']$ and $\|w_{\alpha,\beta, \delta}f(T', x, v)\|_{L^\infty_{x,v}}<\infty$. Treating $f(T', x, v)$ as initial data, we can extend the solution map starting from $T'$ by
    \begin{align*}
        \mathcal{T}f(t+T',x,v) &= e^{-\int_{t-t_b}^t L(f)(\tau+T', x-v(t-\tau), v)\,d\tau}g(t+T'-t_b, x-vt_b, v)\mathbf{1}_{v_z\geq \frac{z}{t}} \\
        &\quad + e^{-\int_0^t L(f)(\tau+T', x-v(t-\tau), v)\,d\tau}f(T', x-vt, v)\mathbf{1}_{v_z<\frac{z}{t}}\\
        &\quad + \int_0^t e^{-\int_\tau^t L(f)(s+T', x-v(t-s), v)\,ds} Q^+(f,f)(\tau+T', x-v(t-\tau), v)\mathbf{1}_{z-v_z(t-\tau)> 0}\,d\tau.
    \end{align*}
    We can repeat the same computations as above and obtain
    \begin{align*}
        \begin{split}
            &\|w_{\alpha,\beta, \delta-\kappa}(\mathcal{T}f_1-\mathcal{T}f_2)(t+T')\|_{L^\infty_{x,v}}\\
            &\leq C\bigg[\|w_{\alpha, \beta,\delta} g(t,x_h,v)\|_{L^\infty_{t,x_h,v}} + \|w_{\alpha, \beta,\delta} f(T',x,v)\|_{L^\infty_{x,v}} \\
            &\quad + \esssup_{0\leq \tau\leq t}\left(\|w_{\alpha,\beta,\delta} f_1(\tau+T')\|_{L^\infty_{x,v}}^2 + \|w_{\alpha,\beta,\delta}f_1(\tau+T')\|_{L^\infty_{x,v}} + \|w_{\alpha,\beta,\delta}f_2(\tau+T')\|_{L^\infty_{x,v}}\right)\bigg]\\
            &\qquad \times \int_0^t \|w_{\alpha,\beta,\delta-\kappa} (f_1-f_2)(\tau+T')\|_{L^\infty_{x,v}}\,d\tau.
        \end{split}
    \end{align*}
    Therefore, as long as the right-hand side of \eqref{eq:apriori_esti} is finite, we can extend the time domain for the solution with uniqueness. It concludes the existence and uniqueness of the solution $f(t,x,v)$ in $L^\infty_{loc, t}([0,t_0), L^\infty_{x,v}(w_{\alpha, \beta,\delta}))$ satisfying \eqref{eq:apriori_esti}.\\

    \noindent \textit{(ii) The case $\kappa\leq 0$.} In this case, we use the original weight $w_{\alpha,\beta,\delta}(v)$. The only problematic part in the previous case was to bound the difference term $|L[f_1]-L[f_2]|$, which generated $(1+|v|)^\kappa$ term from the collision kernel $B$ as in \eqref{eq:L_bound}. If $\kappa\leq 0$, however, it does not produce the $(1+|v|)^\kappa$ factor. Indeed, we write
    \begin{align*}
        \left|L[f_1]-L[f_2]\right|(v)&\leq \int_{\mathbb{S}^{n-1}\times \mathbb{R}^n} |v-v_*|^\kappa b(\cos\theta) |f_1-f_2|(v_*)\,d\omega dv_*\\
        &\leq C\|w_{\alpha,\beta,\delta}(f_1-f_2)\|_{L^\infty_v}\int_{\mathbb{R}^n} |v-v_*|^\kappa (w_{\alpha,\beta,\delta})^{-1}(v_*)\,dv_*.
    \end{align*}
    Since $-n<\kappa\leq 0$, by the Hardy-Littlewood inequality, the last integral is bounded by
    \begin{align*}
        \int_{\mathbb{R}^n} |v-v_*|^\kappa (w_{\alpha,\beta,\delta})^{-1}(v_*)\,dv_*\leq \int_{\mathbb{R}^n} |v_*|^\kappa (w_{\alpha,\beta,\delta})^{-1}(v_*)\,dv_*<\infty.
    \end{align*}
    Therefore, we can modify \eqref{eq:diff_esti2} by
    \begin{align*}
        &w_{\alpha,\beta,\delta}(v)\left|e^{-\int_{t-t_b}^t L[f_1](\tau, x-v(t-\tau), v)\,d\tau} - e^{-\int_{t-t_b}^t L[f_2](\tau, x-v(t-\tau), v)\,d\tau}\right| g(t-t_b, x-vt_b, v)\mathbf{1}_{v_z\geq \frac{z}{t}}\\
        &\leq C \|w_{\alpha, \beta,\delta} g(t,x_h,v)\|_{L^\infty_{t,x_h,v}([0,\infty)\times \partial\mathbb{R}^n_+\times\mathbb{R}^n)}\int_0^t \left\|w_{\alpha,\beta,\delta}(f_1-f_2)(\tau)\right\|_{L^\infty_{x,v}}\,d\tau.
    \end{align*}
    For \eqref{eq:diff_esti3} and \eqref{eq:diff_esti4}, we use a similar computation and get
    \begin{align*}
        &w_{\alpha,\beta,\delta}(v)\int_0^t \left|e^{-\int_\tau^t L[f_1](s, x-v(t-s), v)\,ds} - e^{-\int_\tau^t L[f_2](s, x-v(t-s), v)\,ds}\right| Q^+(f_1,f_1)(\tau, x-v(t-\tau), v)\mathbf{1}_{z-v_z(t-\tau)> 0}\,d\tau\\
        &\leq C\int_0^t \|w_{\alpha,\beta,\delta} f_1(\tau)\|^2_{L^\infty_{x,v}}\int_\tau^t \left\|w_{\alpha,\beta,\delta}(f_1-f_2)(s)\right\|_{L^\infty_{x,v}}\,ds \,d\tau\\
        &\leq C\sup_{0\leq\tau\leq t}\|w_{\alpha,\beta,\delta} f_1(\tau)\|^2_{L^\infty_{x,v}}\int_0^t \left\|w_{\alpha,\beta,\delta}(f_1-f_2)(s)\right\|_{L^\infty_{x,v}}\,ds,
        \intertext{and}
        &w_{\alpha,\beta,\delta}(v)\int_0^t e^{-\int_\tau^t L[f_2](s, x-v(t-s), v)\,ds} \left|Q^+(f_1,f_1) - Q^+(f_2,f_2)\right|(\tau, x-v(t-\tau), v)\mathbf{1}_{z-v_z(t-\tau)> 0}\,d\tau\\
        &\leq C\int_0^t (\|w_{\alpha,\beta,\delta} f_1(\tau)\|_{L^\infty_{x,v}} + \|w_{\alpha,\beta,\delta} f_2(\tau)\|_{L^\infty_{x,v}})\|w_{\alpha,\beta,\delta} (f_1-f_2)(\tau)\|_{L^\infty_{x,v}}\,d\tau
    \end{align*}
    for $-n<\kappa\leq 0$ and $t\leq 1$. Collecting all the bounds, we have
    \begin{align}\label{eq:contract_kappa<=0}
        \begin{split}
            &\|w_{\alpha,\beta, \delta}(\mathcal{T}f_1-\mathcal{T}f_2)(t)\|_{L^\infty_{x,v}}\\
            &\leq C\left[\|w_{\alpha, \beta,\delta} g(t,x_h,v)\|_{L^\infty_{t,x_h,v}} + \esssup_{0\leq \tau\leq t}\left(\|w_{\alpha,\beta,\delta} f_1(\tau)\|_{L^\infty_{x,v}}^2 + \|w_{\alpha,\beta,\delta}f_1(\tau)\|_{L^\infty_{x,v}} + \|w_{\alpha,\beta,\delta}f_2(\tau)\|_{L^\infty_{x,v}}\right)\right]\\
            &\qquad \times \int_0^t \|w_{\alpha,\beta,\delta} (f_1-f_2)(\tau)\|_{L^\infty_{x,v}}\,d\tau.
        \end{split}
    \end{align}
    Starting from \eqref{eq:contract_kappa<=0}, we follow the remaining steps for $\kappa>0$ and establish the existence and uniqueness of the solution satisfying \eqref{eq:apriori_esti}.
\end{proof}

\begin{remark}
    In the proof, we can check that $L(f)(t,x,v)$ and $Q^+(f,f)(t,x,v)$ are in $L^1_{loc}(t,x,v)$ if $f(t,x,v)\in L^\infty_{loc,t}([0, t_0), L^\infty_{x,v}(w_{\alpha,\beta,\delta}))$ as a byproduct. Therefore, by Lemma~\ref{lem:mild_sol_equiv}, the mild solution $f$ also satisfies
    \begin{align*}
        f(t,x,v) = g(t-t_b, x-vt_b, v)\mathbf{1}_{v_z\geq \frac{z}{t}} + \int_0^t Q(f,f)(\tau, x-v(t-\tau), v)\mathbf{1}_{z-v_z(t-\tau)> 0}\,d\tau,
    \end{align*}
    which is another presentation of a mild solution.
\end{remark}
\begin{remark}
    We can slightly modify the proof and get the result that there exists a unique solution $f(t,x,v)$ as long as $\|w_{\alpha,\beta,\delta}f(t,x,v)\|_{L^\infty_{x,v}}<\infty$ by repeatedly extending the time domain of the solution.
\end{remark}

Having the existence and uniqueness in $L^\infty_{loc,t}([0, t_0), L^\infty_{x,v}(w_{\alpha,\beta,\delta}))$, we establish the local-in-time boundedness in $L^\infty_{loc,t}([0, t_0), L^p_vL^q_x(w_{\alpha,\beta,\delta}))$ space.
\begin{proposition}\label{prop:boltzmann_wellposed}
    Assume $g$ satisfies
    \begin{align*}
        \|v_z^{1/q}w_{\alpha,\beta,\delta}g(t,x_h,v)\|_{L^p_v(\mathbb{R}^n, L^q_{t,x_h}([0, t_0)\times \partial \mathbb{R}^n_+))}<\infty
    \end{align*}
    for $\alpha>0$, $\max\left\{0, \frac{2}{n-1}\kappa, \frac{2}{3}(1+\kappa)\right\}<\beta\leq 2$, $\delta>n+1$, and $t_0$ defined by \eqref{eq:t_0_def}. If $n=2$, we further impose a condition $1+\kappa\leq \beta$. Then, the solution of the Boltzmann equation $f(t,x,v)$ for $t\in [0, t_0)$ constructed above satisfies
    \begin{align*}
        \|w_{\alpha,\beta,\delta} f(t)\|_{L^p_v(\mathbb{R}^n, L^q_x(\mathbb{R}^n_+))}\leq \|v_z^{1/q} w_{\alpha,\beta,\delta} g\|_{L^p_v(\mathbb{R}^n, L^q_{t,x_h}([0, t_0)\times \partial \mathbb{R}^n_+))}\exp\left(C_{gain,2}\int_0^t \|w_{\alpha,\beta,\delta} f(\tau)\|_{L^\infty_{x,v}(\mathbb{R}^n\times\mathbb{R}^n_+)}\,d\tau\right)
    \end{align*}
    for $0\leq t<t_0$. Here, $C_{gain, 2}$ is from Lemma~\ref{lem:LPY_lem}.
\end{proposition}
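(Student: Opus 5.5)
We bound the weighted mixed norm of both terms in the mild formulation \eqref{eq:Boltzmann_sol_def} and then apply Gr\"onwall's inequality. Since $L(f)\geq 0$, both exponential damping factors are at most $1$. It therefore suffices to control
\begin{align*}
    w_{\alpha,\beta,\delta}f(t,x,v)\leq w_{\alpha,\beta,\delta}g(t-t_b,x-vt_b,v)\mathbf{1}_{v_z\geq \frac{z}{t}} + \int_0^t w_{\alpha,\beta,\delta}Q^+(f,f)(\tau,x-v(t-\tau),v)\mathbf{1}_{z-v_z(t-\tau)>0}\,d\tau .
\end{align*}
We take the $L^p_v(\mathbb{R}^n, L^q_x(\mathbb{R}^n_+))$ norm of the right-hand side.

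\emph{Boundary term.} By Lemma~\ref{lem:trace}, its norm equals $\|v_z^{1/q}w_{\alpha,\beta,\delta}g\|_{L^p_v L^q_{\tau,x_h}([0,t]\times\partial\mathbb{R}^n_+)}$. This is bounded by the same quantity over $[0,t_0)$, which is the constant prefactor in the claimed estimate.

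\emph{Gain term.} We apply Minkowski's integral inequality to move the $\tau$-integral outside the norm. For fixed $v$ and $\tau$, the map $x\mapsto x-v(t-\tau)$ is a translation. After extending $Q^+(f,f)(\tau,\cdot,v)$ by zero for $z<0$, the cutoff $\mathbf{1}_{z-v_z(t-\tau)>0}$ only restricts to the image of $\mathbb{R}^n_+$. Hence the inner $L^q_x$ norm is at most $\|w_{\alpha,\beta,\delta}Q^+(f,f)(\tau,\cdot,v)\|_{L^q_x(\mathbb{R}^n_+)}$. Taking the $L^p_v$ norm and applying the second estimate of Lemma~\ref{lem:LPY_lem} (valid under $\delta>n+1$, and under $1+\kappa\leq\beta$ when $n=2$) gives
\begin{align*}
    \|w_{\alpha,\beta,\delta}f(t)\|_{L^p_vL^q_x}\leq \|v_z^{1/q}w_{\alpha,\beta,\delta}g\|_{L^p_vL^q_{t,x_h}} + C_{gain,2}\int_0^t \|w_{\alpha,\beta,\delta}f(\tau)\|_{L^\infty_{x,v}}\|w_{\alpha,\beta,\delta}f(\tau)\|_{L^p_vL^q_x}\,d\tau .
\end{align*}
Proposition~\ref{prop:boltzmann_existence} makes $\|w_{\alpha,\beta,\delta}f(\tau)\|_{L^\infty_{x,v}}$ locally integrable on $[0,t_0)$, so Gr\"onwall's inequality yields the stated exponential bound.

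\emph{Main obstacle: finiteness.} Gr\"onwall's inequality requires $\|w_{\alpha,\beta,\delta}f(\tau)\|_{L^p_vL^q_x}$ to be finite, or at least measurable in time, which is not yet known. To secure this, we would run the same estimate along the Picard iterates $f^i=\mathcal{T}f^{i-1}$ with $f^0=0$ from the proof of Proposition~\ref{prop:boltzmann_existence}. These iterates satisfy the $L^\infty$ bound \eqref{eq:apriori_esti} uniformly in $i$. By induction, each $f^i$ then has finite mixed norm, bounded by the iterated-integral (Picard) expansion of the exponential, uniformly in $i$.

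\emph{Passing to the limit.} Since $f^i\to f$ almost everywhere, Fatou's lemma transfers this uniform bound to $f$. We then either stop there or re-run Gr\"onwall's inequality directly for $f$. Because every step is a pointwise-in-$v$ translation bound, no further boundary difficulties arise.
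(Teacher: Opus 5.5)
Your proposal is correct and follows essentially the same route as the paper: Lemma~\ref{lem:trace} for the boundary term, the second estimate of Lemma~\ref{lem:LPY_lem} for the gain term, Gr\"onwall's inequality, and then carrying the a priori bound through the iteration scheme of Proposition~\ref{prop:boltzmann_existence}. Your Picard-iterate and Fatou argument makes explicit the finiteness step that the paper leaves implicit. To get exactly the stated exponent $\int_0^t\|w_{\alpha,\beta,\delta}f(\tau)\|_{L^\infty_{x,v}}\,d\tau$, rather than the weaker one coming from the bound \eqref{eq:apriori_esti}, you should take your second option and re-run Gr\"onwall directly for $f$ once finiteness is known.
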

\begin{proof}
    By Lemma~\ref{lem:trace}, we have
    \begin{align*}
        \|w_{\alpha,\beta,\delta} g(t-t_b,x-vt_b,v)\mathbf{1}_{v_z\geq \frac{z}{t}}\|_{L^p_v(\mathbb{R}^n, L^q_x(\mathbb{R}^n_+))}&=\|v_z^{1/q}w_{\alpha,\beta,\delta} g(t,x_h,v)\|_{L^p_v(\mathbb{R}^n, L^q_{t,x_h}([0, t]\times \partial \mathbb{R}^n_+))}\\
        &\leq \|v_z^{1/q}w_{\alpha,\beta,\delta} g(t,x_h,v)\|_{L^p_v(\mathbb{R}^n, L^q_{t,x_h}([0, t_0)\times \partial \mathbb{R}^n_+))}.
    \end{align*}
    Employing Lemma~\ref{lem:LPY_lem}, we obtain
    \begin{align*}
        \|w_{\alpha,\beta,\delta} f(t)\|_{L^p_v(\mathbb{R}^n, L^q_x(\mathbb{R}^n_+))}&\leq \|w_{\alpha,\beta,\delta} g(t-t_b,x-vt_b,v)\mathbf{1}_{v_z\geq \frac{z}{t}}\|_{L^p_v(\mathbb{R}^n, L^q_x(\mathbb{R}^n_+))} \\
        &\quad + \int_0^t \|w_{\alpha,\beta,\delta}Q^+(f,f)(\tau, x-v(t-\tau), v)\mathbf{1}_{z-v_z(t-\tau)> 0}\|_{L^p_v(\mathbb{R}^n, L^q_x(\mathbb{R}^n_+))}\,d\tau\\
        &\leq \|v_z^{1/q} w_{\alpha,\beta,\delta} g(t,x_h, v)\|_{L^p_v(\mathbb{R}^n, L^q_{t,x_h}([0, t_0)\times \partial \mathbb{R}^n_+))}\\
        &\quad + C_{gain,2}\int_0^t \|w_{\alpha,\beta,\delta} f(\tau)\|_{L^\infty_{x,v}(\mathbb{R}^n\times\mathbb{R}^n_+)}\|w_{\alpha,\beta,\delta} f(\tau)\|_{L^p_v(\mathbb{R}^n, L^q_x(\mathbb{R}^n_+))}\,d\tau.
    \end{align*}
    By Gr{\"o}nwall's inequality, we get \textit{a priori} estimate
    \begin{align*}
        \|w_{\alpha,\beta,\delta} f(t)\|_{L^p_v(\mathbb{R}^n, L^q_x(\mathbb{R}^n_+))}\leq \|v_z^{1/q} w_{\alpha,\beta,\delta} g\|_{L^p_v(\mathbb{R}^n, L^q_{t,x_h}([0, t_0)\times \partial \mathbb{R}^n_+))}\exp\left(C_{gain,2}\int_0^t \|w_{\alpha,\beta,\delta} f(\tau)\|_{L^\infty_{x,v}(\mathbb{R}^n\times\mathbb{R}^n_+)}\,d\tau\right)
    \end{align*}
    for $0\leq t< t_0$. Finally, we apply this \textit{a priori} estimate to the iteration scheme of Proposition~\ref{prop:boltzmann_existence} to conclude the proposition.
\end{proof}

\begin{remark}
    In some range of $\kappa$ and $\beta$, we can construct a unique solution of the Boltzmann equation with the inflow boundary data $g(t,x_h,v)$ of the form \eqref{eq:boundary_data}. Let $g$ be given by \eqref{eq:boundary_data} for some $\alpha>0$, $0<\beta\leq 2$, and $m\geq 0$ to be determined later. We choose the weight function $w_{\frac{\alpha}{2},\frac{\beta}{1+\beta}, \delta}$ with $\delta>n + 1$. We write the multiplication of $g$ and the weight function as
    \begin{align*}
        w_{\frac{\alpha}{2},\frac{\beta}{1+\beta}, \delta}(v)g(t,x_h,v) &= e^{-t} w_{\frac{\alpha}{2},\frac{\beta}{1+\beta}, \delta}(v) \langle v_h\rangle^{-m}e^{-\alpha|v_h|^\beta}\mathbf{1}_{|v_h|\geq (|x_h-v_h t| + v_z)^{\frac{1}{1+\beta}}}\mathbf{1}_{v_z\geq 0}\\
        &=e^{-t}\langle v\rangle^\delta\langle v_h\rangle^{-m} e^{-\alpha|v_h|^\beta} \exp\left(\frac{\alpha}{2} |v|^{\frac{\beta}{1+\beta}}\right)\mathbf{1}_{|v_h|\geq (|x_h-v_h t| + v_z)^{\frac{1}{1+\beta}}}\mathbf{1}_{v_z\geq 0}.
    \end{align*}
    If $|v_h|\geq v_z$ and $|v_h|\geq 1$, by Lemma~\ref{lem:beta_pert}, we have
    \begin{align*}
        \frac{1}{2}\left(|v_h|^2 + v_z^2\right)^{\frac{\beta}{2(1+\beta)}} - |v_h|^\beta\leq \frac{1}{2}\left(|v_h|^{\frac{\beta}{1+\beta}} + \frac{\beta}{2(1+\beta)}|v_h|^{\frac{\beta}{1+\beta}}\right) - |v_h|^\beta\leq 0.
    \end{align*}
    If $|v_h|\leq v_z$, as $v_z\leq |v_h|^{1+\beta}$ by the cutoff condition, we have
    \begin{align*}
        \frac{1}{2}\left(|v_h|^2 + v_z^2\right)^{\frac{\beta}{2(1+\beta)}} - |v_h|^\beta\leq \frac{1}{2}\left(v_z^{\frac{\beta}{1+\beta}} + \frac{\beta}{2(1+\beta)}v_z^{\frac{\beta}{1+\beta}}\right) - |v_h|^\beta\leq 0.
    \end{align*}
    Therefore,
    \begin{align*}
        &e^{-t}v_z^{1/q}\langle v\rangle^\delta \langle v_h\rangle^{-m}e^{-\alpha|v_h|^\beta} \exp\left(\frac{\alpha}{2} |v|^{\frac{\beta}{1+\beta}}\right)\mathbf{1}_{|v_h|\geq (|x_h-v_ht| + v_z)^{\frac{1}{1+\beta}}}\mathbf{1}_{v_z\geq 0}\\
        &\leq Ce^{-t} v_z^{1/q}\langle v\rangle^\delta \langle v_h\rangle^{-m}\mathbf{1}_{|v_h|\geq \left(|x_h-v_ht| + v_z\right)^{\frac{1}{1+\beta}}}\mathbf{1}_{v_z\geq 0}
    \end{align*}
    for some constant $C$ depending on $\alpha$, which is similar to the equation right before \eqref{eq:g_LpvLqx_form}. In fact, for $t_0>0$, we have
    \begin{align*}
        &\left\|e^{-t} v_z^{1/q}\langle v\rangle^\delta \langle v_h\rangle^{-m}\mathbf{1}_{|v_h|\geq \left(|x_h-v_ht| + v_z\right)^{\frac{1}{1+\beta}}}\mathbf{1}_{v_z\geq 0}\right\|_{L^p_v(\mathbb{R}^n, L^q_{t,x_h}([0,t_0)\times\partial \mathbb{R}^n_+))} \\
        &= \left(\frac{1-e^{-qt_0}}{q}\right)^{1/q}\left\| v_z^{1/q}\langle v\rangle^\delta \langle v_h\rangle^{-m}\mathbf{1}_{|v_h|\geq \left(|x_h| + v_z\right)^{\frac{1}{1+\beta}}}\mathbf{1}_{v_z\geq 0}\right\|_{L^p_v(\mathbb{R}^n, L^q_{x_h}(\partial \mathbb{R}^n_+))}
    \end{align*}
    for $1\leq q<\infty$ and
    \begin{align*}
        &\left\|e^{-t} v_z^{1/q}\langle v\rangle^\delta \langle v_h\rangle^{-m}\mathbf{1}_{|v_h|\geq \left(|x_h-v_ht| + v_z\right)^{\frac{1}{1+\beta}}}\mathbf{1}_{v_z\geq 0}\right\|_{L^p_v(\mathbb{R}^n, L^\infty_{t,x_h}([0,t_0)\times\partial \mathbb{R}^n_+))} \\
        &= \left\| v_z^{1/q}\langle v\rangle^\delta \langle v_h\rangle^{-m}\mathbf{1}_{|v_h|\geq \left(|x_h| + v_z\right)^{\frac{1}{1+\beta}}}\mathbf{1}_{v_z\geq 0}\right\|_{L^p_v(\mathbb{R}^n, L^\infty_{x_h}(\partial \mathbb{R}^n_+))}
    \end{align*}
    for $q=\infty$. Therefore, if $m$ satisfies \eqref{eq:m_cond1} for $p=q=\infty$, we have
    \begin{align*}
        \left\|w_{\frac{\alpha}{2},\frac{\beta}{1+\beta},\delta} g(t,x_h,v)\right\|_{L^\infty_{t,x_h,v}([0,\infty)\times \partial \mathbb{R}^n_+\times \mathbb{R}^n)}<\infty.
    \end{align*}
    Now, we can apply Proposition~\ref{prop:boltzmann_existence} and check that there exists a unique local-in-time solution of the Boltzmann equation in $L^\infty_{loc, t}([0, t_0), L^\infty_{x,v}(w_{\frac{\alpha}{2},\frac{\beta}{1+\beta}, \delta}))$ space when $\kappa$ and $\beta$ satisfy
    \begin{align}\label{eq:kappa_range}
        \max\left\{0, \frac{2}{n-1}\kappa, \frac{2}{3}(1+\kappa)\right\}<\frac{\beta}{1+\beta}.
    \end{align}
    Furthermore, if $m$ is large enough such that it meets \eqref{eq:m_cond1} for given $1\leq p,q\leq \infty$, then it also satisfies
    \begin{align*}
        \sup_{t\geq 0}\left\|v_z^{1/q} w_{\frac{\alpha}{2},\frac{\beta}{1+\beta},\delta} g(t,x_h,v)\right\|_{L^p_v(\mathbb{R}^n, L^q_{t,x_h}([0,t]\times \partial \mathbb{R}^n_+))}<\infty.
    \end{align*}
    If we further assume $1+\kappa\leq \frac{\beta}{1+\beta}$ when $n=2$, Proposition~\ref{prop:boltzmann_wellposed} shows that the Boltzmann solution is also in $L^\infty_{loc, t}([0, t_0), L^p_vL^q_x(w_{\frac{\alpha}{2},\frac{\beta}{1+\beta}, \delta}))$.
\end{remark}
When $1\leq \beta\leq 2$, the possible range of $\frac{\beta}{1+\beta}$ is between $\frac{1}{2}$ and $\frac{2}{3}$. From \eqref{eq:kappa_range}, we need to impose $\kappa<0$ to ensure the existence.\\

Now, we prove the main theorem for the Boltzmann equation.
\begin{proof}[Proof of Theorem~\ref{thm:main_Boltzmann}]
    For $w_{\alpha,\beta,\delta}g(t, x_h, v)\in L^\infty_{t,x_h,v}\left([0, t^*_0]\times \partial \mathbb{R}^n_+\times \mathbb{R}^n\right)$, we extend it to $t\in [0,\infty)$ by making
    \begin{align*}
        \tilde{g}(t,x,v) = g(t, x_h, v)\mathbf{1}_{t\leq t^*_0}.
    \end{align*}
    Applying Proposition~\ref{prop:boltzmann_existence} for $\tilde{g}$, we find a unique solution of the Boltzmann equation $f$ in $L_{loc, t}^\infty([0,t_0), L^\infty_{x,v}(w_{\alpha, \beta,\delta}))$ for some $0<t_0\leq t_0^*$. Finally, we employ Proposition~\ref{prop:boltzmann_wellposed} to get the $L_{loc, t}^\infty\left([0,t_0), L^p_vL^q_x(w_{\alpha, \beta,\delta}))\right)$ boundedness.
\end{proof}

\appendix

\section{Stability of the macroscopic fields with respect to inflow boundary data}
In the Appendix, we provide a partial explanation of why we need to exclude the $v_z$ component from the weight function for the BGK model in Theorem \ref{thm:main_BGK}. This is due to an instability of the macroscopic fields when there exists only the inflow boundary data. In Proposition \ref{prop:BGK_contradiction}, we prove that any solution with $f_0(x,v) = 0$ and $g(t,x_h,v) = e^{-|v|^2}$, where the inflow boundary data is given by the natural Maxwellian function with fixed temperature, can not satisfy the second condition of Definition \ref{def:BGK_sol_space}. This result indicates that estimating the macroscopic fields of the BGK solution is beyond the reach of standard techniques, even in this case, since these fields cannot be controlled in terms of $(\rho_g,u_g, T_g)$. As the fixed point argument for solution construction requires the sharp estimate of $(\rho,u, T)$, a possible alternative is to use a weak compactness argument, which can not guarantee the uniqueness of the solution.

Proposition \ref{prop:suppressed_v_z} demonstrates that we can avoid this problem by weakening the decay in the normal velocity component in the inflow boundary function. In detail, by choosing $g(t,x_h,v) = e^{-|v_h|^2 - |v_z|^\beta}$ for $0<\beta\leq 1$, we prove that the macroscopic fields of the solution are stable in the sense of Definition \ref{def:BGK_sol_space}, and we can follow the same procedure as in Section \ref{sec:sol_construction} to construct a unique BGK solution. Motivated by this observation, we choose the weight function $e^{|v_h|^\beta}$ for $1\leq \beta\leq 2$ in Theorem \ref{thm:main_BGK} and impose a proper interior cutoff function to construct a unique solution and establish the desired ill-posedness for this weight.

This restriction also reveals an interesting contrast between the BGK model and the Boltzmann equation. For the Boltzmann equation, it is more natural to impose inflow boundary data with the exponential decay $e^{-|v|^\beta}$ than with the tangential exponential decay $e^{-|v_h|^\beta}$. Indeed, the analysis of the collision operator $Q$, such as the one stated in Lemma~\ref{lem:LPY_lem}, relies crucially on the following identity from the energy conservation:
\begin{align*}
    e^{-|v|^2-|v_*|^2} = e^{-|v'|^2 - |v'_*|^2}.
\end{align*}
When the tangential exponential function $e^{-|v_h|^\beta}$ is used, however, no corresponding identity holds, making it impossible to close the weighted estimates for $Q$ using this weight alone. This difference suggests that, with vacuum initial data, the two models respond differently to the boundary data.

We first prove that the macroscopic fields of the solution with the Maxwellian inflow boundary data have an instability.
\begin{proposition}\label{prop:BGK_contradiction}
    For the given initial data $f_0(x,v) = 0$ and inflow boundary data $g(t,x_h,v) = e^{-|v|^2}$, if $f(t,x,v)$ solves the BGK model, then it does not satisfy the second condition of Definition~\ref{def:BGK_sol_space}.
\end{proposition}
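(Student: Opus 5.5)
We argue by contradiction: assume $f$ solves the BGK model with $f_0=0$ and $g=e^{-|v|^2}$, and that $f$ satisfies the second condition of Definition~\ref{def:BGK_sol_space} for some $C_u$ and $t_0>0$. The strategy is to compute $(\rho_g,u_g,T_g)$ explicitly and show that, as $\frac{z}{t}\to\infty$ at a fixed small $t$, they are too small to dominate the contribution of the Maxwellian gain term $\int_0^t e^{-(t-\tau)}\mathcal{M}(f)(\tau,x-v(t-\tau),v)\,d\tau$. We then play the density bound $|\rho-\rho_g|\leq\frac12\rho_g$ against this lower bound on the relaxation term.

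\textbf{Step 1: the damped free-transport fields.} Since $g$ does not depend on $(t,x_h)$, we have
\begin{align*}
\rho_g(t,x)=\int e^{-z/v_z}e^{-|v|^2}\mathbf{1}_{v_z\geq z/t}\,dv .
\end{align*}
Write $R=z/t$. For $R$ large and $t\leq t_0$ small, the factor $e^{-z/v_z}\geq e^{-t}$ is harmless, and a Gaussian-tail computation gives $\rho_g\simeq R^{-1}e^{-R^2}$. In the same regime, $u_g-R\hat z=O(1/R)$ in the normal direction and $u_{g,h}=0$. The normal variance is of order $R^{-2}$, while the tangential variance is of order one, so $T_g\simeq 1$. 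In particular, the second condition forces
\begin{align*}
\rho(t,x)\leq \tfrac32\rho_g(t,x)\lesssim R^{-1}e^{-R^2}
\end{align*}
at every point with $z/t=R$.

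\textbf{Step 2: a lower bound on the relaxation term.} The idea is that $\mathcal{M}(f)$ has unit-order temperature and velocity near $\frac{z'}{\tau}\hat z$, evaluated at points $x'=x-v(t-\tau)$ where $z'/\tau$ can be much smaller than $R$. Mass thereby leaks into $(t,x)$ from regions where $\rho$ is far larger than $e^{-R^2}$.

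Concretely, fix $t$ and $x$ with $z/t=R$. Restrict to $\tau\in[t/2,t]$, $|v_h|\leq 1$, and $v_z$ in a window chosen so that $z'=z-v_z(t-\tau)$ satisfies $z'/\tau\in[1,2]$. This requires $v_z\approx \frac{z-\tau}{t-\tau}$, so only $\tau$ with $t-\tau\sim R^{-1}$ contribute to a single $v_z$ of size $O(R)$. At those points, the conditions applied at $(\tau,x')$ together with Step 1 (now with bounded ratio $z'/\tau$) give $\rho(\tau,x')\gtrsim 1$, $T(\tau,x')\simeq 1$, and $|u(\tau,x')-\frac{z'}{\tau}\hat z|\lesssim 1$. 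The Maxwellian factor then behaves like $e^{-C(v_z-z'/\tau)^2}$.

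It is better to integrate over $v$ first at fixed $\tau$. The map $v_z\mapsto z'/\tau$ has Jacobian $(t-\tau)/\tau$. Taking $t-\tau\sim 1$ (relative to $t$) and keeping $v_z$ inside the window $\frac{z'}{\tau}\in[1,2]$, we have $v_z\approx \frac{z}{t-\tau}$. The Maxwellian then contributes $e^{-c(v_z-O(1))^2}\approx e^{-c'R^2 t^2/(t-\tau)^2}$.

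This requires care. To beat $e^{-R^2}$ we need the Gaussian of $\mathcal{M}(f)$ to have variance larger than $1/2$. That comes from the lower bound $T\geq\frac12T_g$ together with the explicit tangential-plus-normal form of $T_g$. The contradiction would be cleanest if $T_g(\tau,x')$ were large, but with $g=e^{-|v|^2}$ it is not. The real mechanism is instead that $\frac{z'}{\tau}\hat z$ shifts the Gaussian center.

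The plan is therefore to choose $\tau$ close to $t$ with $z'/\tau$ comparable to $R$ but slightly smaller, say $z'/\tau=R-\eta$. At such points the mean satisfies $u_z\approx R-\eta$, and we take $v_z\approx R-\eta+O(1)$. The Maxwellian factor is then order one, while $\rho(\tau,x')\gtrsim R^{-1}e^{-(R-\eta)^2}$. Integrating over the admissible range of $\tau$ and $v$ gives a lower bound $\int\mathcal{M}\,dv\,d\tau\gtrsim R^{-k}e^{-(R-\eta)^2}$, and this is $\gg R^{-1}e^{-R^2}$ because $e^{2R\eta}$ dominates any power of $R$. The main obstacle is making this bookkeeping rigorous: we must check that the backward characteristic keeps $z'>0$, and we must verify the Jacobian relating $v_z$ and $z'/\tau$ over the chosen window.

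\textbf{Step 3: conclusion.} We compare the results. Since $\rho\geq\rho_g+\int_0^t e^{-(t-\tau)}\int\mathcal{M}(f)\,dv\,d\tau$, Step 2 yields $\rho(t,x)/\rho_g(t,x)\to\infty$ as $R\to\infty$. This contradicts $\rho\leq\frac32\rho_g$ from the second condition of Definition~\ref{def:BGK_sol_space}. The argument applies for every $t_0$ and every $C_u$, because only the density and temperature bounds at the bounded-ratio points were used.
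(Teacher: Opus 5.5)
Your contradiction scheme matches the paper's: assume the second condition, bound $\mathcal M(f)$ from below at the backward point $(\tau,x')$ with $x'=x-v(t-\tau)$, and use that the density there is exponentially larger than $\rho_g(t,x)$ because $z'/\tau<z/t$. There is a genuine gap in Step 2, however. That step carries the whole proof, it is left as ``bookkeeping'', and the regime you commit to is the one in which it fails. Set $R=z/t$ and $w=z'/\tau$. The relation $z'=z-v_z(t-\tau)$ gives exactly
\[
v_z-w=\frac{t}{t-\tau}\,(R-w),\qquad dv_z=\frac{\tau}{t-\tau}\,dw .
\]
So $w=R-\eta$ forces $v_z=R-\eta+\frac{t\eta}{t-\tau}$. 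The Maxwellian $\mathcal M(f)(\tau,x',\cdot)$ has $T\simeq1$ and is centred within $O(1)$ of $w\hat z$, so its Gaussian factor is about $\exp\bigl(-c\,\eta^2t^2/(t-\tau)^2\bigr)$. This is of order one only when $\eta\lesssim (t-\tau)/t$, and then the gain $e^{2R\eta}$ over $e^{-R^2}$ is at most $e^{CR(t-\tau)/t}$. If ``$\tau$ close to $t$'' means $(t-\tau)/t\to0$ as $R\to\infty$ (for instance the scaling $t-\tau\sim R^{-1}$ you mention earlier in Step 2), the gain stays bounded and no contradiction follows.

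The repair is to keep $t-\tau$ comparable to $t$. Restrict to $\tau\in[t/4,t/2]$, $|v_h|\le 1$ and $w\in[R-\tfrac32,R-\tfrac12]$. Then $t/(t-\tau)\le 2$ and $\tau/(t-\tau)\ge\tfrac13$. The assumed bounds give $\rho(\tau,x')\ge\tfrac12\rho_g(\tau,x')\gtrsim(w+1)^{-1}e^{-w^2}$, $T(\tau,x')\simeq1$ and $|u(\tau,x')-w\hat z|\lesssim1$. Hence $\mathcal M(f)\gtrsim R^{-1}e^{-R^2+R}$ on this set. Integrating gives $(\rho-\rho_g)(t,x)\gtrsim t\,e^{R}\rho_g(t,x)$, which contradicts $\rho\le\tfrac32\rho_g$ as $R\to\infty$.

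The paper does the same for all $w$ at once. It confines $\tau\le t/2$ and completes the square in $w^2+\frac{t^2}{C_T(t-\tau)^2}(R-w)^2$, obtaining the factor $\exp\bigl(-\frac{R^2}{1+C_T/4}\bigr)$, which corresponds to the optimal shift $\eta\propto R$. Your fixed shift is weaker but enough once the time window is corrected.

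Note also that, contrary to your Step 3, the bulk-velocity bound at $(\tau,x')$ is indispensable. It is what centres $\mathcal M(f)$ near $w\hat z$, so all three lines of the second condition are used.
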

\begin{proof}
    We confine $0< t\leq 1$. We start by computing the $(\rho_g,u_g, T_g)(t,x)$. Using Lemma~\ref{lem:exp_integral}, we have
    \begin{align}\label{eq:Appendix_rho_g_bound2}
        \begin{split}
            \rho_g(t,x) &= \int_{\mathbb{R}^n} e^{-t_b}e^{-|v|^2}1_{v_z\geq \frac{z}{t}}\,dv = \int_{\mathbb{R}^{n-1}}e^{-|v_h|^2}\,dv_h\int_{\frac{z}{t}}^\infty e^{-\frac{z}{v_z}}e^{-v_z^2}\,dv_z\geq C_1 e^{-t}\frac{1}{\frac{z}{t}+1} e^{-\frac{z^2}{t^2}},
            \intertext{and}
            \rho_g(t,x)&\leq C_2 \frac{1}{\frac{z}{t}+1} e^{-\frac{z^2}{t^2}}
        \end{split}
    \end{align}
    for some constants $C_1$ and $C_2$. For $u_g$, we proceed with a similar calculation and obtain
    \begin{align*}
        \rho_g(t,x)u_{g,h}(t,x) &=\int_{\mathbb{R}^{n-1}}v_h e^{-|v_h|^2}\,dv_h\int_{\frac{z}{t}}^\infty e^{-\frac{z}{v_z}}e^{-v_z^2}\,dv_z = 0,\\
        \rho_g(t,x)\left|u_{g,z}(t,x)-\frac{z}{t}\right| &\leq \int_{\mathbb{R}^{n-1}}e^{-|v_h|^2}\,dv_h\int_{\frac{z}{t}}^\infty e^{-\frac{z}{v_z}}\left|v_z-\frac{z}{t}\right| e^{-v_z^2}\,dv_z\leq C_3'\frac{1}{\left(\frac{z}{t}+1\right)^2} e^{-\frac{z^2}{t^2}}
    \end{align*}
    for some constants $C_3'$, where $u_{g,h}$ and $u_{g,z}$ are horizontal and vertical components of $u_g$. It shows that
    \begin{align}\label{eq:Appendix_u_g_bound2}
        u_{g,h}(t,x) = 0,\quad \left|u_{g,z}(t,x)-\frac{z}{t}\right|\leq \frac{C_3}{\frac{z}{t} + 1}
    \end{align}
    for a constant $C_3$.
    
    For $T_g$, we write
    \begin{align*}
        n\rho_g(t,x)T_g(t,x)&=\int_{\mathbb{R}^n} e^{-\frac{z}{v_z}}\left(|v_h|^2 + |v_z - u_{g,z}|^2\right)e^{-|v|^2}1_{v_z\geq \frac{z}{t}}\,dv\\
        &= \int_{\mathbb{R}^{n-1}} |v_h|^2 e^{-|v_h|^2} \int_{\frac{z}{t}}^\infty e^{-\frac{z}{v_z}} e^{-v_z^2}\,dv_z dv_h\\
        &\quad + \int_{\mathbb{R}^{n-1}} e^{-|v_h|^2} \int_{\frac{z}{t}}^\infty |v_z - u_{g,z}|^2 e^{-\frac{z}{v_z}} e^{-v_z^2}\,dv_z dv_h.
    \end{align*}
    We first compute an upper bound of $T_g$. The first integral is easily bounded above by
    \begin{align}\label{eq:Appendix_T_g_bound}
        \int_{\mathbb{R}^{n-1}} |v_h|^2 e^{-|v_h|^2} \int_{\frac{z}{t}}^\infty e^{-\frac{z}{v_z}} e^{-v_z^2}\,dv_z dv_h\leq C\frac{1}{\frac{z}{t}+1} e^{-\frac{z^2}{t^2}}.
    \end{align}
    For the second integral, we have
    \begin{align*}
        \int_{\mathbb{R}^{n-1}} e^{-|v_h|^2} \int_{\frac{z}{t}}^\infty |v_z - u_{g,z}|^2 e^{-\frac{z}{v_z}} e^{-v_z^2}\,dv_z dv_h&\leq \int_{\mathbb{R}^{n-1}} e^{-|v_h|^2} \int_{\frac{z}{t}}^\infty 2\left(\left|v_z - \frac{z}{t}\right|^2 + \left|u_{g,z}-\frac{z}{t}\right|^2\right) e^{-\frac{z}{v_z}} e^{-v_z^2}\,dv_z dv_h\\
        &\leq C\frac{1}{\left(\frac{z}{t}+1\right)^3} e^{-\frac{z^2}{t^2}}
    \end{align*}
    by \eqref{eq:Appendix_u_g_bound2} and Lemma~\ref{lem:exp_integral}. Therefore, we conclude
    \begin{align}\label{eq:Appendix_T_g_bound2}
        n\rho_g(t,x)T_g(t,x)\leq C_5'\frac{1}{\frac{z}{t}+1} e^{-\frac{z^2}{t^2}}
    \end{align}
    for some $C_5'$. By a similar computation for \eqref{eq:Appendix_T_g_bound}, we compute the lower bound of $n\rho_g T_g$ by
    \begin{align}\label{eq:Appendix_T_g_bound3}
        n\rho_g(t,x)T_g(t,x)&\geq  \int_{\mathbb{R}^{n-1}} |v_h|^2 e^{-|v_h|^2} \int_{\frac{z}{t}}^\infty e^{-\frac{z}{v_z}} e^{-v_z^2}\,dv_z dv_h\geq C_4' \frac{1}{\frac{z}{t}+1} e^{-\frac{z^2}{t^2}}
    \end{align}
    for a constant $C_4'$. Combining all the macroscopic bound results \eqref{eq:Appendix_rho_g_bound2}, \eqref{eq:Appendix_u_g_bound2}, \eqref{eq:Appendix_T_g_bound2}, and \eqref{eq:Appendix_T_g_bound3}, we get
    \begin{align*}
        &C_1 e^{-t}\frac{1}{\frac{z}{t}+1} e^{-\frac{z^2}{t^2}}\leq \rho_g(t,x)\leq C_2\frac{1}{\frac{z}{t}+1} e^{-\frac{z^2}{t^2}},\\
        &\left|u_g(t,x)-\frac{z}{t}\hat{z}\right|\leq \frac{C_3}{\frac{z}{t} + 1},\\
        &C_4\leq T_g(t,x)\leq C_5
    \end{align*}
    for some constants $C_i$, $1\leq i\leq 5$.

    Now, we assume that there exists $0<t^*\leq 1$ such that
    \begin{align}\label{eq:macro_contradiction}
        \begin{split}
            &|\rho-\rho_g|(t,x)\leq \frac{1}{2}\rho_g(t,x),\\
            &\left|u-\frac{z}{t}\hat{z}\right|(t,x)\leq C_u\sqrt{T_g(t,x)},\\
            &|T-T_g|(t,x)\leq \frac{1}{2}T_g(t,x)
        \end{split}
    \end{align}
    for some constant $C_u>0$ for $0< t\leq t^*$ and derive that $\rho(t,x)$ can not satisfy the condition. From the definition of the BGK solution, we have
    \begin{align*}
        f(t,x,v) &= e^{-t_b}g(t-t_b, x-vt_b, v)\mathbf{1}_{t\geq t_b} + \int_0^t e^{-(t-\tau)}\mathcal{M}(f)(\tau, x-v(t-\tau), v)\mathbf{1}_{z-v_z(t-\tau)> 0}\,d\tau\\
        &=e^{-t_b}e^{-|v|^2}\mathbf{1}_{v_z\geq \frac{z}{t}} + \int_0^t e^{-(t-\tau)}\mathcal{M}(f)(\tau, x-v(t-\tau), v)\mathbf{1}_{z-v_z(t-\tau)> 0}\,d\tau,
    \end{align*}
    we have
    \begin{align}\label{eq:contra_2}
        \rho(t,x) \geq \rho_g(t,x) + \int_0^t e^{-(t-\tau)}\int_{\mathbb{R}^n} \mathcal{M}(f)(\tau, x-v(t-\tau), v)\mathbf{1}_{z-v_z(t-\tau)> 0}\,dv d\tau.
    \end{align}
    We compute the integral of the local Maxwellian term and prove that it is much bigger than $\rho_g$.
    
    Since $(\rho,u,T)$ satisfy the relations, using \eqref{eq:macro_contradiction}, we can write
    \begin{align*}
        \mathcal{M}(f)(t,x,v)&= \frac{\rho(t,x)}{(2\pi T(t,x))^{n/2}}\exp\left(-\frac{|v-u(t,x)|^2}{2T(t,x)}\right)\\
        &\geq C\frac{\rho_g(t,x)}{(2\pi T_g(t,x))^{n/2}}\exp\left(-\frac{2\left(\left|v-\frac{z}{t}\hat{z}\right|^2 + \left|u(t,x) - \frac{z}{t}\hat{z}\right|^2\right)}{T_g(t,x)}\right)\\
        &\geq C\frac{1}{\frac{z}{t}+1} e^{-\frac{z^2}{t^2}}\exp\left(-\frac{|v-\frac{z}{t}\hat{z}|^2}{C_4/4}\right).
    \end{align*}
    for a constant $C$ depending on $C_i$ for $1\leq i\leq 5$, $C_u$, and dimension $n$. Using this lower bound, we have
    \begin{align*}
        &\int_{\mathbb{R}^n} \left|v-\frac{z}{t}\hat{z}\right|^m \mathcal{M}(f)(\tau, x-v(t-\tau), v)\mathbf{1}_{z-v_z(t-\tau)> 0}\,dv\\
        &\geq C\int_{\mathbb{R}^n} \left|v-\frac{z}{t}\hat{z}\right|^m \frac{1}{\frac{z-v_z(t-\tau)}{\tau} +1} e^{-\left(\frac{z-v_z(t-\tau)}{\tau}\right)^2}\exp\left(-\frac{|v_h|^2 + \left|v_z - \frac{z-v_z(t-\tau)}{\tau}\right|^2}{C_4/4}\right)\mathbf{1}_{z-v_z(t-\tau)>0}\,dv\\
        &=C\int_{\mathbb{R}^n} |v|^m \frac{1}{\frac{z}{t} - \frac{t-\tau}{\tau}v_z +1} e^{-\left(\frac{z}{t} - \frac{t-\tau}{\tau}v_z\right)^2}\exp\left(-\frac{|v_h|^2 + \left|\frac{t}{\tau}v_z\right|^2}{C_4/4}\right)\mathbf{1}_{\frac{t-\tau}{\tau}v_z<\frac{z}{t}}\,dv\\
        &=C\frac{\tau}{t-\tau}\int_{\mathbb{R}^n} \left(|v_h|^2 + \left|\frac{\tau}{t-\tau}v_z\right|^2\right)^{m/2} \frac{1}{\frac{z}{t} - v_z +1} e^{-\left(\frac{z}{t} - v_z\right)^2}\exp\left(-\frac{|v_h|^2 + \left|\frac{t}{t-\tau}v_z\right|^2}{C_4/4}\right)\mathbf{1}_{v_z<\frac{z}{t}}\,dv.
    \end{align*}
    In the last two equalities, we used the change of variable $v_z\mapsto v_z + \frac{z}{t}$ and $v_z\mapsto \frac{\tau}{t-\tau}v_z$ in steps. We do the $v_h$-integral first and get
    \begin{align*}
        &\frac{\tau}{t-\tau}\int_{\mathbb{R}^n} \left(\frac{\tau}{t-\tau}|v_z|\right)^m \frac{1}{\frac{z}{t} - v_z +1} e^{-\left(\frac{z}{t} - v_z\right)^2}\exp\left(-\frac{|v_h|^2 + \left|\frac{t}{t-\tau}v_z\right|^2}{C_T}\right)\mathbf{1}_{v_z<\frac{z}{t}}\,dv\\
        &\geq C\frac{\tau}{t-\tau}\int_{\mathbb{R}} \left(\frac{\tau}{t-\tau}|v_z|\right)^m \frac{1}{\frac{z}{t} - v_z +1} e^{-\left(\frac{z}{t} - v_z\right)^2}\exp\left(-\frac{\left|\frac{t}{t-\tau}v_z\right|^2}{C_T}\right)\mathbf{1}_{v_z<\frac{z}{t}}\,dv_z,
    \end{align*}
    where $C_T = C_4/4$.
    
    The sum of the exponents is given by
    \begin{align*}
        \left(\frac{z}{t} - v_z\right)^2 + \frac{\left|\frac{t}{t-\tau}v_z\right|^2}{C_T} = \left(1 + \frac{t^2}{C_T(t-\tau)^2}\right)\left(v_z -\left(\frac{t^2}{C_T(t-\tau)^2}+1\right)^{-1}\frac{z}{t}\right)^2 + \frac{1}{\frac{C_T(t-\tau)^2}{t^2}+1}\frac{z^2}{t^2}.
    \end{align*}
    Therefore, merging the two exponents and taking the change of variable
    \begin{align*}
        v_z\mapsto \left(1 + \frac{t^2}{C_T(t-\tau)^2}\right)^{-1/2}v_z + \frac{1}{1+\frac{t^2}{C_T(t-\tau)^2}}\frac{z}{t},
    \end{align*}
    we get
    \begin{align}
        &\frac{\tau}{t-\tau}\int_{\mathbb{R}} \left(\frac{\tau}{t-\tau}|v_z|\right)^m\frac{1}{\frac{z}{t} - v_z +1} e^{-\left(\frac{z}{t} - v_z\right)^2}\exp\left(-\frac{\left|\frac{t}{t-\tau}v_z\right|^2}{C_T}\right)\mathbf{1}_{v_z< \frac{z}{t}}\,dv_z\notag\\
        &=\frac{\tau}{t-\tau}\exp\left(-\frac{1}{\frac{C_T(t-\tau)^2}{t^2}+1}\frac{z^2}{t^2}\right)\int_{\mathbb{R}} \left(\frac{\tau}{t-\tau}|v_z|\right)^m\frac{1}{\frac{z}{t} - v_z +1}\notag\\
        &\qquad \times \exp\left(- \left(1 + \frac{t^2}{C_T(t-\tau)^2}\right)\left(v_z -\left(\frac{t^2}{C_T(t-\tau)^2}+1\right)^{-1}\frac{z}{t}\right)^2\right)\mathbf{1}_{v_z< \frac{z}{t}}\,dv_z\notag\\
        &\geq \frac{\tau}{t}\frac{\sqrt{C_T}}{\sqrt{\frac{C_T(t-\tau)^2}{t^2} + 1}}\exp\left(-\frac{1}{\frac{C_T(t-\tau)^2}{t^2}+1}\frac{z^2}{t^2}\right)\int_{\mathbb{R}} \left(\frac{\tau}{t}\frac{\sqrt{C_T}}{\sqrt{\frac{C_T(t-\tau)^2}{t^2} + 1}}\left|v_z+\frac{1}{\sqrt{1+\frac{t^2}{C_T(t-\tau)^2}}}\frac{z}{t}\right|\right)^m \notag\\
        &\qquad \times \frac{1}{\frac{1}{\frac{C_T(t-\tau)^2}{t^2}+1}\frac{z}{t} - \frac{1}{\sqrt{1 + \frac{t^2}{C_T(t-\tau)^2}}}v_z +1}\exp\left(- |v_z|^2\right)\mathbf{1}_{v_z< \frac{1}{\sqrt{C_T}}\frac{t}{t-\tau}\frac{1}{\sqrt{\frac{C_T(t-\tau)^2}{t^2} + 1}}\frac{z}{t}}\,dv_z\notag.
    \end{align}
    We confine $0\leq \tau\leq t/2$ and $v_z\geq 0$. Then, we obtain
    \begin{align*}
        &\frac{\tau}{t}\frac{\sqrt{C_T}}{\sqrt{\frac{C_T(t-\tau)^2}{t^2} + 1}}\exp\left(-\frac{1}{\frac{C_T(t-\tau)^2}{t^2}+1}\frac{z^2}{t^2}\right)\int_{\mathbb{R}} \left(\frac{\tau}{t}\frac{\sqrt{C_T}}{\sqrt{\frac{C_T(t-\tau)^2}{t^2} + 1}}\left|v_z+\frac{1}{\sqrt{1+\frac{t^2}{C_T(t-\tau)^2}}}\frac{z}{t}\right|\right)^m\\
        &\qquad \times \frac{1}{\frac{1}{\frac{C_T(t-\tau)^2}{t^2}+1}\frac{z}{t} - \frac{1}{\sqrt{1 + \frac{t^2}{C_T(t-\tau)^2}}}v_z +1}\exp\left(- |v_z|^2\right)\mathbf{1}_{v_z< \frac{1}{\sqrt{C_T}}\frac{t}{t-\tau}\frac{1}{\sqrt{\frac{C_T(t-\tau)^2}{t^2} + 1}}\frac{z}{t}}\,dv_z\\
        &\geq \left(\frac{\tau}{t}\right)^{m+1}\left(\frac{\sqrt{C_T}}{\sqrt{C_T + 1}}\right)^{m+1}\exp\left(-\frac{1}{\frac{C_T}{4}+1}\frac{z^2}{t^2}\right) \frac{1}{\frac{1}{\frac{C_T}{4}+1}\frac{z}{t}+1}\\
        &\qquad \times \int_{\mathbb{R}}\left(v_z + \frac{1}{\sqrt{1+\frac{4}{C_T}}}\frac{z}{t}\right)^m\exp\left(- v_z^2\right)\mathbf{1}_{0\leq v_z< \frac{1}{\sqrt{C_T}}\frac{1}{\sqrt{C_T + 1}}\frac{z}{t}}\,dv_z.
    \end{align*}
    When $\frac{1}{\sqrt{C_T}}\frac{1}{\sqrt{C_T + 1}}\frac{z}{t}\geq 1$, then the final integral is bounded below by
    \begin{align*}
        \int_{\mathbb{R}}\left(v_z + \frac{1}{\sqrt{1+\frac{4}{C_T}}}\frac{z}{t}\right)^m\exp\left(- v_z^2\right)\mathbf{1}_{0\leq v_z< \frac{1}{\sqrt{C_T}}\frac{1}{\sqrt{C_T + 1}}\frac{z}{t}}\,dv_z\geq C\left(1 + \frac{1}{\sqrt{1+\frac{4}{C_T}}}\frac{z}{t}\right)^m.
    \end{align*}
    It shows that
    \begin{align*}
        &\int_0^t e^{-(t-\tau)}\int_{\mathbb{R}^n} \left|v-\frac{z}{t}\hat{z}\right|^m \mathcal{M}(f)(\tau, x-v(t-\tau), v)\mathbf{1}_{z-v_z(t-\tau)> 0}\,dv d\tau\\
        &\geq C\exp\left(-\frac{1}{\frac{C_T}{4}+1}\frac{z^2}{t^2}\right)\frac{1}{\frac{1}{C_T+1}\frac{z}{t}+1}\left(1 + \frac{1}{\sqrt{1+\frac{4}{C_T}}}\frac{z}{t}\right)^m \int_0^{t/2} e^{-(t-\tau)}\left(\frac{\tau}{t}\right)^{m+1}\,d\tau\\
        &\geq Ct\exp\left(-\frac{1}{\frac{C_T}{4}+1}\frac{z^2}{t^2}\right) \left(\frac{z}{t} + 1\right)^{m-1}.
    \end{align*}
    for a constant $C$ depending on $C_T$, $m$, and dimension $n$ when $0<t\leq 1$ and $\frac{1}{\sqrt{C_T}}\frac{1}{\sqrt{C_T + 1}}\frac{z}{t}\geq 1$. When we take $m=0$ and apply it to \eqref{eq:contra_2}, we obtain
    \begin{align*}
        \rho(t,x)\geq \rho_g(t,x) + C\frac{1}{\frac{z}{t} +1}\exp\left(-\frac{1}{\frac{C_T}{4}+1}\frac{z^2}{t^2}\right)t.
    \end{align*}
    Since $C_T>0$, the density generated by $\mathcal{M}(f)$ is much bigger than $\rho_g$ when $\frac{z}{t}\gg 1$, so it can not satisfy \eqref{eq:macro_contradiction}. 
\end{proof}
When we further compute the bulk velocity $u$ and temperature $T$, we need to divide the weighted integral for $\rho u$ and $n\rho T$ by $\rho$. In this way, the instability of $\rho(t,x)$ can propagate to the other macroscopic fields making the problem much harder to analyze. To prevent such instability, one possible way is to suppress the $v_z$ component of the inflow boundary data. If we consider inflow boundary data $g(t,x_h,v)$ like $e^{-|v_h|^2 - |v_z|^\beta}$, or more generally $\phi(t,x_h,v_h) e^{-|v_z|^\beta}$ for some function $\phi$, the next proposition claims that we can take $0<\beta\leq 1$ to make the BGK solution have stable macroscopic fields near $(\rho_g, u_g, T_g)$.
\begin{proposition}\label{prop:suppressed_v_z}
    For $f_0(x,v) = 0$ and $g(t,x_h,v) = e^{-|v_h|^2 - |v_z|^\beta}$, if $0<\beta\leq 1$, then there exists a unique local solution $f$ satisfying the second condition of Definition \ref{def:BGK_sol_space}. In fact, we can take $g(t,x_h, v) = \phi(t,x_h,v_h)e^{-|v_z|^\beta}$ for $0<\beta\leq 1$ and any function $\phi(t,x_h, v_h)\geq 0$ satisfying
    \begin{align}\label{eq:suppressed_v_z_cond}
        \sup_{t\geq 0}\|(1+|v_h|^q)\phi(t,x_h, v_h)\|_{L^\infty_{x_h,v_h}}<\infty\quad \text{and}\quad \inf_{t\geq 0,\,0\leq \tau\leq t,\,x_h\in\mathbb{R}^{n-1}}\int_{\mathbb{R}^{n-1}}\phi(t-\tau, x_h-v_h\tau, v_h)\,dv_h>0
    \end{align}
    for some $q>n+1$.
\end{proposition}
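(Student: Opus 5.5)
The plan is to reproduce the scheme of Section~\ref{sec:Macro_quan}--\ref{sec:sol_construction} with $g=\phi(t,x_h,v_h)e^{-|v_z|^\beta}$. The key point is that a subexponential decay $e^{-|v_z|^\beta}$, $0<\beta\le1$, in the normal direction makes the relaxation contribution to the density comparable to, rather than much larger than, the boundary-generated density. This is exactly the phenomenon that fails in Proposition~\ref{prop:BGK_contradiction}.

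\textbf{Step 1: the free-transport fields $(\rho_g,u_g,T_g)$.} Substituting $v_z\mapsto v_z+\frac{z}{t}$, the $v_h$-integral is controlled above and below by \eqref{eq:suppressed_v_z_cond}. Here $e^{-t_b}\ge e^{-t}$ and $|x_h-v_h t_b|$ is absorbed by the infimum condition. The $v_z$-integral is $\int_0^\infty e^{-(v_z+\frac{z}{t})^\beta}\,dv_z$, which by Lemma~\ref{lem:exp_integral} behaves like $(\frac{z}{t}+1)^{1-\beta}e^{-(z/t)^\beta}$. The first and second $v_z$-moments about $\frac zt$ gain factors $(\frac zt+1)^{1-\beta}$ and $(\frac zt+1)^{2(1-\beta)}$. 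Hence $|u_g-\frac zt\hat z|\lesssim(\frac zt+1)^{1-\beta}$, with $T_g\simeq(\frac zt+1)^{2(1-\beta)}$ from above and below. For the lower bound on $T_g$, the $v_h$ variance alone gives a constant, and the $v_z$ part is handled by the half-line argument used for \eqref{eq:set_inclusion}. Since $q>n+1$, the second $v_h$-moments of $\phi$ are finite, so the $v_h$ contributions to $u_g,T_g$ are $O(1)$ and are dominated by $(\frac zt+1)^{2(1-\beta)}\ge1$.

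\textbf{Step 2: an analogue of Proposition~\ref{prop:M[f]_integral}.} Assume $(\rho,u,T)$ obey the two-sided bounds of Step~1 up to constants. I would then show
\[
\sup_{0<\tau\le t}\frac t\tau\int|v-\tfrac zt\hat z|^{k}\mathcal M(f)(\tau,x-v(t-\tau),v)\mathbf 1_{z-v_z(t-\tau)>0}\,dv\lesssim \rho_g\,T_g^{k/2}(t,x),\qquad k=0,1,2 .
\]
After the same change of variables $v_z\mapsto\frac{\tau}{t-\tau}v_z+\frac zt$, the point is to absorb the Maxwellian factor $\exp(-c|\frac{t}{t-\tau}v_z|^2/(\frac zt-v_z+1)^{2(1-\beta)})$ into the density factor $e^{-(\frac zt-v_z+1)^\beta}$. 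This should give $e^{-(\frac zt+\frac{\tau}{t-\tau}|v_z|+1)^\beta}$, an analogue of Lemma~\ref{lem:concaten} with exponents $\beta$ and $2(1-\beta)$. Such a concatenation holds because $\beta\le 2-2(1-\beta)\cdot 1$, i.e.\ the Gaussian width $(\frac zt+1)^{1-\beta}$ is at most the scale $(\frac zt+1)^{1-\beta}$ over which $e^{-s^\beta}$ changes by $O(1)$. For $\beta>1$ this fails, which mirrors the Maxwellian counterexample. The $v_h$ part is an ordinary Gaussian of bounded width. The region $v_z\ll -(\frac zt+1)$ is treated as in Step~2 of Proposition~\ref{prop:M[f]_integral}.

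\textbf{Step 3: conclusion.} With Step~2 in hand, the proof of Proposition~\ref{prop:apriori} carries over verbatim, giving invariance of \eqref{eq:X_T_A3} for small $t_0$. The Lipschitz estimate of Proposition~\ref{prop:maxwell} and the contraction of Theorem~\ref{thm:BGK_sol} follow with the weight $\mathfrak w=1+(|x_h|+\frac zt)^{2(1-\beta)}+|v-\frac zt\hat z|^2$, i.e.\ $\gamma=1-\beta$. The corresponding Corollary~\ref{prop:bound} follows from Step~2 with $\rho\equiv$ polynomial. I expect the main obstacle to be Step~2, specifically the concatenation inequality near $\tau\approx t$ and the endpoint $\beta=1$, where $T_g$ is bounded and one must check directly that $e^{-s}$ tolerates a Gaussian shift of width $O(1)$.
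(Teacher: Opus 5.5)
Your Steps 1 and 2 follow the paper's route. You get the same scalings $\rho_g\simeq(\frac zt+1)^{1-\beta}e^{-(z/t)^\beta}$, $|u_g-\frac zt\hat z|\lesssim(\frac zt+1)^{1-\beta}$, $T_g\simeq(\frac zt+1)^{2(1-\beta)}$, and the same bound $\lesssim\rho_gT_g^{k/2}$ for the transported Maxwellian.

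\textbf{The gap is in Step 3: the weight.} You take $\mathfrak w=1+(|x_h|+\frac zt)^{2(1-\beta)}+|v-\frac zt\hat z|^2$ from Section \ref{sec:sol_construction}. There the $|x_h|$ was justified because $T_g$ grew like $(|x_h|+\frac zt+1)^{2/(1+\beta)}$. Here nothing grows or decays in $x_h$. The infimum condition in \eqref{eq:suppressed_v_z_cond} gives $\rho_g(t,x)\ge A_1e^{-t}\int_{z/t}^\infty e^{-v_z^\beta}\,dv_z$ uniformly in $x_h$; for $\phi=e^{-|v_h|^2}$ the fields do not depend on $x_h$ at all. Hence, for $0<\beta<1$, every $f$ with $\rho\ge\frac12\rho_g$ satisfies $\int\mathfrak w f\,dv\ge\frac12|x_h|^{2(1-\beta)}\rho_g(t,x)\to\infty$ as $|x_h|\to\infty$.

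So condition (1) of Definition \ref{def:BGK_sol_space} fails for every function satisfying \eqref{eq:X_T_A3}. The space $\Omega_{1-\beta,t_0}$ contains neither $f^1$, nor your iterates, nor any solution, and the fixed-point argument has nothing to act on. Even formally, the analogue of Proposition \ref{prop:maxwell} would need $|x_h-v_h(t-\tau)|^{2(1-\beta)}$ to be controlled by $T_\lambda(\tau,x-v(t-\tau))$. That quantity depends only on $\frac{z-v_z(t-\tau)}{\tau}$, so no uniform Lipschitz constant exists. The repair, which is what the paper does, is to drop $|x_h|$. Use the weight $1+(\frac zt)^{2(1-\beta)}+|v-\frac zt\hat z|^2$ (the space $\Omega'_{1-\beta,t_0}$), split only $\frac zt=\frac{z-v_z(t-\tau)}{\tau}+\frac{t-\tau}{\tau}(v_z-\frac zt)$ in the Lipschitz estimate, and state uniqueness in that class.

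Two smaller corrections in Step 2. First, the $v_h$ factor of the Maxwellian is not of bounded width: its variance is $T\simeq(\frac zt-v_z+1)^{2(1-\beta)}$, which is unbounded for $\beta<1$. It is harmless only because this scale does not depend on $v_h$. Integrating in $v_h$ first therefore produces $T^{(n-1)/2}$ (times $T^{k/2}$), which cancels most of the normalization and leaves one-dimensional $v_z$-integrals; this is how the paper proceeds.

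Second, your inequality $\beta\le 2-2(1-\beta)$ reduces to $\beta\ge0$ and carries no information. The actual reason the concatenation works is that writing $\beta=\frac{\beta'}{1+\beta'}$ gives $\frac{2}{1+\beta'}=2(1-\beta)$. So for $\beta<1$ Lemma \ref{lem:concaten} applies verbatim, and for $\beta=1$ one completes the square, $\alpha y-\frac{y^2}{C}\le\frac{C\alpha^2}{4}$.

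Also, for the $v_z$ part of the lower bound on $T_g$, the hemisphere inclusion has no literal analogue, since $u_{g,z}-\frac zt$ lies inside the support $[0,\infty)$. Instead restrict to $v_z\ge\frac zt+(C_3+1)(\frac zt+1)^{1-\beta}$, and use Lemma \ref{lem:beta_pert} to see that this tail still carries mass $\gtrsim\rho_g$.
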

\begin{proof}
    We follow the proof strategy used in Sections~\ref{sec:Macro_quan} and \ref{sec:sol_construction}. We first compute the macroscopic fields $(\rho_g, u_g, T_g)$ generated by the damped free transport of the boundary data $g$, and then verify the \textit{a priori} stability estimate for the solution's macroscopic fields near $(\rho_g, u_g, T_g)$.
    
    By the change of variables $t\mapsto t+\frac{z}{v_z}$ and $x_h\mapsto x_h + v_h\frac{z}{v_z}$, we have
    \begin{align*}
        \sup_{t\geq 0}\left\|(1+|v_h|^q)\phi(t-\frac{z}{v_z},x_h - v_h \frac{z}{v_z},v_h)\mathbf{1}_{v_z t\geq z}\right\|_{L^\infty_{x_h,v_h}} = \sup_{t\geq 0}\|(1+|v_h|^q)\phi(t,x_h, v_h)\|_{L^\infty_{x_h,v_h}}
    \end{align*}
    for all $z\geq 0$ and $v_z>0$. Therefore, applying the inequality of Perthame and Pulvirenti from \cite{PP1993} to $\phi(t-\frac{z}{v_z},x_h - v_h \frac{z}{v_z},v_h)\mathbf{1}_{t\geq \frac{z}{v_z}}$ and using the lower bound condition in \eqref{eq:suppressed_v_z_cond}, we have
    \begin{align}\label{eq:horizontal_int_bounds}
        \begin{split}
            &A_1\leq \int_{\mathbb{R}^{n-1}}\phi(t-t_b,x_h-v_ht_b,v_h)\,dv_h\leq A_2,\\
            &\int_{\mathbb{R}^{n-1}}|v_h|\phi(t-t_b,x_h-v_ht_b,v_h)\,dv_h\leq A_3,\\
            &A_4\leq \int_{\mathbb{R}^{n-1}}|v_h|^2 \phi(t-t_b,x_h-v_ht_b,v_h)\,dv_h\leq A_5
        \end{split}
    \end{align}
    for some uniform constants $A_i$, $1\leq i\leq 5$ for all $x\in\mathbb{R}^n_+$, $v_z>0$, and $t\geq t_b$.
    
    Next, we compute $(\rho_g, u_g, T_g)$. For this, we confine $0<t\leq 1$. By the definition of $g(t,x_h,v)$, we write the $\rho_g$ integral by
    \begin{align*}
        \int_{\mathbb{R}^n}e^{-t_b}g(t-t_b,x_h-v_ht_b,v)\mathbf{1}_{v_z\geq \frac{z}{t}}\,dv&=\int_{\mathbb{R}}e^{-\frac{z}{v_z}}e^{-v_z^\beta}\mathbf{1}_{v_z\geq \frac{z}{t}}\int_{\mathbb{R}^{n-1}}\phi(t-t_b,x_h-v_ht_b,v_h)\,dv_h dv_z.
    \end{align*}
    By \eqref{eq:horizontal_int_bounds} and Lemma \ref{lem:exp_integral}, we estimate $\rho_g$ by
    \begin{align*}
        C_1e^{-t}\left(\frac{z}{t}+1\right)^{1-\beta} e^{-\frac{z^\beta}{t^\beta}}\leq \int_{\mathbb{R}^n}e^{-t_b}g(t-t_b,x_h-v_ht_b,v)\mathbf{1}_{v_z\geq \frac{z}{t}}\,dv&\leq C_2\left(\frac{z}{t}+1\right)^{1-\beta} e^{-\frac{z^\beta}{t^\beta}}
    \end{align*}
    for some constants $C_1$ and $C_2$. By similar calculations, we have
    \begin{align*}
        &\int_{\mathbb{R}^n}e^{-t_b}|v_h|g(t-t_b,x_h-v_ht_b,v)\mathbf{1}_{v_z\geq \frac{z}{t}}\,dv\leq C_3'\left(\frac{z}{t}+1\right)^{1-\beta} e^{-\frac{z^\beta}{t^\beta}},\\
        &\int_{\mathbb{R}^n}e^{-t_b}\left|v_z-\frac{z}{t}\right|g(t-t_b,x_h-v_ht_b,v)\mathbf{1}_{v_z\geq \frac{z}{t}}\,dv\leq C_3' \left(\frac{z}{t}+1\right)^{2(1-\beta)} e^{-\frac{z^\beta}{t^\beta}}.
    \end{align*}
    for a constant $C_3'$. At this stage, we can state that there exist constants $C_i$ for $1\leq i\leq 3$ such that
    \begin{align}\label{eq:Appendix_macro_bound1}
        \begin{split}
            &C_1\left(\frac{z}{t} + 1\right)^{1-\beta}e^{-\left(\frac{z}{t} + 1\right)^\beta}\leq \rho_g(t,x) \leq C_2\left(\frac{z}{t} + 1\right)^{1-\beta}e^{-\frac{z^\beta}{t^\beta}},\\
            &\left|u_g(t,x)-\frac{z}{t}\hat{z}\right|\leq C_3\left(\frac{z}{t}+1\right)^{1-\beta}
        \end{split}
    \end{align}
    for $0<t\leq 1$. To compute $T_g$, we first bound the integral for $\left|v-\frac{z}{t}\hat{z}\right|^2$ weight as
    \begin{align*}
        &\int_{\mathbb{R}^n}e^{-t_b}\left|v-\frac{z}{t}\hat{z}\right|^2 g(t-t_b,x_h-v_ht_b,v)\mathbf{1}_{v_z\geq \frac{z}{t}}\,dv\\
        &=\int_{\mathbb{R}^n}e^{-t_b}\left(|v_h|^2 + \left|v_z-\frac{z}{t}\right|^2\right) g(t-t_b,x_h-v_ht_b,v)\mathbf{1}_{v_z\geq \frac{z}{t}}\,dv\\
        &\leq C_5'\left(\frac{z}{t}+1\right)^{3(1-\beta)}e^{-\frac{z^\beta}{t^\beta}}
    \end{align*}
    for a constant $C_5'$ employing \eqref{eq:horizontal_int_bounds} and Lemma~\ref{lem:exp_integral}. We need to use the weight $|v-u_g(t,x)|^2$ to compute $T_g$. For the upper bound, we easily get
    \begin{align}
        &\int_{\mathbb{R}^n}e^{-t_b}|v-u_g(t,x)|^2 g(t-t_b,x_h-v_ht_b,v)\mathbf{1}_{v_z\geq \frac{z}{t}}\,dv\notag\\
        &=\int_{\mathbb{R}^n}e^{-t_b}\left|\left(v-\frac{z}{t}\hat{z}\right)-\left(u_g(t,x)-\frac{z}{t}\hat{z}\right)\right|^2 g(t-t_b,x_h-v_ht_b,v)\mathbf{1}_{v_z\geq \frac{z}{t}}\,dv\notag\\
        &\leq 2\int_{\mathbb{R}^n}e^{-t_b}\left(\left|v-\frac{z}{t}\hat{z}\right|^2+\left|u_g(t,x)-\frac{z}{t}\hat{z}\right|^2\right) g(t-t_b,x_h-v_ht_b,v)\mathbf{1}_{v_z\geq \frac{z}{t}}\,dv\notag\\
        &\leq 2 (C_5' + C_3^2 C_2)\left(\frac{z}{t}+1\right)^{3(1-\beta)}e^{-\frac{z^\beta}{t^\beta}}\label{eq:Appendix_T_g_upper}.
    \end{align}
    For the lower bound, we take the $z$-component of the weight $|v-u_g(t,x)|^2$. As $u_g$ satisfies \eqref{eq:Appendix_macro_bound1}, if we take $v_z\geq \frac{z}{t} + (C_3+1)\left(\frac{z}{t}+1\right)^{1-\beta}$, we have
    \begin{align}
        &\int_{\mathbb{R}^n}e^{-t_b}\left|v-u_g(t,x)\right|^2 g(t-t_b,x_h-v_ht_b,v)\mathbf{1}_{v_z\geq \frac{z}{t}}\,dv\notag\\
        &\geq \int_{\mathbb{R}^n}e^{-t_b}\left|v_z-u_{g,z}(t,x)\right|^2 g(t-t_b,x_h-v_ht_b,v)\mathbf{1}_{v_z\geq \frac{z}{t} + (C_3+1)\left(\frac{z}{t}+1\right)^{1-\beta}}\,dv\notag\\
        &\geq e^{-t}\left(\frac{z}{t}+1\right)^{2(1-\beta)}\int_{\frac{z}{t} + (C_3+1)\left(\frac{z}{t}+1\right)^{1-\beta}}^\infty e^{-v_z^\beta}\int_{\mathbb{R}^{n-1}} \phi(t-t_b,x_h-v_ht_b,v_h)\,dv_h dv_z\notag\\
        &\geq C_4'\left(\frac{z}{t}+1\right)^{3(1-\beta)} e^{-\frac{z^\beta}{t^\beta}}\label{eq:Appendix_T_g_lower}
    \end{align}
    for a constant $C_4'$. In the final inequality, we assume $\frac{z}{t}\geq 1$ and use Lemma~\ref{lem:exp_integral} with Lemma~\ref{lem:beta_pert} to get
    \begin{align*}
        \left(\frac{z}{t} + (C_3+1)\left(\frac{z}{t}+1\right)^{1-\beta}\right)^\beta&\leq \frac{z^\beta}{t^\beta} + \beta (C_3+1)\left(\frac{z}{t}+1\right)^{1-\beta}\left(\frac{z}{t}\right)^{\beta-1}\\
        &\leq \frac{z^\beta}{t^\beta} + 2^{1-\beta}\beta (C_3+1).
    \end{align*}
    We divide \eqref{eq:Appendix_T_g_upper} and \eqref{eq:Appendix_T_g_lower} by $\rho_g$ in \eqref{eq:Appendix_macro_bound1} to get the upper and lower bounds of $T_g$. If $\frac{z}{t}\leq 1$, the $v_z$ integral is bounded below by a constant, so we again get the lower bound \eqref{eq:Appendix_T_g_lower}.
    
    Adding the bounds of $T_g$ to \eqref{eq:Appendix_macro_bound1}, we prove that there exist constants $C_i$ for $1\leq i\leq 5$ such that
    \begin{align*}
        &C_1\left(\frac{z}{t} + 1\right)^{1-\beta}e^{-\left(\frac{z}{t} + 1\right)^\beta}\leq \rho_g(t,x) \leq C_2\left(\frac{z}{t} + 1\right)^{1-\beta}e^{-\left(\frac{z}{t} + 1\right)^\beta},\\
        &\left|u_g(t,x)-\frac{z}{t}\hat{z}\right|\leq C_3\left(\frac{z}{t}+1\right)^{1-\beta},\\
        &C_4\left(\frac{z}{t} + 1\right)^{2(1-\beta)}\leq T_g(t,x) \leq C_5\left(\frac{z}{t} + 1\right)^{2(1-\beta)}
    \end{align*}
    for $0< t\leq 1$. Note that it is safe to add $1$ in the exponent of $\rho_g$ since $0<\beta\leq 1$. 
    
    Now, as in Proposition \ref{prop:M[f]_integral}, we want to bound the integral of local Maxwellian $\mathcal{M}(f)(\tau, x-v(t-\tau), v)$ generated by $(\rho_g, u_g, T_g)$. In general, we claim that if the macroscopic fields $(\rho, u, T)$ of $f$ satisfy
    \begin{align*}
        &C_1\left(\frac{z}{t} + 1\right)^{m_1}e^{-\alpha\left(\frac{z}{t} + 1\right)^\beta}\leq \rho(t,x) \leq C_2\left(\frac{z}{t} + 1\right)^{m_1}e^{-\alpha\left(\frac{z}{t} + 1\right)^\beta},\\
        &\left|u(t,x)-\frac{z}{t}\hat{z}\right|\leq C_3\left(\frac{z}{t}+1\right)^{1-\beta},\\
        &C_4\left(\frac{z}{t} + 1\right)^{2(1-\beta)}\leq T(t,x) \leq C_5\left(\frac{z}{t} + 1\right)^{2(1-\beta)},
    \end{align*}
    for some $0\leq \alpha\leq 1$ and $m_1\in\mathbb{R}$, we have
    \begin{align}\label{eq:appendix_int_res}
        \sup_{0<\tau\leq t}\frac{t}{\tau}\int_{\mathbb{R}^n} \left|v-\frac{z}{t}\hat{z}\right|^{m_2}\mathcal{M}(f)(\tau, x-v(t-\tau), v)\mathbf{1}_{z-v_z(t-\tau)> 0}\,dv\leq C_{m_1,m_2}(\frac{z}{t} + 1)^{m_1 + m_2(1-\beta)}e^{-\alpha(\frac{z}{t} + 1)^\beta}
    \end{align}
    for each $m_2\geq 0$. If $\tau = t$, then it is an integral of $\mathcal{M}(f)(t,x,v)$ and we directly get \eqref{eq:appendix_int_res}. Thus, we assume $0<\tau<t$. Applying the chain of change of variables used in the proof of Proposition~\ref{prop:M[f]_integral}, we can write the integral of the local Maxwellian as
    \begin{align*}
        &\frac{t}{\tau}\int_{\mathbb{R}^n} \left|v-\frac{z}{t}\hat{z}\right|^{m_2} \mathcal{M}(f)(\tau, x-v(t-\tau), v)\mathbf{1}_{z-v_z(t-\tau)> 0}\,dv\\
        &\leq C\frac{t}{t-\tau}\int_{\mathbb{R}^n} \left(|v_h|^{m_2} + \left(\frac{\tau}{t-\tau}|v_z|\right)^{m_2} \right)\frac{e^{-\alpha(\frac{z}{t}-v_z + 1)^\beta}}{(\frac{z}{t} - v_z + 1)^{n(1-\beta) - m_1}}\exp\left(-\frac{|v_h|^2 + |\frac{t}{t-\tau}v_z|^2}{4C_5(\frac{z}{t} - v_z + 1)^{2(1-\beta)}}\right)\mathbf{1}_{v_z< \frac{z}{t}}\,dv.
    \end{align*}
    We perform the $v_h$-integral first and get
    \begin{align}\label{eq:appendix_int}
        \begin{split}
            &\int_{\mathbb{R}^n} \left(|v_h|^{m_2} + \left(\frac{\tau}{t-\tau}|v_z|\right)^{m_2} \right)\frac{e^{-\alpha(\frac{z}{t}-v_z + 1)^\beta}}{(\frac{z}{t} - v_z + 1)^{n(1-\beta) - m_1}}\exp\left(-\frac{|v_h|^2 + |\frac{t}{t-\tau}v_z|^2}{4C_5(\frac{z}{t} - v_z + 1)^{2(1-\beta)}}\right)\mathbf{1}_{v_z< \frac{z}{t}}\,dv\\
            &\leq C\int_{\mathbb{R}} \left(\frac{z}{t} - v_z + 1\right)^{m_1 + (m_2-1)(1-\beta)}e^{-\alpha(\frac{z}{t}-v_z + 1)^\beta}\exp\left(-\frac{|\frac{t}{t-\tau}v_z|^2}{4C_5(\frac{z}{t} - v_z + 1)^{2(1-\beta)}}\right)\mathbf{1}_{v_z< \frac{z}{t}}\,dv_z\\
            &\quad + C\int_{\mathbb{R}} \left(\frac{\tau}{t-\tau}|v_z|\right)^{m_2}\left(\frac{z}{t} - v_z + 1\right)^{m_1 -(1-\beta)} e^{-\alpha(\frac{z}{t}-v_z + 1)^\beta}\exp\left(-\frac{|\frac{t}{t-\tau}v_z|^2}{4C_5(\frac{z}{t} - v_z + 1)^{2(1-\beta)}}\right)\mathbf{1}_{v_z< \frac{z}{t}}\,dv_z.
        \end{split}
    \end{align}
    The final integrals closely resemble the $v_z$-integrals over the regions $A_i$ in the proof of Proposition \ref{prop:M[f]_integral}, with $x_h=v_h(t-\tau)$. Therefore, we can apply the same proof technique for Proposition \ref{prop:M[f]_integral} to these integrals. For completeness, we provide the details.
    
    We first claim that
    \begin{align*}
        e^{-\alpha(\frac{z}{t}-v_z + 1)^\beta}\exp\left(-\frac{|\frac{t}{t-\tau}v_z|^2}{8C_5(\frac{z}{t} - v_z + 1)^{2(1-\beta)}}\right)&\leq Ce^{-\alpha(\frac{z}{t}-v_z + 1 + \frac{t}{t-\tau}|v_z|)^\beta}\\
        &\leq Ce^{-\alpha(\frac{z}{t} + 1)^\beta}
    \end{align*}
    for some constant $C$. Indeed, when $0<\beta<1$, we can choose $\beta'>0$ satisfying $\beta = \frac{\beta'}{1+\beta'}$ and directly apply Lemma~\ref{lem:concaten} to get the bound. If $\beta = 1$, by direct computation, we have
    \begin{align*}
        e^{-\alpha(\frac{z}{t}-v_z + 1)}\exp\left(-\frac{|\frac{t}{t-\tau}v_z|^2}{8C_5}\right) &= e^{-\alpha(\frac{z}{t} + 1)}\exp\left(-\frac{\left(\frac{t}{t-\tau}v_z - 4\alpha\frac{t-\tau}{t}C_5\right)^2}{8C_5} + 2\alpha^2\frac{(t-\tau)^2}{t^2}C_5\right)\\
        &\leq \exp\left(2\alpha^2 C_5\right) e^{-\alpha(\frac{z}{t} + 1)}.
    \end{align*}
    Therefore, for the final integrals in \eqref{eq:appendix_int}, it is enough to bound the integrals by
    \begin{align}
        &\int_{-\infty}^{\frac{z}{t}} (\frac{z}{t} - v_z + 1)^{m_1 + (m_2-1)(1-\beta)}\exp\left(-\frac{|\frac{t}{t-\tau}v_z|^2}{8C_5(\frac{z}{t} - v_z + 1)^{2(1-\beta)}}\right)\,dv_z\leq C\frac{t-\tau}{t}(\frac{z}{t} + 1)^{m_1 + m_2(1-\beta)},\label{eq:appendix_int_1}\\
        &\int_{-\infty}^{\frac{z}{t}} \left(\frac{\tau}{t-\tau}|v_z|\right)^{m_2}\left(\frac{z}{t} - v_z + 1\right)^{m_1 -(1-\beta)} \exp\left(-\frac{|\frac{t}{t-\tau}v_z|^2}{8C_5(\frac{z}{t} - v_z + 1)^{2(1-\beta)}}\right)\,dv_z\leq C\frac{t-\tau}{t}(\frac{z}{t} + 1)^{m_1 + m_2(1-\beta)}\label{eq:appendix_int_2}
    \end{align}
    to get the result \eqref{eq:appendix_int_res}. For each integral, we decompose the integral domain into three parts: $\frac{1}{2}\frac{z}{t}\leq v_z<\frac{z}{t}$, $-2\left(\frac{z}{t} + 1\right)\leq v_z< \frac{1}{2}\frac{z}{t}$, and $v_z\leq -2\left(\frac{z}{t} + 1\right)$ and estimate the integral. For notational convenience, we set $C_T = 8C_5$ in the exponent of the integrals.
    
    When $\frac{1}{2}\frac{z}{t}\leq v_z<\frac{z}{t}$, we have
    \begin{align*}
        &\int_{\frac{1}{2}\frac{z}{t}}^{\frac{z}{t}} (\frac{z}{t} - v_z + 1)^{m_1 + (m_2-1)(1-\beta)}\exp\left(-\frac{|\frac{t}{t-\tau}v_z|^2}{C_T(\frac{z}{t} - v_z + 1)^{2(1-\beta)}}\right)\,dv_z\\
        &\leq \int_{\frac{1}{2}\frac{z}{t}}^{\frac{z}{t}} (\frac{z}{t} - v_z + 1)^{m_1 + (m_2-1)(1-\beta)}\exp\left(-\frac{|\frac{t}{t-\tau}v_z|^2}{C_T(\frac{1}{2}\frac{z}{t} + 1)^{2(1-\beta)}}\right)\,dv_z\\
        &=\frac{t-\tau}{t}\int_{\frac{t}{t-\tau}\frac{1}{2}\frac{z}{t}}^{\frac{t}{t-\tau}\frac{z}{t}} (\frac{z}{t} - \frac{t-\tau}{t}v_z + 1)^{m_1 + (m_2-1)(1-\beta)}\exp\left(-\frac{|v_z|^2}{C_T(\frac{1}{2}\frac{z}{t} + 1)^{2(1-\beta)}}\right)\,dv_z,
        \intertext{and}
        &\int_{\frac{1}{2}\frac{z}{t}}^{\frac{z}{t}} \left(\frac{\tau}{t-\tau}|v_z|\right)^{m_2}\left(\frac{z}{t} - v_z + 1\right)^{m_1 -(1-\beta)}\exp\left(-\frac{|\frac{t}{t-\tau}v_z|^2}{C_T(\frac{z}{t} - v_z + 1)^{2(1-\beta)}}\right)\,dv_z\\
        &\leq \int_{\frac{1}{2}\frac{z}{t}}^{\frac{z}{t}} \left(\frac{\tau}{t-\tau}|v_z|\right)^{m_2}\left(\frac{z}{t} - v_z + 1\right)^{m_1 -(1-\beta)}\exp\left(-\frac{|\frac{t}{t-\tau}v_z|^2}{C_T(\frac{1}{2}\frac{z}{t} + 1)^{2(1-\beta)}}\right)\,dv_z\\
        &=\frac{t-\tau}{t}\int_{\frac{t}{t-\tau}\frac{1}{2}\frac{z}{t}}^{\frac{t}{t-\tau}\frac{z}{t}} \left(\frac{\tau}{t}|v_z|\right)^{m_2}\left(\frac{z}{t} - \frac{t-\tau}{t}v_z + 1\right)^{m_1 -(1-\beta)}\exp\left(-\frac{|v_z|^2}{C_T(\frac{1}{2}\frac{z}{t} + 1)^{2(1-\beta)}}\right)\,dv_z\\
        &\leq \frac{t-\tau}{t}\int_{\frac{t}{t-\tau}\frac{1}{2}\frac{z}{t}}^{\frac{t}{t-\tau}\frac{z}{t}} \left(\frac{z}{t} - \frac{t-\tau}{t}v_z + 1\right)^{m_1 -(1-\beta)}v_z^{m_2}\exp\left(-\frac{|v_z|^2}{C_T(\frac{1}{2}\frac{z}{t} + 1)^{2(1-\beta)}}\right)\,dv_z.
    \end{align*}
    Regardless of the polynomial terms, due to Lemma~\ref{lem:exp_integral} and $0<\beta\leq 1$, we have exponential decay about $\frac{z}{t}$ and $\frac{t}{t-\tau}$ as the result of the integral. Therefore, we have
    \begin{align}\label{eq:appendix_int_3}
        \begin{split}
            &\int_{\frac{1}{2}\frac{z}{t}}^{\frac{z}{t}} (\frac{z}{t} - v_z + 1)^{m_1 + (m_2-1)(1-\beta)}\exp\left(-\frac{|\frac{t}{t-\tau}v_z|^2}{C_T(\frac{z}{t} - v_z + 1)^{2(1-\beta)}}\right)\,dv_z\leq C\frac{t-\tau}{t}\left(\frac{z}{t} + 1\right)^{m_1 + m_2(1-\beta)}\\
            &\int_{\frac{1}{2}\frac{z}{t}}^{\frac{z}{t}} \left(\frac{\tau}{t-\tau}|v_z|\right)^{m_2}\left(\frac{z}{t} - v_z + 1\right)^{m_1 -(1-\beta)}\exp\left(-\frac{|\frac{t}{t-\tau}v_z|^2}{C_T(\frac{z}{t} - v_z + 1)^{2(1-\beta)}}\right)\,dv_z\leq C\frac{t-\tau}{t}\left(\frac{z}{t} + 1\right)^{m_1 + m_2(1-\beta)}.
        \end{split}
    \end{align}
    
    If $-2\left(\frac{z}{t} + 1\right)\leq v_z<\frac{1}{2}\frac{z}{t}$, then we have
    \begin{align*}
        \frac{1}{2}\frac{z}{t} + \frac{1}{2}\leq \frac{z}{t} - v_z + 1\leq 3\frac{z}{t} + 3.
    \end{align*}
    Therefore, we obtain
    \begin{align}\label{eq:appendix_int_4}
        \begin{split}
            &\int_{-2\left(\frac{z}{t}+1\right)}^{\frac{1}{2}\frac{z}{t}} \left(\frac{z}{t} - v_z + 1\right)^{m_1 + (m_2-1)(1-\beta)}\exp\left(-\frac{|\frac{t}{t-\tau}v_z|^2}{C_T(\frac{z}{t} - v_z + 1)^{2(1-\beta)}}\right)\,dv_z\\
            &\leq C\left(\frac{z}{t} + 1\right)^{m_1 + (m_2-1)(1-\beta)}\int_{-\infty}^\infty \exp\left(-\frac{|\frac{t}{t-\tau}v_z|^2}{C_T(3\frac{z}{t} + 3)^{2(1-\beta)}}\right)\,dv_z\\
            &\leq C\frac{t-\tau}{t}\left(\frac{z}{t} + 1\right)^{m_1 + m_2(1-\beta)}.
        \end{split}
    \end{align}
    For the second integral, by a similar reason,
    \begin{align}\label{eq:appendix_int_5}
        \begin{split}
            &\int_{-2\left(\frac{z}{t}+1\right)}^{\frac{1}{2}\frac{z}{t}} \left(\frac{\tau}{t-\tau}|v_z|\right)^{m_2}\left(\frac{z}{t} - v_z + 1\right)^{m_1 -(1-\beta)} \exp\left(-\frac{|\frac{t}{t-\tau}v_z|^2}{C_T(\frac{z}{t} - v_z + 1)^{2(1-\beta)}}\right)\,dv_z\\
            &\leq C\left(\frac{z}{t} + 1\right)^{m_1 -(1-\beta)}\int_{-\infty}^\infty \left(\frac{\tau}{t-\tau}|v_z|\right)^{m_2}\exp\left(-\frac{|\frac{t}{t-\tau}v_z|^2}{C_T(3\frac{z}{t} + 3)^{2(1-\beta)}}\right)\,dv_z\\
            &\leq C\frac{t-\tau}{t}\left(\frac{z}{t} + 1\right)^{m_1 + m_2(1-\beta)}.
        \end{split}
    \end{align}
    Hence, we get the desired result for each \eqref{eq:appendix_int_1} and \eqref{eq:appendix_int_2} in this case.
    
    Finally, suppose $v_z\leq -2\left(\frac{z}{t} + 1\right)$. In this case, we have $|v_z|\leq \frac{z}{t}-v_z + 1\leq 2|v_z|$. Using these bounds, we split the exponential function into half and get
    \begin{align*}
        &\int_{-\infty}^{-2\left(\frac{z}{t} + 1\right)} \left(\frac{z}{t} - v_z + 1\right)^{m_1 + (m_2-1)(1-\beta)}\exp\left(-\frac{|\frac{t}{t-\tau}v_z|^2}{C_T(\frac{z}{t} - v_z + 1)^{2(1-\beta)}}\right)\,dv_z\\
        &\leq C\int_{-\infty}^{-2\left(\frac{z}{t} + 1\right)} |v_z|^{m_1 + (m_2-1)(1-\beta)}\exp\left(-\frac{\left(\frac{t}{t-\tau}|v_z|^{\beta}\right)^2}{2^{2(1-\beta) }C_T}\right)\,dv_z\\
        &\leq C\exp\left(-\frac{1}{2}\frac{\left(\frac{t}{t-\tau}2^\beta \left(\frac{z}{t} + 1\right)^{\beta}\right)^2}{2^{2(1-\beta)}C_T}\right)\int_{-\infty}^{-2} |v_z|^{m_1 + (m_2-1)(1-\beta)}\exp\left(-\frac{1}{2}\frac{\left(\frac{t}{t-\tau}|v_z|^{\beta}\right)^2}{2^{2(1-\beta)}C_T}\right)\,dv_z\\
        &\leq C\exp\left(-\frac{1}{2}\frac{\left(\frac{t}{t-\tau}2^\beta \left(\frac{z}{t} + 1\right)^{\beta}\right)^2}{2^{2(1-\beta)}C_T}\right).
    \end{align*}
    By a similar calculation, we have
    \begin{align*}
        &\int_{-\infty}^{-2\left(\frac{z}{t}+1\right)} \left(\frac{\tau}{t-\tau}|v_z|\right)^{m_2}\left(\frac{z}{t} - v_z + 1\right)^{m_1 -(1-\beta)} \exp\left(-\frac{|\frac{t}{t-\tau}v_z|^2}{C_T(\frac{z}{t} - v_z + 1)^{2(1-\beta)}}\right)\,dv_z\\
        &\leq C\left(\frac{\tau}{t-\tau}\right)^{m_2}\exp\left(-\frac{1}{2}\frac{\left(\frac{t}{t-\tau}2^\beta \left(\frac{z}{t} + 1\right)^{\beta}\right)^2}{2^{2(1-\beta)}C_T}\right).
    \end{align*}
    Since the exponential decay can suppress $\frac{z}{t}$ and the time singular term $\frac{1}{t-\tau}$ for arbitrary order, we get
    \begin{align}\label{eq:appendix_int_6}
        \begin{split}
            &\int_{-\infty}^{-2\left(\frac{z}{t} + 1\right)} \left(\frac{z}{t} - v_z + 1\right)^{m_1 + (m_2-1)(1-\beta)}\exp\left(-\frac{|\frac{t}{t-\tau}v_z|^2}{C_T(\frac{z}{t} - v_z + 1)^{2(1-\beta)}}\right)\,dv_z\leq C\frac{t-\tau}{t}(\frac{z}{t} + 1)^{m_1 + m_2(1-\beta)},\\
            &\int_{-\infty}^{-2\left(\frac{z}{t}+1\right)} \left(\frac{\tau}{t-\tau}|v_z|\right)^{m_2}\left(\frac{z}{t} - v_z + 1\right)^{m_1 -(1-\beta)} \exp\left(-\frac{|\frac{t}{t-\tau}v_z|^2}{C_T(\frac{z}{t} - v_z + 1)^{2(1-\beta)}}\right)\,dv_z\leq C\frac{t-\tau}{t}(\frac{z}{t} + 1)^{m_1 + m_2(1-\beta)}.
        \end{split}
    \end{align}
    Combining the integral estimates \eqref{eq:appendix_int_3}, \eqref{eq:appendix_int_4}, \eqref{eq:appendix_int_5}, and \eqref{eq:appendix_int_6}, we conclude \eqref{eq:appendix_int_1} and \eqref{eq:appendix_int_2}, and so \eqref{eq:appendix_int_res}.
    
    Now, we have versions of Proposition~\ref{prop:M[f]_integral} and Corollary~\ref{prop:bound} (for $\alpha = 0$ and general $m_1\in\mathbb{R}$) for our boundary data. At this stage, we define a solution space $\Omega'_{\gamma, t_0}$ of the BGK model for our inflow boundary data as follows: For $\gamma\geq 0$, $t_0>0$, and given inflow boundary data $g(t,x_h,v)$, $\Omega'_{\gamma, t_0}$ is the collection of functions $f(t,x,v)\geq 0$ for $0\leq t\leq t_0$ such that
    \begin{enumerate}
        \item it satisfies
        \begin{align*}
            \esssup_{0< t\leq t_0}\esssup_{x\in\mathbb{R}^n_+}\int_{v\in \mathbb{R}^n}\left(1+\left(\frac{z}{t}\right)^{2\gamma} + \left(v-\frac{z}{t}\hat{z}\right)^2\right)f(t,x,v)\,dv<\infty
        \end{align*}
        and $f(0,x,v) = 0$.
        \item For the macroscopic fields $(\rho, u, T)$ of $f$, there exists a constant $C_u>0$ such that
        \begin{align*}
            &|\rho-\rho_g|(t,x)\leq \frac{1}{2}\rho_g(t,x),\\
            &\left|u-\frac{z}{t}\hat{z}\right|(t,x)\leq C_u\sqrt{T_g(t,x)},\\
            &|T-T_g|(t,x)\leq \frac{1}{2}T_g(t,x)
        \end{align*}
        for almost every $0<t\leq t_0$ and $x\in\mathbb{R}^n_+$.
    \end{enumerate}
    Starting from \eqref{eq:appendix_int_res}, we can check that the solution of the BGK model with the inflow boundary data satisfies the conditions for $\Omega'_{1-\beta, t_0^*}$ for some $t_0^*>0$ and $C_u = \frac{C_3+1}{\sqrt{C_4}}$, modifying the proof of Proposition~\ref{prop:apriori}. Furthermore, for $f_1,f_2\in \Omega'_{1-\beta, t_0^*}$, we can reproduce Proposition~\ref{prop:maxwell} by
    \begin{align*}
        &\esssup_{x\in\mathbb{R}^n_+}\int_{\mathbb{R}^n}\left(1+\left(\frac{z}{t}\right)^{2(1-\beta)} + \left(v-\frac{z}{t}\hat{z}\right)^2\right)\left|\mathcal{M}(f_1)-\mathcal{M}(f_2)\right|(\tau, x-v(t-\tau), v)\mathbf{1}_{z-v_z(t-\tau)>0}\,dv\\
        &\leq C\esssup_{x\in\mathbb{R}^n_+}\int_{\mathbb{R}^n}\left(1+\left(\frac{z}{\tau}\right)^{2(1-\beta)} + \left(v-\frac{z}{\tau}\hat{z}\right)^2\right)|f_1-f_2|(\tau,x,v)\,dv
    \end{align*}
    for almost every $0<\tau\leq t$ and all $0<t\leq t_0^*$ from the pointwise estimate of $(\rho, u, T)$ in the second condition. Finally, following the contraction argument in Theorem~\ref{thm:BGK_sol}, we get a unique solution $f$ in $\Omega'_{1-\beta, t_0}$ for some $t_0$, which satisfies the second condition of Definition \ref{def:BGK_sol_space}.
\end{proof}
In the proof of Theorem \ref{thm:main_BGK}, we choose the tangential exponential decay $e^{-|v_h|^\beta}$ and the mitigated cutoff function $\mathbf{1}_{|v_h|\geq (|x_h| + v_z)^{\frac{1}{1+\beta}}}$. This choice removes the $v_z$-dependence from the exponential weight while retaining its effect through the interior cutoff. Because of this cutoff, the resulting expression remains integrable with respect to $v_z$.\\

\noindent{\bf Data availability:} No data was used for the research described in the article.
\newline

\noindent{\bf Conflict of interest:} The authors declare that they have no conflict of interest.\newline

\noindent{\bf Disclaimer on the use of AI tools} During the preparation of this manuscript, the authors used ChatGPT for writing assistance and to help check the correctness of the mathematical arguments. The authors reviewed the AI-assisted suggestions and take full responsibility for the content of the manuscript.\newline

\noindent{\bf Acknowledgement} D. Lee and S. Park are supported by the National Research Foundation of Korea (NRF) grant funded by the Korea government(MSIT)(No.RS-2023-00212304 and No.RS-2023-00219980). S. Park is also supported by the Basic Science Research Institute Fund(BRSI)(No.RS-2021-NR060139). S.-J. Son is supported by National Research Foundation of Korea(NRF) grants funded by the Korea government(MSIT and MOE)(RS-2023-00210484 and RS-2025-25419038). Yun was supported by the National Research Foundation of Korea(NRF) grant funded by the Korea government(MSIT)(No.RS-2023-NR076676).

\nocite{}
\bibliographystyle{abbrv}
\bibliography{ref.bib}

\end{document}

%% file: boundary_geometry.tex
\begin{figure}[t]
    \centering
    \resizebox{0.7\textwidth}{!}{
\begin{tikzpicture}[
    x=1.1cm,
    y=1.1cm,
    >=Stealth,
    axis/.style={
        ->,
        line width=.9pt
    },
    boundary/.style={
        dash pattern=on 4pt off 3pt,
        line width=.6pt
    },
    main line/.style={
        line width=.7pt
    },
    point/.style={
        circle,
        fill=black,
        inner sep=1.8pt
    },
    every node/.style={
        font=\small
    }
]

\coordinate (O)     at (0,0);
\coordinate (X)     at (1.7,1.8);
\coordinate (XV)    at (0.5,-1);

\begin{scope}
    \clip (-3.2,-1.5) rectangle (5,0);

    \foreach \x in {
        -4.5,-3.9,...,10.5
    }{
        \draw[
            gray!45,
            line width=.45pt
        ]
        (\x,-3.2) -- ++(-5,3.2);
    }
\end{scope}

\draw[axis] (-3.2,0) -- (5,0);
\draw[axis] (0,-1.5) -- (0,2.5);
\node[xshift=2.5mm] at (0,2.25) {$z$};
\node[yshift=-2.5mm] at (5, 0) {$x_h$};
\node[circle, fill=black, inner sep=0pt, minimum size=4pt] at (0.92857143, 0) {};
\node[above right=1pt] at (O) {$0$};

\node[xshift=16mm, yshift=-1.5mm] at (X) {$x = (x_h, z)$ at time $t$};
\node[xshift=9mm, yshift=-2mm] at (1.7+0.3,1.8+0.7) {$v = (v_h, v_z)$};
\node[circle, fill=black, inner sep=0pt, minimum size=4pt] at (X) {};

\node[yshift = -1.5mm, xshift = 6mm] at (XV){$x-vt$};

\draw[->, thick] (1.7,1.8) -- (1.7+0.3,1.8+0.7);

\node[yshift=2.5mm, xshift = 17mm] at (0.92857143, 0) {$x-v t_b$ at time $t-t_b$};

\node at (XV) {$\times$};

\draw[main line] (X) -- (0.92857143, 0);
\draw[boundary] (0.92857143, 0) -- (XV);

\node[anchor=west] at (-2.7,1.5) {Initially vacuum};

\node[anchor=west] at (-3.5,-0.75) {Filled \& Free transport};

\node[anchor=west] at (-2.7,.25) {inflow at $z=0$};

\end{tikzpicture}
}

\caption{Geometry of the half-space boundary configuration. 
        Particles enter the domain through the boundary plane and propagate along characteristic trajectories. For the point $x$ in the interior, the backward characteristic intersects the boundary at $x_b = x - v t_b$.}
    \label{fig:boundary_geometry}
\end{figure}